\pgfplotsset{compat=1.11}
\theoremstyle{plain}
\newtheorem{theorem}[equation]{Theorem}
\newtheorem{proposition}[equation]{Proposition}
\newtheorem{lemma}[equation]{Lemma}
\newtheorem{corollary}[equation]{Corollary}
\newtheorem{definition}[equation]{Definition}
\theoremstyle{remark}
\theoremstyle{remark}
\newtheorem{remark}[equation]{Remark}
\numberwithin{equation}{section}
\newcommand{\dbarstar}{\bar{\partial}^{\star}}
\newcommand{\dbar}{\bar \partial}
\newcommand{\abs}[1]{\left\vert#1\right\vert}
\newcommand{\norm}[1]{\left\Vert#1\right\Vert}
\newcommand{\ol}{\overline}
\newcommand{\wt}{\widetilde}
\newcommand{\ipr}[1]{\left( #1 \right)}
\newcommand{\xdownarrow}[1]{%
	{\left\downarrow\vbox to #1{}\right.\kern-\nulldelimiterspace}
}
\newcommand*{\dashdownarrow}{%
	\mathrel{%
		\mathpalette\dasharrow@vert{-90}%
	}%
}
\newcommand*{\dashuparrow}{%
	\mathrel{%
		\mathpalette\dasharrow@vert{90}%
	}%
}
\newcommand*{\dasharrow@vert}[2]{%
	\sbox0{$#1\vcenter{}$}%
	\sbox2{$#1\dashrightarrow\m@th$}%
	\dimen@=1.2\dimexpr\ht2-\ht0\relax
	\sbox2{\raisebox{-\ht0}{\unhcopy2}}%
	\ht2=\z@
	\dp2=\z@
	\vcenter{\hbox to 2\dimen@{\hfill\rotatebox{#2}{\box2}\hfill}}%
}
\renewcommand*\env@matrix[1][\arraystretch]{%
	\edef\arraystretch{#1}%
	\hskip -\arraycolsep
	\let\@ifnextchar\new@ifnextchar
	\array{*\c@MaxMatrixCols c}}
\newcommand{\ch}{{\mathcal H}}
\newcommand{\cl}{{\mathcal L}}
\newcommand{\cm}{{\mathcal M}}
\newcommand{\cp}{{\mathcal P}}
\newcommand{\ct}{{\mathcal T}}
\newcommand{\sC}{{\mathscr C}}
\newcommand{\vp}{\varphi}
\newcommand{\C}{{\mathbb C}}
\newcommand{\Z}{{\mathbb Z}}
\newcommand{\hwedge}[2]{\langle #1 \wedge \overline{#2}, h\rangle}
\newcommand{\sqi}{\sqrt{\scalebox{1.5}[0.9]{-}1}}
\begin{document}
\thispagestyle{empty}

\title{Direct Images and Hilbert Fields}
\author{Pranav Upadrashta}

\begin{abstract}
    In this article we undertake a careful study of the curvature operator of a field of Bergman spaces associated to a holomorphic fibration of K\"ahler manifolds. We focus on the case when the fibers are smoothly bounded pseudoconvex domains. 
\end{abstract}
\maketitle

\section{Introduction}
We are interested in the geometry of families of Bergman spaces associated to a holomorphic submersion. More precisely, let $\wt{X}$ be an $n+m$-dimensional K\"ahler manifold, let $B$ be an $m$-dimensional complex manifold and let $\wt{\pi}:\wt{X}\to B$ be a holomorphic submersion. Let $X\subset \wt{X}$ be a smoothly bounded domain whose fibers $X_b := \wt{\pi}^{-1}(b)\cap X$ have smooth pseudoconvex boundary. Let $E\to \wt{X}$ be a holomorphic line bundle with a smooth hermitian metric $h$. There is a family of Bergman spaces $\ch\to B$ associated to the submersion $\pi :=\wt{\pi}|_{X}$, which assigns to each $b \in B$ the Bergman space
\[
\ch_b = \left\{ f\in \Gamma(X_b, \mathcal{O}(K_{X_b}\otimes E|_{X_b}))  : (f,f)_b := \sqi^{n^2}\int_{X_b} \hwedge{f}{f} < \infty\right\}.
\]
We prove the following theorem about the family $\ch\to B$.
\begin{theorem}\label{Bgeneral}
Let $(E,h)\to X\to B$ be as above and let the ambient manifold $\wt{X}$ be Stein. Assume that for every $b\in B$ the fiber $X_b$ is such that the Neumann operator $\mathcal{N}_b^{(0,1)}$ acting on $(0,1)$-forms is compact and $X_b$ admits a neighborhood of Stein domains in $\wt{X}_b$. If $(E,h)$ has nonnegative curvature and $X$ is pseudoconvex then $\ch$ has nonnegative Nakano curvature.
\end{theorem}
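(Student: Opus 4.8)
\emph{Plan of proof.} The plan is to follow Berndtsson's method for computing the curvature of the Bergman bundle associated to a holomorphic fibration, and to adapt it to the present Hilbert-field setting in which the fibers are abstract smoothly bounded pseudoconvex domains rather than domains in $\C^N$. The two standing hypotheses on the fibers---compactness of $\cN_b^{(0,1)}$ and the existence of Stein neighbourhoods of $X_b$ in $\wt X_b$---are exactly what turns the formal curvature computation into a rigorous one in the presence of boundary. I would first reduce the statement to a pointwise fiberwise identity, then derive that identity, and then isolate the analytic difficulty.

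\emph{Reduction and set-up.} Fix $b_0\in B$ and holomorphic coordinates $t=(t_1,\dots,t_m)$ on a polydisc $\Delta\subset B$ centered at $b_0$. Nonnegativity of the Nakano curvature of $\ch$ at $b_0$ amounts to
\[
\sum_{j,k=1}^{m}\bigl\langle \Theta^{\ch}_{j\bar k}\big|_{b_0}\,u_k,\,u_j\bigr\rangle_{b_0}\ \ge\ 0
\qquad\text{for all}\quad u_1,\dots,u_m\in\ch_{b_0}.
\]
I would extend the $u_j$ to local holomorphic sections of $\ch$ over $\Delta$ and realize them by sections $\wt u_j$ of $K_{\wt X}\otimes\wt\pi^{*}K_B^{-1}\otimes E$ over $\pi^{-1}(\Delta)$ whose restrictions to the fibers are holomorphic and belong to the $\ch_t$; since $\wt X$ is Stein such representatives, and all the auxiliary global objects used below, exist. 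Writing $h=e^{-\varphi}$ in a local trivialization of $E$, the hypothesis that $(E,h)$ has nonnegative curvature says exactly that $\varphi$ is plurisubharmonic on $\pi^{-1}(\Delta)$. The K\"ahler metric on $\wt X$ furnishes horizontal lifts $V_\ell$ of the $\partial/\partial t_\ell$, hence lets one compute the $(1,0)$-part $D'$ of the Chern connection of $\ch$ from $\partial/\partial t_\ell$-derivatives of the inner products $(\,\cdot\,,\cdot\,)_t$; the failure of the $V_\ell$ to be holomorphic is the Kodaira--Spencer datum of the fibration and produces, through $\dbar(V_\ell\lrcorner\wt u_j)$, the fiberwise $E$-valued $(n,1)$-forms that enter the curvature formula.

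\emph{The curvature identity.} Differentiating $(u_j,u_k)_t$ twice in $t$ and regrouping, I expect three kinds of contributions. The variation of the weight $\varphi$ yields $\int_{X_t}$ of a quadratic form in the $u_j$ built from the complex Hessian of $\varphi$, nonnegative because $\varphi$ is plurisubharmonic on $X$. The variation of the fiberwise forms and of the connection yields, after using the relation between $D'u_j$ and the Kodaira--Spencer terms, a contribution of the shape $\|(\cN_t^{(0,1)})^{1/2}\dbar(\cdots)\|_t^2$ ($\dbar$ applied to a suitable $(n,0)$-form built from the $u_j$ and the first $t$-derivatives of $\varphi$, and $\cN_t^{(0,1)}$ acting on $(n,1)$-forms via the canonical twist), together with terms $\|P_t^{\perp}(\cdots)\|_t^2$, where $P_t$ is the Bergman projection of $X_t$. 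The variation of the domain $X_t$---which moves with $t$---yields a boundary integral over $\partial X_t$ whose integrand is a Levi form, nonnegative because $X$ is pseudoconvex in $\wt X$. Assembling these via the Morrey--Kohn--H\"ormander identity on the pseudoconvex fiber $X_t$---which is what makes the interplay of the $E$-curvature term, the Neumann term and the Kodaira--Spencer term transparent---I expect to reach an identity
\[
\sum_{j,k}\bigl\langle \Theta^{\ch}_{j\bar k}u_k,u_j\bigr\rangle_{t}\ =\ (\text{nonnegative }E\text{-curvature term})\ +\ \bigl\|(\cN_t^{(0,1)})^{1/2}\dbar(\cdots)\bigr\|_t^2\ +\ (\text{nonnegative boundary term})\ \ge\ 0 ,
\]
from which the Nakano curvature of $\ch$ is nonnegative; only nonnegativity, not strict positivity, is obtained, in line with the non-strict hypotheses on $(E,h)$ and on $X$.

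\emph{Main obstacle.} The hard part will be the analysis that legitimizes the previous paragraph when the fibers have boundary: justifying differentiation under the integral sign with the correct boundary terms; showing that the covariant derivatives $D'u_j$, which a priori are only densely defined linear functionals on the $\ch_t$, take values in the fibers $\ch_t$ with all curvature integrals finite; and making all of this uniform for $t$ near $b_0$. This is where the two hypotheses are indispensable. The existence of Stein neighbourhoods of $X_b$ in $\wt X_b$ lets one approximate the $L^2$ holomorphic forms in $\ch_b$ by forms holomorphic on slightly larger domains---hence smooth up to $\partial X_b$---so that Stokes' theorem applies and the boundary integrals are meaningful. Compactness of $\cN_b^{(0,1)}$ supplies the spectral gap for $\dbar$ on $X_b$, the closedness of the relevant ranges, and the continuity in $t$ of $P_t$ and $\cN_t^{(0,1)}$, which together make the approximation converge and force the covariant derivatives to lie in the $\ch_b$. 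Carrying out these approximation and convergence estimates uniformly in $t$, and then patching the local computations into the global statement about the Hilbert field $\ch$, is where I expect most of the effort to go.
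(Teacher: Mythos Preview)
Your outline captures the broad flavor of a Berndtsson-type argument, but the structure of the curvature identity you expect is not correct, and this is where the actual work lies.

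The curvature of $\ch$ as a subfield of the ambient BLS field $\cl$ is obtained from the Gauss--Griffiths formula
\[
\sum_{j,k}\bigl(\Theta^{\ch}_{\tau_j\ol\tau_k}\mathfrak{f}_j,\mathfrak{f}_k\bigr)_b
=\sum_{j,k}\bigl(\Theta^{\cl}_{\tau_j\ol\tau_k}\mathfrak{f}_j,\mathfrak{f}_k\bigr)_b
-\Bigl\|P_b^{\perp}\sum_j L^{1,0}_{\xi_{\tau_j}}u_j\Bigr\|_b^{2},
\]
so the second fundamental form enters with a \emph{minus} sign, not as a nonnegative contribution. The paper computes $\Theta^{\cl}$ exactly: it is the $E$-curvature term, plus the Levi boundary integral, \emph{minus} a wedge-product term $\sqi^{n^2}\!\int_{X_b}\hwedge{\kappa}{\kappa}$ where $\kappa=\sum_j\iota_{X_b}^*(\dbar\xi_{\tau_j}\lrcorner u_j)$. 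This wedge term has no a priori sign. The positivity therefore does \emph{not} come from writing $\Theta^{\ch}$ as a sum of squares; it comes from two separate ingredients. First, a H\"ormander-type $L^2$ estimate (Theorem~\ref{2ndfunprop}) bounds the second fundamental form from above by the $E$-curvature term plus $\|\kappa\|_b^2$. Second---and this is the device you are missing---one chooses \emph{primitive horizontal lifts} $\xi_{\tau_j}$ (Lemma~\ref{primlift}) so that each $\iota_{X_b}^*(\dbar\xi_{\tau_j}\lrcorner u_j)$ is a primitive $(n-1,1)$-form; only then does $-\sqi^{n^2}\!\int\hwedge{\kappa}{\kappa}=\|\kappa\|_b^2$. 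After these two steps the $E$-curvature and Kodaira--Spencer contributions cancel against the second fundamental form, leaving only the nonnegative boundary Levi term.

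This also corrects your account of where the hypotheses are used. Compactness of $\cN_b^{(0,1)}$ is not needed for a spectral gap or for continuity of $P_t$ in $t$; closed range already holds on any bounded pseudoconvex domain. Its role is to force compactness, hence (by Kohn--Nirenberg) global regularity, of $\cN_b^{(0,2)}$, which is exactly what the construction of primitive lifts requires: one must solve $\dbar\beta=\iota_{X_b}^*(\dbar\xi^\theta_\tau\lrcorner\omega)$ with $\beta$ smooth up to $\partial X_b$ and in $\mathrm{Dom}(\dbarstar)$, then raise the index to get a vertical vector field tangent to $\partial X_b$. The Stein-neighborhood hypothesis on $X_b$ is used, together with Steinness of $\wt X$, to produce enough smooth sections in the tuning $\Sigma_b$ (density of $\Sigma_b(b)$ in $\ch_b$ via the approximation theorem of Shaw and Laurent-Thi\'ebaut), so that $(\ch,\cl,\Sigma)$ is a genuine iBLS field and the curvature is defined on a dense domain. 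Your proposal conflates these roles and, more importantly, omits the primitive-lift step without which the sign of the Kodaira--Spencer term is uncontrolled.
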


In fact Theorem \ref{Bgeneral} holds under less restrictive assumptions on the geometry of the fibration $X\to B$. Defining the curvature of $\ch$ associated to such general fibrations is a subtle matter involving technicalities. The main reason for these technical difficulties is that $\ch \to B$ need not be a Hilbert bundle as it may fail to be locally trivial. However, using the ideas developed in \cite{LS2014} and \cite{V2021} we can equip $\ch$ with additional structure so that its \emph{curvature} is well-defined. The additional structure is that of an \emph{iBLS field}. (See Section \ref{iBLS} for details.)

The geometry of $\ch \to B$ is much simpler when $X$ is a product. Suppose that $\wt{X} = \C^n \times B$ where $B$ is a domain in $\C^m$ and $X = \Omega \times B \subset \wt{X}$ where $\Omega \subset \C^n$ is a smoothly bounded pseudoconvex domain. Let $(E,h)\to \wt{X}$ be the trivial line bundle with a nontrivial metric. In this case the metric is just a function, which we assume to be smooth on $\C^n\times \ol{B}$ for the sake of simplicity. Then the Hilbert spaces $\ch_b$ are all canonically isomorphic as topological vector spaces, so $\ch \to B$ is a trivial vector bundle. Also, the curvature of the Chern connection associated to the $L^2$-metric on $\ch$ is a smooth $(1,1)$-form taking values in $\text{End}(\ch)$. Here $\text{End}(\ch)$ denotes the bundle morphisms of $\ch \to B$ that restrict to continuous operators on the fiber $\ch_b$ for every $b\in B$. It is a theorem of B. Berndtsson (\cite[Theorem 1.1]{B2009}) that if $(E,h)$ has nonnegative curvature then the curvature of the Chern connection on $\ch$ is Nakano nonnegative. 

Returning to the case of a general submersion $\wt{\pi}:\wt{X}\to B$, Theorem \ref{Bgeneral} was proved by X. Wang (\cite{W2017}) under the assumption that $X$ has a defining function $\rho$ such that $\rho|_{\wt{\pi}^{-1}(b)}$ is strictly plurisubharmonic on a neighborhood of $X_b$ for every $b\in B$. In this case the compactness of $\mathcal{N}_b^{(0,1)}$ goes back to the work of J. Kohn.

In the next theorem we obtain a formula for the curvature of $\ch$.
\begin{theorem}[= Theorem \ref{smoothcurv}]\label{smoothcurvintro}
Let $\wt{\pi}:\wt{X}\to B$ be a holomorphic submersion, let $(E,h)\to \wt{X}$ a hermitian holomorphic line bundle and let $X\subset \wt{X}$ be a bounded domain so that the fibers $X_b := X\cap \wt{\pi}^{-1}(b)$ are smoothly bounded pseudoconvex domains. Assume that $\ch\to B$ is an iBLS field. Then the curvature of $\ch$ is given by
\begin{align}
    \ipr{\Theta^{\mathcal{\ch}}_{\sigma_1\ol{\sigma_2}}f_1,f_2}_b &= \ipr{\Theta^E_{\xi_{\sigma_1}\ol{\xi}_{\sigma_2}} u_1,u_2}_b - \sqi^{n^2} \int_{X_b} \hwedge{\dbar \xi_{\sigma_1}\lrcorner u_1}{\dbar \xi_{\sigma_2}\lrcorner u_2} + \int_{\partial X_b} \partial \dbar\rho(\xi_{\sigma_1},\ol{\xi}_{\sigma_2})\{u_1,u_2\}dS_b \nonumber \\
    &\quad \quad - \ipr{P_b^{\perp}L^{1,0}_{\xi_{\sigma_1}}u_1, P_b^{\perp}L^{1,0}_{\xi_{\sigma_2}}u_2}_b. \label{hcurv}
\end{align}
Here $\sigma_1, \sigma_2$ are holomorphic $(1,0)$-vector fields on an open set $U$ containing $b$ and $\xi_{\sigma_1}, \xi_{\sigma_2}$ are their horizontal lifts to $X$, i.e., $d\wt{\pi}(\xi_{\sigma_i})=\sigma_i$ and $\xi_{\sigma_i}(x) \in T^{1,0}_{\partial X, x}$ for all $x\in \partial X\cap \wt{\pi}^{-1}(U)$. Also $f_1,f_2 \in \ch_b$ are such that $f_i = \iota_{X_b}^*u_i$ for a twisted $(n,0)$-form $u_i$ that is smooth on $\ol{X}$ and satisfies 
\begin{equation}\label{ufcond} 
\iota_{X_b}^*L^{0,1}_{\ol{\xi}_{\sigma_i}}u_i \in \ch_b \quad \text{and}  \quad \iota_{X_b}^*L^{1,0}_{\xi_{\sigma_i}}\dbar u_i=0.
\end{equation}
\end{theorem}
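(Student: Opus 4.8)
The plan is to compute the curvature of the iBLS field $\ch$ by differentiating twice along horizontal directions and carefully tracking boundary terms. First I would set up local holomorphic coordinates on the base near $b$ and choose, for each point near $b$, a representative of the section whose fiber restriction lies in $\ch$; the hypotheses \eqref{ufcond} ensure that such representatives exist and are compatible with the iBLS structure, so that the connection and curvature are computed honestly by differentiating the $L^2$-inner products $(f_1,f_2)_{b'}$. The key analytic input is a formula for the $(1,0)$-part of the Chern connection: $D^{1,0}_{\sigma} f = \iota_{X_b}^* \big( L^{1,0}_{\xi_\sigma} u - \text{(correction)} \big)$, where the correction is the $\dbar$-minimal solution that projects $L^{1,0}_{\xi_\sigma} u$ back into the Bergman space. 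This is where compactness of $\mathcal N_b^{(0,1)}$ and the Stein-neighborhood hypothesis would be used (via the earlier sections), to guarantee that the relevant $\dbar$-problems on $X_b$ are solvable with estimates and that $P_b^\perp$ behaves well.

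\medskip

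Next I would compute $\bar\partial_{\sigma_2} D^{1,0}_{\sigma_1}$ acting on a section, i.e. differentiate the connection form. The standard approach (following Berndtsson's method and its adaptations in \cite{B2009}, \cite{W2017}, \cite{V2021}) is to use the commutation identities for the Lie derivatives $L^{1,0}_{\xi}$ and $L^{0,1}_{\bar\xi}$ with $\dbar$ and $\dbarstar$ on the fibers, together with Cartan's formula expressing these Lie derivatives in terms of contractions $\xi\lrcorner$ and $\dbar$. Differentiating the $L^2$-pairing produces: (i) the curvature-of-$E$ term $(\Theta^E_{\xi_{\sigma_1}\ol\xi_{\sigma_2}}u_1,u_2)_b$, coming from differentiating the metric $h$ twice in the fiber directions; (ii) a fiberwise term involving $\dbar\xi_{\sigma_1}\lrcorner u_1$, arising because the horizontal lift is not holomorphic along the fiber — its $\dbar$ (in the fiber variable) measures the twisting of the fibration; (iii) a boundary integral over $\partial X_b$ with weight $\partial\dbar\rho(\xi_{\sigma_1},\ol\xi_{\sigma_2})$, which is the Levi form of $X$ evaluated on the horizontal lifts and appears via integration by parts (Stokes) when moving derivatives off the domain; and (iv) the negative term $-\|P_b^\perp L^{1,0}_{\xi_{\sigma_1}}u_1\|$ type pairing, which is precisely the norm-squared of the connection-form correction — the "second fundamental form" contribution that is always present in a curvature computation for a subbundle/field of the type considered here.

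\medskip

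The main obstacle, I expect, is the boundary term. Because the fibers are only smoothly bounded (not compact without boundary, as in the projective case) and the horizontal lift $\xi_\sigma$ is chosen to be tangential to $\partial X$, every integration by parts on $X_b$ generates a boundary contribution, and one must show that all such contributions assemble into the single clean term $\int_{\partial X_b}\partial\dbar\rho(\xi_{\sigma_1},\ol\xi_{\sigma_2})\{u_1,u_2\}\,dS_b$, with no leftover pieces. This requires: (a) a careful choice of $\rho$ and of the tangential lift so that $\xi_\sigma$ preserves $\partial X$ fiberwise, hence the pullback of $u_i$ to $\partial X_b$ interacts correctly with $L_{\xi}$; (b) using the pseudoconvexity of $X_b$ (and of $X$) to control signs and to invoke the Morrey–Kohn–Hörmander identity on the fiber when estimating the correction terms; and (c) showing that the condition $\iota_{X_b}^* L^{1,0}_{\xi_{\sigma_i}}\dbar u_i = 0$ in \eqref{ufcond} is exactly what kills the otherwise-problematic cross terms between $\dbar u_i$ and the horizontal derivative. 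Once the boundary bookkeeping is done, collecting terms by type and recognizing the projection $P_b^\perp$ in the last term yields \eqref{hcurv}. I would also verify consistency with the two special cases already in the literature: the product case of \cite{B2009} (where $\xi_\sigma$ can be taken holomorphic and tangential with $\dbar\xi_\sigma = 0$ and $\partial\dbar\rho(\xi,\bar\xi)$ reduces to the expected Levi term) and Wang's case \cite{W2017} (strictly plurisubharmonic $\rho$ on fibers).
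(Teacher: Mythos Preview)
Your outline has the right general shape (Lie derivatives along horizontal lifts, Stokes for the boundary term, second fundamental form for the last term), but it misidentifies both the hypotheses and the mechanism, and this would lead you to a much harder proof than the one in the paper.

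First, none of the analytic machinery you invoke --- compactness of $\mathcal N_b^{(0,1)}$, Stein neighborhoods, pseudoconvexity of $X$ or $X_b$, the Morrey--Kohn--H\"ormander identity --- is used anywhere in the proof of this theorem. Those hypotheses belong to Theorem~\ref{Bgeneral} (positivity) and to the examples in Section~\ref{finalremarks}; the curvature \emph{formula} requires only that $(\ch,\cl^\theta,\Sigma^\theta)$ is an iBLS field. There are no estimates, no $\dbar$-problems to solve, and no sign control in the derivation of \eqref{hcurv}; it is an exact identity.

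Second, you propose to compute a Chern connection $D^{1,0}_\sigma$ on $\ch$ and then differentiate again. The paper does not do this, and in the iBLS framework one generally \emph{cannot}: $\ch$ need not carry a BLS-Chern connection (this is precisely why the iBLS formalism is introduced). Instead, the proof computes the curvature $\Theta^{\cl^\theta}_{\sigma\ol\tau}$ of the ambient field directly as the commutator $[L^{1,0}_{\xi_\sigma},L^{0,1}_{\ol\xi_\tau}]$ acting on $(n,0)$-forms, and then invokes the Gauss--Griffiths formula \eqref{iBLScurv} to subtract $\ipr{P_b^\perp L^{1,0}_{\xi_{\sigma_1}}u_1,P_b^\perp L^{1,0}_{\xi_{\sigma_2}}u_2}_b$. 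The essential algebraic step you are missing is the commutator identity
\[
[L^{1,0}_{\xi},L^{0,1}_{\ol\eta}]u=\Theta^E_{\xi\ol\eta}u+L_{[\xi,\ol\eta]}u-[L^{0,1}_{\xi},L^{1,0}_{\ol\eta}]u-[L^{0,1}_{\xi},L^{0,1}_{\ol\eta}]u,
\]
derived in a local frame using only $\nabla^2=\Theta^E$ and the ordinary Jacobi identity $[L_\xi,L_{\ol\eta}]=L_{[\xi,\ol\eta]}$. Once you have this, each term of \eqref{hcurv} falls out: the last bracket vanishes by bidegree; the third yields $-\sqi^{n^2}\int_{X_b}\hwedge{\dbar\xi_\sigma\lrcorner u}{\dbar\xi_\tau\lrcorner v}$ after one application of \eqref{lieder}; and the boundary term comes from the single term $L_{[\xi_\sigma,\ol\xi_\tau]}u$, because $[\xi_\sigma,\ol\xi_\tau]$ is \emph{vertical} (so $\iota_{X_b}^*([\xi_\sigma,\ol\xi_\tau]\lrcorner\nabla u)=0$) and tangential to $\partial X$, which via Stokes gives $\int_{\partial X_b}\partial\rho_b([\xi_\sigma,\ol\xi_\tau]^{1,0})\{u,v\}dS_b=\int_{\partial X_b}\partial\dbar\rho(\xi_\sigma,\ol\xi_\tau)\{u,v\}dS_b$. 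There is no ``assembling many boundary contributions into a clean term'': exactly one integration by parts occurs. The conditions \eqref{ufcond} are not used to kill cross terms in this computation; they define the tuning $\Sigma_b$ and ensure that $f_i\in\text{Dom}(\Theta^\ch_{\sigma_1(b)\ol\sigma_2(b)})$, but the formula for $\Theta^{\cl^\theta}$ is valid for any smooth sections.
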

A few clarifying remarks about notation. For a $(1,0)$-vector field $\eta$ on $X$, the form $L^{1,0}_{\eta}u$ is the twisted Lie derivative of an $E$-valued form $u$ given by
    \[
    L^{1,0}_{\eta}u  = \nabla^{1,0}(\eta\lrcorner u) + \eta \lrcorner \nabla^{1,0}u.
    \]
    Likewise $L^{0,1}_{\ol{\eta}}u$ is 
    \[
    L^{0,1}_{\ol\eta}u = \dbar(\ol\eta\lrcorner u) + \ol\eta\lrcorner \dbar u.
    \]
    Here $\nabla = \nabla^{1,0}+\dbar$ is the Chern connection for $(E,h)$.
    The map $\iota_{X_b}:X_b\to X$ denotes the natural inclusion and $P_b:\Gamma(X_b, L^2(K_{X_b}\otimes E|_{X_b}))\to \ch_b$ denotes the Bergman projection. 
    For a local $(1,0)$-vector field $\eta = \eta_k \frac{\partial}{\partial z^k}$ (here and below the standard summation convention is in place) we use the notation 
    \[
    \dbar \eta\lrcorner u = \dbar \eta^k \wedge \ipr{ \frac{\partial}{\partial z^k}\lrcorner u}.
    \]
    The notation $\{\cdot,\cdot\}$ denotes the inner product induced on $E|_{X_b}$-valued forms by metric $h_b$ on $E|_{X_b}$ and the K\"ahler metric $\omega_b$ on $T^{1,0}_{X_b}$. Finally $\rho$ is a defining function for $X$ so that the defining function $\rho_b := \rho|_{\wt{\pi}^{-1}(b)}$ for $X_b$ satisfies $\abs{d\rho_b}=1$ on $\partial X_b$, and $dS_b$ is the induced volume form on $\partial X_b$, i.e., $d\rho_b \wedge dS_b = dV_{\omega_b}|_{\partial X_b}$. 

\begin{remark}
The right hand side of \eqref{hcurv} is independent of the choice of horizontal lifts $\xi_{\sigma_1},\xi_{\sigma_2}$ of the  vector fields $\sigma_1, \sigma_2$. (see Theorem \ref{liftinv}). The element $(\Theta^{\mathcal{\ch}}_{\sigma_1\ol{\sigma}_2}f)(b) \in \ch_b$ depends only on the values $\sigma_i(b)$ of the vector fields $\sigma_i$ at $b$. The sections $f$ of $E|_{X_b}\otimes K_{X_b} \to X_b$ which arise as restrictions of $E$-valued $(n,0)$-forms $u$ that satisfy \eqref{ufcond} are in the domain of the operator $\Theta^{\ch}_{\sigma_1(b)\ol{\sigma}_2(b)}$ and the element $\Theta^{\mathcal{\ch}}_{\sigma_1(b)\ol{\sigma}_2(b)}f$
is independent of the choice of $u$ that extends $f$, provided $u$ satisfies \eqref{ufcond}. Informally, this means that $\Theta^{\ch}$ is a (possibly nonsmooth) operator valued $(1,1)$-form.
\end{remark}

When the line bundle $(E,h)\to X$ has non-negative curvature, we are able to obtain the following estimate.

\begin{theorem}[=Theorem \ref{2ndfunprop}]\label{2ndfunpropintro}
Let $(E,h)\to X\to B$ be as in Theorem \ref{smoothcurvintro}. Suppose that $(E,h)$ has nonnegative curvature. Let $f_1,\cdots, f_m \in \ch_b$ such that $f_i = \iota_{X_b}^*u_i$ for twisted $(n,0)$-form $u_i$ satisfying \eqref{ufcond}, where $\sigma_1, \cdots, \sigma_m$ are holomorphic $(1,0)$-vector fields on an open set containing $b$ and $\xi_{\sigma_1},\cdots, \xi_{\sigma_m}$ are their horizontal lifts to $X$. Then we have the following estimate
\begin{equation}\label{2ndfunestimateintro}
    \norm{P_b^{\perp}\left(\sum_{j=1}^m L^{1,0}_{\xi_{\sigma_j}} u_j\right)}_{\ch_b}^2 \leq \sum_{j,k=1}^m \left[\ipr{\Theta^E_{\xi_{\sigma_j}\ol{\xi}_{\sigma_k}}f_j,f_k}_{\ch_b} + \ipr{ \dbar\xi_{\sigma_j}\lrcorner f_j,\dbar\xi_{\sigma_k}\lrcorner f_k}_{\ch_b}\right].
\end{equation}
\end{theorem}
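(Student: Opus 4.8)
The plan is to prove \eqref{2ndfunestimateintro} not by manipulating the curvature formula but by a direct $L^2$-estimate for $\dbar$ on the pseudoconvex fiber $X_b$. Set $w:=\iota_{X_b}^*\bigl(\sum_{j=1}^m L^{1,0}_{\xi_{\sigma_j}}u_j\bigr)$, an $E|_{X_b}$-valued $(n,0)$-form on $X_b$; since $X_b$ is a smoothly bounded pseudoconvex domain the Bergman space $\ch_b$ is closed, so
\[
\norm{P_b^{\perp}w}_{\ch_b}^2=\inf\bigl\{\norm{v}_{\ch_b}^2:\ v\in L^2_{(n,0)}(X_b,E|_{X_b}),\ \dbar_{X_b}v=\dbar_{X_b}w\bigr\}.
\]
Hence it suffices to exhibit one solution $v$ of $\dbar_{X_b}v=\dbar_{X_b}w$ whose norm is at most the right-hand side of \eqref{2ndfunestimateintro}.

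First I would compute $\dbar_{X_b}w$. From the definitions and $\nabla^{1,0}\dbar+\dbar\nabla^{1,0}=\Theta^E\wedge(\cdot)$ one gets the commutation identity $\dbar L^{1,0}_{\eta}u=L^{1,0}_{\eta}\dbar u-(\eta\lrcorner\Theta^E)\wedge u-[\nabla^{1,0},(\dbar\eta)\lrcorner]u$. Taking $\eta=\xi_{\sigma_j}$, $u=u_j$ and pulling back to $X_b$: the term $\iota_{X_b}^*L^{1,0}_{\xi_{\sigma_j}}\dbar u_j$ vanishes by the second part of \eqref{ufcond}; and, using that the base-components of the horizontal lift $\xi_{\sigma_j}$ are $\sigma_j^a\circ\wt\pi$ and hence $\dbar$-closed while $\nabla^{1,0}_{X_b}$ kills $(n,0)$-forms on the $n$-dimensional $X_b$, one checks $\iota_{X_b}^*[\nabla^{1,0},(\dbar\xi_{\sigma_j})\lrcorner]u_j=\nabla^{1,0}_{X_b}(\dbar_{X_b}\xi_{\sigma_j}\lrcorner f_j)$. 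Therefore
\[
\dbar_{X_b}w=-\sum_{j=1}^m\Bigl[(\xi_{\sigma_j}\lrcorner\Theta^E)\wedge f_j+\nabla^{1,0}_{X_b}\bigl(\dbar_{X_b}\xi_{\sigma_j}\lrcorner f_j\bigr)\Bigr],
\]
a $\dbar_{X_b}$-closed $(n,1)$-form, where $(\xi_{\sigma_j}\lrcorner\Theta^E)\wedge f_j$ abbreviates $\iota_{X_b}^*\bigl((\xi_{\sigma_j}\lrcorner\Theta^E)\wedge u_j\bigr)$.

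Next I would solve this $\dbar$-equation, exploiting $\Theta^E\ge 0$. The model case is the curvature term by itself: by Berndtsson's pointwise inequality, valid since $\Theta^E\ge0$, one has $\bigl\langle[\Theta^E,\Lambda]^{-1}\bigl((\xi\lrcorner\Theta^E)\wedge g\bigr),(\xi\lrcorner\Theta^E)\wedge g\bigr\rangle\le\Theta^E(\xi,\bar\xi)\,|g|^2$ pointwise, so Hörmander's theorem on the pseudoconvex $X_b$ (applied after replacing $\Theta^E$ by $\Theta^E+\varepsilon\omega_b$ and passing to the limit) solves $\dbar v_1=-\sum_j(\xi_{\sigma_j}\lrcorner\Theta^E)\wedge f_j$ with $\norm{v_1}^2\le\sum_{j,k}\ipr{\Theta^E_{\xi_{\sigma_j}\ol\xi_{\sigma_k}}f_j,f_k}_{\ch_b}$; the remaining term $-\sum_j\nabla^{1,0}_{X_b}(\dbar_{X_b}\xi_{\sigma_j}\lrcorner f_j)$ should be absorbed by a further form $v_2$ with $\norm{v_2}^2\le\sum_{j,k}\ipr{\dbar\xi_{\sigma_j}\lrcorner f_j,\dbar\xi_{\sigma_k}\lrcorner f_k}_{\ch_b}$, after which $v=v_1+v_2$ finishes the proof.

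The hard part will be this last absorption: the two pieces of $\dbar_{X_b}w$ are not individually $\dbar$-closed — only their sum is — so the problem cannot simply be split, and the naive primitive $\nabla^{1,0}_{X_b}\tau$ of a $\dbar$-primitive $\tau$ of $\dbar_{X_b}\xi\lrcorner f$ carries no useful norm bound. I expect the way through is to use that $\norm{P_b^{\perp}(\cdot)}$ is unchanged if the extensions $u_j$ are replaced by other extensions satisfying \eqref{ufcond} (a consequence of Theorem \ref{smoothcurvintro} and the first part of \eqref{ufcond}), and then to reorganize $\dbar_{X_b}w$ as $-\sum_j(\xi_{\sigma_j}\lrcorner\Theta^E)\wedge(f_j+\text{correction}_j)$ plus a genuinely $\dbar$-exact remainder with controlled primitive, the correction producing exactly the $\dbar\xi_{\sigma_j}\lrcorner f_j$ contributions via the pointwise inequality; pinning down this reorganization, and verifying that no boundary contribution survives (here one uses that $\xi_{\sigma_j}$ is tangent to $\partial X$), is where the work concentrates. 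A fallback route is to stay with \eqref{hcurv}: summing it over $j,k$ shows that \eqref{2ndfunestimateintro} is equivalent to $\sum_{j,k}\int_{\partial X_b}\partial\dbar\rho(\xi_{\sigma_j},\ol\xi_{\sigma_k})\{u_j,u_k\}\,dS_b\le\sum_{j,k}\ipr{\Theta^{\ch}_{\sigma_j\ol\sigma_k}f_j,f_k}_b$, which one then proves directly from the Bochner--Kohn--Morrey identity on $X_b$ — the positivity-free core of the proof of \eqref{hcurv} — together with $\Theta^E\ge0$ and the pseudoconvexity of $X_b$.
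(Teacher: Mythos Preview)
Your setup is correct and matches the paper: you correctly identify $P_b^{\perp}w$ as the minimal $L^2$-solution of $\dbar_{X_b}v=\dbar_{X_b}w$, and your computation of $\dbar_{X_b}w=-\sum_j\bigl[(\xi_{\sigma_j}\lrcorner\Theta^E)\wedge f_j+\nabla^{1,0}_{X_b}(\dbar\xi_{\sigma_j}\lrcorner f_j)\bigr]$ is exactly what the paper obtains from Proposition~\ref{dbarPL}. The gap is in how you propose to estimate the minimal solution. Your plan is to construct explicit solutions $v_1,v_2$ for the two summands separately; you yourself observe that this fails because the two pieces are not individually $\dbar$-closed, and your proposed workarounds (modifying the extensions $u_j$, reorganizing the data, or falling back on \eqref{hcurv}) are either vague or circular --- in the paper the estimate \eqref{2ndfunestimateintro} is an input to the positivity of $\Theta^{\ch}$, not a consequence of it.

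The missing idea is to work \emph{dually} rather than to produce a solution. By the standard functional analysis behind H\"ormander's $L^2$-theory, $\norm{P_b^{\perp}w}^2\le C$ follows from the a~priori inequality
\[
\bigl|\,(\mu+\nabla^{1,0}\kappa,\alpha)_b\,\bigr|^2\le C\,(\Box\alpha,\alpha)_b\qquad\text{for all compactly supported }(n,1)\text{-forms }\alpha,
\]
where $\mu=\sum_j\iota_{X_b}^*((\xi_{\sigma_j}\lrcorner\Theta^E)\wedge u_j)$ and $\kappa=\sum_j\iota_{X_b}^*(\dbar\xi_{\sigma_j}\lrcorner u_j)$. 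In this dual form the two summands \emph{can} be handled separately, because one is only estimating pairings: writing $\Theta^E=\sum_k\gamma^k\wedge\ol{\gamma}^k$ (possible since $\Theta^E\ge 0$) and applying Cauchy--Schwarz gives $|(\mu,\alpha)|^2\le\bigl(\sum_{j,k}(\Theta^E_{\xi_{\sigma_j}\ol\xi_{\sigma_k}}f_j,f_k)\bigr)\cdot\bigl([\Theta^{E_b},\Lambda_{\omega_b}]\alpha,\alpha\bigr)$, while $|(\nabla^{1,0}\kappa,\alpha)|^2=|(\kappa,\nabla^{1,0*}\alpha)|^2\le(\kappa,\kappa)\cdot(\Box^{1,0}\alpha,\alpha)$ since $\nabla^{1,0}\alpha=0$ on $(n,1)$-forms. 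The two bounds then combine via the Bochner--Kodaira--Nakano identity $\Box^{1,0}+[\Theta^{E_b},\Lambda_{\omega_b}]=\Box$ on $(n,1)$-forms, which is precisely what makes the constants add up to the right-hand side of \eqref{2ndfunestimateintro}. This dual/a~priori step is the crux you are missing; once you see it, no modification of the $u_j$ and no appeal to \eqref{hcurv} is needed.
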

In view of the estimate \eqref{2ndfunestimateintro} and Theorem \ref{smoothcurvintro}, to obtain Nakano positivity of $\Theta^{\ch}$ we would like to have 
\[
- \sum_{j,k=1}^m \sqi^{n^2} \int_{X_b} \hwedge{\dbar \xi_{\sigma_j}\lrcorner u_j}{\dbar \xi_{\sigma_k}\lrcorner u_k} = \sum_{j,k=1}^m \ipr{ \dbar\xi_{\sigma_j}\lrcorner u_j,\dbar\xi_{\sigma_k}\lrcorner u_k}_{\ch_b}.
\]
Equivalently, with 
\begin{equation}\label{kappa}
    \kappa = \iota_{X_b}^*\left(\sum_{j=1}^m \dbar\xi_{\sigma_j}\lrcorner u_j\right)
\end{equation}
we wish to show that 
\begin{equation}\label{kappaprim}
-\sqi^{n^2}\hwedge{\kappa}{\kappa}=\{\kappa,\kappa\}.
\end{equation}
The identity \eqref{kappaprim} holds when $\kappa$ is a primitive $E$-valued $(n-1,1)$-form (see \cite[Corollary 1.2.36]{huyb}). Recall that a twisted $(n-1,1)$-form $v$ on $X_b$ is said to be primitive if $\omega_b\wedge v = 0$. If the Neumann operator $\mathcal{N}_b^{(0,2)}$ acting on $(0,2)$-forms on $X_b$ is globally regular, then we can get horizontal lifts $\xi_\tau$ to $X$ of a $(1,0)$-vector field $\tau$ on a neighbourhood of $b$ so that $\iota_{X_b}^*(\dbar \xi_{\tau}\lrcorner u)$ is primitive on $X_b$ for every twisted $(n,0)$-form $u$ on $X$ (see Lemma \ref{primlift}). These lifts, called \emph{primitive horizontal lifts} were introduced in \cite{V2021} in the setting of proper fibrations. Even though we assume compactness of $\mathcal{N}_b^{0,1}$ in Theorem \ref{Bgeneral}, the global regularity of $\mathcal{N}_b^{0,2}$ is sufficient to prove Nakano positivity of $\ch$. Recall that compactness of $\mathcal{N}_b^{0,1}$ implies the compactness of $\mathcal{N}_b^{0,2}$ (see \cite[Proposition 4.5]{Strbook}), and that compactness of $\mathcal{N}_b^{0,2}$ implies global regularity is a result of Kohn-Nirenberg (\cite{KoNi}). When $n=1$, i.e., the fibers $X_b$ are Riemann surfaces, every $(0,1)$-form is primitive so every horizontal lift $\xi_\tau$ is a primitive horizontal lift of $\tau$. Thus, Theorem \ref{Bgeneral} follows for one-dimensional fibers even though $\mathcal{N}_b^{0,2}$ does not exist.

In the case of the trivial fibration $\pi: X\to B$, i.e., when $X= \Omega \times B$, the tangent bundle of $X$ splits canonically as $T_X^{1,0} = p_1^*T^{1,0}_{\Omega}\oplus \pi^*T^{1,0}_B$ where $p_1:X\to \Omega$ is the projection onto the first factor. A vector field $\tau \in \Gamma(B, T^{1,0}_B)$ can be lifted as $\xi_\tau = (0,\tau)$ with respect to this splitting. These lifts satisfy $\dbar \xi_{\tau} \equiv 0$, so they are primitive horizontal lifts. Thus, one does not need the global regularity of $\mathcal{N}^{0,2}$ for $\Omega$ to prove Theorem \ref{Bgeneral}. Another feature of the trivial fibration is that these primitive horizontal lifts lie in the kernel of the Levi form of $\partial X$, i.e., $\partial \dbar\rho(\xi_\tau, \ol{\xi}_\tau)=0$ for lifts $\xi_\tau = (0,\tau)$ as above. Since the curvature formula \eqref{hcurv} is independent of the choice of horizontal lifts, it follows that the formula \eqref{hcurv} for a trivial fibration does not contain a term involving integral over $\partial \Omega$.

The quantity $\kappa$ (as in \eqref{kappa}) appearing in the formula for curvature of $\ch$ is related to the deformation of complex structure on $X_b$. Observe that in the case of trivial fibration (when all the fibers are biholomorphic), for any vector field $\tau$ on $B$ we were able to produce lifts $\xi_\tau$ such that $\dbar \xi_\tau \equiv 0$, so the term containing $\kappa$ is also absent from the curvature formula of a trivial fibration. An interpretation of \eqref{hcurv} is that for families of domains, the curvature of $(E,h)\to X$, the Levi form of $\partial X$ in directions transverse to the fibers of $X\to B$ and the deformation of complex structure on the fibers contribute to the curvature of $\ch$.  
The relation between the deformation of complex structure on $X_b$ and the curvature formula with regard to the contribution of $\kappa$ is better understood when $\pi: X\to B$ is a proper map (see \cite[Theorem 4]{V2021}, \cite[Theorem 1.1]{BerndtssonStrict} for instance). In order to get a better understanding of how the positivity of $\Theta^{\ch}$ relates to the geometry of the family $(E,h)\to X\to B$ it would be desirable to get an exact formula for the square norm of the \emph{second fundamental map}, which is the quantity appearing on the left hand side of \eqref{2ndfunestimateintro} (see Section \ref{iBLS} for precise definition). 

When $\wt{X}$ is a Stein manifold we obtain a couple of exact formulae for the square norm of the second fundamental map, which we record below. However, it seems difficult to determine the positivity of $\Theta^{\ch}$ from either of the formulae. 
\begin{theorem}[= Theorem \ref{2ndfunexact}]\label{2ndfunexactintro}
Let $(E,h)\to X\to B$ be as in Theorem \ref{smoothcurvintro}. Also suppose that the ambient manifold $\wt{X}$ is Stein and that the Neumann operator $N_b^{n,q}$ acting on $E|_{X_b}$-valued $(n,q)$-forms is globally regular for $1\leq q \leq n$. Then  
\begin{align}
    \ipr{P_b^{\perp} L^{1,0}_{\xi_{\sigma_1}}u_1,P_b^{\perp}L^{1,0}_{\xi_{\sigma_2}}u_2}_{\ch_b} &=  -\frac{1}{2}  \ipr{N^{(n,1)}_b\iota_{X_b}^*\dbar\nabla^{1,0}(\xi_{\sigma_1} \lrcorner u_1), \iota_{X_b}^*(\xi_{\sigma_2}\lrcorner \Theta^{E})u_2+\iota_{X_b}^*\nabla^{1,0}(\dbar\xi_{\sigma_2}\lrcorner u_2)}_{\ch_b}  \nonumber \\
    &\quad- \frac{1}{2} \ipr{ \iota_{X_b}^*(\xi_{\sigma_1}\lrcorner \Theta^{E})u_1+\iota_{X_b}^*\nabla^{1,0}(\dbar\xi_{\sigma_1}\lrcorner u_2), N^{(n,1)}_b\iota_{X_b}^*\dbar\nabla^{1,0}(\xi_{\sigma_2} \lrcorner u_2)}_{\ch_b}, \label{2ndfunex}
\end{align}
where $u_i$ are $E$-valued $(n,0)$-forms satisfying \eqref{ufcond} such that $\iota_{X_b}^* u\in \ch_b$ and $\xi_{\sigma_i}$ are horizontal lifts of holomorphic $(1,0)$-vector fields $\sigma_i$ on an open set containing $b$.  
\end{theorem}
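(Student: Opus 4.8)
The plan is to pull the second fundamental form down to the $\dbar$-Neumann operator of the fibre, rewrite $\dbar$ of the ambient derivative $L^{1,0}_{\xi_{\sigma}}u$ by the commutation identity already behind the curvature formula \eqref{hcurv}, and then let the Neumann calculus finish the job. Since $\wt X$ is Stein each fibre $X_b$ is a relatively compact pseudoconvex domain, so there are no nonzero $L^2$ harmonic $E|_{X_b}$-valued $(n,1)$-forms; with the global regularity of $N_b^{(n,1)}$ this gives the Bergman-type identity $I-P_b=\dbar^{*}N_b^{(n,1)}\dbar$ on $(n,0)$-forms smooth up to $\partial X_b$. Setting $w_i:=\iota_{X_b}^{*}L^{1,0}_{\xi_{\sigma_i}}u_i$ (smooth on $\ol{X_b}$) we get $P_b^{\perp}L^{1,0}_{\xi_{\sigma_i}}u_i=\dbar^{*}N_b^{(n,1)}\dbar w_i$, and since $N_b^{(n,1)}$ is self-adjoint and preserves $\ol{\mathcal R(\dbar)}$ one has $\dbar\dbar^{*}N_b^{(n,1)}\dbar w=\dbar w$, whence
\[
\ipr{P_b^{\perp}L^{1,0}_{\xi_{\sigma_1}}u_1,\,P_b^{\perp}L^{1,0}_{\xi_{\sigma_2}}u_2}_{\ch_b}=\ipr{N_b^{(n,1)}\dbar w_1,\,\dbar w_2}_{\ch_b}=\tfrac12\ipr{N_b^{(n,1)}\dbar w_1,\dbar w_2}_{\ch_b}+\tfrac12\ipr{\dbar w_1,\,N_b^{(n,1)}\dbar w_2}_{\ch_b}.
\]

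Next I would compute $\dbar w_i=\iota_{X_b}^{*}\dbar L^{1,0}_{\xi_{\sigma_i}}u_i$. With $L_V=\nabla\circ(V\lrcorner)+(V\lrcorner)\circ\nabla$ for the Chern connection $\nabla=\nabla^{1,0}+\dbar$ one has the Cartan-type identity $[\nabla,L_V]=-(V\lrcorner\Theta^E)\wedge(\cdot)$; splitting it into bidegrees, and using $L^{0,1}_{\eta}=(\dbar\eta)\lrcorner(\cdot)$ for a $(1,0)$-field $\eta$, gives
\[
\dbar L^{1,0}_{\eta}=L^{1,0}_{\eta}\dbar-\nabla^{1,0}\bigl((\dbar\eta)\lrcorner(\cdot)\bigr)+(\dbar\eta)\lrcorner\nabla^{1,0}(\cdot)-(\eta\lrcorner\Theta^E)\wedge(\cdot).
\]
Applying this to $u_i$, restricting to $X_b$, discarding $\iota_{X_b}^{*}L^{1,0}_{\xi_{\sigma_i}}\dbar u_i$ by the second relation of \eqref{ufcond}, and using that $\iota_{X_b}^{*}$ annihilates $(n+1,\ast)$-forms, one obtains
\[
\dbar w_i=\iota_{X_b}^{*}\bigl((\dbar\xi_{\sigma_i})\lrcorner\nabla^{1,0}u_i\bigr)-\iota_{X_b}^{*}\bigl[(\xi_{\sigma_i}\lrcorner\Theta^E)u_i+\nabla^{1,0}(\dbar\xi_{\sigma_i}\lrcorner u_i)\bigr]=:C_i-B_i ,
\]
where $C_i$ differs from $A_i:=\iota_{X_b}^{*}\dbar\nabla^{1,0}(\xi_{\sigma_i}\lrcorner u_i)$ by the $\dbar$-exact form $\dbar\bigl(\iota_{X_b}^{*}(\xi_{\sigma_i}\lrcorner\nabla^{1,0}u_i)\bigr)$; this follows from $L^{0,1}_{\xi_{\sigma_i}}(\nabla^{1,0}u_i)=(\dbar\xi_{\sigma_i})\lrcorner\nabla^{1,0}u_i$, from $\dbar\nabla^{1,0}+\nabla^{1,0}\dbar=\Theta^E\wedge(\cdot)$, and from \eqref{ufcond} once more. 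This is essentially the computation that produces the $\iota_{X_b}^{*}(\dbar\xi_{\sigma_i}\lrcorner u_i)$ term in \eqref{hcurv}, so I would quote it from the proof of Theorem~\ref{smoothcurv}.

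Now substitute $\dbar w_i=C_i-B_i$ into the symmetrised expression above and write $C_i=A_i+B_i+\dbar\psi_i$ with $\psi_i=\iota_{X_b}^{*}(\xi_{\sigma_i}\lrcorner\nabla^{1,0}u_i)$. Expanding, I move the $\dbar$'s across by self-adjointness of $N_b^{(n,1)}$, the identity $\dbar^{*}N_b^{(n,1)}\dbar=P_b^{\perp}$, and — for the pieces that are not $\dbar$-closed — the global regularity of $N_b^{(n,q)}$ for $q=1,2$, integrating by parts against $\dbar B_i$ (an $(n,2)$-form). At this point the first relation of \eqref{ufcond}, i.e.\ $\iota_{X_b}^{*}\dbar(\ol{\xi}_{\sigma_i}\lrcorner\dbar u_i)=0$, together with $\iota_{X_b}^{*}\dbar u_i=0$ (because $\iota_{X_b}^{*}u_i=f_i$ is holomorphic), cancels the remaining cross terms. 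What survives is exactly $-\tfrac12\ipr{N_b^{(n,1)}A_1,B_2}_{\ch_b}-\tfrac12\ipr{B_1,N_b^{(n,1)}A_2}_{\ch_b}$, which is \eqref{2ndfunex}.

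The main obstacle is this last step. The quantities $A_i,B_i,C_i$ are restrictions to $X_b$ of ambient $E$-valued forms, and the cancellations that collapse the expansion do not follow from formal manipulation with $\dbar,\dbar^{*},N_b^{(n,q)}$: they genuinely exploit the constraints \eqref{ufcond} and the way $\nabla^{1,0}u_i$ and $\Theta^E$ decompose along the fibre. Keeping the bookkeeping honest while invoking the $\dbar$-Neumann calculus in several form-degrees is where the work lies — and this is precisely what the Stein hypothesis on $\wt X$ and the global regularity of $N_b^{(n,q)}$ for $1\le q\le n$ make legitimate, since then the relevant forms are smooth up to $\partial X_b$, lie in the operator domains, and $N_b^{(n,q)}\Box_b=\Box_b N_b^{(n,q)}=I$.
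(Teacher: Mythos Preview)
Your setup and the identity $P_b^{\perp}=\dbar^{*}N_b^{(n,1)}\dbar$ are fine, and so is the commutation formula giving $\dbar w_i=-B_i$ (note in fact that your $C_i=\iota_{X_b}^*\bigl((\dbar\xi_{\sigma_i})\lrcorner\nabla^{1,0}u_i\bigr)$ vanishes identically: since $\sigma_i$ is holomorphic, $\dbar\xi_{\sigma_i}$ is a \emph{vertical} $T^{1,0}$-valued $(0,1)$-form, while $\nabla^{1,0}u_i$ has bidegree $(n+1,0)$). The real gap is in the last step. After writing $\dbar w_i=A_i+\dbar\psi_i$ with $\psi_i=\iota_{X_b}^*(\xi_{\sigma_i}\lrcorner\nabla^{1,0}u_i)$, the cross terms you need to dispose of are of the form
\[
\ipr{N_b^{(n,1)}\dbar\psi_1,\dbar w_2}_b=\ipr{\dbar^{*}N_b^{(n,1)}\dbar\psi_1,w_2}_b=\ipr{P_b^{\perp}\psi_1,w_2}_b,
\]
and the symmetric one. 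Neither relation in \eqref{ufcond} says anything about $\psi_i$: those conditions control $\iota_{X_b}^*(\ol{\xi}_{\sigma_i}\lrcorner\dbar u_i)$ and $\iota_{X_b}^*L^{1,0}_{\xi_{\sigma_i}}\dbar u_i$, not $\iota_{X_b}^*(\xi_{\sigma_i}\lrcorner\nabla^{1,0}u_i)$. So there is no cancellation, and your expansion does not collapse to \eqref{2ndfunex}.

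What the paper does instead is precisely to neutralise $\psi_i$ by a choice of representative. Using the global regularity of $N_b^{(n,q)}$ (this is exactly where that hypothesis is spent), one can replace $u_1$ by another $E$-valued $(n,0)$-form representing the same twisted relative canonical section such that $\psi_1=\iota_{X_b}^*(\xi_{\sigma_1}\lrcorner\nabla^{1,0}u_1)$ is \emph{holomorphic} on $X_b$; concretely one subtracts $dt^k\wedge w_k$ with $w_k=\sqi\,\Lambda_{\omega_b}\dbar N_b^{(n,0)}P_b^{\perp}\iota_{X_b}^*\mu_k$ and checks $\nabla^{1,0}w_k=P_b^{\perp}\iota_{X_b}^*\mu_k$ via $\sqi[\nabla^{1,0},\Lambda_{\omega_b}]=\vartheta$. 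With this choice $P_b^{\perp}\psi_1=0$, so $P_b^{\perp}L^{1,0}_{\xi_{\sigma_1}}u_1=P_b^{\perp}\iota_{X_b}^*\nabla^{1,0}(\xi_{\sigma_1}\lrcorner u_1)=\dbar^{*}N_b^{(n,1)}A_1$, and pairing with $\dbar w_2=-B_2$ gives $-\ipr{N_b^{(n,1)}A_1,B_2}_b$ directly. Doing the same on the $u_2$ side yields $-\ipr{B_1,N_b^{(n,1)}A_2}_b$; averaging the two gives \eqref{2ndfunex}. The missing idea in your argument is this ``Berndtsson choice'' of representative, not a further integration by parts.
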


\begin{theorem}[= Theorem \ref{2ndfunexact2}]
Let $(E,h)\to X \to B$ be as in Theorem \ref{smoothcurvintro}. Also suppose that the ambient manifold $\wt{X}$ is Stein and that the Neumann operator $N_b^{n,q}$ acting on $E|_{X_b}$-valued $(n,q)$-forms is globally regular for $1\leq q \leq n$. Further assume that for all $b\in B$ the Neumann operator $\mathcal{N}^{(0,2)}_b$ acting on $(0,2)$-forms on $X_b$ is globally regular. Let $u$ be an $E$-valued $(n,0)$-form satisfying \ref{ufcond} such that $\iota_{X_b}^*u\in \ch_b$. 
Then 
\begin{equation*}
   \norm{P_b^{\perp}L^{1,0}_{\xi_\sigma}u}^2_{\ch_b} = \ipr{\alpha, \iota_{X_b}^*(\xi_\sigma\lrcorner \Theta^E)u}_{\ch_b} + \ipr{\dbar \Lambda_{\omega_b}\alpha, \iota_{X_b}^*(\dbar\xi_\sigma \lrcorner u)}_{\ch_b} + \int_{\partial X_b}\left\{\iota_{X_b}^*(\ol{\xi}_\sigma \lrcorner \partial \dbar \rho)\wedge\Lambda_{\omega_b}\alpha ,u   \right\}dS_b, 
\end{equation*}
where $\alpha := \sqi N^{(n,1)}_b \iota_{X_b}^*\dbar \nabla^{1,0} (\xi_\sigma \lrcorner u)$. Here $\sigma$ is a holomorphic $(1,0)$-vector field on a neighborhood of $b$ and $\xi_{\sigma}$ is its primitive horizontal lift. 
\end{theorem}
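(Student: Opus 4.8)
The strategy is to specialize the exact formula \eqref{2ndfunex} of Theorem~\ref{2ndfunexact} to the diagonal and then integrate by parts on the fiber $X_b$. Take $\xi_\sigma$ to be the primitive horizontal lift of $\sigma$, which exists because $\mathcal{N}^{(0,2)}_b$ is globally regular (Lemma~\ref{primlift}); thus $\kappa:=\iota_{X_b}^*(\dbar\xi_\sigma\lrcorner u)$ is a primitive $(n-1,1)$-form on $X_b$. Since $\iota_{X_b}:X_b\hookrightarrow X$ is holomorphic, pullback along it intertwines $\nabla^{1,0}$, $\dbar$ and $\Theta^E$ on $X$ with their counterparts on $X_b$; in particular $\iota_{X_b}^*\nabla^{1,0}(\dbar\xi_\sigma\lrcorner u)=\nabla^{1,0}\kappa$. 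Setting $\sigma_1=\sigma_2=\sigma$, $u_1=u_2=u$ in \eqref{2ndfunex} and writing $\beta:=N^{(n,1)}_b\iota_{X_b}^*\dbar\nabla^{1,0}(\xi_\sigma\lrcorner u)=\alpha/\sqi$, the two summands are complex conjugate and \eqref{2ndfunex} collapses to
\[
\norm{P_b^\perp L^{1,0}_{\xi_\sigma}u}^2_{\ch_b}=-\re\,\ipr{\beta,\ \iota_{X_b}^*(\xi_\sigma\lrcorner\Theta^E)u+\nabla^{1,0}\kappa}_{\ch_b}.
\]
Moreover $\iota_{X_b}^*\dbar\nabla^{1,0}(\xi_\sigma\lrcorner u)$ is $\dbar$-closed (as $\dbar$ commutes with $\iota_{X_b}^*$ and $\dbar^2=0$), and a pseudoconvex $X_b$ carries no $L^2$-harmonic $(n,1)$-forms, so $\dbar\beta=0$, $\dbar\alpha=0$, and $\beta\in\dom(\dbar^*)$; also, global regularity of $N^{(n,q)}_b$ makes $\beta$ smooth up to $\partial X_b$, legitimizing the integrations by parts below.

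For the interior term, integrate $\nabla^{1,0}$ by parts on $X_b$, moving it off $\kappa$:
\[
\ipr{\beta,\nabla^{1,0}\kappa}_{\ch_b}=\ipr{(\nabla^{1,0})^{*}\beta,\kappa}_{\ch_b}+\int_{\partial X_b}\{\,\iota_{X_b}^*(\partial\rho)\wedge\kappa,\ \beta\,\}\,dS_b,
\]
the boundary integrand being $\kappa$ acted on by the conormal symbol $\iota_{X_b}^*(\partial\rho)\wedge(\cdot)$ of $\nabla^{1,0}$, paired against $\beta$. On the Kähler fiber the twisted Kähler identity $(\nabla^{1,0})^{*}=\sqi[\dbar,\Lambda_{\omega_b}]$ together with $\dbar\beta=0$ gives $(\nabla^{1,0})^{*}\beta=\sqi\,\dbar\Lambda_{\omega_b}\beta=\dbar\Lambda_{\omega_b}\alpha$, so the interior part of $\ipr{\beta,\nabla^{1,0}\kappa}_{\ch_b}$ is $\ipr{\dbar\Lambda_{\omega_b}\alpha,\kappa}_{\ch_b}$, the second term of the asserted identity.

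The boundary term is the crux. As $\xi_\sigma$ is a horizontal lift tangent to $\partial X$, one has $\xi_\sigma\lrcorner\partial\rho\equiv 0$ on $\partial X$; applying the tangential $\dbar$ yields the pointwise identity $\iota^*_{\partial X}(\dbar\xi_\sigma\lrcorner\partial\rho)=-\iota^*_{\partial X}(\xi_\sigma\lrcorner\partial\dbar\rho)$, which converts the first derivatives of $\xi_\sigma$ hidden inside $\kappa=\iota_{X_b}^*(\dbar\xi_\sigma\lrcorner u)$ into the Levi form of $\partial X$. Feeding this into the boundary integrand $\{\iota_{X_b}^*(\partial\rho)\wedge\kappa,\beta\}$ and using the normalization $\abs{d\rho_b}=1$ on $\partial X_b$, the $\dbar$-Neumann boundary condition satisfied by $\beta$, the self-adjointness of $\Lambda_{\omega_b}$, the primitivity of $\kappa$ together with $-\sqi^{n^2}\hwedge{\kappa}{\kappa}=\{\kappa,\kappa\}$, and the sesquilinearity of $\{\cdot,\cdot\}$ — the source of the conjugated field $\ol\xi_\sigma$ — one rewrites the integrand pointwise as $\{\iota_{X_b}^*(\ol\xi_\sigma\lrcorner\partial\dbar\rho)\wedge\Lambda_{\omega_b}\alpha,\ u\}$, the last term of the theorem. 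Collecting the three contributions, and using that $\norm{P_b^\perp L^{1,0}_{\xi_\sigma}u}^2_{\ch_b}$ is real to pin down the factors of $\sqi$ coming from $\beta=\alpha/\sqi$, should yield the formula.

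I expect the main obstacle to be the boundary term: isolating the precise conormal integrand from the integration by parts over the fiber-with-boundary $X_b$, and then reorganizing it pointwise into $\iota_{X_b}^*(\ol\xi_\sigma\lrcorner\partial\dbar\rho)\wedge\Lambda_{\omega_b}\alpha$ paired with $u$, will demand careful bookkeeping of signs, Hodge stars and powers of $\sqi$, with the conversion of $\dbar\xi_\sigma$ into the Levi form of $\partial X$ the one genuinely new geometric ingredient. A secondary technical point — justifying all the integrations by parts — is supplied by the global-regularity hypotheses on the Neumann operators $N^{(n,q)}_b$.
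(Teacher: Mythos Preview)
Your approach has a genuine gap in the first term $(\alpha,\iota_{X_b}^*(\xi_\sigma\lrcorner\Theta^E)u)$. Specializing Theorem~\ref{2ndfunexact} to the diagonal gives $-\re\,(\beta,A)$ with $A=\iota_{X_b}^*(\xi_\sigma\lrcorner\Theta^E)u+\nabla^{1,0}\kappa$. Your integration by parts on $(\beta,\nabla^{1,0}\kappa)$, together with the K\"ahler identity and $\dbar\beta=0$, does produce the interior piece $(\dbar\Lambda_{\omega_b}\alpha,\kappa)$, and the Morrey trick is indeed the right mechanism for the boundary term. But the remaining summand $-\re\,(\beta,\iota_{X_b}^*(\xi_\sigma\lrcorner\Theta^E)u)$ is \emph{not} equal to $(\alpha,\iota_{X_b}^*(\xi_\sigma\lrcorner\Theta^E)u)=\sqi\,(\beta,\iota_{X_b}^*(\xi_\sigma\lrcorner\Theta^E)u)$: one is a real part, the other is $\sqi$ times the full complex inner product, and there is no reason for $(\beta,\iota_{X_b}^*(\xi_\sigma\lrcorner\Theta^E)u)$ to vanish or to be real. ``Pinning down factors of $\sqi$ by reality'' cannot repair this --- the theorem asserts that the right-hand side equals a real number, not merely that its real part does, and your route recovers only the real part of a different expression.

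The paper's route avoids this by not passing through Theorem~\ref{2ndfunexact} at all. It uses Proposition~\ref{Bchoice} and the Hodge decomposition to write $P_b^\perp\iota_{X_b}^*\nabla^{1,0}(\xi_\sigma\lrcorner u)=\nabla^{1,0}\Lambda_{\omega_b}\alpha$ directly; the $\sqi$ in the definition of $\alpha$ enters precisely here, via the K\"ahler identity $\vartheta=\sqi[\nabla^{1,0},\Lambda_{\omega_b}]$. The squared norm is then computed as $c_n\int_{X_b}\hwedge{\nabla^{1,0}\Lambda_{\omega_b}\alpha}{P_b^\perp L^{1,0}_{\xi_\sigma}u}$, and a \emph{first} integration by parts --- whose boundary term vanishes because $\partial\rho_b\wedge\Lambda_{\omega_b}\alpha=0$ on $\partial X_b$ is the $\dbar$-Neumann condition satisfied by $\alpha$ --- moves $\dbar$ onto $L^{1,0}_{\xi_\sigma}u$, yielding the pairing $(-1)^{n-1}c_n\int_{X_b}\hwedge{\Lambda_{\omega_b}\alpha}{\iota_{X_b}^*(\xi_\sigma\lrcorner\Theta^E)u+\nabla^{1,0}\kappa}$. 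The $\Theta^E$ piece is then converted to $(\alpha,\iota_{X_b}^*(\xi_\sigma\lrcorner\Theta^E)u)$ using the Lefschetz identity $L_{\omega_b}\Lambda_{\omega_b}=\mathrm{Id}$ on $(n,1)$-forms and self-adjointness of $\Lambda_{\omega_b}$; the $\nabla^{1,0}\kappa$ piece undergoes a \emph{second} integration by parts, whose boundary term $c_n\int_{\partial X_b}\hwedge{\Lambda_{\omega_b}\alpha}{\kappa}$ is rewritten via the Morrey trick exactly as you anticipated. So the key step you are missing is the identity $P_b^\perp L^{1,0}_{\xi_\sigma}u=\nabla^{1,0}\Lambda_{\omega_b}\alpha$, which places $\Lambda_{\omega_b}\alpha$ on one side of the $\hwedge{\cdot}{\cdot}$ pairing from the start.
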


\subsection{Scope of results}
The main theorems in this paper are Theorem \ref{smoothcurvintro} and Theorem \ref{2ndfunpropintro}. These theorems rely on the technical assumption that $\ch \to B$ is an iBLS field. The following are examples of fibrations $X\to B$ for which this assumption holds. (See Section \ref{finalremarks} for details.)
\begin{enumerate}
\item Let $B\subset \C^m$ be the unit ball, let $\wt{X} = B\times \C^n$ and let $E\to \wt{X}$ be the trivial line bundle with a nontrivial metric. Let $\Omega \subset \C^n$ be a smoothly bounded pseudoconvex domain. Take $X = B\times \Omega \subset \wt{X}$ and $\pi :X\to B$ to be the projection onto the first factor. This scenario was studied in \cite{B2009}. 

\item Let $E\to \wt{X}$ be a hermitian holormophic line bundle over an $n+m$-dimensional complex manifold $\wt{X}$ and let $\wt{\pi}:\wt{X}\to B$ be a holomorphic submersion, where $B\subset \C^m$ is the unit ball. Take $X\subset \wt{X}$ to be the domain $X = \{ \rho <0\}$, where $\rho$ is a smooth real valued function on $\wt{X}$ such that for all $b\in B$ (i) $\rho|_{\wt{\pi}^{-1}(b)}$ is a strictly plurisubharmonic function in a neighborhood of the closure of $X_b := X\cap \wt{\pi}^{-1}(b)$ and (ii) $d\rho(x)\neq 0$ for all $x \in \partial X_b$. Take $\pi:X\to B$ to be the map $\wt{\pi}|_X$. This scenario was studied in \cite{W2017}.

\item Let $\wt{X} = B\times \C^n$ where $B\subset \C^m$ is a domain, let $E\to \wt{X}$ be the trivial line bundle with nontrivial metric and let $\wt{\pi}:\wt{X}\to B$ be the projection onto the first factor. Take $X\subset \wt{X}$ to be the domain $X = \{ \rho <0\}$, where $\rho$ is a smooth real valued function on $\wt{X}$ such that for all $b\in B$ (i) $X_b$ is a bounded domain in $\C^n$ (ii) $\rho|_{\wt{\pi}^{-1}(b)}$ is a plurisubharmonic function and (iii) $d\rho(x)\neq 0$ for all $x \in \partial X_b$. Take $\pi:X\to B$ be the map $\wt{\pi}|_X$.

\item Let $\wt{X} = B\times \C^n$ where $B\subset \C^m$ is a domain, let $E\to \wt{X}$ be the trivial line bundle with nontrivial metric and let $\wt{\pi}:\wt{X}\to B$ be the projection onto the first factor. Take $X\subset \wt{X}$ to a smoothly bounded domain such that the domains $X_b := X\cap \wt{\pi}^{-1}(b) \subset \C^n$ admit good Stein neighborhood bases in the sense of \cite{straube2001}. Take $\pi:X\to B$ be the map $\wt{\pi}|_X$.
\end{enumerate}

\subsubsection{An example with compact fibers}\label{compactexample}
Here we reproduce an example from \cite{V2021} of a fibration $\pi: X\to B$ whose fibers are compact manifolds (without boundary). It can be shown that for any line bundle $(E,h)\to X\to B$ the associated family of Hilbert spaces is an iBLS field when the fibers are manifolds without boundary (see \cite[Proposition 4.15]{V2021}).

Let $\mathbb{H} \subset \C$ denote the upper half plane and let $X = \C\times \mathbb{H}/\sim$ where $(z,b) \sim (z',b')$ if $b=b'$ and there exist $m, n \in \Z$ such that $z-z' = m+ nb$. Then $\pi: X\to \mathbb{H}$ is the map induced by the projection $\C\times \mathbb{H} \to \mathbb{H}$. Thus, the fiber $X_b$ is the torus corresponding to the lattice spanned by $\{1,b\}$. Let $D_a$ be the smooth hypersurface on $X$ obtained from $\{a\}\times \mathbb{H} \subset \C\times \mathbb{H}$ after passing to the quotient. Let $E\to X$ be the line bundle associated to the divisor $D:= D_0- D_{\sqi}$. The divisor $\Delta_b := D\cap X_b$ is trivial if there are $m,n \in \Z$ such that $\sqi = m+nb$ and otherwise is $[0]-[\sqi] \in X_b$. Let $B\subset \mathbb{H}$ denote the discrete set of $b$ for which there are $m,n \in \Z$ such that $\sqi = m+nb$. Then it can be shown that 
\[
\ch_b = 
\begin{cases}
\C \quad \text{if} \quad b\in B \\
\{0\} \quad \text{if} \quad b\in \mathbb{H}\setminus B.
\end{cases}
\]

The geometry of families $\ch \to B$ associated to a holomorphic fibration $\pi:X\to B$ has been studied extensively when the fibers $X_b$ are compact manifolds without boundary. The articles \cite{LS2014} and \cite{V2021} do an excellent job of giving an almost exhaustive list of references on this topic, so we refer the reader to those for the current state of affairs. In \cite{LS2014} the authors develop the formalism of Hilbert fields and study the case of fibrations $X\to B$ when the fibers $X_b$ are open manifolds. The article \cite{V2021} discusses the geometry of families $\ch \to B$ associated to a proper submersion $X\to B$, when $E\to X$ is a vector bundle. In this scenario, the fibers of $\ch \to B$ are all finite dimensional vector spaces, but $\ch \to B$ need not be vector bundle as the example in Section \ref{compactexample} shows.

\subsection{Organization of the paper} In Section 2 we recall definitions regarding abstract Hilbert fields and maps between them. The key idea is that of an iBLS field, and we show that the curvature of an iBLS field is a possibly nonsmooth operator valued $2$-form. Section 3 contains details of the geometric setup that we are interested in. We also obtain identities involving Lie derivatives that will be needed in curvature calculations. In Section 4 we define the Hilbert fields of interest to us and define the curvature of a family of Bergman spaces $\ch$. In this section, we show that the curvature of $\ch$ is independent of various choices involved in defining it. Sections 5 and 6 contain the proofs of the main results. 

\subsection*{Acknowledgements}
I am grateful to my advisor Dror Varolin for numerous discussions relating to this article in particular and complex geometry in general. I cannot thank him enough for his kindness, his constant encouragement in pursuit of all forms of math and for giving me the complete creative freedom to pursue whichever ideas I liked to their logical conclusion. It has been a pleasure working with him.

\section{Abstract Hilbert Fields}\label{absHilb}
In this section, we collect the basic definitions about Hilbert fields. We are mainly interested in the notion of an \emph{iBLS field} and defining its curvature. The main motivation for such a notion comes from the fact (see Section \ref{finalremarks})that under reasonable assumptions, the family of Bergman spaces associated to a holomorphic submersion is an iBLS field. For a more thorough discussion of the geometry of abstract Hilbert fields we refer the reader to \cite{V2021} and \cite{LS2014}. 

\begin{definition}[Hilbert field]
Let $B$ be a smooth manifold. 

\begin{enumerate}[label=(\roman*)]
    \item A \emph{Hilbert field} $p:\cl \to B$ is a set theoretic map whose fibers are Hilbert spaces. Thus, each fiber $\cl_b := p^{-1}(b)$ comes equipped with an inner product $(\cdot, \cdot)_{\cl_b}$.
    
    \item A \emph{section} $\mathfrak{f}$ of a Hilbert field $\cl\to B$ is a set theoretic map $\mathfrak{f}:B\to \cl$ such that $\mathfrak{f}(b) \in \cl_b$ for every $b\in B$.
\end{enumerate}
\end{definition}
In order to talk about differentiable sections we need to equip $\cl$ with more structure.

\begin{definition}
Let $B$ be a smooth manifold and $\cl \to B$ be a Hilbert field. 
\begin{enumerate}[label=(\roman*)]
    \item A \emph{smooth structure} for the Hilbert field $\cl \to B$ is a sheaf $\sC^{\infty}(\cl)$ whose associated presheaf assigns to each open set $U\subset B$ a $\sC^{\infty}(U)$-module of sections $\Gamma(U,\sC^{\infty}(\cl))$ such that for each $b\in B$ the stalk $\sC^{\infty}(\cl)_b$ is mapped by the evaluation-at-$b$ map to a dense subspace of $\cl_b$.
    
    \item The metric $(\cdot,\cdot)_{\cl}$ on $\cl\to B$ is said to be \emph{smooth} if the function $b \mapsto (\mathfrak{f}(b),\mathfrak{g}(b))_{\cl_b}$ belongs to $\sC^{\infty}(U)$ for all $\mathfrak{f, g} \in \Gamma(U, \sC^{\infty}(\cl))$ and every open set $U\subset B$.
    
    \item Let $\cl \to B$ be a Hilbert field with a smooth structure $\sC^{\infty}(\cl)$. A \emph{connection} $\nabla^{\cl}$ for $\cl\to B$ is a map
    $\nabla^{\cl}: \Gamma(U,\sC^{\infty}(T_B\otimes \C))\times \Gamma(U, \sC^{\infty}(\cl)) \to \Gamma(U,\sC^{\infty}(\cl))$ for every open $U\subset B$ satisfying
    \begin{equation}
    \nabla^{\cl}_{f\sigma + \tau}\mathfrak{f} = f\nabla^\cl_\sigma \mathfrak{f} + \nabla^\cl_\tau \mathfrak{f} \quad \text{and} \quad \nabla^\cl_\sigma (f\mathfrak{f}) = (\sigma f)\mathfrak{f} + f\nabla^\cl_\sigma \mathfrak{f} \label{Leibniz}
    \end{equation}
    for all complex vector fields $\sigma, \tau$ on $U$, for all $f \in \sC^{\infty}(U)$ and all $\mathfrak{f} \in \Gamma(U, \sC^{\infty}(\cl))$.
    
    \item A connection $\nabla^{\cl}$ for $\cl\to B$ is said to be \emph{compatible with the metric} if 
    \begin{align}
        \sigma (\mathfrak{f},\mathfrak{g}) = \ipr{ \nabla^\cl_\sigma \mathfrak{f,g}} + \ipr{\mathfrak{f}, \nabla^\cl_{\ol \sigma} \mathfrak{g}} \label{compatibleconnection},
    \end{align}
    for every open set $U\subset B$, all complex vector fields $\sigma$ on $U$ and sections $\mathfrak{f,g} \in \Gamma(U,\sC^{\infty}(\cl))$.
    
    \item A \emph{smooth Hilbert field} is a Hilbert field with a smooth structure, a smooth metric and a metric compatible connection. 
\end{enumerate}
\end{definition}
\begin{remark}
It is possible to develop a theory of smooth structures and  connections for a family of topological vector spaces $\cl \to B$ independently of any metric on $\cl$. One is then led to the notion of a quasi-Hilbert field, as defined in \cite{V2021}. Each fiber $\cl_b$ of a quasi-Hilbert field $\cl \to B$ is a topological vector space, which forgets the metric that gives rise to the topology. We avoid the language of quasi-Hilbert fields, as the language of Hilbert fields is sufficient to describe the results in this paper.
\end{remark}
Pursuing the analogy of finite rank vector bundles and Hilbert fields $\cl \to B$ with a connection $\nabla^{\cl}$ further, we can define the curvature of $\nabla^\cl$ as below. 

\begin{definition}
Let $\cl \to B$ be a Hilbert field with smooth structure $\sC^{\infty}(\cl)$ and let $\nabla^{\cl}$ be a connection for $\cl$. The map $\Theta(\nabla^{\cl}): \Gamma(U, \sC^{\infty}(\Lambda^2T_B\otimes \C))\times \Gamma(U, \sC^{\infty}(\cl)) \to \Gamma(U, \sC^{\infty}(\cl))$ defined by
\begin{align*}
    \Theta(\nabla^{\cl})(\sigma, \tau)\mathfrak{f} = \nabla^{\cl}_\sigma \nabla^{\cl}_\tau \mathfrak{f} - \nabla^{\cl}_\tau \nabla^{\cl}_\sigma \mathfrak{f} - \nabla^{\cl}_{[\sigma,\tau]}\mathfrak{f},
\end{align*}
is called the \emph{curvature} of $\nabla^{\cl}$. Here $U \subset B$ is an open set, $\sigma, \tau \in \Gamma(U, \sC^{\infty}(T_B\otimes \C))$ and $\mathfrak{f}\in \Gamma(U, \sC^{\infty}(\cl)$.
\end{definition}
The curvature of a finite rank vector $E\to B$ is an $\text{End}(E)$-valued $2$-form on $B$. In order to obtain a similar description for the curvature of Hilbert fields with a connection, we introduce the following definitions.

\begin{definition}[Maps of Hilbert fields]
Let $\cl, \cm$ be Hilbert fields over a smooth manifold $B$.
\begin{enumerate}[label=(\roman*)]

    \item A set theoretic map $\ct:\cl \to \cm$ is said to be a \emph{map of Hilbert fields} if it is fiber preserving and linear on fibers. We write $\ct\in \text{Lin}(\cl, \cm)$. Thus, a map of Hilbert fields $\ct:\cl \to \cm$ is a collection $\{\ct_b\}_{b\in B}$ of linear operators $\ct_b \in \text{Lin}(\cl_b,\cm_b)$.
    
    \item A map of Hilbert fields $\ct:\cl\to \cm$ is said to be a \emph{morphism} if $\ct_b$ is a bounded linear operator for every $b\in B$.
    
    \item A map of Hilbert fields $\ct:\cl \to \cm$ is said to be \emph{densely defined} (resp. \emph{closed}, \emph{closable}) if $\ct_b$ is a densely defined (resp. \emph{closed}, \emph{closable}) operator for each $b\in B$.
    
    \item Suppose that $\sC^{\infty}(\cl)$ and $\sC^{\infty}(\cm)$ are smooth structures for $\cl$ and $\cm$ respectively. A Hilbert field map $\ct \in \text{Lin}(\cl,\cm)$ is said to be \emph{smooth} if for all $b\in B$ and all $\mathfrak{f} \in \sC^{\infty}(\cl)_b$ we have $\ct \mathfrak{f}\in \sC^{\infty}(\cm)_b$.
\end{enumerate}
\end{definition}
It follows that a smooth map is densely defined. However, a smooth map need not be a morphism and a morphism need not be smooth. 

\begin{definition}
Let $\cl, \cm \to B$ be Hilbert fields with smooth structures $\sC^{\infty}(\cl)$ and $\sC^{\infty}(\cm)$.

\begin{enumerate}[label=(\roman*)]
    \item A map of sheaves $\mathbb{L}: \sC^{\infty}(\cl)\to \sC^{\infty}(\cm)$ is said to be $\sC^{\infty}$-linear if for all $b\in B$ we have
    \[
    \mathbb{L}(f\mathfrak{f}) = f\mathbb{L}\mathfrak{f} \quad \text{for all } f\in \sC^{\infty}_{B,b} \text{ and all }\mathfrak{f} \in \sC^{\infty}(\cl)_b.
    \]
    
    \item A map of sheaves $\mathbb{L}: \sC^{\infty}(\cl)\to \sC^{\infty}(\cm)$ is said to be tensorial at $b\in B$ if \[
    \mathfrak{f}\in \sC^{\infty}(\cl)_b \quad \text{and} \quad \mathfrak{f}(b) = 0 \implies (\mathbb{L}\mathfrak{f})(b) = 0.
    \]
    We say that $\mathbb{L}$ is \emph{tensorial} if $\mathbb{L}$ is tensorial at every point of $B$.
\end{enumerate}
\end{definition}

 In contrast to the situation of a finite rank vector bundle, the curvature $\Theta^{\cl}_{\sigma\tau}$ of a connection $\nabla^{\cl}$ along complex vector fields $\sigma, \tau \in \Gamma(B, \sC^{\infty}(T_B\otimes \C))$ need not be a morphism in general. However, following lemma says that the curvature $\Theta^{\cl}_{\sigma\tau}$ is tensorial. Moreover, it says that $\Theta^{\cl}_{\sigma\tau}$ is a densely defined closable smooth map from the Hilbert field $\cl$ to itself. The smoothness of $\Theta^{\cl}_{\sigma\tau}$ is a consequence of the definition of the connection $\nabla^\cl$.

\begin{lemma}[{\cite[Lemma 2.2.4]{LS2014}}]\label{LSlemma}
\begin{enumerate}[label=(\arabic*)]
    \item $\Theta^{\cl}_{\sigma\tau}\mathfrak{f}(b)$ depends only on $\sigma(b), \tau(b)$ and $\mathfrak{f}(b)$, hence induces a densely defined operator on $\cl_b$, denoted $\Theta^{\cl}_{\sigma(b)\tau(b)}$.
    
    \item The adjoint of $\Theta^{\cl}_{\sigma(b)\tau(b)}$ is an extension of $-\Theta^{\cl}_{\ol{\sigma}(b)\ol{\tau}(b)}$. In particular, the adjoint is densely defined, and so $\Theta^{\cl}_{\sigma(b)\tau(b)}$ is closable.
\end{enumerate}
\end{lemma}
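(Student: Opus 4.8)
The plan is to imitate the classical tensoriality argument for the curvature of a connection on a finite rank vector bundle, being careful that everything takes place at the level of the smooth structure $\sC^\infty(\cl)$ and that the resulting fiberwise operators are only densely defined. First I would establish $\sC^\infty_B$-linearity of $\Theta^\cl_{\sigma\tau}$ in each of its three slots. For the vector field slots one computes directly from the definition $\Theta^\cl(\sigma,\tau)\mathfrak f = \nabla^\cl_\sigma\nabla^\cl_\tau \mathfrak f - \nabla^\cl_\tau\nabla^\cl_\sigma \mathfrak f - \nabla^\cl_{[\sigma,\tau]}\mathfrak f$: using the Leibniz rules \eqref{Leibniz} and the identity $[f\sigma,\tau] = f[\sigma,\tau] - (\tau f)\sigma$ one checks that the derivative terms $(\sigma f)$, $(\tau f)$ produced by the two double-derivative terms are exactly cancelled by the term coming from $[\sigma,\tau]$, so $\Theta^\cl_{f\sigma,\tau} = f\,\Theta^\cl_{\sigma,\tau}$ and similarly in $\tau$; antisymmetry in $(\sigma,\tau)$ is immediate. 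For the section slot one expands $\Theta^\cl_{\sigma\tau}(g\mathfrak f)$ with $g\in\sC^\infty(U)$ using the second Leibniz rule twice in each double-derivative term; the first-order terms $(\sigma g)(\tau\text{-derivative})$, $(\tau g)(\sigma\text{-derivative})$ and the second-order term $(\sigma\tau g - \tau\sigma g)\mathfrak f = ([\sigma,\tau]g)\mathfrak f$ are cancelled by the corresponding pieces of $\nabla^\cl_{[\sigma,\tau]}(g\mathfrak f)$, leaving $\Theta^\cl_{\sigma\tau}(g\mathfrak f) = g\,\Theta^\cl_{\sigma\tau}\mathfrak f$. Hence $\Theta^\cl_{\sigma\tau}$ is a tensorial map of sheaves.

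Next I would deduce statement (1) from tensoriality. $\sC^\infty_B$-linearity in the vector field slots means that $\Theta^\cl_{\sigma\tau}\mathfrak f(b)$ depends on $\sigma,\tau$ only through their values $\sigma(b),\tau(b)$: given two vector fields agreeing at $b$, their difference is a finite $\sC^\infty$-combination of vector fields vanishing at $b$, and each such term contributes $0$ at $b$ by $\sC^\infty_B$-linearity together with the fact that a function vanishing at $b$ times any section evaluates to $0$ at $b$. Likewise $\sC^\infty_B$-linearity in $\mathfrak f$ shows that if $\mathfrak f\in\sC^\infty(\cl)_b$ with $\mathfrak f(b)=0$ then $(\Theta^\cl_{\sigma\tau}\mathfrak f)(b)=0$: near $b$ one writes $\mathfrak f$ as a finite sum $\sum g_i \mathfrak f_i$ with $g_i\in\sC^\infty$, $g_i(b)=0$ (this uses that $\sC^\infty(\cl)$ is a $\sC^\infty$-module and that one may subtract off a section taking the value $\mathfrak f(b)=0$, so in fact $\mathfrak f$ itself is such a combination), and then $(\Theta^\cl_{\sigma\tau}\mathfrak f)(b) = \sum g_i(b)(\Theta^\cl_{\sigma\tau}\mathfrak f_i)(b)=0$. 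Therefore $\mathfrak f(b)\mapsto (\Theta^\cl_{\sigma\tau}\mathfrak f)(b)$ is a well-defined linear map on the image of the evaluation map $\sC^\infty(\cl)_b\to\cl_b$, which by definition of a smooth structure is dense in $\cl_b$; this is the densely defined operator $\Theta^\cl_{\sigma(b)\tau(b)}$. That it is smooth as a map of Hilbert fields is immediate, since $\nabla^\cl$ preserves $\sC^\infty(\cl)$ by definition of a connection, so $\Theta^\cl_{\sigma\tau}$ does too.

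For statement (2) I would use the metric-compatibility \eqref{compatibleconnection} of $\nabla^\cl$. Take $\mathfrak f,\mathfrak g\in\Gamma(U,\sC^\infty(\cl))$ and a vector field $\sigma$, and apply $\sigma$ to the identity $\tau(\mathfrak f,\mathfrak g) = (\nabla^\cl_\tau\mathfrak f,\mathfrak g) + (\mathfrak f,\nabla^\cl_{\ol\tau}\mathfrak g)$, then apply $\tau$ to $\sigma(\mathfrak f,\mathfrak g) = (\nabla^\cl_\sigma\mathfrak f,\mathfrak g)+(\mathfrak f,\nabla^\cl_{\ol\sigma}\mathfrak g)$, subtract, and apply $[\sigma,\tau]$ to $(\mathfrak f,\mathfrak g)$; rearranging the resulting first- and second-order derivatives of the inner product and using compatibility once more on each term, one arrives at the identity $(\Theta^\cl_{\sigma\tau}\mathfrak f,\mathfrak g) + (\mathfrak f, \Theta^\cl_{\ol\sigma\ol\tau}\mathfrak g) = 0$ for all such $\mathfrak f,\mathfrak g$. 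Evaluating at $b$ gives $(\Theta^\cl_{\sigma(b)\tau(b)}\mathfrak f(b),\mathfrak g(b))_{\cl_b} = -(\mathfrak f(b),\Theta^\cl_{\ol\sigma(b)\ol\tau(b)}\mathfrak g(b))_{\cl_b}$ on the dense domains, which says precisely that $(\Theta^\cl_{\sigma(b)\tau(b)})^*$ extends $-\Theta^\cl_{\ol\sigma(b)\ol\tau(b)}$. Since the latter is densely defined, the adjoint of $\Theta^\cl_{\sigma(b)\tau(b)}$ is densely defined, and an operator with densely defined adjoint is closable, completing the proof. The main obstacle, such as it is, is bookkeeping: one must organize the second-derivative-of-inner-product computation carefully so that every spurious lower-order term cancels, and keep track that all identities are between honest elements of the $\sC^\infty$-module before evaluating at $b$, since the fiberwise operators themselves are unbounded and cannot be manipulated naively.
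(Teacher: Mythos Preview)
The paper does not prove this lemma; it is quoted from \cite[Lemma 2.2.4]{LS2014}. Your argument for part~(2) is correct and is exactly the computation the paper itself uses elsewhere (e.g.\ in the proofs of Proposition~\ref{11formcrit} and Lemma~\ref{iBLSlemma}): repeated use of metric compatibility \eqref{compatibleconnection} gives
\[
0=(\sigma\tau-\tau\sigma-[\sigma,\tau])(\mathfrak f,\mathfrak g)=\bigl(\Theta^{\cl}_{\sigma\tau}\mathfrak f,\mathfrak g\bigr)+\bigl(\mathfrak f,\Theta^{\cl}_{\ol\sigma\ol\tau}\mathfrak g\bigr),
\]
and evaluating at $b$ together with density yields the adjoint statement.

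There is, however, a genuine gap in your argument for part~(1). You correctly establish $\sC^\infty_B$-linearity of $\Theta^{\cl}_{\sigma\tau}$ in all three slots, but then claim that a section $\mathfrak f\in\sC^\infty(\cl)_b$ with $\mathfrak f(b)=0$ can be written as a finite sum $\sum g_i\mathfrak f_i$ with $g_i(b)=0$. This step uses a local frame in the finite-rank case, and no such frame is available for an abstract smooth Hilbert field: the sheaf $\sC^\infty(\cl)$ is merely a $\sC^\infty_B$-module with dense evaluation, and there is no reason for the submodule $\mathfrak m_b\cdot\sC^\infty(\cl)_b$ to contain every germ vanishing at $b$. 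This is precisely why the paper separately defines ``$\sC^\infty$-linear'' and ``tensorial'' for maps of sheaves rather than treating them as equivalent.

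The fix is to reverse the order of your argument: prove the pairing identity first, and use \emph{it} to obtain tensoriality in $\mathfrak f$. If $\mathfrak f(b)=0$ then for every $\mathfrak g\in\sC^\infty(\cl)_b$ one has $\bigl(\Theta^{\cl}_{\sigma\tau}\mathfrak f,\mathfrak g\bigr)_b=-\bigl(\mathfrak f,\Theta^{\cl}_{\ol\sigma\ol\tau}\mathfrak g\bigr)_b=0$; since the $\mathfrak g(b)$ are dense in $\cl_b$, this forces $(\Theta^{\cl}_{\sigma\tau}\mathfrak f)(b)=0$. Tensoriality in the vector-field slots follows from $\sC^\infty_B$-linearity as you wrote, since $T_B\otimes\C$ \emph{does} have local frames.
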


\begin{definition}[Subfields]
 Let $\cl \to B$ be a Hilbert field.
\begin{enumerate}[label=(\roman*)]
    \item A \emph{Hilbert subfield} $\ch$ of $\cl$ is a Hilbert field $\ch \to B$ such that $\ch_b \subset \cl_b$ is a closed subspace for all $b\in B$, consequently the metric $(\cdot, \cdot)_{\ch_b}$ is the restriction of the metric $(\cdot, \cdot)_{\cl_b}$ to $\ch_b$. 
    
    \item Every Hilbert subfield $\ch \subset \cl$ is equipped with the \emph{orthogonal projector}, i.e., the unique morphism $\cp_{\ch}: \cl \to \ch$ such that $\cp_{\ch, b} \in \text{Lin}(\cl_b,\ch_b)$ is the orthogonal projection onto $\ch_b$ for every $b\in B$.
    
    \item Suppose that $\cl$ is a smooth Hilbert field. A \emph{smooth Hilbert subfield} $\ch$ is a smooth Hilbert field so that the smooth structure $\sC^{\infty}(\ch)$ is the sheaf associated to the presheaf
    \[
    U \mapsto \Gamma(U, \sC^{\infty}(\ch)) := \{ \mathfrak{f}\in \Gamma(U, \sC^{\infty}(\cl)) : \mathfrak{f}(b) \in \ch_b \text{ for all } b\in U\}.
    \]
    
    \item We say that a smooth subfield $\ch \subset \cl$ is \emph{regular} if $\cp_{\ch}:\cl \to \ch$  is a smooth morphism.
\end{enumerate}
\end{definition}
When $\ch \subset \cl$ is a smooth Hilbert subfield, we can also look at the Hilbert subfield $\ch^{\perp} \subset \cl$. In general, $\ch^{\perp}$ need not be a smooth subfield of $\cl$. However if $\ch \subset \cl$ is a regular subfield, then $\ch^{\perp}$ is a regular subfield of $\cl$.

\begin{definition}[Second fundamental map of a smooth subfield]\label{2ndfundef}
Let $\cl \to B$ be a smooth Hilbert field and $\ch \subset \cl$ be a smooth subfield. The map $N^{\cl/\ch}:\Gamma(B,\sC^{\infty}(T_B\otimes \C)\times \Gamma(B, \sC^{\infty}(\ch))\to \Gamma(B, \sC^{\infty}(\cl))$ given by
\[
N^{\cl/\ch}(\sigma)\mathfrak{f} = \nabla^{\cl}_{\sigma}\mathfrak{f} - \nabla^{\ch}_{\sigma}\mathfrak{f}, \quad \text{for } \mathfrak{f} \in \Gamma(B,\sC^{\infty}(\ch)), \sigma \in \Gamma(B, \sC^{\infty}(T_B\otimes \C))
\]
is called the \emph{second fundamental map} of $\ch$  in $\cl$.
\end{definition}
When $\ch \subset \cl$ is a regular subfield, the connection $\nabla^{\cl}$ induces a connection $\nabla^{\ch}$ on $\ch$ as $\nabla^{\ch} := \cp_{\ch}\nabla^{\cl}$. The second fundamental map of the induced connection is 
\begin{equation}\label{secondfunmotivation}
N^{\cl/\ch}(\sigma)\mathfrak{f} = \nabla^{\cl}_{\sigma}\mathfrak{f} - \cp \nabla^{\cl}_{\sigma}\mathfrak{f} = \cp^{\perp}\nabla^{\cl}_{\sigma}\mathfrak{f}.
\end{equation}

The quantity on the right hand side of equation \eqref{secondfunmotivation} is analogous to the familiar second fundamental form one encounters in the geometry of finite dimensional vector bundles, which explains the terminology `second fundamental map' in Definition \ref{2ndfundef}. In the next proposition we compare the curvature of a connection $\nabla^{\cl}$ with that of the induced connection on a regular subfield $\ch \subset \cl$. This proposition is well known in the case of finite rank holomorphic vector bundles (due to P. Griffiths), and is present in \cite{B2009} in the setting of Hilbert bundles. The proof for the finite rank vector bundle situation carries over to our setting with minor modifications.

\begin{proposition}\label{propgauss}
Let $\mathcal{L}\to B$ be a smooth Hilbert field with a metric compatible connection $\nabla^{\cl}$ and let $\mathcal{H}$ be a regular subfield of $\cl$. Let $\nabla^{\ch} := \cp \nabla^{\cl}$ denote the induced connection on $\ch$. Then the curvatures $\Theta^{\ch}$ of $\nabla^{\ch}$ and $\Theta^{\cl}$ of $\nabla^{\cl}$ are related as
\begin{equation}\label{pregauss}
    \Theta^\mathcal{H}_{\sigma_1 \sigma_2}\mathfrak{f} = \cp \Theta^{\mathcal{L}}_{\sigma_1\sigma_2}\mathfrak{f} - \cp \left(\nabla^{\mathcal{L}}_{\sigma_1}\cp^{\perp}\nabla^{\mathcal{L}}_{\sigma_2}\mathfrak{f} - \nabla^{\mathcal{L}}_{\sigma_2}\cp^{\perp}\nabla^{\mathcal{L}}_{\sigma_1}\mathfrak{f} \right),
\end{equation}
where $U\subset B$ is an open set, $\sigma_1, \sigma_2 \in \Gamma(U,\sC^{\infty}(T_B\otimes \C))$ and $\mathfrak{f}\in \Gamma(U, \sC^{\infty}(\ch))$. Here $\cp^{\perp}:\cl \to \cl$ is the smooth map given by $\cp_{b}^{\perp}= \text{Id}_{\cl_b}-\cp_{b}$ on fibers. Moreover, for sections $\mathfrak{f,g} \in \Gamma(U, \sC^{\infty}(\ch))$ and vector fields $\sigma_1, \sigma_2 \in \Gamma(U,\sC^{\infty}(T_B\otimes \C))$ we have
\begin{equation}\label{progauss2}
    \ipr{\Theta^\mathcal{H}_{\sigma_1 \sigma_2}\mathfrak{f}, \mathfrak{g}} = \ipr{\Theta^\mathcal{L}_{\sigma_1 \sigma_2}\mathfrak{f}, \mathfrak{g}} + \ipr{\cp^{\perp}\nabla^{\cl}_{\sigma_2}\mathfrak{f},\cp^{\perp}\nabla^{\cl}_{\ol\sigma_1}\mathfrak{g}} - \ipr{\cp^{\perp}\nabla^{\cl}_{\sigma_1}\mathfrak{f},\cp^{\perp}\nabla^{\cl}_{\ol\sigma_2}\mathfrak{g}}. 
\end{equation}
\end{proposition}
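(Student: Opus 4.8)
The plan is to mimic the classical Griffiths computation for subbundles of hermitian vector bundles, being careful about domains since $\nabla^{\cl}_\sigma\mathfrak f$ need not lie in $\sC^\infty(\ch)$. First I would fix an open set $U$, sections $\mathfrak f,\mathfrak g\in\Gamma(U,\sC^\infty(\ch))$ and vector fields $\sigma_1,\sigma_2\in\Gamma(U,\sC^\infty(T_B\otimes\C))$, and decompose $\nabla^{\cl}_{\sigma_i}\mathfrak f=\nabla^{\ch}_{\sigma_i}\mathfrak f+\cp^\perp\nabla^{\cl}_{\sigma_i}\mathfrak f$ using $\nabla^{\ch}=\cp\nabla^{\cl}$ and the definition $N^{\cl/\ch}(\sigma_i)\mathfrak f=\cp^\perp\nabla^{\cl}_{\sigma_i}\mathfrak f$ from \eqref{secondfunmotivation}. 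Applying $\nabla^{\cl}_{\sigma_1}$ to this decomposition (with roles of $\sigma_1,\sigma_2$ as in the curvature), using the Leibniz rule \eqref{Leibniz}, antisymmetrizing in $\sigma_1\leftrightarrow\sigma_2$, and subtracting $\nabla^{\cl}_{[\sigma_1,\sigma_2]}\mathfrak f$ gives
\[
\Theta^{\cl}_{\sigma_1\sigma_2}\mathfrak f=\big(\nabla^{\cl}_{\sigma_1}\nabla^{\ch}_{\sigma_2}\mathfrak f-\nabla^{\cl}_{\sigma_2}\nabla^{\ch}_{\sigma_1}\mathfrak f-\nabla^{\cl}_{[\sigma_1,\sigma_2]}\mathfrak f\big)+\big(\nabla^{\cl}_{\sigma_1}\cp^\perp\nabla^{\cl}_{\sigma_2}\mathfrak f-\nabla^{\cl}_{\sigma_2}\cp^\perp\nabla^{\cl}_{\sigma_1}\mathfrak f\big).
\]
Next I would split the first parenthesis further as $\nabla^{\ch}_{\sigma_1}\nabla^{\ch}_{\sigma_2}\mathfrak f-\nabla^{\ch}_{\sigma_2}\nabla^{\ch}_{\sigma_1}\mathfrak f-\nabla^{\ch}_{[\sigma_1,\sigma_2]}\mathfrak f=\Theta^{\ch}_{\sigma_1\sigma_2}\mathfrak f$ plus the $\cp^\perp$-parts $\cp^\perp\nabla^{\cl}_{\sigma_1}\nabla^{\ch}_{\sigma_2}\mathfrak f-\cp^\perp\nabla^{\cl}_{\sigma_2}\nabla^{\ch}_{\sigma_1}\mathfrak f$, and then apply $\cp$ to the whole identity. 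Since $\cp\Theta^{\ch}_{\sigma_1\sigma_2}\mathfrak f=\Theta^{\ch}_{\sigma_1\sigma_2}\mathfrak f$ (as $\mathfrak f\in\sC^\infty(\ch)$) and $\cp\cp^\perp=0$ kills the $\cp^\perp\nabla^{\cl}_{\sigma_i}\nabla^{\ch}_{\sigma_j}\mathfrak f$ terms, rearranging yields exactly \eqref{pregauss}.

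For the second identity \eqref{progauss2} I would pair \eqref{pregauss} with $\mathfrak g\in\Gamma(U,\sC^\infty(\ch))$. The term $\ipr{\cp\Theta^{\cl}_{\sigma_1\sigma_2}\mathfrak f,\mathfrak g}=\ipr{\Theta^{\cl}_{\sigma_1\sigma_2}\mathfrak f,\mathfrak g}$ since $\cp$ is self-adjoint and $\cp\mathfrak g=\mathfrak g$. For the remaining term $-\ipr{\cp\nabla^{\cl}_{\sigma_1}\cp^\perp\nabla^{\cl}_{\sigma_2}\mathfrak f,\mathfrak g}+\ipr{\cp\nabla^{\cl}_{\sigma_2}\cp^\perp\nabla^{\cl}_{\sigma_1}\mathfrak f,\mathfrak g}$, I drop the $\cp$ (again using $\cp\mathfrak g=\mathfrak g$) and use metric compatibility \eqref{compatibleconnection}: $\sigma_1\ipr{\cp^\perp\nabla^{\cl}_{\sigma_2}\mathfrak f,\mathfrak g}=\ipr{\nabla^{\cl}_{\sigma_1}\cp^\perp\nabla^{\cl}_{\sigma_2}\mathfrak f,\mathfrak g}+\ipr{\cp^\perp\nabla^{\cl}_{\sigma_2}\mathfrak f,\nabla^{\cl}_{\ol\sigma_1}\mathfrak g}$. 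The left side vanishes because $\ipr{\cp^\perp\nabla^{\cl}_{\sigma_2}\mathfrak f,\mathfrak g}=0$ for all $b$ (as $\mathfrak g(b)\in\ch_b$ and $\cp^\perp_b$ projects onto $\ch_b^\perp$), so $\ipr{\nabla^{\cl}_{\sigma_1}\cp^\perp\nabla^{\cl}_{\sigma_2}\mathfrak f,\mathfrak g}=-\ipr{\cp^\perp\nabla^{\cl}_{\sigma_2}\mathfrak f,\nabla^{\cl}_{\ol\sigma_1}\mathfrak g}$; finally insert $\cp^\perp$ into the second slot, using $\cp^\perp$ self-adjoint and $(\cp^\perp)^2=\cp^\perp$, to get $-\ipr{\cp^\perp\nabla^{\cl}_{\sigma_2}\mathfrak f,\cp^\perp\nabla^{\cl}_{\ol\sigma_1}\mathfrak g}$. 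Doing the same for the $\sigma_1\leftrightarrow\sigma_2$ term and assembling gives \eqref{progauss2}.

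The main obstacle is not the algebra, which is essentially the classical computation, but ensuring every step is legitimate at the level of sheaves of smooth sections: one needs $\cp^\perp\nabla^{\cl}_{\sigma_i}\mathfrak f\in\Gamma(U,\sC^\infty(\cl))$ so that $\nabla^{\cl}$ can be applied to it again, and this is precisely where the hypothesis that $\ch$ is a \emph{regular} subfield enters — regularity gives that $\cp$ (hence $\cp^\perp$) is a smooth morphism, so $\cp^\perp\nabla^{\cl}_{\sigma_i}\mathfrak f$ is again a smooth section of $\cl$, and $\nabla^{\ch}_{\sigma_i}\mathfrak f=\cp\nabla^{\cl}_{\sigma_i}\mathfrak f$ is a smooth section of $\ch$. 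I would remark explicitly that regularity is what makes all the terms appearing above well-defined smooth sections, so the finite-rank proof transfers verbatim modulo these bookkeeping checks.
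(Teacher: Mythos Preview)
Your proposal is correct and follows essentially the same Gauss--Griffiths computation as the paper's proof. The only organizational difference is that the paper starts from $\nabla^{\ch}_{\sigma_1}\nabla^{\ch}_{\sigma_2}\mathfrak{f}=\cp\nabla^{\cl}_{\sigma_1}(\cp\nabla^{\cl}_{\sigma_2}\mathfrak{f})$ and expands using a ``derivative-of-$\cp$'' formalism (differentiating the identity $\cp^2=\cp$), whereas you start from $\Theta^{\cl}_{\sigma_1\sigma_2}\mathfrak{f}$ and decompose via $\nabla^{\cl}=\nabla^{\ch}+\cp^{\perp}\nabla^{\cl}$; these are equivalent rearrangements of the same algebra, and your derivation of \eqref{progauss2} via metric compatibility matches the paper's exactly. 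One tiny omission: when you split the first parenthesis you should also get a $\cp^{\perp}\nabla^{\cl}_{[\sigma_1,\sigma_2]}\mathfrak{f}$ term, but as you note it is killed by $\cp$, so the conclusion is unaffected.
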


\begin{proof}
Let $\mathfrak{g} \in \Gamma(U,\sC^{\infty}(\cl))$. For a smooth map $\ct:\cl\to \cl$ define the map $\nabla^{\cl}_{\sigma_i}\ct:\cl \to \cl$ by 
\begin{equation}\label{Tder}
    \ipr{ \nabla^{\cl}_{\sigma_i}\ct}(\mathfrak{g}) = \nabla^{\cl}_{\sigma_i}(\ct \mathfrak{g}) - \ct\ipr{\nabla^{\cl}_{\sigma_i}\mathfrak{g}}.
\end{equation}
Thus, $\nabla^{\cl}_{\sigma_i}\ct$ is a smooth densely defined map of Hilbert fields. Applying \eqref{Tder} to the smooth map $\cp$ twice, we get
\begin{align}
    \ipr{ \nabla^{\cl}_{\sigma_i} \cp^2} (\mathfrak{g}) = \nabla^\cl_{\sigma_i} (\cp \cp \mathfrak{g}) - \cp \cp\ipr{\nabla^\cl_{\sigma_i} \mathfrak{g}} &= \ipr{\nabla^\cl_{\sigma_i}\cp}(\cp\mathfrak{g}) + \cp\nabla^\cl_{\sigma_i}(\cp \mathfrak{g}) - \cp \cp\ipr{\nabla^\cl_{\sigma_i}\mathfrak{g}} \nonumber \\
    &= \ipr{\nabla^\cl_{\sigma_i}\cp}(\cp \mathfrak{g}) + \cp\ipr{\nabla^\cl_{\sigma_i}\cp}(\mathfrak{g}). \label{P2der}
\end{align}
Since $\cp^2 = \cp$,
\begin{equation}\label{P3Der}
    \cp\ipr{\nabla^\cl_{\sigma_i}\cp}(\mathfrak{g}) = \ipr{\nabla^\cl_{\sigma_i}\cp}(\mathfrak{g}) - \ipr{\nabla^\cl_{\sigma_i}\cp}(\cp \mathfrak{g}) = \ipr{\nabla^\cl_{\sigma_i}\cp}(\cp^{\perp} \mathfrak{g}) = -\cp \nabla^\cl_{\sigma_i}\ipr{\cp^{\perp}\mathfrak{g}},
\end{equation}
where the last equality follows by applying \eqref{Tder} to the map $\cp$ and noting that $\cp\cp^{\perp}\mathfrak{g}=0$. We now calculate 
\begin{align*}
    \nabla^{\ch}_{\sigma_1}\nabla^{\ch}_{\sigma_2}(\mathfrak{f}) &=\cp \nabla^\cl_{\sigma_1}(\cp\nabla^\cl_{\sigma_2}\mathfrak{f}) \\
    &= \cp \left( \ipr{\nabla^\cl_{\sigma_1} \cp}\ipr{\nabla^\cl_{\sigma_2}\mathfrak{f}} + \cp \nabla^\cl_{\sigma_1}\ipr{\nabla^\cl_{\sigma_2}\mathfrak{f}}\right) \\
    &= \cp \ipr{\nabla^\cl_{\sigma_1} \cp}\ipr{\nabla^\cl_{\sigma_2}\mathfrak{f}} + \cp^2 \nabla^\cl_{\sigma_1}\ipr{\nabla^\cl_{\sigma_2}\mathfrak{f}} \\
    &= -\cp\nabla^\cl_{\sigma_1}\ipr{\cp^{\perp}\nabla^\cl_{\sigma_2}\mathfrak{f}} + \cp \nabla^\cl_{\sigma_1}\nabla^\cl_{\sigma_2}\mathfrak{f},
\end{align*}
where we used equation \eqref{Tder} for the smooth map $\cp$ to get the second equality, and \eqref{P3Der} to get the final equality. Using this identity (and the one obtained by interchanging the roles of $\sigma_1$ and $\sigma_2$) we get
\begin{align*}
    \Theta^{\ch}_{\sigma_1\sigma_2}\mathfrak{f} &= \nabla^\ch_{\sigma_1}\nabla^\ch_{\sigma_2}\mathfrak{f} - \nabla^\ch_{\sigma_2}\nabla^\ch_{\sigma_1}\mathfrak{f} - \nabla^\ch_{[\sigma_1,\sigma_2]}\mathfrak{f} \\
    &= -\cp\nabla^\cl_{\sigma_1}\ipr{\cp^{\perp}\nabla^\cl_{\sigma_2}\mathfrak{f}} + \cp \nabla^\cl_{\sigma_1}\nabla^\cl_{\sigma_2}\mathfrak{f} +\cp\nabla^\cl_{\sigma_2}\ipr{\cp^{\perp}\nabla^\cl_{\sigma_1}\mathfrak{f}} - \cp \nabla^\cl_{\sigma_2}\nabla^\cl_{\sigma_1}\mathfrak{f} - \cp\nabla^\cl_{[\sigma_1,\sigma_2]}\mathfrak{f} \\
    &= \cp \left( \nabla^\cl_{\sigma_1}\nabla^\cl_{\sigma_2}\mathfrak{f} - \nabla^\cl_{\sigma_2}\nabla^\cl_{\sigma_1}\mathfrak{f} -\nabla^\cl_{[\sigma_1,\sigma_2]}\mathfrak{f} \right) - \cp \left(\nabla^\cl_{\sigma_1}\cp^{\perp}\nabla^\cl_{\sigma_2}\mathfrak{f} - \nabla^\cl_{\sigma_2}\cp^{\perp}\nabla^\cl_{\sigma_1}\mathfrak{f} \right) \\
    &= \cp \Theta^{\mathcal{L}}_{\sigma_1\sigma_2}\mathfrak{f} - \cp \left(\nabla^{\mathcal{L}}_{\sigma_1}\cp^{\perp}\nabla^{\mathcal{L}}_{\sigma_2}\mathfrak{f} - \nabla^{\mathcal{L}}_{\sigma_2}\cp^{\perp}\nabla^{\mathcal{L}}_{\sigma_1}\mathfrak{f} \right).
\end{align*}
Equation \eqref{progauss2} follows by taking inner products with $\mathfrak{g}$ on both sides of \eqref{propgauss} and noting that 
\begin{align*}
0 = \sigma_i\ipr{\cp^{\perp}\nabla^{\cl}_{\sigma_j}\mathfrak{f},\mathfrak{g}} &= \ipr{\nabla^{\cl}_{\sigma_i}\cp^{\perp}\nabla^{\cl}_{\sigma_j}\mathfrak{f},\mathfrak{g}} + \ipr{\cp^{\perp}\nabla^{\cl}_{\sigma_j}\mathfrak{f},\nabla^{\cl}_{\ol\sigma_j}\mathfrak{g}} \\
&= \ipr{\nabla^{\cl}_{\sigma_i}\cp^{\perp}\nabla^{\cl}_{\sigma_j}\mathfrak{f},\mathfrak{g}} + \ipr{\cp^{\perp}\nabla^{\cl}_{\sigma_j}\mathfrak{f},\cp^{\perp}\nabla^{\cl}_{\ol\sigma_j}\mathfrak{g}}
\end{align*}
\end{proof}

\begin{definition}[Holomorphic Hilbert fields]
Let $B$ be a complex manifold and $\cl \to B$ be a Hilbert field with smooth structure.

\begin{enumerate}[label=(\roman*)]
    \item An \emph{almost complex structure} for $\cl$ is a map 
    \[
    \dbar^{\cl}: \Gamma(B, T_B^{0,1})\times \Gamma(B, \sC^{\infty}(\cl)) \to \Gamma(B, \sC^{\infty}(\cl))
    \]
    satisfying the Leibniz rule 
    \begin{equation}\label{Leibnizdbar} 
    \dbar^{\cl}(f\mathfrak{f})(\sigma) = \dbar f(\sigma)\mathfrak{f}+ f\dbar^{\cl}\mathfrak{f}(\sigma)
    \end{equation}
    for all $f\in \sC^{\infty}(B), \sigma \in \Gamma(B, T^{0,1}_B)$ and $\mathfrak{f} \in \Gamma(B, \sC^{\infty}(\cl))$. The $\dbar$ appearing on the right  hand side of equation \eqref{Leibnizdbar} is the $\dbar$ operator for the complex manifold $B$.
    
    \item We define $(\dbar^{\cl})^2: \Gamma(B, \Lambda^2T^{0,1}_B)\times \Gamma(B, \sC^{\infty}(\cl))\to \Gamma(B, \sC^{\infty}(\cl))$ by
    \[
    (\dbar^\cl)^2\mathfrak{f}(\sigma, \tau) = \sigma(\dbar^{\cl}\mathfrak{f}(\tau)) - \tau(\dbar^{\cl}\mathfrak{f}(\sigma)) - \dbar^{\cl}\mathfrak{f}([\sigma,\tau]).
    \]
    We say that $\dbar^{\cl}$ is \emph{involutive} or \emph{integrable} if $(\dbar^{\cl})^2\mathfrak{f}(\sigma,\tau)=0$ for all $\mathfrak{f} \in \Gamma(U,\sC^{\infty}(\cl))$, all $\sigma, \tau \in \Gamma(U,T^{0,1}_B)$ and all open sets $U\subset B$.
    
    \item A smooth section $\mathfrak{f}$ of the Hilbert field $\cl\to B$ with almost complex structure $\dbar^{\cl}$ is said to be \emph{holomorphic} on an open set $U\subset B$ if $\dbar^{\cl}\mathfrak{f}(\sigma)=0$ for all $\sigma \in \Gamma(U, T^{0,1}_B)$. 
    
    \item An \emph{(almost) holomorphic Hilbert field} is a Hilbert field with smooth structure and an (almost) complex structure. We say that $(\cl, \dbar^{\cl})$ is a \emph{holomorphic Hilbert field} if $\dbar^\cl$ is integrable.
    
    \item Let $(\cl,\dbar^{\cl})$ be an (almost) holomorphic Hilbert field. We call $\cl$ a \emph{Berndtsson-Lempert-Sz\H{o}ke field} (abbreviated as BLS field) if $\cl$ is also a smooth Hilbert field such that $\nabla^{\cl(0,1)}=\dbar^{\cl}$, i.e.,
    \[
    \nabla^{\cl}_{\sigma}\mathfrak{f} = \dbar^{\cl}\mathfrak{f}(\sigma)
    \]for all $\mathfrak{f}\in \Gamma(U, \sC^{\infty}(\cl))$, $\Gamma(U, T^{0,1}_B)$ and open sets $U\subset B$. We call $\nabla^{\cl}$ a \emph{BLS Chern connection}.
    
    \item A BLS field is said to be \emph{integrable} if its almost complex structure is integrable.

\end{enumerate}
\end{definition}
\begin{remark}
    The definitions of a BLS field and a smooth Hilbert field in \cite{V2021} are slightly more general than what we have given. It can be shown that the two sets of definitions are equivalent. 
\end{remark}

\begin{definition}[BLS subfields]\label{blssub}
Let $(\cl, \dbar^{\cl})\to B$ be an almost holomorphic Hilbert field.

\begin{enumerate}[label=(\roman*)]
    \item An (almost) holomorphic Hilbert field $(\ch, \dbar^{\ch})\to B$ is said to be an \emph{(almost) holomorphic Hilbert subfield} of $\cl$ if 
    \begin{enumerate}[label=(\alph*)]
        \item $\ch_b$ is a closed subspace of $\cl_b$ for every $b\in B$ and $(\cdot, \cdot)_{\cl}|_{\ch} = (\cdot, \cdot)_{\ch}$
        
        \item $\dbar^{\cl}|_{\ch} = \dbar^{\ch}$ i.e., $\dbar^{\cl}\mathfrak{f}(\sigma) = \dbar^{\ch}\mathfrak{f}(\sigma)$ for all $\mathfrak{f}\in \Gamma(U, \sC^{\infty}(\ch))$, all $\sigma \in \Gamma(U, T^{0,1}_B)$ and every open set $U\subset B$. In particular, this means that whenever $\mathfrak{f}\in \sC^{\infty}(\ch)_b$ we have $\dbar^{\cl}\mathfrak{f}(\sigma)\in \sC^{\infty}(\ch)_b$.
    \end{enumerate}
    
    \item An (almost) holomorphic Hilbert subfield $\ch \subset \cl$ is said to be a \emph{BLS subfield} if $\ch$ is a regular subfield of $\cl$, i.e. $\cp: \cl \to \ch$ given by orthogonal projection on each fiber is a smooth map. 
\end{enumerate}
\end{definition}
The next proposition shows that if $\ch$ is an integrable BLS subfield of a BLS field $\cl$ then the curvature of the induced BLS Chern connection for $\ch$ is an operator valued $(1,1)$-form even when $\cl$ is not integrable.

\begin{proposition}\label{11formcrit}
Let $B$ be a complex manifold. Let $\cl \to B$ be a BLS field and let $\ch \to B$ be an integrable BLS subfield of $\cl$. Then, the curvature $\Theta^{\ch}$ of the Hilbert field $\ch$ is an operator-valued $(1,1)$-form. Moreove, we haver
\begin{align}
    \ipr{\Theta^{\ch}_{\sigma\ol{\tau}}\mathfrak{f,g}} &= \ipr{\Theta^{\cl}_{\sigma \ol\tau}\mathfrak{f,g}} - \ipr{\cp^{\perp} \nabla^{\cl}_{\sigma}\mathfrak{f}, \cp^{\perp}\nabla^{\cl}_{\tau}\mathfrak{g}} \label{gaussformula}
\end{align}
for all $\mathfrak{f,g} \in \Gamma(U, \sC^{\infty}(\ch)), \sigma, \tau \in \Gamma(U, T^{1,0}_B)$ and all open sets $U\subset B$.
\end{proposition}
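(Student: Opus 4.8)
The plan is to establish the statement in two stages. First I would show that $\Theta^{\ch}$ has vanishing $(2,0)$- and $(0,2)$-components, so that — together with Lemma \ref{LSlemma}, which already supplies tensoriality, closability and the operator-valued character — it is an operator-valued $(1,1)$-form; then I would read off \eqref{gaussformula} from the Gauss--Codazzi identity \eqref{progauss2} of Proposition \ref{propgauss}. The facts I would use repeatedly are: since $\ch$ is a BLS subfield it is a regular subfield, so its BLS Chern connection is the induced connection $\nabla^{\ch} = \cp\,\nabla^{\cl}$; this connection is metric compatible (a one-line check using that $\cp$ is self-adjoint and $\mathfrak{f},\mathfrak{g}\in\ch_b$); by part (i)(b) of Definition \ref{blssub} it satisfies $\nabla^{\ch}_{\ol\sigma}\mathfrak{f} = \cp\,\dbar^{\cl}\mathfrak{f}(\ol\sigma) = \dbar^{\ch}\mathfrak{f}(\ol\sigma)$ for $\mathfrak{f}\in\Gamma(U,\sC^{\infty}(\ch))$ and every $(0,1)$-vector field $\ol\sigma$ on $U$; and Proposition \ref{propgauss} applies verbatim to the pair $\ch\subset\cl$.

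For the $(0,2)$-part, let $\sigma,\tau$ be $(0,1)$-vector fields on $U$. Integrability of the complex structure of $B$ makes $[\sigma,\tau]$ of type $(0,1)$ as well, so in $\Theta^{\ch}(\sigma,\tau)\mathfrak{f} = \nabla^{\ch}_{\sigma}\nabla^{\ch}_{\tau}\mathfrak{f} - \nabla^{\ch}_{\tau}\nabla^{\ch}_{\sigma}\mathfrak{f} - \nabla^{\ch}_{[\sigma,\tau]}\mathfrak{f}$ every covariant derivative may be replaced by $\dbar^{\ch}$; the right-hand side is then by definition $(\dbar^{\ch})^{2}\mathfrak{f}(\sigma,\tau)$, which vanishes because $\ch$ is an \emph{integrable} BLS subfield. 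For the $(2,0)$-part, let $\sigma,\tau$ be of type $(1,0)$. By Lemma \ref{LSlemma}(2) for $\ch$ (legitimate since $\nabla^{\ch}$ is metric compatible), the adjoint of $\Theta^{\ch}_{\sigma(b)\tau(b)}$ extends $-\Theta^{\ch}_{\ol\sigma(b)\ol\tau(b)}$; since $\ol\sigma,\ol\tau$ are of type $(0,1)$, the first computation shows $\Theta^{\ch}_{\ol\sigma(b)\ol\tau(b)}$ is zero on its (dense) domain, so $\ipr{\Theta^{\ch}_{\sigma(b)\tau(b)}\mathfrak{f},\mathfrak{g}} = -\ipr{\mathfrak{f},\Theta^{\ch}_{\ol\sigma(b)\ol\tau(b)}\mathfrak{g}} = 0$ for $\mathfrak{f}$ in the domain of $\Theta^{\ch}_{\sigma(b)\tau(b)}$ and $\mathfrak{g}$ in the dense domain of $\Theta^{\ch}_{\ol\sigma(b)\ol\tau(b)}$; density forces $\Theta^{\ch}_{\sigma(b)\tau(b)}\mathfrak{f} = 0$. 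Hence $\Theta^{\ch}$ is an operator-valued $(1,1)$-form.

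Finally, to obtain \eqref{gaussformula} I would specialize \eqref{progauss2} to $\sigma_{1}=\sigma$ of type $(1,0)$ and $\sigma_{2}=\ol\tau$ of type $(0,1)$, so that $\ol{\sigma_{1}}=\ol\sigma$ and $\ol{\sigma_{2}}=\tau$, which gives
\[
\ipr{\Theta^{\ch}_{\sigma\ol\tau}\mathfrak{f},\mathfrak{g}} = \ipr{\Theta^{\cl}_{\sigma\ol\tau}\mathfrak{f},\mathfrak{g}} + \ipr{\cp^{\perp}\nabla^{\cl}_{\ol\tau}\mathfrak{f},\cp^{\perp}\nabla^{\cl}_{\ol\sigma}\mathfrak{g}} - \ipr{\cp^{\perp}\nabla^{\cl}_{\sigma}\mathfrak{f},\cp^{\perp}\nabla^{\cl}_{\tau}\mathfrak{g}}.
\]
The middle term is zero, because for a section $\mathfrak{f}$ of the holomorphic subfield $\ch$ one has $\nabla^{\cl}_{\ol\tau}\mathfrak{f} = \dbar^{\cl}\mathfrak{f}(\ol\tau) = \dbar^{\ch}\mathfrak{f}(\ol\tau)\in\ch_{b}$, hence $\cp^{\perp}\nabla^{\cl}_{\ol\tau}\mathfrak{f} = 0$, and what remains is precisely \eqref{gaussformula}. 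The step I expect to require the most care is the vanishing of the $(2,0)$-part: one must make sure that the adjoint relation of Lemma \ref{LSlemma}(2), which a priori only yields vanishing of $\Theta^{\ch}_{\ol\sigma(b)\ol\tau(b)}$ on a dense domain, genuinely propagates — through the pairing above and density of smooth sections — to the conclusion that $\Theta^{\ch}_{\sigma(b)\tau(b)}$ is identically zero and not merely zero on a dense subset.
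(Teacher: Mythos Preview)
Your proposal is correct and uses the same core ingredients as the paper --- integrability of $\dbar^{\ch}$ for the $(0,2)$-vanishing, the metric-compatibility/adjoint relation for the $(2,0)$-vanishing, and \eqref{progauss2} for the formula --- but your organization is slightly different and in fact a bit more economical. The paper works at the level of $\cl$: it first shows $\cp^{\perp}\nabla^{\cl}_{\ol\sigma}\mathfrak{g}=0$ and $\Theta^{\cl}_{\ol\sigma\ol\tau}\mathfrak{g}=0$ for sections of $\ch$, deduces $\cp\,\Theta^{\cl}_{\sigma\tau}\mathfrak{f}=0$ via the same adjoint computation you invoke, then separately verifies that the cross-terms $\cp\,\nabla^{\cl}_{\xi}\cp^{\perp}\nabla^{\cl}_{\eta}\mathfrak{f}$ vanish for same-type $\xi,\eta$, and finally plugs all of this into the operator-level Gauss identity \eqref{pregauss}. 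You bypass the cross-term analysis entirely by computing $\Theta^{\ch}_{\ol\sigma\ol\tau}$ directly as $(\dbar^{\ch})^{2}$ and then invoking Lemma \ref{LSlemma}(2) for $\ch$ itself. Both routes are valid; yours is shorter, while the paper's has the mild advantage of making explicit which pieces of the ambient curvature and second fundamental form individually vanish, information that is reused later in the iBLS setting (Proposition \ref{iBLS11formcrit}). Your caveat about the $(2,0)$-part is well-placed but not a problem: density of $\sC^{\infty}(\ch)_b(b)$ in $\ch_b$ is part of the definition of a smooth Hilbert (sub)field, so the pairing argument goes through.
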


\begin{proof}
We want to show that $\Theta^\ch_{\sigma\tau}\mathfrak{f} = \Theta^\ch_{\ol{\sigma}\ol{\tau}}\mathfrak{f} = 0$. Since $\ch \subset \cl$ is a BLS subfield it follows from Definition \ref{blssub} (i) that 
\begin{equation}\label{11formcrit1}
    \cp^{\perp}\nabla^\cl_{\ol{\sigma}}\mathfrak{g} = \cp^{\perp}\dbar^{\cl}\mathfrak{g}(\ol{\sigma}) = 0.
\end{equation} 
Since $\ch$ is an integrable holomorphic Hilbert field,
\begin{equation}\label{11formcrit2}
    \Theta^\cl_{\ol{\sigma}\ol{\tau}}\mathfrak{g} = (\dbar^{\cl})^2\mathfrak{g}(\ol{\sigma}, \ol{\tau}) = (\dbar^{\ch})^2\mathfrak{g}(\ol{\sigma}, \ol{\tau}) =0.
\end{equation}
To see that $\cp\Theta^{\cl}$ is an operator-valued $(1,1)$-form use \eqref{compatibleconnection} repeatedly to get
\[
0 = (\sigma\tau - \tau \sigma -[\sigma, \tau])\ipr{\mathfrak{f,g}} = \ipr{\Theta^\cl_{\sigma\tau}\mathfrak{f},\mathfrak{g}} + \ipr{\mathfrak{f}, \Theta^\cl_{\ol{\sigma}\ol{\tau}}\mathfrak{g}}. 
\]
Now $\Theta^\cl_{\ol{\sigma}\ol{\tau}}\mathfrak{g}=0$ by condition \eqref{11formcrit2}, so we get that $\ipr{\Theta^\cl_{\sigma\tau}\mathfrak{f},\mathfrak{g}}_b=0$ which combined with the density of germs of sections $\mathfrak{g}(b)$ in $\ch_b$ implies that $\cp \Theta^\cl_{\sigma(b)\tau(b)}\mathfrak{f}(b)=0$ for all $b\in U$.

We claim that $\cp\nabla^\cl_{\xi}\cp^{\perp}\nabla^\cl_{\eta}\mathfrak{f}=0$ whenever $\xi$ and $\eta$ are vector fields of same type. We only need to show this when both $\xi, \eta$ are vector fields of type $(1,0)$, since the other case follows directly from \eqref{11formcrit1}. Using \eqref{compatibleconnection} again, we get
\begin{align*}
    0 = \sigma \ipr{\cp^{\perp}\nabla^\cl_{\tau}\mathfrak{f},\mathfrak{g}} &= \ipr{ \nabla^{\cl}_{\sigma}\cp^{\perp}\nabla^{\cl}_{\tau}\mathfrak{f,g}} + \ipr{ \cp^{\perp}\nabla^{\cl}_{\tau}\mathfrak{f}, \nabla^{\cl}_{\ol\sigma}\mathfrak{g}} \\
    &= \ipr{ \nabla^{\cl}_{\sigma}\cp^{\perp}\nabla^{\cl}_{\tau}\mathfrak{f,g}} + \ipr{ \cp^{\perp}\nabla^{\cl}_{\tau}\mathfrak{f}, \cp^{\perp} \nabla^{\cl}_{\ol\sigma}\mathfrak{g}} \\
    &= \ipr{ \nabla^{\cl}_{\sigma}\cp^{\perp}\nabla^{\cl}_{\tau}\mathfrak{f,g}},
\end{align*}
where the last equality follows from \eqref{11formcrit1}. By the density of the germs $\mathfrak{g}(b)$ in $\ch_b$ we see that the the section $\cp \nabla^{\cl}_{\sigma}\cp^{\perp}\nabla^{\cl}_{\tau}\mathfrak{f}$ vanishes at the point $b$ for every $b\in U$. The fact that $\Theta^{\ch}$ is an operator-valued $(1,1)$-form now follows from equation \eqref{pregauss}. Equation \eqref{gaussformula} now follows from \eqref{progauss2}.
\end{proof}
Observe that in the formula \eqref{gaussformula} there are no derivatives of the map $\cp$. Thus this equation can be used to define the curvature of a not-necessarily-smooth Hilbert subfield $\ch$ of a smooth Hilbert field $\cl$. We give a precise definition of such a Hilbert field $\ch$ in the next subsection.

\subsection{iBLS fields}\label{iBLS}
In order to define an iBLS field, we need to introduce more definitions. 

\begin{definition}
Let $\cl\to B$ be a Hilbert field with smooth structure and let $\ch \to B$ be a Hilbert subfield of $\cl$.

\begin{enumerate}[label=(\roman*)]
    \item A \emph{substalk bundle} $\mathcal{S}$ of $\sC^{\infty}(\cl)$ is a choice of $\C$-vector space $\mathcal{S}_b\subset \sC^{\infty}(\cl)_b$ for each $b\in B$.
    
    \item For each $b\in B$, let $\wt{\ch}_{\cl,b}$ denote the germs of smooth sections of $\cl$ at $b$ that take values in $\ch_b$. More precisely, let  
    \[
    \wt{\ch}_{\cl,b} := \{ \mathfrak{f} \in \sC^{\infty}(\cl)_b : \mathfrak{f}(b)\in \ch_b\} 
    \]
    be the $\C$-vector subspace of the stalk $\sC^{\infty}(\cl)_b$. Let $\wt{\ch}_{\cl}\to B$ be the family whose fiber over $b$ is $\wt{\ch}_{\cl,b}$. The family $\wt{\ch}_{\cl}\to B$ is called the \emph{maximal substalk bundle} of $\ch$.
    
    \item If $\mathcal{S}$ is a substalk bundle so that $\mathcal{S}_b\subset \wt{\ch}_{\cl,b}$ for every $b\in B$ then $\mathcal{S}$ is said to be a \emph{tuning} of $\ch$ (in $\cl$). Alternatively, we say that $(\ch, \mathcal{S})$ is a \emph{tuned} Hilbert field.
    
    \item A tuning $\Sigma$ of $\ch \to B$ is said to formalize $\ch$ if for every $b\in B$ 
    \begin{enumerate}[label=(\alph*)]
        \item $\Sigma_b(b) \subset \wt{\ch}_{\cl,b}(b)$ is a dense subspace and
        
        \item If $\mathfrak{f}\in \Sigma_b$ and $\mathfrak{f}(b)=0$ then $(\nabla^{\cl}_\sigma \mathfrak{f})(b) \in \mathfrak{\ch}_b$ for every $\sigma \in \sC^{\infty}(T_B\otimes \C)_b$.
    \end{enumerate}
    We say that $(\ch, \Sigma)$ is a \emph{formal tuning} or that $\ch$ is a \emph{formal subfield} (of $\cl$).
    
    \item Suppose that $(\cl,\dbar^{\cl}) \to B$ is an almost holomorphic Hilbert field, let $\ch \subset \cl$ be a Hilbert subfield and let $\mathcal{S}$ be a tuning of $\ch$. We say that $(\ch, \mathcal{S})$ is an \emph{infinitesimally almost holomorphic Hilbert subfield} of $\cl$ if for all $b\in B$ we have 
    \[
    \dbar^{\cl}\mathfrak{f}(\sigma)\in \wt{\ch}_{\cl,b} \text{ for all } \sigma \in \sC^{\infty}(T^{0,1}_B)_b, \text{ and for all } \mathfrak{f}\in \mathcal{S}_b.
    \]
    We say that $(\ch, \mathcal{S})$ is an \emph{infinitesimally holomorphic subfield} of $\cl$ if for all $b\in B$ we have
    \[
    \dbar^{\cl}\dbar^{\cl}\mathfrak{f}(\sigma, \tau)(b) =0, \text{ for all } \sigma, \tau \in \sC^{\infty}(T^{0,1}_B)_b \text{ and } \mathfrak{f} \in \mathcal{S}_b.
    \]
    
\end{enumerate}
\end{definition}
\begin{remark}
We emphasize that neither $\wt{\ch}_{\cl}$ nor $\Sigma$ is a sheaf, and we give them no additional structure. We equip $\ch$ with a tuning $\Sigma$ to be able to define \emph{curvature} of $\ch$ when $\ch$ may not have a smooth structure. Condition $(b)$ in the definition of a formal tuning $\Sigma$ is our replacement for the smoothness of the Hilbert field morphism $\cp$, as it is automatically satisfied when the morphism $\cp$ is smooth. Indeed, suppose that $\ch$ is a regular subfield and let $\mathfrak{f}\in \sC^{\infty}(\ch)_b$ be such that $\mathfrak{f}(b)=0$. Then for any $\mathfrak{g} \in \sC^{\infty}(\cl)_b$ and $\sigma \in \sC^{\infty}(T_B\otimes \C)_b$, we have
\[
\ipr{\cp^{\perp}\nabla^{\cl}_\sigma \mathfrak{f,g}} = \ipr{\nabla^{\cl}_\sigma \mathfrak{f},\cp^{\perp}\mathfrak{g}} = \sigma \ipr{\mathfrak{f},\cp^{\perp}\mathfrak{g}} - \ipr{\mathfrak{f}, \nabla^{\cl}_{\ol\sigma}\cp^{\perp}\mathfrak{g}} = - \ipr{\mathfrak{f}, \nabla^{\cl}_{\ol\sigma}\cp^{\perp}\mathfrak{g}}.
\]
Evaluating both sides of the equation above at $b$ shows that $(\cp^{\perp}\nabla^{\cl}_\sigma \mathfrak{f})(b) \in \cl_b$ is orthogonal to a dense subspace, so must be $0$. 
\end{remark}

We can define the curvature of a formal subfield $\ch$ using the Gauss-Griffiths formulae \eqref{progauss2} or \eqref{gaussformula} provided some additional conditions are satisfied. As a first step toward this, we define a second fundamental map of a formal subfield below.

\begin{definition}[Second fundamental map of a formal subfield]
Let $\ch \subset \cl$ be a formal subfield of a smooth Hilbert field $\cl$ and let $(\ch, \Sigma)$ be a formal tuning. The second fundamental map of $\ch$ is a collection of linear operators 
\[
N_b : \sC^{\infty}(T_{B}\otimes \C)_b \times \ch_b \to \ch_b^{\perp} \text{ defined by } N_b(\sigma)(\mathfrak{f}(b)) = (\cp^{\perp}\nabla_{\sigma}\mathfrak{f})(b)
\]
for all $\sigma \in \sC^{\infty}(T_{B}\otimes \C)_b$ and $\mathfrak{f}\in \Sigma_b$. Note that for all $\sigma \in \sC^{\infty}(T_B\otimes \C)_b$ we have $\text{Dom}(N_b(\sigma))= \Sigma_b(b)$ where 
\[
\Sigma_b(b) := \{ \mathfrak{f}(b) \in \ch_b : \mathfrak{f} \in \Sigma_b\}. 
\]
\end{definition}
The second fundamental map is well-defined because $(\ch,\Sigma)$ is formal. If $\mathfrak{f}_1, \mathfrak{f}_2 \in \Sigma_b$ are such that $\mathfrak{f}_1(b)-\mathfrak{f}_2(b)=0$ then 
\[
N_b(\sigma)(\mathfrak{f}_1(b)-\mathfrak{f}_2(b)) = \left(\cp^{\perp}\nabla_{\sigma}^{\cl}(\mathfrak{f}_1-\mathfrak{f}_2)\right)(b) = 0.
\]

\begin{definition}[A sesquilinear form]\label{sesQ}
Let $\ch \subset \cl$ be a formal subfield of a smooth Hilbert field $\cl$ and let $(\ch, \Sigma)$ be a formal tuning.
For $\sigma, \tau \in \sC^{\infty}(T_B)_b$ define a sesquilinear form $Q_{\sigma\tau}(\cdot,\cdot)_b$ on $\ch_b$ with $\text{Dom}(Q_{\sigma\tau}(\cdot, \cdot)_b) = \Sigma_b(b)$ by 
   \[
    Q_{\sigma\tau}(\mathfrak{f}(b), \mathfrak{g}(b))_b = \ipr{\Theta^{\cl}_{\sigma\tau}\mathfrak{f,g}}_{\ch_b} - \ipr{N_b(\sigma)\mathfrak{f},N_b(\ol{\tau})\mathfrak{g}}_{\ch_b}, 
    \]
    for $\mathfrak{f, g} \in \Sigma_b(b).$ 
\end{definition}

\begin{definition}[iBLS field]\label{iBLSdef}
Let $\ch \to B$ be a Hilbert field. An \emph{infinitesimally BLS field} (or iBLS field) is a triple $(\ch, \cl, \Sigma)$ where $\cl \to B$ is a BLS field called the \emph{ambient BLS field} (for $\ch$) and $\Sigma \subset \wt{\ch}_{\cl}$ is a tuning such that
\begin{enumerate}[label=(\alph*)]
     \item For every $b\in B$ the subspace $\wt{\ch}_{\cl,b}(b):= \{\mathfrak{f}(b): \mathfrak{f} \in \wt{\ch}_{\cl,b}\}$ is dense in $\ch_b$.
    
    \item $(\ch,\Sigma)$ is a formal tuning.
    
    \item $(\ch, \Sigma)$ is an infinitesimally almost holomorphic Hilbert subfield of $\cl$.
    
    \item For all $b\in B$, for all $\sigma, \tau \in \sC^{\infty}(T_B\otimes \C)_b$ and all $\mathfrak{f}\in \Sigma_b$ the anti-linear functional $Q_{\sigma\tau}(\mathfrak{f}(b),\cdot)_b$ is continuous on its domain $\Sigma_b(b)$, where $Q_{\sigma\tau}(\cdot,\cdot)_b$ is as in definition \ref{sesQ}. 
\end{enumerate}
\end{definition}
\begin{remark}
    Note that by \ref{iBLSdef} (c) the second fundamental map of $\ch$ is a collection of linear operators $N_b: \sC^{\infty}(T^{1,0}_B)_b\times \ch_b \to \ch_b^{\perp}$ given by
    \[
    N_b(\sigma)(\mathfrak{f}(b)) = (\cp^{\perp}\nabla^{\cl(1,0)}_\sigma\mathfrak{f})(b). 
    \]
    For $\sigma, \tau \in \sC^{\infty}(T_B\otimes \C)_b$ the sesquilinear form $Q_{\sigma\tau}(\cdot, \cdot)_b$ is densely defined as conditions (a) and (b) imply that both $N_b(\sigma)$ and $N_b(\tau)$ are densely defined.
\end{remark}


\begin{definition}[Curvature of an iBLS field]
Let $(\ch,\cl, \Sigma)$ be an iBLS field. For $\sigma, \tau \in \sC^{\infty}(T_B\otimes \C)_b$ the curvature $\Theta^{\ch}$ of $\ch$ relative to $\cl$ is defined as
\begin{equation}\label{iBLScurv}
\ipr{ \Theta^{\ch}_{\sigma(b)\tau(b)}\mathfrak{f}(b),\mathfrak{g}(b)}_{\ch_b} := \ipr{\Theta^{\cl}_{\sigma\tau}\mathfrak{f,g}}_{\ch_b} - \ipr{N_b(\sigma)\mathfrak{f},N_b(\ol{\tau})\mathfrak{g}}_{\ch_b}. 
\end{equation}
Thus, the curvature $\Theta^{\ch}_{\sigma(b)\tau(b)}$ is a densely defined operator with $\text{Dom}(\Theta^{\ch}_{\sigma(b)\tau(b)}) = \Sigma_b(b)$.
\end{definition}
\begin{remark}
This method of defining an operator is familiar from the theory of unbounded operators. For each $\mathfrak{f}(b) \in \Sigma_b(b)$ the densely defined anti-linear functional $ Q_{\sigma\tau}(\mathfrak{f}(b),\cdot)_b$ is continuous. Then the Riesz representation theorem gives an element $\Theta^{\ch}_{\sigma(b)\tau(b)}\mathfrak{f}(b) \in \ch_b$ such that 
\[
\ipr{\Theta^{\ch}_{\sigma(b)\tau(b)}\mathfrak{f}(b),\mathfrak{g}(b)}_{\ch_b} = Q_{\sigma\tau}(\mathfrak{f}(b), \mathfrak{g}(b))_b.
\]
\end{remark}

\begin{proposition}\label{iBLS11formcrit}
 Let $(\ch, \cl, \Sigma)$ be an iBLS field. Suppose that $(\ch, \Sigma)$ is an infinitesimally holomorphic Hilbert subfield. Then $\Theta^{\ch}$ has $(1,1)$-coefficients, i.e., for all $b\in B$ and all $\mathfrak{f}\in \Sigma_b$, the quantity $(\Theta^{\ch}_{\sigma\tau}\mathfrak{f})(b)=0$ whenever $\sigma, \tau \in \sC^{\infty}(T_B\otimes \C)_b$ are vector fields of the same type.   
\end{proposition}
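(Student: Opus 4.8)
The plan is to read everything off the defining formula \eqref{iBLScurv} and to reduce the claim to the vanishing of suitable germs at the point $b$. Fix $b\in B$, a germ $\mathfrak{f}\in\Sigma_b$, and germs $\sigma,\tau\in\sC^{\infty}(T_B\otimes\C)_b$ of the same type. Since $\Sigma_b(b)$ is dense in $\ch_b$ (by \ref{iBLSdef}(a) together with the density clause in the definition of a formal tuning), it suffices by \eqref{iBLScurv} to prove that
\[
\ipr{\Theta^{\cl}_{\sigma\tau}\mathfrak{f},\mathfrak{g}}_{\ch_b}=\ipr{N_b(\sigma)\mathfrak{f},N_b(\ol\tau)\mathfrak{g}}_{\ch_b}
\]
for every test germ $\mathfrak{g}\in\Sigma_b$. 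I would treat separately the cases $\sigma,\tau$ both of type $(0,1)$ and $\sigma,\tau$ both of type $(1,0)$; the argument is the iBLS analogue of Proposition \ref{11formcrit}.

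In the $(0,1)$ case I would first dispose of the right-hand side. Because $\cl$ is a BLS field, $\nabla^{\cl}_{\sigma}\mathfrak{f}=\dbar^{\cl}\mathfrak{f}(\sigma)$ since $\sigma$ has type $(0,1)$, and because $(\ch,\Sigma)$ is an infinitesimally almost holomorphic subfield of $\cl$ (condition \ref{iBLSdef}(c)) we have $\dbar^{\cl}\mathfrak{f}(\sigma)\in\wt{\ch}_{\cl,b}$, i.e.\ $(\nabla^{\cl}_{\sigma}\mathfrak{f})(b)\in\ch_b$. Hence $N_b(\sigma)\mathfrak{f}=(\cp^{\perp}\nabla^{\cl}_{\sigma}\mathfrak{f})(b)=0$ and the right-hand side is $0$. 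For the left-hand side, I would use that on a complex manifold $B$ the bracket $[\sigma,\tau]$ of two type-$(0,1)$ fields is again of type $(0,1)$; feeding the BLS identity $\nabla^{\cl(0,1)}=\dbar^{\cl}$ into each term of $\Theta^{\cl}_{\sigma\tau}\mathfrak{f}=\nabla^{\cl}_{\sigma}\nabla^{\cl}_{\tau}\mathfrak{f}-\nabla^{\cl}_{\tau}\nabla^{\cl}_{\sigma}\mathfrak{f}-\nabla^{\cl}_{[\sigma,\tau]}\mathfrak{f}$ then yields $\Theta^{\cl}_{\sigma\tau}\mathfrak{f}=(\dbar^{\cl})^2\mathfrak{f}(\sigma,\tau)$. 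The hypothesis that $(\ch,\Sigma)$ is an infinitesimally holomorphic subfield says precisely that $(\dbar^{\cl})^2\mathfrak{f}(\sigma,\tau)(b)=0$, so the left-hand side is also $0$, settling this case.

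In the $(1,0)$ case the conjugate $\ol\tau$ has type $(0,1)$, so by the same reasoning $N_b(\ol\tau)\mathfrak{g}=(\cp^{\perp}\dbar^{\cl}\mathfrak{g}(\ol\tau))(b)=0$ and the right-hand side vanishes. For the left-hand side I would transfer to antiholomorphic directions by metric compatibility, exactly as in the proof of Proposition \ref{11formcrit}: applying \eqref{compatibleconnection} repeatedly to the smooth function $b'\mapsto\ipr{\mathfrak{f}(b'),\mathfrak{g}(b')}_{\cl_{b'}}$, and using that $\sigma\tau-\tau\sigma-[\sigma,\tau]$ annihilates functions together with $\overline{[\sigma,\tau]}=[\ol\sigma,\ol\tau]$, one obtains, as germs of functions near $b$,
\[
0=\ipr{\Theta^{\cl}_{\sigma\tau}\mathfrak{f},\mathfrak{g}}+\ipr{\mathfrak{f},\Theta^{\cl}_{\ol\sigma\ol\tau}\mathfrak{g}}.
\]
Since $\ol\sigma,\ol\tau$ have type $(0,1)$ and $\mathfrak{g}\in\Sigma_b$, the computation of the previous paragraph applies to $\Theta^{\cl}_{\ol\sigma\ol\tau}\mathfrak{g}=(\dbar^{\cl})^2\mathfrak{g}(\ol\sigma,\ol\tau)$, which therefore vanishes at $b$; hence $\ipr{\Theta^{\cl}_{\sigma\tau}\mathfrak{f},\mathfrak{g}}_{\ch_b}=0$ as well. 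In both cases the displayed identity holds for all $\mathfrak{g}\in\Sigma_b$, and density of $\Sigma_b(b)$ in $\ch_b$ then forces $\Theta^{\ch}_{\sigma(b)\tau(b)}\mathfrak{f}(b)=0$, which is the assertion.

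I do not expect a serious obstacle. The one point that needs genuine care is the $(1,0)$ case: the metric-compatibility computation must be carried out entirely at the level of germs in $\Sigma_b$, since there is no globally smooth orthogonal projection $\cp$ at our disposal, and the two hypotheses on $(\ch,\Sigma)$ must be kept distinct — infinitesimal almost holomorphicity is what kills the second-fundamental terms, while infinitesimal holomorphicity is what kills $(\dbar^{\cl})^2$. Everything else is bookkeeping with the definitions of Section \ref{iBLS}.
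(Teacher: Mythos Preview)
Your proposal is correct and follows essentially the same approach as the paper's proof: both split into the $(0,1)$ and $(1,0)$ cases, kill the second-fundamental term using infinitesimal almost holomorphicity, kill $(\dbar^{\cl})^2$ at $b$ using infinitesimal holomorphicity, and handle the $(1,0)$ case by the metric-compatibility identity $0=\ipr{\Theta^{\cl}_{\sigma\tau}\mathfrak{f},\mathfrak{g}}+\ipr{\mathfrak{f},\Theta^{\cl}_{\ol\sigma\ol\tau}\mathfrak{g}}$ together with density of $\Sigma_b(b)$ in $\ch_b$. If anything, your version is slightly more precise: the paper writes ``it follows that $(\Theta^{\cl}_{\sigma\tau}\mathfrak{f})(b)=0$'', whereas what is actually established (and what you state) is that the pairing with every $\mathfrak{g}(b)\in\Sigma_b(b)$ vanishes, which is exactly what is needed to conclude $\Theta^{\ch}_{\sigma(b)\tau(b)}\mathfrak{f}(b)=0$ via the Riesz definition.
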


\begin{proof}
Since $(\ch, \cl,\Sigma)$ is an iBLS field, $N_b(\eta)=0$ for all $b\in B$ whenever $\eta \in \sC^{\infty}(T^{0,1}_B)$. Thus, the second term on the right hand side of \eqref{iBLScurv} vanishes whenever $\sigma, \tau$ are vector fields of same type. Since $(\ch, \Sigma)$ is an infinitesimally holomorpihc Hilbert subfield, for all $b\in B$, for all $\sigma, \tau \in \sC^{\infty}(T^{1,0}_B)_b$ and for all $\mathfrak{f} \in \Sigma_b$ we have $(\Theta^{\cl}_{\ol{\sigma}\ol{\tau}}\mathfrak{f})(b)=0$. As in the proof of Proposition \ref{11formcrit}, we have
\[
0 = \ipr{\Theta^{\cl}_{\sigma \tau}\mathfrak{f,g} } + \ipr{\mathfrak{f}, \Theta^{\cl}_{\ol{\sigma}\ol{\tau}}\mathfrak{g}}.
\]
Note that $\Sigma_b(b)$ is dense in $\ch_b$: $\Sigma_b(b)$ is dense in $\wt{\ch}_{\cl,b}(b)$ because $(\ch, \Sigma)$ is formal and $\wt{\ch}_{\cl,b}(b)$ is dense in $\ch_b$ because $(\ch, \Sigma,\cl)$ is an iBLS field. Thus, if $(\Theta^{\cl}_{\ol{\sigma}\ol{\tau}}\mathfrak{g})(b)=0$ for all $\mathfrak{g} \in \Sigma_b$ it follows that $(\Theta^{\cl}_{\sigma\tau}\mathfrak{f})(b) = 0$. 
The proposition now follows from equation \eqref{iBLScurv}.
\end{proof}

We end this subsection with an analogue of Lemma \ref{LSlemma} for iBLS fields. 

 
\begin{lemma}\label{iBLSlemma}
Let $(\ch,\Sigma, \cl)$ be an iBLS field. Let $\sigma, \tau \in \sC^{\infty}(T_B\otimes \C)_b$ and let $\mathfrak{f} \in \Sigma_b$. Then  
\begin{enumerate}[label=(\arabic*)]
    \item $(\Theta^{\ch}_{\sigma\ol\tau}\mathfrak{f})(b)$ depends only on $\sigma(b), \tau(b)$ and $\mathfrak{f}(b)$, hence induces a densely defined operator on $\ch_b$, denoted $\Theta^{\ch}_{\sigma(b)\ol\tau(b)}$.
    
    \item The adjoint of $\Theta^{\ch}_{\sigma(b)\tau(b)}$ is an extension of $\Theta^{\ch}_{\ol{\tau}(b)\ol{\sigma}(b)}$. In particular, the adjoint is densely defined, and so $\Theta^{\ch}_{\sigma(b)\tau(b)}$ is closable.
\end{enumerate}
\end{lemma}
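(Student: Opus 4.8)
The plan is to follow the same strategy used for the analogous statement in the smooth Hilbert field setting (Lemma~\ref{LSlemma}), but carefully tracking the domain $\Sigma_b(b)$ of the curvature operator, which is now only a dense subspace rather than a stalk of smooth sections. The two assertions are really properties of the sesquilinear form $Q_{\sigma\tau}(\cdot,\cdot)_b$ from Definition~\ref{sesQ}, so I would phrase everything in terms of $Q$.

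For part (1), the key point is that each of the two terms on the right of \eqref{iBLScurv} depends only on the pointwise data $\sigma(b),\tau(b),\mathfrak{f}(b),\mathfrak{g}(b)$. The second term $\ipr{N_b(\sigma)\mathfrak{f},N_b(\ol\tau)\mathfrak{g}}_{\ch_b}$ already has this property essentially by construction: $N_b$ is a collection of linear operators on $\ch_b$ depending only on $\sigma(b)$ (resp. $\tau(b)$), and its well-definedness was established right after the definition of the second fundamental map using that $(\ch,\Sigma)$ is formal. For the first term, I would invoke part~(1) of Lemma~\ref{LSlemma} applied to the ambient BLS field $\cl$: since $\cl$ is a genuine smooth Hilbert field, $(\Theta^{\cl}_{\sigma\tau}\mathfrak{f})(b)$ depends only on $\sigma(b),\tau(b),\mathfrak{f}(b)$, hence so does $\ipr{\Theta^{\cl}_{\sigma\tau}\mathfrak{f},\mathfrak{g}}_{\ch_b}$. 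Combining, $Q_{\sigma\tau}(\mathfrak{f}(b),\mathfrak{g}(b))_b$ — and therefore, via the Riesz representation used to define $\Theta^{\ch}$, the element $(\Theta^{\ch}_{\sigma\tau}\mathfrak{f})(b)$ — depends only on the claimed data; this yields the well-defined operator $\Theta^{\ch}_{\sigma(b)\tau(b)}$ with domain $\Sigma_b(b)$, which is dense in $\ch_b$ by iBLS conditions (a) and (b) as noted in the proof of Proposition~\ref{iBLS11formcrit}.

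For part (2), I would compute, for $\mathfrak{f},\mathfrak{g}\in\Sigma_b$,
\[
\overline{\ipr{\Theta^{\ch}_{\sigma(b)\tau(b)}\mathfrak{f}(b),\mathfrak{g}(b)}_{\ch_b}} = \overline{Q_{\sigma\tau}(\mathfrak{f}(b),\mathfrak{g}(b))_b} = \overline{\ipr{\Theta^{\cl}_{\sigma\tau}\mathfrak{f},\mathfrak{g}}_{\ch_b}} - \overline{\ipr{N_b(\sigma)\mathfrak{f},N_b(\ol\tau)\mathfrak{g}}_{\ch_b}},
\]
and identify the right side with $Q_{\ol\tau\,\ol\sigma}(\mathfrak{g}(b),\mathfrak{f}(b))_b$. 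For the curvature term one uses part~(2) of Lemma~\ref{LSlemma} for $\cl$, namely that the adjoint of $\Theta^{\cl}_{\sigma(b)\tau(b)}$ extends $-\Theta^{\cl}_{\ol\sigma(b)\ol\tau(b)}$, so that $\overline{\ipr{\Theta^{\cl}_{\sigma\tau}\mathfrak{f},\mathfrak{g}}_{\ch_b}} = -\ipr{\Theta^{\cl}_{\ol\sigma\,\ol\tau}\mathfrak{g},\mathfrak{f}}_{\ch_b} = \ipr{\Theta^{\cl}_{\ol\tau\,\ol\sigma}\mathfrak{g},\mathfrak{f}}_{\ch_b}$ by the skew-symmetry of curvature in its two $2$-form slots. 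For the second-fundamental-map term, conjugating swaps the two factors of the inner product, giving $\overline{\ipr{N_b(\sigma)\mathfrak{f},N_b(\ol\tau)\mathfrak{g}}_{\ch_b}} = \ipr{N_b(\ol\tau)\mathfrak{g},N_b(\sigma)\mathfrak{f}}_{\ch_b} = \ipr{N_b(\ol\tau)\mathfrak{g}, N_b(\overline{\ol\sigma})\mathfrak{f}}_{\ch_b}$, which is exactly the $N$-term of $Q_{\ol\tau\,\ol\sigma}$. Hence $\overline{\ipr{\Theta^{\ch}_{\sigma(b)\tau(b)}\mathfrak{f}(b),\mathfrak{g}(b)}} = \ipr{\Theta^{\ch}_{\ol\tau(b)\ol\sigma(b)}\mathfrak{g}(b),\mathfrak{f}(b)}$ for all $\mathfrak{f},\mathfrak{g}$ in the dense domain $\Sigma_b(b)$, which says precisely that $(\Theta^{\ch}_{\sigma(b)\tau(b)})^{*}$ extends $\Theta^{\ch}_{\ol\tau(b)\ol\sigma(b)}$; since the latter is densely defined, the adjoint is densely defined and the operator is closable.

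The only subtlety — and the place I would be most careful — is making sure every manipulation stays within the domain $\Sigma_b(b)$ and that the bookkeeping of bars and of the order of the two form-arguments of the curvature is done consistently; in particular one must use that $\Theta^{\cl}$ (hence $\Theta^{\ch}$) is alternating in its two vector-field slots, $\Theta_{\xi\eta} = -\Theta_{\eta\xi}$, which is immediate from the definition of curvature. There is no analytic difficulty here beyond what is already packaged in Lemma~\ref{LSlemma} and in the iBLS axioms; the content is purely the algebra of the Riesz-representation definition of $\Theta^{\ch}$.
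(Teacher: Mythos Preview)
Your proposal is correct and follows essentially the same route as the paper. For part (1) both you and the paper combine Lemma~\ref{LSlemma}(1) for $\cl$ with the formality of $(\ch,\Sigma)$; for part (2) the paper rederives the identity $\ipr{\Theta^{\cl}_{\sigma\tau}\mathfrak{f},\mathfrak{g}}=\ipr{\mathfrak{f},\Theta^{\cl}_{\ol\tau\,\ol\sigma}\mathfrak{g}}$ directly from metric compatibility and then inserts the second-fundamental-map term by adding and subtracting, whereas you package the same identity as an appeal to Lemma~\ref{LSlemma}(2) plus skew-symmetry --- these are equivalent, since Lemma~\ref{LSlemma}(2) is proved by exactly that metric-compatibility computation.
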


\begin{proof}
In view of \eqref{iBLScurv}, the first statement follows from statement (1) of Lemma \ref{LSlemma} applied to the smooth Hilbert field $\cl$ and the fact that $(\ch,\Sigma)$ is a formal tuning. Using metric compatibility \eqref{compatibleconnection} of the connection $\nabla^{\cl}$, for $\mathfrak{f,g} \in \Sigma_b$ we get
\begin{align*}
    0 = (\sigma \tau - \tau\sigma - [\sigma,\tau])\ipr{\mathfrak{f,g}} &= \ipr{\Theta^{\cl}_{\sigma\tau}\mathfrak{f,g}} - \ipr{\mathfrak{f}, \Theta^{\cl}_{\ol{\tau}\ol{\sigma}}\mathfrak{g}} \\
    &= \ipr{\Theta^{\cl}_{\sigma\tau}\mathfrak{f,g}} -\ipr{\cp^{\perp}\nabla^{\cl}_{\sigma^{1,0}}\mathfrak{f},\cp^{\perp}\nabla^{\cl}_{\ol{\tau}^{1,0}}\mathfrak{g}}+\ipr{\cp^{\perp}\nabla^{\cl}_{\sigma^{1,0}}\mathfrak{f},\cp^{\perp}\nabla^{\cl}_{\ol{\tau}^{1,0}}\mathfrak{g}}- \ipr{\mathfrak{f}, \Theta^{\cl}_{\ol{\tau}\ol{\sigma}}\mathfrak{g}} \\
    &= \ipr{\Theta^{\ch}_{\sigma\tau}\mathfrak{f,g}} - \ipr{\mathfrak{f},\Theta^{\ch}_{\ol{\tau}\ol\sigma}\mathfrak{g}}.
\end{align*}
The second statement now follows by evaluating the above equation at $b$ and obvserving that $\Sigma_b(b)$ is dense in $\ch_b$.
\end{proof}

\section{The geometric setup}
In order to equip a family of Bergman spaces with the structure of an iBLS field, we need to describe the geometry of the holomorphic submersion that gives rise to this family. We do so in this section. 

 \subsection{Notation} \label{notation} We fix the following notation for the rest of the paper. Let $\wt{\pi}: \wt{X}\to B$ be a holomorphic submersion of K\"ahler manifolds, where $\text{dim}_{\C} \wt{X} = n+m$ and $\text{dim}_{\C} B= m$. Let $(E,h)\to \wt{X}$ be a holomorphic line bundle with a smooth metric $h$. Let $X\subset \wt{X}$ be a smoothly bounded domain such that for each $b\in B$ the fiber $X_b := \wt{\pi}^{-1}(b)\cap X$ is a bounded pseudoconvex domain with smooth boundary in the $n$-dimensional K\"ahler manifold $\wt{\pi}^{-1}(b)$. Let $\pi: X\to B$ be the submersion $\wt{\pi}|_{X}$. We call $\pi:X\to B$ a \emph{family of pseudoconvex domains}.

 
 Assume that $ X$ has a defining function $\rho$ and denote by $\rho_b$ the restriction of $\rho$ to $\wt{\pi}^{-1}(b)$, so that $\rho_b$ is a defining function for the pseudoconvex domain $X_b$. We will denote the K\"ahler form on $\wt{X}$ by $\omega$ and let $\omega_b = \iota_{X_b}^*\omega$, so $dV_{\omega_b}$ denotes the volume form on $X_b$. To simplify some of our fomrulas we assume that $\abs{d\rho_b}_{\omega_b} = 1$ on $\partial X_b$ and we denote the induced volume form on $\partial X_b$ by $dS_b$, i.e., it is a form that satisfies $d\rho_b\wedge dS_b = dV_{\omega_b}$ on $\partial X_b$.

 
 All hermitian holomorphic line bundles in this article are equipped with the corresponding Chern connection, so we interchangeably use $\dbar = \nabla^{0,1}$ for the $(0,1)$-part of the connection. Also, we use the notation $\nabla$ for the Chern connection of the line bundle $E\to X$, as well as that of the line bundle $E|_{X_b}\to X_b$ and hope this does not cause confusion. 
 
 We denote by $\{\cdot,\cdot \}$ the inner product induced on $E$-valued $(p,q)$-forms by the metrics $\omega$ and $h$. We also use another pairing of $E$-valued forms on $X$, denoted by $\hwedge{\cdot}{\cdot}$. Suppose $u$ is an $E$-valued $(p,q)$-form and $v$ is an $E$-valued $(r,s)$-form on $X$. Let $e$ be a frame for $E$ so that $u$ and $v$ are locally written as $u = \vp \otimes e$, and $v = \psi\otimes e$. Then the pairing $\hwedge{\cdot}{\cdot}$ is given by 
 \[
 \hwedge{u}{v} = \vp\wedge \ol\psi h(e,e).
 \]

\subsection{Horizontal distributions}

Let $\pi:X\to B$ be a holomorphic submersion as in Section \ref{notation}. A \emph{horizontal distribution} for the submersion $\pi$ is a subbundle $\theta$ of the tangent bundle $T_{\ol{X}}$ so that 
\begin{enumerate}
    \item $d\pi|_{\theta}$ maps each fiber $\theta_x$ isomorphically onto $T_{B,\pi(x)}$ (i.e., $d\pi: \theta \to \pi^*T_B$ is an isomorphism of vector bundles),
    
    \item $\theta$ is tangent to the boundary of $X$, i.e., $\theta|_{\partial X} \subset T_{\partial X}$.
\end{enumerate}  
Since $\pi$ is a holomorphic map, $d\pi$ respects the splitting of $\theta \otimes \C$ and $T_B\otimes \C$ into subbundles of type $(1,0)$ and $(0,1)$, i.e., $d\pi(\theta^{1,0}_x)= T^{1,0}_{B,\pi(x)}$ for all $x\in X$. Also $\theta^{1,0}|_{\partial X} \subset T^{1,0}_{\partial X}$ since $\theta|_{\partial X} \subset T_{\partial X}$, so $\theta^{1,0}$ is tangent to the boundary of $X$. Let $\sigma$ be a $(1,0)$-vector field on $B$. A \emph{horizontal lift} of $\sigma$ with respect to the distribution $\theta^{1,0}$ is a section $\xi_{\sigma}^{\theta}$ of $\theta^{1,0} \to \ol{X}$ such that $d\pi(\xi^{\theta}_{\sigma}) =\sigma$.

\subsubsection{Twisted Lie derivatives}
Let $X$ be a K\"ahler manifold as above. For a complex vector field $\eta$ and a complex form $\vp$ on $X$ we define the Lie derivative of $\vp$ along the vector field $\eta$ as
\begin{equation}\label{cplxlie}
L_{\eta} \vp := L_{\text{Re }\eta}\vp + \sqi L_{\text{Im }\eta}\vp,
\end{equation}
where the Lie derivatives appearing on the right hand side of the \eqref{cplxlie} are the usual Lie derivatives. It follows from \eqref{cplxlie} that Cartan's formula
\[
L_{\eta}\vp = d(\eta\lrcorner \vp) + \eta\lrcorner d\vp
\]
holds for complex forms. We extend this formula to define the twisted Lie derivative (which we still denote $L_\eta$) of an $E$-valued $(p,q)$-form $u$ along the complex vector field $\eta$ as 
\[
L_{\eta} u  := \nabla (\eta\lrcorner u) + \eta\lrcorner \nabla u.
\]
We also define the twisted $(1,0)$ Lie derivative $L^{1,0}_{\eta}$ and the twisted $(0,1)$ Lie derivative $L^{0,1}_{\eta}$ of an $E$-valued form $u$ along the complex vector field $\eta$ as
\begin{align*}
    L^{1,0}_{\eta}u &:= \nabla^{1,0} (\eta\lrcorner u) + \eta\lrcorner \nabla^{1,0} u, \\
    L^{0,1}_{\eta}u &:= \nabla^{0,1} (\eta\lrcorner u) + \eta\lrcorner \nabla^{0,1} u.
\end{align*}
Note that $L^{1,0}_{\eta}u$ (resp. $L^{0,1}_{\eta}u$) has same bidegree as $u$ iff $\eta$ is a $(1,0)$-vector field (resp. $(0,1)$-vector field).

\subsubsection{Sections of the twisted relative canonical bundle}
Recall that we have a holomorphic submersion $\pi:X\to B$ of K\"ahler manifolds. Since $\pi$ is a submersion, the map $\pi^*$ maps $T^{1,0*}_{B,\pi(x)}$ injectively into $T^{1,0*}_{X,x}$. Thus, we may identify the image $\pi^*T^{1,0*}_B$ with a subbundle of $T^{1,0*}_X$. This gives rise to a short exact sequence of vector bundles
\begin{equation}\label{relcan}
0\to \pi^*T^{1,0*}_B \to T^{1,0*}_X \to T^{1,0*}_{X/B}\to 0,
\end{equation}
where the first map is the inclusion of $\pi^*T^{1,0*}_B$ as a subbundle. The top exterior power of the quotient vector bundle $ T^{1,0*}_{X/B}$ is called the \emph{relative canonical bundle}, denoted $K_{X/B}$.

Let $(t^1,\cdots, t^m)$ be local coordinates on an open set $U \subset B$, so that $\pi^*dt^1, \cdots, \pi^*dt^m$ is a frame for $\pi^*T^{1,0*}_B$ on the open set $\pi^{-1}(U)$. From the short exact sequence \eqref{relcan} we see that sections of $T^{1,0*}_{X/B}$ are equivalence classes of $(1,0)$-forms on $X$ with respect to the relation $\sim$ given by $\alpha_1 \sim \alpha_2$ if 
\[
\alpha_1 - \alpha_2 = f_1\pi^*dt^1 + \cdots + f_m\pi^*dt^m, \quad \text{for some functions }f_j \in \sC^{\infty}(\pi^{-1}(U)),
\]
or equivalently, if $\iota_{X_b}^*(\alpha_1 - \alpha_2) = 0$ for all $b\in U$. Since $\text{dim}_{\C}(X) - \text{dim}_{\C}(B)=n$, sections of $K_{X/B}$ are equivalence classes of $(n,0)$-forms where $\vp_1 \sim \vp_2$ if
\[
\vp_1 - \vp_2 = \psi_1\wedge\pi^*dt^1 + \cdots \psi_m\wedge \pi^*dt^m, \quad \text{for some }\psi_j \in \Gamma(\pi^{-1}(U),\Lambda^{n-1,0}_X),
\]
or equivalently, if $\iota_{X_b}^*(\vp_1 - \vp_2) = 0$ for all $b \in B$.

Now fix a point $b\in B$. Recall that the conormal bundle $N_{X/X_b}^*$ to $X_b$ consists of covectors in $T^{1,0*}_X\big|_{X_b}$ that annihilate vectors tangent to $X_b$. Thus there is a natural identification of vector bundles $\pi^*T^{1,0*}_B\big|_{X_b}$ and $N^*_{X/X_b}$ as a subbundle of $T^{1,0*}_X\big|_{X_b}$ consisting of $(1,0)$-covectors that annihilate vectors tangent to $X_b$. Restricting the sequence \eqref{relcan} to the submanifold $X_b$ and taking determinants gives 
\[
K_{X/B}\big|_{X_b}\otimes \text{det}(N^*_{X/X_b})  \cong K_X\big|_{X_b}.
\]
Tensoring both sides of the last equation with determinant $\text{det}(N_{X/X_b})$ of the normal bundle to $X_b$ and noting that $\text{det}(N^*_{X/X_b})\otimes \text{det}(N_{X/X_b})$ is the trivial bundle, we get 
\[
K_{X/B}\big|_{X_b} \cong K_{X/B}\big|_{X_b}\otimes \text{det}(N^*_{X/X_b})\otimes \text{det}(N_{X/X_b})  \cong K_X\big|_{X_b}\otimes \text{det}(N_{X/X_b}) \cong K_{X_b},
\]
where the last isomorphism follows from the adjunction formula (see \cite[Proposition 2.2.17]{huyb}).

The same argument works with $E$-valued $(n,0)$-forms and sections of the twisted relative canonical bundle $E\otimes K_{X/B}$, which we state as a lemma.

\begin{lemma}\label{twistedrelcan}
Let $E\to X$ be a holomorphic hermitian line bundle and let $\pi:X\to B$ be a surjective holomorphic submersion as in Section \ref{notation}. Then for each $b\in B$ there is an isomorphism  
\[
\iota_b: (K_{X/B}\otimes E)|_{X_b} \to K_{X_b}\otimes E|_{X_b}
\]
such that for an open set $U$ containing $b$ and a twisted relative canonical section $f$ over $\pi^{-1}(U)$ we have $\iota_b(f)= \iota_{X_b}^*u$, where $u$ is an $E$-valued $(n,0)$-form representing $f$.
\end{lemma}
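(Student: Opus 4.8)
The plan is to imitate the discussion that precedes the lemma, which established the untwisted statement $K_{X/B}\big|_{X_b}\cong K_{X_b}$ via restriction of the relative cotangent sequence \eqref{relcan} to $X_b$ and the adjunction formula, and to carry along the factor of $E$ throughout. First I would restrict the short exact sequence \eqref{relcan} to the submanifold $X_b$, tensor it with the line bundle $E\big|_{X_b}$ (which is flat over $X_b$ and hence preserves exactness), and take determinants of the middle and quotient terms; using the identification $\pi^*T^{1,0*}_B\big|_{X_b}\cong N^*_{X/X_b}$ noted in the excerpt, this yields
\[
(K_{X/B}\otimes E)\big|_{X_b}\otimes \det(N^*_{X/X_b}) \cong (K_X\otimes E)\big|_{X_b}.
\]
Then I would tensor both sides with $\det(N_{X/X_b})$, use that $\det(N^*_{X/X_b})\otimes \det(N_{X/X_b})$ is trivial, and apply the adjunction formula $K_X\big|_{X_b}\otimes\det(N_{X/X_b})\cong K_{X_b}$ to obtain the desired bundle isomorphism $\iota_b\colon (K_{X/B}\otimes E)\big|_{X_b}\to K_{X_b}\otimes E\big|_{X_b}$. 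This is line-for-line the argument already given for the untwisted case, so the existence of \emph{some} isomorphism is immediate.

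The substantive point is the second assertion: that $\iota_b$ can be chosen so that it agrees with the concrete pullback operation $f\mapsto \iota_{X_b}^*u$ on representing $(n,0)$-forms. For this I would work locally: pick coordinates $(t^1,\dots,t^m)$ on $U\ni b$ and a holomorphic frame $e$ for $E$ near a point of $X_b$, so that a section $f$ of $(K_{X/B}\otimes E)\big|_{\pi^{-1}(U)}$ is represented by $u=\varphi\otimes e$ with $\varphi$ an $(n,0)$-form on $\pi^{-1}(U)$, well-defined modulo $(\Lambda^{n-1,0}_X)\wedge\{\pi^*dt^1,\dots,\pi^*dt^m\}$. The map $u\mapsto\iota_{X_b}^*u=\iota_{X_b}^*\varphi\otimes e\big|_{X_b}$ then descends to the quotient, exactly as in the excerpt's description of sections of $K_{X/B}$: if $\varphi_1-\varphi_2=\sum_j\psi_j\wedge\pi^*dt^j$ then $\iota_{X_b}^*(\varphi_1-\varphi_2)=\sum_j\iota_{X_b}^*\psi_j\wedge\iota_{X_b}^*(\pi^*dt^j)=0$ because $dt^j$ pulls back to zero on the fiber over $b$. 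Hence $f\mapsto\iota_{X_b}^*u$ is a well-defined map of sheaves $(K_{X/B}\otimes E)\big|_{X_b}\to K_{X_b}\otimes E\big|_{X_b}$; I would then check it is $\mathcal O_{X_b}$-linear and an isomorphism on fibers — surjectivity because any local $(n,0)$-form on $X_b$ together with a choice of normal coframe $\pi^*dt^1\wedge\cdots\wedge\pi^*dt^m$ lifts to an $(n,0)$-form on $X$ restricting to it, and injectivity by a dimension count (both sides are line bundles) — and \emph{define} $\iota_b$ to be this map, noting it is independent of the auxiliary choices of coordinates and frame since the formula $f\mapsto\iota_{X_b}^*u$ is intrinsic.

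The main obstacle, such as it is, is bookkeeping rather than mathematics: one must make sure the well-definedness modulo the ideal $\langle\pi^*dt^1,\dots,\pi^*dt^m\rangle$ interacts correctly with the adjunction isomorphism, i.e.\ that the intrinsic pullback map and the abstractly-constructed determinant-line isomorphism are one and the same. I would handle this by not constructing the isomorphism abstractly at all: take $\iota_b$ to be the pullback map $f\mapsto\iota_{X_b}^*u$ \emph{by definition}, verify directly that it is a well-defined bundle isomorphism as above, and remark that the chain of identifications in the displayed isomorphisms simply provides an independent confirmation that source and target have the same rank. This reduces the proof to the three short verifications (well-definedness on the quotient, $\mathcal O_{X_b}$-linearity, fiberwise bijectivity) and a sentence pointing back to the untwisted argument preceding the lemma.
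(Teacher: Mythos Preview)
Your proposal is correct and follows essentially the same approach as the paper: the paper does not give a separate proof of the lemma but simply remarks that ``the same argument works with $E$-valued $(n,0)$-forms,'' referring to the untwisted adjunction argument immediately preceding the statement. Your write-up is in fact more careful than the paper's, since you explicitly verify the second assertion about $\iota_b(f)=\iota_{X_b}^*u$ by checking well-definedness on the quotient, whereas the paper leaves this implicit in its description of sections of $K_{X/B}$ as equivalence classes.
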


\subsection{Calculations involving Lie derivatives}

We obtain some identities involving Lie derivatives that are needed to equip the field of Bergman spaces with an iBLS structure in Section \ref{concHilb}. The following lemma says that the twisted Lie derivatives (when taken with respect to a metric compatible connection) obey the same Leibniz rule as the usual Lie derivatives.

\begin{lemma} 

Let $Z$ be a complex manifold (with or without boundary) and $E\to Z$ be a holomorphic line bundle with hermitian metric $h$. Let $u \in \Gamma(Z, E\otimes \Lambda^{p,q}_Z)$ and $v\in \Gamma(Z, E\otimes \Lambda^{r,s}_Z)$ be $E$-valued forms. Then for a complex vector field $\xi$ on $Z$ we have

\begin{align}
    L_{\xi} \hwedge{u}{v} &= \hwedge{L_{\xi}u}{v} + \hwedge{u}{L_{\overline{\xi}}v}, \label{lieder} 
\end{align}

\end{lemma}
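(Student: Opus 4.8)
The plan is to verify the Leibniz-type identity \eqref{lieder} by reducing it to the analogous identity for the untwisted Lie derivative together with the compatibility of the Chern connection $\nabla$ with the metric $h$. Since the statement is local, I would fix a local holomorphic frame $e$ for $E$ over an open set and write $u = \varphi\otimes e$, $v = \psi\otimes e$ with $\varphi,\psi$ scalar-valued forms. Writing $h(e,e) = e^{-\vp_E}$ for a local weight, the Chern connection is $\nabla = d + \partial\log h(e,e)\wedge(\cdot)$ on sections expressed in this frame — more precisely $\nabla(\alpha\otimes e) = (d\alpha - \partial\vp_E\wedge\alpha)\otimes e$ — so $L_\xi u = (L_\xi\varphi - (\xi\lrcorner\partial\vp_E)\varphi - \dots)\otimes e$ expands into the ordinary Lie derivative of $\varphi$ plus a zeroth-order correction built from $\vp_E$, and similarly for $L_{\ol\xi}v$.

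**The key steps, in order:** First, recall (or quickly check from Cartan's formula and the fact that $L_\xi$ commutes with $d$) that the untwisted identity $L_\xi(\varphi\wedge\ol\psi) = (L_\xi\varphi)\wedge\ol\psi + \varphi\wedge\overline{L_{\ol\xi}\psi}$ holds for scalar complex forms; this is the classical Leibniz rule for Lie derivatives applied to a wedge product, being careful that conjugation turns $L_\xi$ into $L_{\ol\xi}$ on the conjugated factor. Second, apply the usual (scalar) Leibniz rule of the Lie derivative of a function times a form to the factor $h(e,e)$: $L_\xi\big(\varphi\wedge\ol\psi\,h(e,e)\big) = \big(L_\xi(\varphi\wedge\ol\psi)\big)h(e,e) + \varphi\wedge\ol\psi\,(\xi\,h(e,e))$, where $\xi h(e,e) = (\xi\lrcorner dh(e,e))$. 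Third, combine the two expansions and match terms: the piece $(L_\xi\varphi)\wedge\ol\psi\,h(e,e)$ must be reorganized into $\langle \nabla^{1,0}\text{- and }\dbar\text{-corrected derivative}\rangle$, and the correction terms $(\xi\lrcorner\partial\vp_E)$, $(\ol\xi\lrcorner\ol{\partial\vp_E})$ together with $\xi h(e,e)$ should precisely telescope, using $dh(e,e) = -(\partial\vp_E + \overline{\partial\vp_E})h(e,e)$ and the decomposition $\xi\lrcorner d = \xi\lrcorner\partial + \xi\lrcorner\dbar$. After this bookkeeping one reads off exactly $\hwedge{L_\xi u}{v} + \hwedge{u}{L_{\ol\xi}v}$ on the right.

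**The main obstacle** I anticipate is the careful bookkeeping of the metric-correction terms and the sign/conjugation conventions: one must track that the $(1,0)$-part of $\nabla$ acting on $u$ contributes a $-\partial\vp_E$ term while the conjugated factor $\ol v$ picks up the conjugate weight term, and verify these combine with $L_\xi h(e,e)$ without leftover. A clean way to sidestep most of this is to prove the identity first for $E$ trivial with trivial metric (where it is the classical Leibniz rule for the pairing $\varphi\wedge\ol\psi$), then observe that both sides of \eqref{lieder} are $\sC^\infty$-bilinear expressions in $(u,v)$ that transform correctly under change of frame, so it suffices to check it at a point in a frame that is \emph{normal} for $h$ at that point (i.e. $dh(e,e)=0$ there); in such a frame $\nabla$ agrees with $d$ to first order, $L_\xi$ on twisted forms reduces to $L_\xi$ on the scalar coefficients, and \eqref{lieder} collapses to the untwisted identity. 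This normal-frame trick makes the proof essentially a one-line reduction and I would present it that way, relegating the explicit weight computation to a remark.
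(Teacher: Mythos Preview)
Your approach is correct and genuinely different from the paper's. The paper does a direct, frame-free calculation: it applies Cartan's formula $L_\xi = \xi\lrcorner d + d(\xi\lrcorner\,\cdot\,)$ to the scalar form $\hwedge{u}{v}$, invokes the metric-compatibility identity $d\hwedge{u}{v} = \hwedge{\nabla u}{v} + (-1)^{p+q}\hwedge{u}{\nabla v}$ to replace $d$ by $\nabla$ inside the pairing, distributes the contraction $\xi\lrcorner$ over the two factors (picking up the $(-1)^{p+q}$ sign and turning $\xi$ into $\ol\xi$ on the conjugated slot), and observes that the cross terms cancel in pairs. No frames, no weight function, about five lines.

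Your local-frame reduction and normal-frame trick both work, but one statement needs tightening: neither side of \eqref{lieder} is $\sC^\infty$-bilinear in $(u,v)$, since Lie derivatives are first-order operators. What \emph{is} true --- and is all you need --- is that their \emph{difference} is $\sC^\infty$-bilinear (the Leibniz errors on the two sides match), hence a pointwise tensor that may be tested in a frame with $dh(e,e)=0$ at the given point. With that correction your normal-frame argument is clean. The paper's route is shorter and avoids both the existence-of-normal-frames remark and the tensoriality check; your route makes more explicit why metric compatibility is the only input.
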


\begin{proof}
The proof is by direct calculation as below.
\begin{align*}
    L_{\xi} \hwedge{u}{v} &= \xi \lrcorner d \hwedge{u}{v} + d \left( \xi \lrcorner \hwedge{u}{v} \right) \\
                                &= \xi \lrcorner \hwedge{\nabla u}{v} + (-1)^{p+q} \xi \lrcorner \hwedge{u}{\nabla v}  + d \hwedge{\xi \lrcorner u}{v} + (-1)^{p+q} d \hwedge{u}{\overline{\xi} \lrcorner v} \\
                                &= \hwedge{\xi \lrcorner \nabla u}{v} + (-1)^{p+q+1}\hwedge{\nabla u}{\overline{\xi} \lrcorner v} + (-1)^{p+q} \hwedge{\xi \lrcorner u}{\nabla v} + \hwedge{u}{\ol\xi \lrcorner \nabla v} \\
                                & + \hwedge{\nabla (\xi \lrcorner u)}{v} + (-1)^{p+q-1} \hwedge{\xi \lrcorner u}{\nabla v} + (-1)^{p+q}\hwedge{\nabla u}{\overline{\xi} \lrcorner v} + \hwedge{u}{\nabla(\overline{\xi} \lrcorner v)}\\
                                &= \hwedge{L_{\xi}u}{v} + \hwedge{u}{L_{\overline{\xi}}v}.
\end{align*}
\end{proof}



    
    

\begin{proposition}\label{dbarPL}
Let $\iota_Y: Y \hookrightarrow Z$ be an $n$-dimensional submanifold of a complex manifold $Z$ and $(E,h)\to Z$ a hermitian holomorphic line bundle. Let $u$ be an $E$-valued $(n,0)$-form and let $\xi$ be a $(1,0)$ vector field on $Z$. Then
\begin{equation}\label{2ndfund}
    \dbar \iota_Y^* \left(L^{1,0}_{\xi}u\right) = \iota_Y^* \left( \dbar\xi \lrcorner \nabla^{1,0}u - (\xi\lrcorner \Theta^E)\wedge u  - \nabla^{1,0}(\dbar\xi \lrcorner u) + L^{1,0}_{\xi}\dbar u \right).
\end{equation}
\end{proposition}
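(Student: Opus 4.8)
The plan is to compute $\dbar\iota_Y^*(L^{1,0}_\xi u)$ directly by expanding the definition $L^{1,0}_\xi u = \nabla^{1,0}(\xi\lrcorner u) + \xi\lrcorner\nabla^{1,0}u$ and pushing the $\dbar$ through, using the identity $\nabla^{1,0}\dbar + \dbar\nabla^{1,0} = \Theta^E$ (the Chern curvature, acting as multiplication by the $(1,1)$-form $\Theta^E$) to convert terms of the shape $\dbar\nabla^{1,0}(\cdots)$ into $-\nabla^{1,0}\dbar(\cdots) + \Theta^E\wedge(\cdots)$. First I would write $\dbar L^{1,0}_\xi u = \dbar\nabla^{1,0}(\xi\lrcorner u) + \dbar(\xi\lrcorner\nabla^{1,0}u)$. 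For the first summand, use the curvature identity to replace it by $-\nabla^{1,0}\dbar(\xi\lrcorner u) + \Theta^E\wedge(\xi\lrcorner u)$. For the second summand, I would use Cartan-type bookkeeping: since $\xi$ is of type $(1,0)$ and $\nabla^{1,0}u$ is an $E$-valued $(n+1,0)$-form which is zero for degree reasons on an $n$-fold — wait, $\nabla^{1,0}u$ lives on the ambient $Z$, not on $Y$, so it need not vanish; one must keep it. Here the key is to commute $\dbar$ past the contraction $\xi\lrcorner$, which produces a term $(\dbar\xi)\lrcorner\nabla^{1,0}u$ (using the $\dbar\eta\lrcorner$ notation from the preamble) plus $\xi\lrcorner\dbar\nabla^{1,0}u = \xi\lrcorner(-\nabla^{1,0}\dbar u + \Theta^E\wedge u)$.

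Assembling these, I expect to obtain
\[
\dbar L^{1,0}_\xi u = -\nabla^{1,0}\dbar(\xi\lrcorner u) + \Theta^E\wedge(\xi\lrcorner u) + \dbar\xi\lrcorner\nabla^{1,0}u - \xi\lrcorner\nabla^{1,0}\dbar u + \xi\lrcorner(\Theta^E\wedge u),
\]
and then I would pull out $\iota_Y^*$, reorganize the curvature terms via $\Theta^E\wedge(\xi\lrcorner u) + \xi\lrcorner(\Theta^E\wedge u) = (\xi\lrcorner\Theta^E)\wedge u$ (the graded Leibniz rule for the contraction $\xi\lrcorner$ acting on the wedge product $\Theta^E\wedge u$, since $\Theta^E$ has even degree), and recognize $-\nabla^{1,0}\dbar(\xi\lrcorner u) - \xi\lrcorner\nabla^{1,0}\dbar u = -\nabla^{1,0}(\dbar\xi\lrcorner u) - L^{1,0}_\xi\dbar u$ after absorbing the cross-term $\nabla^{1,0}(\xi\lrcorner\dbar u)$ appropriately and using $\dbar(\xi\lrcorner u) = -\xi\lrcorner\dbar u + \dbar\xi\lrcorner u$ up to sign conventions. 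The delicate point throughout is sign bookkeeping: $u$ has odd total degree $n$ (for $n$ odd) so the graded commutators between $\dbar$, $\nabla^{1,0}$, contraction by $\xi$, and wedging by the even-degree form $\Theta^E$ each contribute signs, and the contraction operator $\dbar\xi\lrcorner$ defined in the preamble hides a choice of sign convention that must be matched.

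The main obstacle I anticipate is precisely this sign/degree accounting, especially the interplay between "$\dbar$ applied to a contraction" and the nonstandard operator $\dbar\xi\lrcorner u := \dbar\xi^k\wedge(\partial_{z^k}\lrcorner u)$: one must verify the Leibniz-type formula $\dbar(\xi\lrcorner\vp) = (\dbar\xi)\lrcorner\vp - \xi\lrcorner\dbar\vp$ (or with whatever sign the convention forces) in local coordinates, since $\xi\lrcorner$ does not anticommute with $\dbar$ the way $d$ does in Cartan's formula. A secondary subtlety is that the ambient forms $\nabla^{1,0}u$, $\dbar u$, etc., are genuinely $(n+1,0)$- and $(n,1)$-forms on $Z$ and do not vanish, so nothing simplifies until one applies $\iota_Y^*$ at the very end; I would therefore carry all terms on $Z$ until the final line. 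I would double-check the result by specializing to the untwisted case $\Theta^E = 0$ and to the case $\dbar u = 0$, where \eqref{2ndfund} should reduce to the familiar formula $\dbar\iota_Y^*(L^{1,0}_\xi u) = \iota_Y^*(\dbar\xi\lrcorner\nabla^{1,0}u)$ that appears in the Riemann surface / product-fibration calculations.
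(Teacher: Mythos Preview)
Your strategy is exactly the paper's: expand $L^{1,0}_\xi u$, apply $\dbar$ to each summand, use $\dbar\nabla^{1,0} = \Theta^E - \nabla^{1,0}\dbar$ twice, use the contraction rule $\dbar(\xi\lrcorner\alpha) = \dbar\xi\lrcorner\alpha - \xi\lrcorner\dbar\alpha$ twice, and regroup into the four displayed terms; the paper also carries everything on $Z$ and only applies $\iota_Y^*$ at the end.

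Two concrete corrections to your bookkeeping, since you flag signs as the obstacle. First, the contraction rule carries a \emph{minus}: $\dbar(\xi\lrcorner\alpha) = \dbar\xi\lrcorner\alpha - \xi\lrcorner\dbar\alpha$ (check it locally on $\alpha = f\,dz^k$, using that $\iota_{\partial_{z^k}}$ anticommutes with $\dbar$). With that sign your assembled expression has $-\xi\lrcorner(\Theta^E\wedge u) + \xi\lrcorner\nabla^{1,0}\dbar u$, and the curvature terms then combine via the correct Leibniz identity $\xi\lrcorner(\Theta^E\wedge u) = (\xi\lrcorner\Theta^E)\wedge u + \Theta^E\wedge(\xi\lrcorner u)$, giving $\Theta^E\wedge(\xi\lrcorner u) - \xi\lrcorner(\Theta^E\wedge u) = -(\xi\lrcorner\Theta^E)\wedge u$; your stated version with a plus sign is wrong. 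Second, your sanity check is off: when $\Theta^E = 0$ and $\dbar u = 0$ the formula reduces to $\iota_Y^*\bigl(\dbar\xi\lrcorner\nabla^{1,0}u - \nabla^{1,0}(\dbar\xi\lrcorner u)\bigr)$, not just the first term. The $-\nabla^{1,0}(\dbar\xi\lrcorner u)$ piece survives and is precisely what drives the second-fundamental-form estimate and the primitive-lift argument later in the paper, so do not expect it to disappear.
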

It is to be understood that the $\dbar$ appearing on the left hand side of \eqref{2ndfund} is the $\dbar$-operator acting on $E|_Y$-valued forms on $Y$, while the $\dbar$ appearing on the right hand side is the $\dbar$-operator acting on $E$-valued forms on $Z$. We will continue to abuse the notation for $\dbar$ in this manner throughout this paper. 

\begin{proof}
The proof is a straightforward calculation:
\begin{align*}
    \dbar \iota_Y^* \left( L^{1,0}_{\xi}u\right) &= \iota_Y^* \left( \dbar L^{1,0}_{\xi} u\right) \\
                                                 &= \iota_Y^* \left( \dbar \nabla^{1,0}(\xi \lrcorner u) + \dbar (\xi \lrcorner \nabla^{1,0}u) \right) \\
                                                 &= \iota_Y^* \left( \dbar \nabla^{1,0}(\xi \lrcorner u) + \dbar\xi \lrcorner \nabla^{1,0}u - \xi \lrcorner \dbar\nabla^{1,0} u \right) \\
                                                 &= \iota_Y^* \left( \Theta^E(\xi \lrcorner u) - \nabla^{1,0}\dbar(\xi \lrcorner u) + \dbar\xi \lrcorner \nabla^{1,0}u - \xi \lrcorner (\Theta^E\wedge u) + \xi\lrcorner \nabla^{1,0}\dbar u \right) \\ 
                                                 &= \iota_Y^* \left( - (\xi\lrcorner \Theta^E)\wedge u -\nabla^{1,0} (\dbar \xi \lrcorner u - \xi\lrcorner \dbar u) + \dbar\xi \lrcorner \nabla^{1,0}u + \xi \lrcorner \nabla^{1,0}\dbar u\right) \\
                                                 &= \iota_Y^* \left( \dbar\xi \lrcorner \nabla^{1,0}u - (\xi\lrcorner \Theta^E)\wedge u -\nabla^{1,0} (\dbar \xi \lrcorner u) + \nabla^{1,0}(\xi \lrcorner \dbar u) + \xi \lrcorner \nabla^{1,0}\dbar u \right) \\
                                                 &= \iota_Y^* \left( \dbar\xi \lrcorner \nabla^{1,0}u - (\xi\lrcorner \Theta^E)\wedge u -\nabla^{1,0} (\dbar \xi \lrcorner u) + L^{1,0}_{\xi}\dbar u \right).
\end{align*}        
\end{proof}

\subsubsection{Applications to families of domains}

\begin{proposition}\label{metcomp}
Let $\pi:X \to B$ be a family of pseudoconvex domains. Assume that $X$ is a domain in a larger complex manifold $\widetilde{X}$ and $\pi$ is the restriction of a holomorphic submersion $\widetilde{X}\to B$. Let $(E,h) \to \widetilde{X}$ be a holomorphic line bundle with a smooth metric $h$ and let $u, v$ be $E$-valued $(n,0)$-forms that are smooth up to the boundary of $X$. Let $\tau$ be a $(1,0)$-vector field on $B$ and let $\xi_{\tau}$ be a horizontal lift of $\tau$ to $X$. Then we obtain the following identities
\begin{align}
    \tau \int_{X_b} \hwedge{u}{v} &= \int_{X_b} \hwedge{L^{1,0}_{\xi_{\tau}}u}{v} + \int_{X_b} \hwedge{u}{L^{0,1}_{\overline{\xi}_{\tau}}v} \label{metcomp10} \\
    \ol{\tau}  \int_{X_b} \hwedge{u}{v} &= \int_{X_b} \hwedge{L^{0,1}_{\ol{\xi}_{\tau}}u}{v} + \int_{X_b} \hwedge{u}{L^{1,0}_{\xi_{\tau}}v} \label{metcomp01}
\end{align}
\end{proposition}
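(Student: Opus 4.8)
The plan is to prove \eqref{metcomp10} by combining the Leibniz rule \eqref{lieder} for the pairing $\hwedge{\cdot}{\cdot}$ with Stokes' theorem on the fiber $X_b$, and then to obtain \eqref{metcomp01} either by an analogous computation or by conjugating \eqref{metcomp10}. The starting point is the observation that for a horizontal lift $\xi_\tau$ of $\tau$, the flow of $\xi_\tau$ covers the flow of $\tau$ on $B$, so differentiating the fiber integral $\int_{X_b}\hwedge{u}{v}$ along $\tau$ amounts to differentiating the pulled-back integrand along $\xi_\tau$ on $X$; concretely, one writes $\tau\int_{X_b}\hwedge{u}{v}=\int_{X_b}\iota_{X_b}^*\,L_{\xi_\tau}\hwedge{u}{v}$. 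Crucially, because $\xi_\tau$ is tangent to $\partial X$, its flow preserves $\overline X$ fiberwise, so no boundary-variation term appears here and the differentiation under the integral sign is legitimate (the forms are smooth up to $\partial X$ and $X_b$ is relatively compact).

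Next I would apply \eqref{lieder} with $\xi=\xi_\tau$ to get
\[
L_{\xi_\tau}\hwedge{u}{v}=\hwedge{L_{\xi_\tau}u}{v}+\hwedge{u}{L_{\overline{\xi}_\tau}v}.
\]
Now $u,v$ are $(n,0)$-forms, so $\hwedge{u}{v}$ is an $(n,n)$-form and $L_{\xi_\tau}u=L^{1,0}_{\xi_\tau}u+L^{0,1}_{\xi_\tau}u$. The $(0,1)$-Lie derivative $L^{0,1}_{\xi_\tau}u=\dbar(\xi_\tau\lrcorner u)+\xi_\tau\lrcorner\dbar u$ is an $(n-1,1)$-form, and similarly the decomposition of $L_{\overline{\xi}_\tau}v$ into $L^{1,0}_{\overline{\xi}_\tau}v$ (bidegree $(n-1,1)$) and $L^{0,1}_{\overline{\xi}_\tau}v$ (bidegree $(n,0)$). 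The key bookkeeping point is that $\hwedge{\alpha}{\beta}$ with $\alpha$ an $(a,b)$-form and $\beta$ an $(r,s)$-form has bidegree $(a+s,b+r)$, so after pulling back by $\iota_{X_b}$ (which kills any component not of bidegree $(n,n)$) only the terms landing in bidegree $(n,n)$ survive. Thus $\iota_{X_b}^*\hwedge{L^{0,1}_{\xi_\tau}u}{v}$ and $\iota_{X_b}^*\hwedge{u}{L^{1,0}_{\overline{\xi}_\tau}v}$ together assemble, after an integration by parts / Stokes argument accounting for $\dbar$ and $\nabla^{1,0}$ terms, into the statement that their fiber integrals cancel or vanish — this is the content that reduces $L_{\xi_\tau}$ to $L^{1,0}_{\xi_\tau}$ on the first slot and $L^{0,1}_{\overline{\xi}_\tau}$ on the second. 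Concretely, $\iota_{X_b}^*\hwedge{\dbar(\xi_\tau\lrcorner u)}{v}=\dbar\,\iota_{X_b}^*\hwedge{\xi_\tau\lrcorner u}{v}$ (since $v$ is $\dbar$-closed up to bidegree reasons on the fiber, or one uses $\iota_{X_b}^*\hwedge{\xi_\tau\lrcorner u}{\dbar v}$ vanishing by bidegree) is exact on $X_b$, hence integrates to zero by Stokes because $X_b$ has no boundary contribution of the right type — one must check the boundary term $\int_{\partial X_b}\iota^*\hwedge{\xi_\tau\lrcorner u}{v}$ vanishes, which it does because $\xi_\tau$ is tangent to $\partial X$ and $\hwedge{\xi_\tau\lrcorner u}{v}$ restricted to $\partial X_b$ then has a vanishing pullback for dimension/tangency reasons. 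This is the step I expect to be the main obstacle: carefully tracking bidegrees, signs from the $(-1)^{p+q}$ factors, and verifying that every boundary integral produced by Stokes genuinely vanishes using only the tangency of $\xi_\tau$ to $\partial X$.

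Finally, \eqref{metcomp01} follows by the same argument applied to the antiholomorphic vector field $\overline{\xi}_\tau$, which is a horizontal lift of $\overline{\tau}$, together with the Leibniz identity \eqref{lieder} in the form $L_{\overline{\xi}_\tau}\hwedge{u}{v}=\hwedge{L_{\overline{\xi}_\tau}u}{v}+\hwedge{u}{L_{\xi_\tau}v}$, and then discarding the bidegree-wrong pieces via the same Stokes/tangency reasoning; alternatively one can take complex conjugates of \eqref{metcomp10} after swapping the roles of $u$ and $v$, using that $\overline{\hwedge{u}{v}}$ relates to $\hwedge{v}{u}$ and that $\overline{L_\eta\varphi}=L_{\overline\eta}\overline\varphi$. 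I would present the direct computation for \eqref{metcomp10} in full and then indicate that \eqref{metcomp01} is entirely parallel.
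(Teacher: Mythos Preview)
Your overall strategy coincides with the paper's: differentiate the fiber integral using the flow of $\xi_\tau$ (the paper does this by passing to the real and imaginary parts of $\xi_\tau$, whose real flows map fibers to fibers and preserve $\partial X$), then apply the Leibniz identity \eqref{lieder}, and discard the pieces of wrong type. Where you diverge from the paper is in the last step, and this is where you make the task harder than it is.

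The two ``wrong'' terms disappear for purely pointwise bidegree reasons, with no Stokes theorem and no boundary analysis required. First, $L^{1,0}_{\overline{\xi}_\tau}v$ is not an $(n-1,1)$-form as you write; it is identically zero, because $\overline{\xi}_\tau$ is of type $(0,1)$ and both $v$ and $\nabla^{1,0}v$ have no $d\bar z$'s to contract. Second, $\hwedge{L^{0,1}_{\xi_\tau}u}{v}$ has bidegree $(n-1,n+1)$, and since $X_b$ is an $n$-dimensional complex manifold, any $(n-1,n+1)$-form on it is zero. You already observe correctly that ``only the terms landing in bidegree $(n,n)$ survive'' after pullback, so your subsequent invocation of integration by parts, Stokes, and boundary tangency is unnecessary. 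Your worry about the boundary integral $\int_{\partial X_b}\iota^*\hwedge{\xi_\tau\lrcorner u}{v}$ is also misplaced: that integral need not vanish just because $\xi_\tau$ is tangent to $\partial X$, and if your argument genuinely depended on this you would have a gap. The paper avoids producing any such boundary term in the first place. For \eqref{metcomp01}, the paper repeats the same computation with $\overline{\xi}_\tau$ rather than conjugating; either works.
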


\begin{proof}
Write $\tau = \tau^1 + \sqi \tau^2$ and $\xi_{\tau} = \xi_{\tau}^1 + \sqi \xi_{\tau}^2$ for real vector fields $\tau^i$ and $\xi_{\tau}^i$. Since $\pi$ is holomorphic, we have $d\pi(\xi_{\tau}^i) = \tau^i$. Moreover, since $\xi_{\tau}$ is tangent to the boundary of $X$, so are $\xi^i_{\tau}$. Denote by $\Psi^i_t$ the flow of the vector field $\xi_{\tau}^i$. Since the form $\hwedge{u}{v}$ is smooth up to the boundary of $X$, we may differentiate under the integral sign to get
\begin{align*}
    \tau^i \int_{X_b} \hwedge{u}{v} &= \frac{\partial}{\partial t}\Big|_{t=0} \int_{\Psi^i_t(X_b)} \hwedge{u}{v} \\
                                    &= \frac{\partial}{\partial t}\Big|_{t=0} \int_{X_b} (\Psi^i_t)^*\hwedge{u}{v} = \int_{X_b} L_{\xi_{\tau}^i} \hwedge{u}{v}.
\end{align*}
Thus, using the fact that $L_{\xi_{\tau}} = L_{\xi_{\tau}^1} +\sqi L_{\xi_{\tau}^2}$ and the identity \eqref{lieder} we get
\begin{equation*}
    \tau \int_{X_b} \hwedge{u}{v} = \int_{X_b} L_{\xi_{\tau}} \hwedge{u}{v} = \int_{X_b} \hwedge{L_{\xi_{\tau}}u}{v} + \int_{X_b} \hwedge{u}{L_{\overline{\xi}_{\tau}}v}.
\end{equation*}
Note that $L^{1,0}_{\overline{\xi}_{\tau}}v = 0$ since $\overline{\xi}_{\tau}$ is a vector field of type $(0,1)$. Thus the integral of the form $\hwedge{u}{L^{1,0}_{\overline{\xi}_{\tau}}v}$ vanishes. Since the form $\hwedge{L^{0,1}_{\xi_{\tau}}u}{v}$ has bidegree $(n-1,n+1)$, its integral also vanishes. This gives us \eqref{metcomp10}.

Using identity \eqref{lieder} again, we see that
\begin{equation*}
    \ol{\tau} \int_{X_b} \hwedge{u}{v} = \int_{X_b} L_{\ol{\xi}_{\tau}} \hwedge{u}{v} = \int_{X_b} \hwedge{L_{\ol{\xi}_{\tau}}u}{v} + \int_{X_b} \hwedge{u}{L_{\xi_{\tau}}v}.
\end{equation*}
As before, $L^{1,0}_{\ol{\xi}_{\tau}}u=0$ since $\xi_{\tau}$ is a $(1,0)$-vector field and the integral of $\hwedge{u}{L^{0,1}_{\xi_{\tau}}v}$ is zero for reasons of bidegree. Thus, we get \eqref{metcomp01}.
\end{proof}

\begin{corollary}
Let $(E,h)\to X\to B$, $\tau, u$ and $v$ be as in Proposition \ref{metcomp}. Let $\xi^1_{\tau}, \xi^2_{\tau}$ be horizontal lifts of $\tau $ to $X$. Let $w_\tau$ be the vertical vector field $w_\tau = \xi^1_\tau - \xi^2_\tau$. Then 
\begin{align}
    0 &= \int_{X_b} \hwedge{L^{1,0}_{w_\tau}u}{v} + \hwedge{u}{L^{0,1}_{\ol{w}_\tau}v}, \label{zerolie10} \\
    0 &= \int_{X_b} \hwedge{L^{0,1}_{\ol{w}_\tau}u}{v} + \hwedge{u}{L^{1,0}_{w_\tau}v}. \label{zerolie01}
\end{align}
\end{corollary}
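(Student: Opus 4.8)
The plan is to obtain both identities by applying Proposition \ref{metcomp} to the two horizontal lifts $\xi^1_\tau$ and $\xi^2_\tau$ separately and subtracting the resulting equations. The first step is to record the elementary fact that the twisted Lie derivatives depend $\C$-linearly on the vector field: this is immediate from the defining formulas $L^{1,0}_\eta u = \nabla^{1,0}(\eta\lrcorner u) + \eta\lrcorner\nabla^{1,0}u$ and $L^{0,1}_{\ol\eta}u = \dbar(\ol\eta\lrcorner u) + \ol\eta\lrcorner\dbar u$ together with the bilinearity of interior multiplication. Since $d\pi(\xi^1_\tau) = d\pi(\xi^2_\tau) = \tau$, the difference $w_\tau = \xi^1_\tau - \xi^2_\tau$ is a vertical $(1,0)$-vector field on $X$, and because $\ol{\xi}^1_\tau - \ol{\xi}^2_\tau = \ol{w}_\tau$ the linearity just noted gives
\[
L^{1,0}_{\xi^1_\tau}u - L^{1,0}_{\xi^2_\tau}u = L^{1,0}_{w_\tau}u, \qquad L^{0,1}_{\ol{\xi}^1_\tau}v - L^{0,1}_{\ol{\xi}^2_\tau}v = L^{0,1}_{\ol{w}_\tau}v,
\]
and likewise with the roles of $u$ and $v$ interchanged.

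Next I would apply identity \eqref{metcomp10} once with the horizontal lift $\xi^1_\tau$ and once with $\xi^2_\tau$. Both instances have the same left-hand side $\tau\int_{X_b}\hwedge{u}{v}$, since that quantity makes no reference to the choice of lift. Subtracting the two equations therefore annihilates the left-hand side, and by the linearity observed above the right-hand side collapses to $\int_{X_b}\left(\hwedge{L^{1,0}_{w_\tau}u}{v} + \hwedge{u}{L^{0,1}_{\ol{w}_\tau}v}\right)$, which is exactly \eqref{zerolie10}. Running the identical argument with \eqref{metcomp01} in place of \eqref{metcomp10} yields \eqref{zerolie01}.

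I do not expect a genuine obstacle here: every integrand that appears is a smooth form of appropriate bidegree on $\ol{X}_b$ because $u$, $v$, $\xi^1_\tau$ and $\xi^2_\tau$ are all smooth up to the boundary, so Proposition \ref{metcomp} applies verbatim to each lift. The only point warranting a moment's care is the $\C$-linearity of the twisted Lie derivative in its vector-field argument — the reason I would isolate it at the outset — after which the corollary is a one-line subtraction.
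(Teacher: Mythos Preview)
Your proposal is correct and follows essentially the same approach as the paper: apply Proposition \ref{metcomp} with each lift and subtract. The only cosmetic difference is that for \eqref{zerolie01} the paper interchanges $u$ and $v$ in \eqref{zerolie10} and takes complex conjugates rather than repeating the subtraction argument with \eqref{metcomp01}; both routes are immediate.
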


\begin{proof}
Using the identity \eqref{metcomp10} twice we get
\begin{align*}
    \tau \int_{X_b} \hwedge{u}{v} &= \int_{X_b} \hwedge{L^{1,0}_{\xi^i_{\tau}}u}{v} + \int_{X_b} \hwedge{u}{L^{0,1}_{\overline{\xi}^i_{\tau}}v}
\end{align*}
for $i=1,2$. Subtracting the two equations gives us \eqref{zerolie10}. Interchanging $u$ and $v$ and taking complex conjugates in \eqref{zerolie10} gives us \eqref{zerolie01}.
\end{proof}

\section{Concrete Hilbert Fields}\label{concHilb}
In this section we equip a family of Bergman spaces with the structure of an iBLS field. Recall that an iBLS field is a triple $(\ch, \cl, \Sigma)$ where $\cl$ is the ambient BLS field, and $(\ch, \Sigma)$ is a formal tuning for $\ch$. First, we introduce the ambient BLS field $\cl^{\theta}$.

\subsection{The Hilbert field of square integrable sections}\label{concL}
Let $\pi: X\to B$ a family of pseudoconvex domains and $(E,h)\to X$ be a hermitian holomorphic line bundle as in Section \ref{notation}. Let $\cl\to B$ be the Hilbert field whose fiber $\cl_b$ over a point $b$ of $B$ is given by
\[
\cl_b = \ol{\left\{f\in \Gamma(X_b, \sC^{\infty}(K_{X_b}\otimes E|_{X_b})) : (f,f)_b = \sqi^{n^2} \int_{X_b} \hwedge{f}{f} < \infty \right\}},
\]
the Hilbert space completion of the smooth twisted relative canonical sections with respect to the norm $(\cdot, \cdot)_b$. We equip $\cl \to B$ with the structure of a BLS field as below.

\begin{enumerate}
    \item \emph{The sheaf $\sC^{\infty}(\cl).$} In view of Lemma \ref{twistedrelcan} we define the sections of the sheaf $\sC^{\infty}(\cl)$ over an open set $U$ to be 
    \[
    \Gamma(U,\sC^{\infty}(\cl)) := \Gamma\ipr{\ol{\pi^{-1}(U)}, \sC^{\infty}(K_{X/B}\otimes E)},
    \]
    where the sections of $K_{X/B}\otimes E$ over $\pi^{-1}(U)$ on the right are smooth up to the boundary of $X$. Given a section $f\in \Gamma\ipr{\ol{\pi^{-1}(U)}, \sC^{\infty}(K_{X/B}\otimes E)}$, we write $\mathfrak{f} = \mathfrak{i}(f)$ to say that the smooth section $\mathfrak{f}$ of $\cl$ is \emph{induced} by $f$. Conversely given a smooth section $\mathfrak{f} \in \Gamma(U,\sC^{\infty}(\cl))$, we write $f = \mathfrak{a(f)}$ to say that the twisted relative canonical section $f$ is \emph{associated} to $\mathfrak{f}$. The $\sC^{\infty}(U)$-module structure on $\Gamma(U, \sC^{\infty}(\cl)$ is the obvious one - for $r\in \sC^{\infty}(U)$ and $\mathfrak{f}\in \Gamma(U, \sC^{\infty}(\cl)$ 
    \[
    r\mathfrak{f} := \mathfrak{i}((\pi^*r) \mathfrak{a(f)}).
    \]

    \item \emph{The operator $\dbar^{\cl^{\theta}}$.} Fix a horizontal distribution $\theta \subset T_X$. Define the operator $\dbar^{\cl^{\theta}}$ by
    \[
    \dbar^{\cl^{\theta}}\mathfrak{f}(\ol{\sigma}) = \mathfrak{i}(L^{0,1}_{\ol{\xi}^{\theta}_{\sigma}}\mathfrak{a(f)})
    \]
    for $\mathfrak{f} \in \Gamma(U,\sC^{\infty}(\cl)$ and $\sigma \in \Gamma(U, T^{1,0}_B)$.
    From now on, we denote the Hilbert field $\cl$ by $\cl^{\theta}$ to emphasize the choice of the horizontal distribution $\theta \subset T_X$ used to define the operator $\dbar^{\cl^{\theta}}$.
        
    \item \emph{A connection on $\cl^{\theta}$.} 
    Let $\mathfrak{f} \in \Gamma(U,\sC^{\infty}(\cl^{\theta}))$ and $\sigma \in \Gamma(U, T^{1,0}_B)$. We define the $(1,0)$-part of the connection $\nabla^{\cl^\theta (1,0)}$ by 
    \[
    \nabla^{\cl^\theta (1,0)}_{\sigma}\mathfrak{f} = \mathfrak{i}\ipr{L^{1,0}_{\xi^\theta_\sigma}\mathfrak{a(f)}}.
    \]
    If a complex vector field $\tau$ on $U$ is written $\tau = \tau^{1,0}+\tau^{0,1}$ as a sum of its $(1,0)$ and $(0,1)$ components then we define
    \[
    \nabla^{\cl^\theta}_{\tau}\mathfrak{f} := \nabla^{\cl^\theta (1,0)}_{\tau^{1,0}}\mathfrak{f} + \dbar^{\cl^\theta}\mathfrak{f}(\tau^{0,1}).
    \]

\end{enumerate}

\begin{proposition}
With the smooth structure given by the sheaf $\sC^{\infty}(\cl^{\theta})$ and the connection $\nabla^{\cl^\theta}$ as above the almost holomorphic Hilbert field $(\cl^{\theta}, \dbar^{\cl^\theta})$ is a BLS field.
\end{proposition}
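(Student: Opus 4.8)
The plan is to verify, one at a time, the defining properties of a BLS field for $(\cl^\theta,\dbar^{\cl^\theta})$: that $\sC^\infty(\cl^\theta)$ is a smooth structure (density of the stalk in the fibre), that the $L^2$-metric is smooth, that $\dbar^{\cl^\theta}$ satisfies the Leibniz rule \eqref{Leibnizdbar}, that $\nabla^{\cl^\theta}$ satisfies the Leibniz rules \eqref{Leibniz} and is compatible with the metric \eqref{compatibleconnection}, and finally that $\nabla^{\cl^\theta(0,1)}=\dbar^{\cl^\theta}$. The last identity is built into the definition $\nabla^{\cl^\theta}_\tau=\nabla^{\cl^\theta (1,0)}_{\tau^{1,0}}+\dbar^{\cl^\theta}(\tau^{0,1})$, so nothing is needed there; everything else reduces to the Cartan calculus of twisted Lie derivatives recalled above together with Proposition \ref{metcomp}.

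Before checking any axiom I would record a preliminary well-definedness statement: the formulas for $\dbar^{\cl^\theta}\mathfrak{f}(\ol\sigma)$ and $\nabla^{\cl^\theta(1,0)}_\sigma\mathfrak{f}$ descend to the relative-canonical quotient. Concretely, if $u-u'=\sum_j \psi_j\wedge\pi^*dt^j$ for $(n-1,0)$-forms $\psi_j$, then $\iota_{X_b}^*L^{1,0}_{\xi^\theta_\sigma}(u-u')=0$ and $\iota_{X_b}^*L^{0,1}_{\overline{\xi^\theta_\sigma}}(u-u')=0$ for every $b$; moreover $L^{1,0}_{\xi^\theta_\sigma}u$ and $L^{0,1}_{\overline{\xi^\theta_\sigma}}u$ are again $(n,0)$-forms smooth up to $\partial X$, hence genuinely lie in $\Gamma(U,\sC^\infty(\cl^\theta))$; and $\nabla^{\cl^\theta}$ is additive and $\sC^\infty(U)$-linear in the vector-field slot. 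All of this is term-by-term Cartan calculus built on three structural facts about a horizontal lift: $d(\pi^*dt^j)=0$; the contraction rule $\xi^\theta_\sigma\lrcorner\pi^*\alpha=\pi^*(\sigma\lrcorner\alpha)$ for a form $\alpha$ on $B$, so in particular $\overline{\xi^\theta_\sigma}\lrcorner\pi^*dt^j=0$ by type; and $\nabla^{1,0}$ being a derivation. The cross-terms cancel in pairs with signs $(-1)^{n-1}+(-1)^n=0$, and whatever survives carries a factor of $\pi^*(\text{a form on }B)$ and hence restricts to $0$ on each fibre. This bookkeeping, though elementary, is the fussiest part of the argument.

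With well-definedness in hand the Leibniz rules are short. For $\dbar^{\cl^\theta}$: since $\overline{\xi^\theta_\sigma}$ has type $(0,1)$ and $\mathfrak{a}(r\mathfrak{f})=(\pi^*r)\,\mathfrak{a(f)}$ has type $(n,0)$, the contraction $\overline{\xi^\theta_\sigma}\lrcorner(\pi^*r)\mathfrak{a(f)}$ vanishes, so $L^{0,1}_{\overline{\xi^\theta_\sigma}}\bigl((\pi^*r)\mathfrak{a(f)}\bigr)=\overline{\xi^\theta_\sigma}\lrcorner\dbar\bigl((\pi^*r)\mathfrak{a(f)}\bigr)=\pi^*\bigl(\dbar r(\ol\sigma)\bigr)\,\mathfrak{a(f)}+(\pi^*r)\,L^{0,1}_{\overline{\xi^\theta_\sigma}}\mathfrak{a(f)}$, and applying $\mathfrak{i}$ gives \eqref{Leibnizdbar}. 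For $\nabla^{\cl^\theta(1,0)}$ I would first note the scalar identity $L^{1,0}_\xi(gv)=g\,L^{1,0}_\xi v+(\xi g)\,v$, valid for any $(1,0)$-vector field $\xi$, $(n,0)$-form $v$, and function $g$ (the cross-terms $\partial g\wedge(\xi\lrcorner v)$ cancel), then specialize $\xi=\xi^\theta_\sigma$, $g=\pi^*r$, $v=\mathfrak{a(f)}$, use $\xi^\theta_\sigma(\pi^*r)=\pi^*(\sigma r)$, and apply $\mathfrak{i}$ to obtain the remaining Leibniz identity in \eqref{Leibniz}. For the smooth-structure axiom I would argue density as follows: each $\ol{X_b}$ is a compact manifold with smooth boundary, so by local triviality of $\wt\pi$ every $s\in\Gamma\bigl(\ol{X_b},\sC^\infty(K_{X_b}\otimes E|_{X_b})\bigr)$ extends to a section of $K_{X/B}\otimes E$ near $\ol{X_b}$ in $\ol X$, smooth up to $\partial X$, which under the isomorphism $\iota_b$ of Lemma \ref{twistedrelcan} represents $s$; and these $s$ are dense in $\cl_b$, the $L^2$-completion of smooth sections on $X_b$, because sections smooth up to the boundary are $L^2$-dense on a bounded domain with smooth boundary.

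The analytic heart — smoothness of the metric and metric compatibility of $\nabla^{\cl^\theta}$ — is exactly what Proposition \ref{metcomp} supplies, so I would simply invoke it. For $\mathfrak{f}=\mathfrak{i}(u)$, $\mathfrak{g}=\mathfrak{i}(v)$ one has $(\mathfrak{f}(b),\mathfrak{g}(b))_{\cl_b}=\sqi^{n^2}\int_{X_b}\hwedge{u}{v}$, and, reading $\nabla^{\cl^\theta(1,0)}_\tau\mathfrak{f}=\mathfrak{i}(L^{1,0}_{\xi^\theta_\tau}u)$ and $\dbar^{\cl^\theta}\mathfrak{g}(\ol\tau)=\mathfrak{i}(L^{0,1}_{\overline{\xi^\theta_\tau}}v)$ off the definitions, identity \eqref{metcomp10} becomes
\[
\tau\,\ipr{\mathfrak{f},\mathfrak{g}}=\sqi^{n^2}\Bigl(\int_{X_b}\hwedge{L^{1,0}_{\xi^\theta_\tau}u}{v}+\int_{X_b}\hwedge{u}{L^{0,1}_{\overline{\xi^\theta_\tau}}v}\Bigr)=\ipr{\nabla^{\cl^\theta}_\tau\mathfrak{f},\mathfrak{g}}+\ipr{\mathfrak{f},\nabla^{\cl^\theta}_{\ol\tau}\mathfrak{g}}
\]
for a $(1,0)$-vector field $\tau$, while \eqref{metcomp01} gives the $(0,1)$ case; $\C$-linearity in $\tau$ then yields \eqref{compatibleconnection} for an arbitrary complex vector field. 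Smoothness of $b\mapsto(\mathfrak{f}(b),\mathfrak{g}(b))_{\cl_b}$ follows by iterating Proposition \ref{metcomp}: each differentiation replaces the integrand by a new pairing $\hwedge{\cdot}{\cdot}$ of $(n,0)$-forms smooth up to $\partial X$, namely twisted Lie derivatives of $u$ and $v$ along horizontal lifts, so all iterated directional derivatives exist and are continuous, i.e.\ $b\mapsto(\mathfrak{f}(b),\mathfrak{g}(b))_{\cl_b}\in\sC^\infty(U)$. Assembling the pieces, $(\cl^\theta,\dbar^{\cl^\theta})$ is an almost holomorphic Hilbert field, it is a smooth Hilbert field with metric-compatible connection, and $\nabla^{\cl^\theta(0,1)}=\dbar^{\cl^\theta}$; hence it is a BLS field. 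The one genuine obstacle I anticipate is the clerical burden of the preliminary well-definedness check — the substantive analytic step, Proposition \ref{metcomp}, is already available.
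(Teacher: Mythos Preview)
Your proposal is correct and follows essentially the same approach as the paper: verify well-definedness on the relative-canonical quotient via Cartan calculus, check the Leibniz rules for $\dbar^{\cl^\theta}$ and $\nabla^{\cl^\theta(1,0)}$, and invoke Proposition \ref{metcomp} for metric compatibility. The only cosmetic differences are that the paper obtains density of the stalk from compactly supported sections (rather than extension of sections smooth up to $\partial X_b$) and deduces smoothness of the metric directly from smoothness of the integrand up to $\partial X$ (rather than by iterating Proposition \ref{metcomp}).
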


\begin{proof}
The density of the image of the stalk $\sC^{\infty}(\cl^\theta)_b$ in $\cl^{\theta}_b$ under the evaluation-at-$b$ map follows from the density of compactly supported smooth sections of $K_{X_b}\otimes E|_{X_b}\to X_b$ in $\cl_b$. 

We work in a small enough open set $U\subset B$ so that $(t^1, \cdots, t^m)$ are local coordinates for $B$ over $U$. Let $\mathfrak{f,g}\in \Gamma(U, \sC^{\infty}(\cl^{\theta}))$ and $\sigma \in \Gamma(U, T^{1,0}_B)$. We check that the almost complex structure $\dbar^{\cl^\theta}$ is well-defined, i.e. independent of the choice of a twisted $(n,0)$-form representing $\mathfrak{a(f)}$. Toward this end, suppose that $u_1$ and $u_2$ are two $(n,0)$-forms that represent $\mathfrak{a(f)}$. Then $u_1-u_2= \gamma_j \wedge\pi^*dt^j$ for some $E$-valued $(n-1,0)$-forms $\gamma_j$. If we write $\sigma = \sigma^j\frac{\partial}{\partial t^j}$ in these coordinates then 
\begin{align*}
    L^{0,1}_{\ol{\xi}^\theta_\sigma}(u_1-u_2) &= L^{0,1}_{\ol{\xi}^\theta_\sigma}\gamma_j \wedge \pi^*dt^j.
\end{align*}
This shows that $L^{0,1}_{\ol{\xi}^\theta_\sigma}(u_1-u_2)$ lies in the ideal generated by the $(1,0)$-forms $\{\pi^*dt^j\}$, so $L^{0,1}_{\ol{\xi}^\theta_\sigma} u_1$ and $L^{0,1}_{\ol{\xi}^\theta_\sigma} u_2$ represent the same twisted relative canonical section.
To check that $\dbar^{\cl^\theta}\mathfrak{f}(\ol{\sigma})$ is tensorial in the argument $\ol{\sigma}$, note that if $r\in \sC^{\infty}(U)$ then $(\pi^*r)\ol{\xi}^{\theta}_{\sigma}$ is the horizontal lift of the $(0,1)$-vector field $r\ol{\sigma}$ with respect to the distribution $\theta$. If $u$ is a twisted $(n,0)$-form representing $\mathfrak{a(f)}$ then
\begin{align*}
    \dbar^{\cl^\theta}\mathfrak{f}(\ol{\sigma}) = \mathfrak{i}\left(L^{0,1}_{\ol{\xi}^\theta_{\ol{r}\sigma}} u\right) = \mathfrak{i}\left( (\pi^*r)(\ol{\xi}^\theta_{\sigma}\lrcorner \dbar u)\right) = \mathfrak{i}\left( (\pi^*r)\mathfrak{a}(\dbar^{\cl^\theta}\mathfrak{f}(\ol\sigma))\right) = r\dbar^{\cl^\theta}\mathfrak{f}(\ol\sigma).
\end{align*}
To check the Leibniz rule, for $r \in \sC^{\infty}(U)$ we have
\begin{align*}
    \dbar^{\cl^\theta}(r\mathfrak{f})(\ol\sigma) = \mathfrak{i}\left( L^{0,1}_{\ol{\xi}^{\theta}_\sigma}\mathfrak{a(f)} \right) = \mathfrak{i}\left( \ol{\xi}^{\theta}_\sigma\lrcorner \dbar((\pi^*r)u) \right) &= \mathfrak{i}\left( \pi^*(\ol{\sigma} r)u + (\pi^*r) \ol{\xi}^{\theta}_\sigma\lrcorner\dbar u \right) = \ol{\sigma}r \mathfrak{f} + r\dbar^{\cl^\theta}\mathfrak{f}(\ol\sigma).
\end{align*}
The smoothness of $b\mapsto (\mathfrak{f},\mathfrak{g})_b$ follows from the fact that $\mathfrak{a(f)}$, $\mathfrak{a(g)}$ and the metric $h$ for $E$ are smooth up to the boundary of $X$. Thus, the metric for $\cl^\theta$ is smooth. 
We verify that the $(1,0)$-part of the connection $\nabla^{\cl^\theta}$ is well-defined. Indeed, let $u_1$ and $u_2$ be two $(n,0)$-forms that represent $\mathfrak{a(f)}$ so that $u_1-u_2= \gamma_j \wedge\pi^*dt^j$. Then
\begin{align*}
    L^{1,0}_{\xi^\theta_\sigma}(u_1-u_2) &= L^{1,0}_{\xi^\theta_\sigma}\gamma_j \wedge \pi^*dt^j + \gamma_j\wedge L^{1,0}_{\xi^\theta_\sigma}(\pi^*dt^j) =  L^{1,0}_{\xi^\theta_\sigma}\gamma_j \wedge \pi^*dt^j + \gamma_j\wedge \pi^*\partial \sigma^j.
\end{align*}
This shows that $L^{1,0}_{\xi^\theta_\sigma}(u_1-u_2)$ lies in the ideal generated by the $(1,0)$-forms $\{\pi^*dt^j\}$ and so $L^{1,0}_{\xi^\theta_\sigma} u_1$ and $L^{1,0}_{\xi^\theta_\sigma} u_2$ represent the same twisted relative canonical section. It can be verified that $\nabla^{\cl^\theta(1,0)}$ satisfies the Leibniz rule in a way similar to the verification of the Leibniz rule for $\dbar^{\cl^\theta}$. To check that the quantity $\nabla^{\cl^\theta(1,0)}_{\sigma}\mathfrak{f}$ is tensorial in the argument $\sigma$ it suffices to show that $L^{1,0}_{\xi^\theta_{r\sigma}} u = \mathfrak{a}(\nabla^{\cl^\theta(1,0)}_{r\sigma}\mathfrak{f})$ and $(\pi^*r)L^{1,0}_{\xi^\theta_{\sigma}} u = \mathfrak{a}(r\nabla^{\cl^\theta(1,0)}_\sigma\mathfrak{f})$ represent the same twisted canonical section. Since $\xi^{\theta}_{r\sigma} = (\pi^*r)\xi^{\theta}_\sigma$ we have
\begin{align*}
    L^{1,0}_{\xi^\theta_{r\sigma}}u &= (\pi^*r) (\xi^\theta_\sigma \lrcorner \nabla^{1,0}u) + \nabla^{1,0}( (\pi^*r) \xi^\theta_\sigma\lrcorner u) \\
    &= (\pi^*r) \left(\xi^\theta_\sigma \lrcorner \nabla^{1,0}u + \nabla^{1,0} (\xi^\theta_\sigma\lrcorner u)\right) + \pi^*\partial r \wedge (\xi^\theta_\sigma\lrcorner u) \\
    &= (\pi^*r) L^{1,0}_{\xi^\theta_\sigma}u + (\pi^*\partial r )\wedge (\xi^\theta_\sigma\lrcorner u).
\end{align*}
Since $\iota_{X_b}^*(\pi^*\partial r) = 0$, this shows that $L^{1,0}_{\xi^\theta_{r\sigma}}u$ and $(\pi^*r) L^{1,0}_{\xi^\theta_\sigma}u$ represent the same twisted relative canonical section.
Finally metric compatibility \eqref{compatibleconnection} of the connection $\nabla^{\cl^\theta} = \dbar^{\cl^\theta}+\nabla^{\cl^\theta(1,0)}$ follows from the Proposition \ref{metcomp}, so $\nabla^{\cl^\theta}$ is the BLS-Chern connection for $\cl^\theta$. This shows that $\cl^{\theta}$ is a BLS-field.
\end{proof}

\subsection{Field of Bergman spaces}\label{fieldofberg}
Let $\ch\to B$ be the Hilbert subfield of $\cl^\theta$ such that the fiber $\ch_b\subset \cl^\theta_b$ is the closed subspace containing holomorphic twisted canonical sections, i.e.,
\[
\ch_b = \left\{ f\in \Gamma\ipr{X_b, \mathcal{O}(K_{X_b}\otimes E|_{X_b})} : \sqi^{n^2} \int_{X_b} \hwedge{f}{f} < +\infty\right\}.
\]
Define a sheaf $\sC^{\infty}(\ch)$ as follows. Given an open set $U$ of $B$, let 
\[
\Gamma(U, \sC^{\infty}(\ch)) := \{ \mathfrak{f}\in \Gamma(U,\sC^{\infty}(\cl^\theta)) : \mathfrak{f}(b) \in \ch_b \quad  \text{for all } b\in U\}.
\]
Thus $\Gamma(U, \sC^{\infty}(\ch))$ consists of sections $\mathfrak{f}$ such that the restriction $\iota_b(\mathfrak{a(f)})$ of $\mathfrak{a(f)}$ to $X_b$ is a holomorphic section of $K_{X_b}\otimes E|_{X_b} \to X_b$ that is smooth up to the boundary of $X_b$. For a general family of pseudoconvex domains, the image of the stalk $\sC^{\infty}(\ch)_b$ under the evaluation map need not be dense in $\ch_b$. Even when the image of the stalk $\sC^{\infty}(\ch)_b$ is dense in $\ch_b$, $\ch$ may not be a BLS field as it may not be possible to equip $\ch$ with a BLS-Chern connection. Indeed, requiring the existence of a BLS-Chern connection imposes strong restrictions on the fibers $X_b$ (see \cite{W2017} for instance, where a Chern connection is defined). For this reason we equip $\ch$ with the structure of an iBLS field and exploit the ambient BLS field $\cl^{\theta}$ (which has a BLS-Chern connection) to define the curvature of $\ch$.

\subsubsection{Concrete iBLS fields}\label{iBLSconc}
Let $\cl^\theta$ be the BLS field introduced in Section \ref{concL}. Then the maximal substalk bundle $\wt{\ch}_{\cl^\theta}$ of $\ch$ relative to $\cl^\theta$ is the collection of stalks $\{ \wt{\ch}_{\cl^\theta,b}\}_{b\in B}$ where
\[
 \wt{\ch}_{\cl^\theta,b} := \{\mathfrak{f}\in \sC^{\infty}(\cl^\theta)_b :\iota_b \mathfrak{a(f)} \in \ch_b \}. 
\]
Recall that $\iota_b: (K_{X/B}\otimes E)|_{X_b}\to K_{X_b}\otimes E|_{X_b}$ is the restriction isomorphism in Lemma \ref{twistedrelcan}. For $b\in B$, we denote by $\wt{\ch}_{\cl^\theta, b}(b)$ the image of $\wt{\ch}_{\cl^\theta,b}$ under the evaluation map, i.e.,
\[
 \wt{\ch}_{\cl^\theta,b}(b) := \{\iota_b \mathfrak{a(f)} \in \ch_b:\mathfrak{f}\in \wt{\ch}_{\cl^\theta,b} \}. 
\]
Let $\Sigma^\theta \subset \wt{\ch}_{\cl^\theta,b}$ be the subtalk bundle whose fiber $\Sigma^{\theta}_b$ over a point $b\in B$ is 
\begin{equation}\label{concsigma}
\Sigma^\theta_b := \{\mathfrak{f} \in \wt{\ch}_{\cl^\theta, b} : \iota_{X_b}^*L^{0,1}_{\ol{\xi}^\theta_\tau}\mathfrak{a(f)} \in \ch_b \text{ and } \iota_{X_b}^* L^{1,0}_{\xi^\theta_\tau}\dbar \mathfrak{a(f)}=0 \text{ for all } \tau \in \sC^{\infty}(T^{1,0}_B)_b \}.
\end{equation}
\begin{remark}
Note that the first condition in \eqref{concsigma} means that $\dbar^{\cl^\theta}\mathfrak{f}(\ol\tau) \in \wt{\ch}_{\cl^{\theta},b}$ so that $(\ch, \Sigma^{\theta})$ is an infinitesimally holomorphic subfield of $\cl^{\theta}$. It can be checked that the second condition in \eqref{concsigma} is independent of the choice of representative $u$ of $\mathfrak{a(f)}$. To this end let $t^1,\dots, t^m$ be coordinates for $B$ near $b$ and let $\tau\in \sC^{\infty}(T^{1,0}_b)_b$ be written as $\tau^j\frac{\partial}{\partial t^j}$ in these coordinates. It suffices to show that if $\alpha = \gamma_j\wedge dt^j$, then $\iota_{X_b}^*L^{1,0}_{\xi^\theta_\tau}\dbar \alpha = 0$. Indeed, 
\[
\iota_{X_b}^*L^{1,0}_{\xi^\theta_\tau} \dbar \alpha  = \iota_{X_b}^*L^{1,0}_{\xi^\theta_\tau} (\dbar\gamma_j \wedge \pi^*dt^j) = \iota_{X_b}^*\left(L^{1,0}_{\xi^\theta_\tau} (\dbar\gamma_j) \wedge \pi^*dt^j + \dbar\gamma_j \wedge \pi^*\partial \tau_j\right) =0. 
\]
Thus, the second condition in \eqref{concsigma} means that $\mathfrak{a(f)}$ is represented by a twisted $(n,0)$-form $u$ so that $\iota_{X_b}^*L^{1,0}_{\xi^\theta_\tau}\dbar u=0$ for all $\tau \in \sC^{\infty}(T^{1,0}_B)_b$. Since the quantities $\dbar^{\cl^{\theta}}\mathfrak{f}(\ol\tau)$ and $\iota_{X_b}^*L^{1,0}_{\xi^\theta_\tau}\dbar \mathfrak{a(f)}$ are tensorial in the argument $\tau$, the conditions in \eqref{concsigma} only need to be checked for a finite number of vector fields that span $T^{1,0}_{B,b}$.
\end{remark}

\begin{proposition}\label{HiBLSprop}
Let $\ch \to B$ be the field of Bergman spaces, let $\cl^{\theta}$ be the BLS field as above. Let $\Sigma^\theta$ be the tuning defined by \eqref{concsigma} and suppose that $\Sigma^\theta_{b}(b)$ is dense in $\wt{\ch}_{\cl^\theta,b}(b)$ for all $b\in B$. Then $(\ch, \Sigma^{\theta})$ is a formal tuning. Moreover, for all $b\in B$ and for all $\sigma \in \sC^{\infty}(T^{1,0}_B)_b$ the domain of the second fundamental map $N^{\theta}_b(\sigma)$ is $\Sigma_b(b)$ and 
\[
N_b^{\theta}(\sigma)\mathfrak{f}(b) = \mathfrak{i}\left(P_b^{\perp}L^{1,0}_{\xi^\theta_{\sigma}}\mathfrak{a(f)} \right).
\]
\end{proposition}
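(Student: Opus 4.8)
The plan is to establish the two conditions that make $(\ch,\Sigma^\theta)$ a formal tuning of $\ch$ in $\cl^\theta$, and then to read off the formula for $N_b^\theta$ from the definition of the second fundamental map. Condition (a) of a formal tuning — density of $\Sigma^\theta_b(b)$ in $\wt{\ch}_{\cl^\theta,b}(b)$ — is precisely the hypothesis, since $\Sigma^\theta_b\subset\wt{\ch}_{\cl^\theta,b}$ by construction. The content is condition (b): if $\mathfrak{f}\in\Sigma^\theta_b$ and $\mathfrak{f}(b)=0$, then $(\nabla^{\cl^\theta}_\sigma\mathfrak{f})(b)\in\ch_b$ for every $\sigma\in\sC^{\infty}(T_B\otimes\C)_b$. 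Writing $\sigma=\sigma^{1,0}+\sigma^{0,1}$, the $(0,1)$-part contributes $(\dbar^{\cl^\theta}\mathfrak{f}(\sigma^{0,1}))(b)$, which lies in $\ch_b$ by the first condition built into \eqref{concsigma} (no use of $\mathfrak{f}(b)=0$ is needed here). So everything reduces to showing that $(\nabla^{\cl^\theta(1,0)}_{\sigma^{1,0}}\mathfrak{f})(b)=\iota_{X_b}^*(L^{1,0}_{\xi^\theta_{\sigma^{1,0}}}u)$ is a holomorphic section on $X_b$, where $u$ is any $(n,0)$-form representing $\mathfrak{a(f)}$ (Lemma \ref{twistedrelcan}); its $L^2$-norm is automatically finite since $u$ and the lift are smooth up to $\partial X$, so only $\dbar$-closedness remains.

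I would first reduce to the case that $\sigma$ is \emph{holomorphic}. In local holomorphic coordinates $(t^1,\dots,t^m)$ on $B$ near $b$ write $\sigma^{1,0}=\sum_j\sigma^j\,\partial/\partial t^j$; using $\sC^{\infty}(B)$-linearity of the horizontal lift and the Leibniz rule $L^{1,0}_{(\pi^*r)\eta}u=(\pi^*r)L^{1,0}_\eta u+(\pi^*\partial r)\wedge(\eta\lrcorner u)$ (already computed in Section \ref{concL}), and noting that $\iota_{X_b}^*(\pi^*\partial\sigma^j)=0$ because $\pi^*\partial\sigma^j$ annihilates vectors tangent to $X_b$, one gets $\iota_{X_b}^*(L^{1,0}_{\xi^\theta_{\sigma^{1,0}}}u)=\sum_j\sigma^j(b)\,\iota_{X_b}^*(L^{1,0}_{\xi^\theta_{\partial/\partial t^j}}u)$. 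So it suffices to treat $\sigma=\partial/\partial t^j$. The payoff is that then $d\pi(\dbar\xi^\theta_\sigma)=\dbar(\pi^*\sigma)=0$, so $\dbar\xi^\theta_\sigma$ takes values in the vertical bundle $T^{1,0}_{X/B}$. Now apply Proposition \ref{dbarPL} with $Y=X_b$, $Z=X$, $\xi=\xi^\theta_\sigma$, which writes $\dbar\,\iota_{X_b}^*(L^{1,0}_{\xi^\theta_\sigma}u)$ as $\iota_{X_b}^*$ of $\dbar\xi^\theta_\sigma\lrcorner\nabla^{1,0}u-(\xi^\theta_\sigma\lrcorner\Theta^E)\wedge u-\nabla^{1,0}(\dbar\xi^\theta_\sigma\lrcorner u)+L^{1,0}_{\xi^\theta_\sigma}\dbar u$; I claim every term dies after restriction. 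The term $(\xi^\theta_\sigma\lrcorner\Theta^E)\wedge u$ pulls back to $\iota_{X_b}^*(\xi^\theta_\sigma\lrcorner\Theta^E)\wedge\iota_{X_b}^*u=0$, since $\iota_{X_b}^*u=0$ is exactly what $\mathfrak{f}(b)=0$ means. For $\dbar\xi^\theta_\sigma\lrcorner\nabla^{1,0}u$ and $\nabla^{1,0}(\dbar\xi^\theta_\sigma\lrcorner u)$, verticality of $\dbar\xi^\theta_\sigma$ means each of its component vector fields is tangent to $X_b$, so contraction commutes with $\iota_{X_b}^*$; combined with $\iota_{X_b}^*u=0$ and $\iota_{X_b}^*\nabla^{1,0}u=\nabla^{1,0}\iota_{X_b}^*u=0$ (the Chern connection of $E|_{X_b}$ is the restriction of that of $E$), this forces $\iota_{X_b}^*(\dbar\xi^\theta_\sigma\lrcorner u)=\iota_{X_b}^*(\dbar\xi^\theta_\sigma\lrcorner\nabla^{1,0}u)=0$. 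Finally $\iota_{X_b}^*(L^{1,0}_{\xi^\theta_\sigma}\dbar u)=0$ is exactly the second condition in \eqref{concsigma} with $\tau=\sigma$. Hence $\iota_{X_b}^*(L^{1,0}_{\xi^\theta_\sigma}u)$ is $\dbar$-closed, so it lies in $\ch_b$, condition (b) holds, and $(\ch,\Sigma^\theta)$ is a formal tuning.

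For the remaining assertion: once $(\ch,\Sigma^\theta)$ is known to be formal, the second fundamental map of a formal subfield is, by definition, the well-defined collection given by $N_b^\theta(\sigma)(\mathfrak{f}(b))=(\cp^\perp\nabla^{\cl^\theta}_\sigma\mathfrak{f})(b)$ with domain $\Sigma_b(b):=\Sigma^\theta_b(b)$. For $\sigma\in\sC^{\infty}(T^{1,0}_B)_b$ one has $\sigma^{0,1}=0$, so $\nabla^{\cl^\theta}_\sigma=\nabla^{\cl^\theta(1,0)}_\sigma$, and unwinding the definition of $\nabla^{\cl^\theta(1,0)}$ together with Lemma \ref{twistedrelcan} gives $N_b^\theta(\sigma)\mathfrak{f}(b)=\cp^\perp_b(\iota_{X_b}^*L^{1,0}_{\xi^\theta_\sigma}\mathfrak{a(f)})=P_b^\perp\,\iota_{X_b}^*L^{1,0}_{\xi^\theta_\sigma}\mathfrak{a(f)}$, which is what is denoted $\mathfrak{i}(P_b^\perp L^{1,0}_{\xi^\theta_\sigma}\mathfrak{a(f)})$ in the statement. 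I expect the main obstacle to be carrying out the Proposition \ref{dbarPL} computation cleanly — in particular, recognizing that the verticality of $\dbar\xi^\theta_\sigma$ (which is the reason one must first reduce to holomorphic $\sigma$) is exactly what kills the two ``curvature-of-the-lift'' terms, and that the second condition imposed in \eqref{concsigma} was built in precisely so that the last term vanishes; everything else is bookkeeping with pullbacks and the Leibniz rule.
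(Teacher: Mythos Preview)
Your proof is correct and follows essentially the same approach as the paper: reduce to holomorphic $\sigma$ by tensoriality, apply Proposition \ref{dbarPL}, and kill the four terms using verticality of $\dbar\xi^\theta_\sigma$, the hypothesis $\iota_{X_b}^*u=0$, and the second condition in \eqref{concsigma}. The only cosmetic difference is that for the term $\dbar\xi^\theta_\sigma\lrcorner\nabla^{1,0}u$ the paper observes directly that $\iota_{X_b}^*\nabla^{1,0}u=0$ because $\nabla^{1,0}u$ is an $(n+1,0)$-form restricted to an $n$-dimensional fiber (no need for $\mathfrak f(b)=0$ there), whereas you route through $\iota_{X_b}^*\nabla^{1,0}u=\nabla^{1,0}\iota_{X_b}^*u=0$; both are valid.
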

\begin{proof}
Since $(\ch, \Sigma^\theta)$ is an infinitesimally almost holomorphic subfield, we have $(\cp^{\perp}\dbar^{\cl^\theta}\mathfrak{f}(\ol\sigma))(b)=0$ for all $\sigma \in \sC^{\infty}(T^{1,0}_B)_b$ and all $\mathfrak{f}\in \Sigma^{\theta}_b$. Consequently to check formality of $\Sigma^\theta$ it remains to verify that $(\cp^{\perp}\nabla^{\cl^\theta(1,0)}_\sigma\mathfrak{f})(b)=0$ for all $\sigma \in \sC^{\infty}(T^{1,0}_B)_b$ whenever $\mathfrak{f}(b)=0$. Since $\nabla^{\cl^\theta(1,0)}_\sigma\mathfrak{f}$ is tensorial in the argument $\sigma$, we may assume $\sigma$ is a holomorphic vector field. After unravelling the definitions, the vanishing of $\cp^{\perp} \nabla^{\cl^\theta(1,0)}_\sigma \mathfrak{f}$ at $b$ is equivalent to $P_b^{\perp}(\iota_{X_b}^*L^{1,0}_{\xi^\theta_\sigma}u)=0$ where $u$ is a twisted $(n,0)$-form representing $\mathfrak{a(f)}$ and $P_b$ denotes the Bergman projection. Thus we want to show that $\iota_{X_b}^*L^{1,0}_{\xi^\theta_\sigma}u$ is a holomorphic section of $K_{X_b}\otimes E|_{X_b} \to X_b$. By Proposition \ref{dbarPL},
\begin{align}\label{HiBLS}
  \dbar \iota_{X_b}^* L^{1,0}_{\xi^\theta_\sigma}u &=  \iota_{X_b}^* \left( \dbar\xi^\theta_\sigma \lrcorner \nabla^{1,0}u - (\xi^\theta_\sigma\lrcorner \Theta^E)\wedge u  - \nabla^{1,0}(\dbar\xi^\theta_\sigma \lrcorner u) + L^{1,0}_{\xi^\theta_\sigma}\dbar u \right).
\end{align}
Since $\sigma$ is a holomorphic $(1,0)$-vector field, $\dbar\xi^\theta_\sigma$ is a vertical vector field valued $(0,1)$-form. Therefore 
\[
\iota_{X_b}^* (\dbar \xi^\theta_\sigma \lrcorner \nabla^{1,0}u) = \dbar (\xi^{\theta}_\sigma|_{X_b})\lrcorner \iota_{X_b}^*\nabla^{1,0}u=0,
\]
since $\nabla^{1,0}u$ is an $(n+1,0)$-form. The second term $\iota_{X_b}^* ((\xi^\theta_{\sigma}\lrcorner \Theta^E)\wedge u)=0$ because $\iota_{X_b}^*u=0$. For the third term, since $\dbar\xi^\theta_\sigma$ is vertical 
\begin{align*}
\iota_{X_b}^*(\nabla(\dbar\xi^\theta_\sigma \lrcorner u)) &= \nabla^{1,0}\iota_{X_b}^*(\dbar \xi^\theta_\sigma\lrcorner u) = \nabla^{1,0} \left(\dbar(\xi^\theta_\sigma|_{X_b}) \lrcorner \iota_{X_b}^*u\right) = 0,
\end{align*}
since $\iota_{X_b}^*u =0$.
Finally, we see from condition \eqref{concsigma} that the last term in equation \eqref{HiBLS} is zero. 
\end{proof}


\begin{lemma}
Let $\ch \to B$ be the field of Bergman spaces, let $\cl^\theta$ be the ambient BLS field and let $\Sigma^\theta$ be the tuning defined by \eqref{concsigma}. Then $\Theta^\ch$ is a (possibly nonsmooth) opeartor valued $(1,1)$-form.
\end{lemma}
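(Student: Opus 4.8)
The plan is to reduce the assertion to Proposition \ref{iBLS11formcrit}. Recall that $(\ch,\cl^\theta,\Sigma^\theta)$ is an iBLS field: Proposition \ref{HiBLSprop} supplies formality of the tuning and the formula for the second fundamental map, and the density and continuity axioms of Definition \ref{iBLSdef} are the standing hypotheses of this subsection. Hence $\Theta^\ch$ is defined by \eqref{iBLScurv}, and Lemma \ref{iBLSlemma} already shows that each $\Theta^\ch_{\sigma(b)\tau(b)}$ is a well-defined, densely defined, closable operator on $\ch_b$ depending only on the point values $\sigma(b),\tau(b)$. So the only thing left is to prove that the coefficients are of type $(1,1)$, and by Proposition \ref{iBLS11formcrit} it suffices to check that $(\ch,\Sigma^\theta)$ is an \emph{infinitesimally holomorphic} subfield of $\cl^\theta$, i.e.\ that $(\dbar^{\cl^\theta})^2\mathfrak f(\ol\sigma,\ol\tau)(b)=0$ for every $b$, every $\mathfrak f\in\Sigma^\theta_b$, and every pair $\sigma,\tau\in\sC^\infty(T^{1,0}_B)_b$.

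To verify this I would unwind the definition of $\dbar^{\cl^\theta}$. Since $\dbar^{\cl^\theta}\mathfrak g(\ol\eta)=\mathfrak{i}(L^{0,1}_{\ol\xi^\theta_\eta}\mathfrak{a(g)})$ and this is tensorial in $\ol\eta$, one may assume $\sigma,\tau$ holomorphic; then $(\dbar^{\cl^\theta})^2\mathfrak f(\ol\sigma,\ol\tau)$ is represented, modulo the ideal generated by $\{\pi^*dt^j\}$, by the twisted form
\begin{equation*}
\bigl[L^{0,1}_{\ol\xi^\theta_\sigma},L^{0,1}_{\ol\xi^\theta_\tau}\bigr]u-L^{0,1}_{\ol\xi^\theta_{[\sigma,\tau]}}u,
\qquad u:=\mathfrak{a(f)} .
\end{equation*}
Working in a local holomorphic frame for $E$ — in which the twisted $\dbar$, interior products, and twisted $(0,1)$-Lie derivatives obey the Cartan identities of the ordinary $\dbar$-calculus, so that in particular $[L^{0,1}_{\ol V},L^{0,1}_{\ol W}]=L^{0,1}_{[\ol V,\ol W]}$ for $(0,1)$-vector fields — this form equals $L^{0,1}_V u$ with $V:=[\ol\xi^\theta_\sigma,\ol\xi^\theta_\tau]-\ol\xi^\theta_{[\sigma,\tau]}$. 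As horizontal lifts are $\pi$-related to their base fields, $d\pi(V)=[\ol\sigma,\ol\tau]-[\ol\sigma,\ol\tau]=0$, so $V$ is a vertical $(0,1)$-vector field; since $u$ has bidegree $(n,0)$ we get $L^{0,1}_V u=V\lrcorner\dbar u$, and restricting to $X_b$, where $V$ is tangent to the fiber, gives $\iota_{X_b}^*(V\lrcorner\dbar u)=(V|_{X_b})\lrcorner\,\dbar(\iota_{X_b}^*u)=0$, because $\iota_{X_b}^*u=\iota_b\mathfrak{a(f)}$ is holomorphic — this is precisely the statement $\mathfrak f(b)\in\ch_b$. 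Hence $(\dbar^{\cl^\theta})^2\mathfrak f(\ol\sigma,\ol\tau)(b)=0$.

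With this in hand, Proposition \ref{iBLS11formcrit} gives $(\Theta^\ch_{\sigma\tau}\mathfrak f)(b)=0$ whenever $\sigma,\tau$ are of the same type, and combined with Lemma \ref{iBLSlemma} this is exactly the claim that $\Theta^\ch$ is a (possibly nonsmooth) operator-valued $(1,1)$-form. The part needing the most care is the second paragraph: correctly identifying $(\dbar^{\cl^\theta})^2\mathfrak f(\ol\sigma,\ol\tau)$ as a twisted Lie derivative along the vertical field $V$, which in turn rests on the twisted Cartan algebra — in particular the bracket identity $[L^{0,1}_{\ol V},L^{0,1}_{\ol W}]=L^{0,1}_{[\ol V,\ol W]}$ for $E$-valued forms, obtained by passing to a holomorphic frame and isolating the bidegree-preserving part of $[L_{\ol V},L_{\ol W}]=L_{[\ol V,\ol W]}$. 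It is worth remarking that this integrability computation uses only $\mathfrak f(b)\in\ch_b$; the finer conditions \eqref{concsigma} defining $\Sigma^\theta$ are needed only to make $(\ch,\cl^\theta,\Sigma^\theta)$ an iBLS field in the first place.
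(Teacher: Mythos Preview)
Your proof is correct and follows essentially the same route as the paper's: both reduce to Proposition \ref{iBLS11formcrit} and then verify infinitesimal holomorphicity by showing the relevant expression equals $L^{0,1}_V u$ for the vertical $(0,1)$-field $V=[\ol\xi^\theta_\sigma,\ol\xi^\theta_\tau]-\ol\xi^\theta_{[\sigma,\tau]}$, which restricts to $(V|_{X_b})\lrcorner\,\dbar(\iota_{X_b}^*u)=0$ since $\iota_{X_b}^*u\in\ch_b$. The only cosmetic difference is in how the commutator identity $[L^{0,1}_{\ol V},L^{0,1}_{\ol W}]u=L^{0,1}_{[\ol V,\ol W]}u$ is obtained: the paper reads it off directly from $\ol\xi^\theta_\tau\lrcorner\ol\xi^\theta_\sigma\lrcorner\dbar^2 u=0$, while you extract it from the Jacobi identity $[L_{\ol V},L_{\ol W}]=L_{[\ol V,\ol W]}$ by isolating the bidegree-preserving component in a holomorphic frame.
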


\begin{proof}
By Proposition \ref{iBLS11formcrit} it suffices to show that $(\ch, \Sigma^{\theta})$ is an infinitesimally holomorphic Hilbert subfield of $\cl^{\theta}$, which means that for all $b\in B$ and $\sigma, \tau \in \sC^{\infty}(T^{1,0}_B)_b$ we have $(\Theta^{\cl^\theta}_{\ol\sigma\ol\tau}\mathfrak{f})(b)=0$ for all $\mathfrak{f} \in \Sigma^{\theta}_b$. Let $u$ be an $(n,0)$-form representing $\mathfrak{a(f)}$. Note that 
\begin{align*}
    0 = \ol{\xi}^\theta_{\tau}\lrcorner \ol{\xi}^{\theta}_{\sigma} \lrcorner \dbar^2 u &= L^{0,1}_{\ol{\xi}^\theta_{\sigma}}L^{0,1}_{\ol{\xi}^\theta_{\tau}}u - L^{0,1}_{\ol{\xi}^\theta_{\tau}}L^{0,1}_{\ol{\xi}^\theta_{\sigma}}u- L^{0,1}_{\left[\ol{\xi}^\theta_{\sigma},\ol{\xi}^\theta_{\tau}\right]}u
\end{align*}
Therefore
\begin{align*}
    \Theta^{\cl^\theta}_{\ol\sigma(b)\ol\tau(b)}\mathfrak{f}(b) &= \iota_{X_b}^*\left( L^{0,1}_{\ol{\xi}^\theta_{\sigma}}L^{0,1}_{\ol{\xi}^\theta_{\tau}}u - L^{0,1}_{\ol{\xi}^\theta_{\tau}}L^{0,1}_{\ol{\xi}^\theta_{\sigma}}u- L^{0,1}_{\ol{\xi}^{\theta}_{[\sigma,\tau]}}u \right) \\
    &= \iota_{X_b}^* \left(L^{0,1}_{\left[\ol{\xi}^\theta_{\sigma},\ol{\xi}^\theta_{\tau}\right]}u - L^{0,1}_{\ol{\xi}^{\theta}_{[\sigma,\tau]}}u \right) \\
    &= \iota_{X_b}^*  \left( \left[\ol{\xi}^\theta_{\sigma},\ol{\xi}^\theta_{\tau}\right] - \ol{\xi}^{\theta}_{[\sigma,\tau]} \right)\lrcorner \dbar u\\
    &= \left( \left[\ol{\xi}^\theta_{\sigma},\ol{\xi}^\theta_{\tau}\right] - \ol{\xi}^{\theta}_{[\sigma,\tau]} \right)\Big|_{X_b}\lrcorner \iota_{X_b}^*\dbar u =0.
\end{align*}
Here the penultimate equality follows from the fact that $\left( \left[\ol{\xi}^\theta_{\sigma},\ol{\xi}^\theta_{\tau}\right] - \ol{\xi}^{\theta}_{[\sigma,\tau]} \right)$ is a vertical vector field, and the last equality  holds because $\iota_{X_b}^*u$ is holomorphic along $X_b$. 
\end{proof}

Thus, we have the following definition.

\begin{definition}[Curvature of a field of Bergman spaces]
Let $\ch \to B$ be the field of Bergman spaces. Suppose that $(\ch, \cl^{\theta},\Sigma^{\theta})$ is an iBLS field. Then for $b\in B$ and $\sigma, \tau \in \sC^{\infty}(T^{1,0}_B)_b$ the curvature $\Theta^{\ch/\cl^\theta}_{\sigma(b)\ol\tau(b)}$ is a densely defined operator with $\text{Dom}(\Theta^{\ch/\cl^\theta}_{\sigma(b)\ol\tau(b)}) = \Sigma^\theta_b(b)\subset \ch_b$ given by
\begin{equation}\label{conccurv}
\ipr{\Theta^{\ch/\cl^\theta}_{\sigma(b)\ol\tau(b)}\mathfrak{f}(b),\mathfrak{g}(b)}_b = \ipr{\Theta^{\cl^\theta}_{\sigma\ol\tau}\mathfrak{f},\mathfrak{g}}_b - \ipr{\cp^{\perp}\nabla^{\cl^\theta}_{\sigma}\mathfrak{f},\cp^{\perp}\nabla^{\cl^\theta}_{\tau}\mathfrak{g}}_b.
\end{equation}
\end{definition}
At this point, the curvature of $\ch$ seems to depend on the choice of the horizontal distribution $\theta$ used to equip $\cl^{\theta}$ with a connection. Indeed the curvature $\Theta^{\cl^\theta}|_{\ch}$ does depend on $\theta$, but the quantity $\Theta^{\ch/\cl^\theta}$ on the right hand side of \eqref{conccurv} is independent of $\theta$ (see Theorem \ref{liftinv}). The next proposition shows that the domain of $\Theta^{\ch/\cl^\theta}$ is independent of the choice of the horizontal distribution.

\begin{proposition}\label{liftinvintro}
Let $(E,h)\to X\to B$ be as in Section \ref{notation}. Let $\ch \to B$ be the field of Bergman spaces and let $\cl^{\theta_1}\to B$ (resp.  $\cl^{\theta_2}\to B$) be an ambient BLS field whose Chern connection $\nabla^{\cl^{\theta_1}}$ (resp. $\nabla^{\cl^{\theta_2}}$) is defined with respect to a horizontal distribution $\theta_1$ (resp. $\theta_2$). Let $\Sigma^{\theta_1}$ (resp. $\Sigma^{\theta_2}$) be the tuning of $\ch$ in $\cl^{\theta_1}$ (resp. $\cl^{\theta_2}$) given by \eqref{concsigma}. Then 
\begin{enumerate}
    \item $(\ch, \Sigma^{\theta_1})$ is an infinitesimally holomorphic Hilbert subfield of $\cl^{\theta_1}$ if and only if $(\ch, \Sigma^{\theta_2})$ is an infinitesimally holomorphic Hilbert subfield of $\cl^{\theta_2}$.

    \item The tunings $\Sigma^{\theta_1}$ and $\Sigma^{\theta_2}$ are the same. Thus $(\ch, \Sigma^{\theta_1})$ is a formal tuning if and only if $(\ch, \Sigma^{\theta_2})$ is. 

\end{enumerate}
\end{proposition}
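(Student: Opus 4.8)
The plan is to pin down exactly where the horizontal distribution enters. Observe first that the Hilbert field $\cl$, its smooth structure $\sC^{\infty}(\cl)$, the maps $\mathfrak{i},\mathfrak{a}$ of Section \ref{concL}, and the fiberwise restriction isomorphisms $\iota_b$ of Lemma \ref{twistedrelcan} are all defined with no reference to a horizontal distribution; only the operators $\dbar^{\cl^{\theta}}$ and $\nabla^{\cl^{\theta}(1,0)}$ depend on $\theta$. Hence the maximal substalk bundle $\wt{\ch}_{\cl,b}$ is literally the same set whether we regard it inside $\cl^{\theta_1}$ or inside $\cl^{\theta_2}$, and --- this is the crucial point --- every $\mathfrak{f}\in\wt{\ch}_{\cl,b}$ is represented by an $E$-valued $(n,0)$-form $u$ on $\pi^{-1}(U)$, smooth up to $\partial X$, whose restriction $\iota_{X_b}^{*}u=\iota_b\mathfrak{a}(f)$ is a \emph{holomorphic} twisted canonical section on $X_b$, so that $\dbar(\iota_{X_b}^{*}u)=0$.

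Next I would compare, for a fixed $\tau\in\sC^{\infty}(T^{1,0}_B)_b$, the two conditions in \eqref{concsigma} for $\theta_1$ and for $\theta_2$. Since both lifts push forward to $\tau$, the difference $w_{\tau}:=\xi^{\theta_1}_{\tau}-\xi^{\theta_2}_{\tau}$ is a vertical $(1,0)$-vector field on $\overline{\pi^{-1}(U)}$ that is tangent to $\partial X$; in particular $w_{\tau}|_{X_b}$ is a vector field on $X_b$, and since $\xi\mapsto L^{1,0}_{\xi}$ and $\ol{\eta}\mapsto L^{0,1}_{\ol{\eta}}$ are linear in the vector field, the two conditions for $\theta_1$ and $\theta_2$ differ only by $\iota_{X_b}^{*}L^{0,1}_{\ol{w}_{\tau}}u$ and $\iota_{X_b}^{*}L^{1,0}_{w_{\tau}}\dbar u$. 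For $\mathfrak{f}\in\wt{\ch}_{\cl,b}$ with representative $u$ these vanish: using $\ol{w}_{\tau}\lrcorner u=0$ one gets $\iota_{X_b}^{*}L^{0,1}_{\ol{w}_{\tau}}u=\iota_{X_b}^{*}(\ol{w}_{\tau}\lrcorner\dbar u)=\ol{w}_{\tau}|_{X_b}\lrcorner\dbar(\iota_{X_b}^{*}u)=0$; and since $w_{\tau}|_{X_b}$ is tangent to $X_b$ while $\iota_{X_b}^{*}\nabla^{1,0}\dbar u=0$ (there is no $(n+1,1)$-form on the $n$-dimensional $X_b$), one gets $\iota_{X_b}^{*}L^{1,0}_{w_{\tau}}\dbar u=\iota_{X_b}^{*}\nabla^{1,0}(w_{\tau}\lrcorner\dbar u)=\nabla^{1,0}\bigl(w_{\tau}|_{X_b}\lrcorner\dbar(\iota_{X_b}^{*}u)\bigr)=0$. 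Therefore $\iota_{X_b}^{*}L^{0,1}_{\ol{\xi}^{\theta_1}_{\tau}}u=\iota_{X_b}^{*}L^{0,1}_{\ol{\xi}^{\theta_2}_{\tau}}u$ and $\iota_{X_b}^{*}L^{1,0}_{\xi^{\theta_1}_{\tau}}\dbar u=\iota_{X_b}^{*}L^{1,0}_{\xi^{\theta_2}_{\tau}}\dbar u$ for every $\tau$, so the pair of conditions in \eqref{concsigma} is literally the same for $\theta_1$ and for $\theta_2$; this yields $\Sigma^{\theta_1}_b=\Sigma^{\theta_2}_b$ for all $b$, which is the substance of (2).

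It remains to deduce (1) and the formality clause of (2). For (1): the computation in the proof of the lemma asserting that $\Theta^{\ch}$ is an operator-valued $(1,1)$-form uses only the holomorphicity of $\iota_{X_b}^{*}u$ and the fact that the bracket of two horizontal lifts projects to the bracket of the base vector fields, so it applies verbatim to any horizontal distribution and shows that $(\ch,\Sigma^{\theta})$ is an infinitesimally holomorphic Hilbert subfield of $\cl^{\theta}$ for each of $\theta=\theta_1,\theta_2$; hence both sides of the biconditional hold. For the formality clause: condition (a) in the definition of a formal tuning (density of $\Sigma_b(b)$ in $\wt{\ch}_{\cl,b}(b)$) involves no distribution and by (2) the tunings and their evaluations agree, so it holds for one precisely when for the other; condition (b) is preserved because if $\mathfrak{f}\in\Sigma_b$ has $\mathfrak{f}(b)=0$, i.e. $\iota_{X_b}^{*}u=0$, the same vertical-difference computation --- now with $\iota_{X_b}^{*}u$ and $\dbar(\iota_{X_b}^{*}u)$ both zero --- yields $(\nabla^{\cl^{\theta_1}}_{\sigma}\mathfrak{f})(b)=(\nabla^{\cl^{\theta_2}}_{\sigma}\mathfrak{f})(b)$, so one lies in $\ch_b$ iff the other does. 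The one thing to watch, and what makes the argument work at all, is that the conditions of \eqref{concsigma} are imposed on elements of $\wt{\ch}_{\cl,b}$, whose representatives are already fiberwise holomorphic; this holomorphicity is precisely what kills every $\theta$-dependent correction term (all of the form $L^{\bullet}_{w_{\tau}}$ with $w_{\tau}$ vertical) after restriction to $X_b$. Beyond bookkeeping of bidegrees and of which objects see the distribution, there is no real obstacle.
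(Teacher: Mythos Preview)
Your proof is correct and follows essentially the same route as the paper: identify the vertical difference $w_\tau=\xi^{\theta_1}_\tau-\xi^{\theta_2}_\tau$, and use fiberwise holomorphicity of $\iota_{X_b}^*u$ together with verticality of $w_\tau$ to kill $\iota_{X_b}^*L^{0,1}_{\ol w_\tau}u$ and $\iota_{X_b}^*L^{1,0}_{w_\tau}\dbar u$, yielding $\Sigma^{\theta_1}=\Sigma^{\theta_2}$. The only minor differences are organizational: for the formality clause you verify condition (b) directly by showing $(\nabla^{\cl^{\theta_1}}_\sigma\mathfrak f)(b)=(\nabla^{\cl^{\theta_2}}_\sigma\mathfrak f)(b)$ when $\mathfrak f(b)=0$, whereas the paper simply invokes Proposition~\ref{HiBLSprop}; and for (1) you observe that the computation establishing $(\Theta^{\cl^\theta}_{\ol\sigma\ol\tau}\mathfrak f)(b)=0$ works for any horizontal distribution so both sides of the biconditional hold unconditionally, whereas the paper argues the equivalence via the equality $\iota_{X_b}^*L^{0,1}_{\ol\xi^{\theta_1}_\eta}u=\iota_{X_b}^*L^{0,1}_{\ol\xi^{\theta_2}_\eta}u$.
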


\begin{proof}
Since the smooth structure for $\cl^{\theta_1}$ and $\cl^{\theta_2}$ are the same, the maximal substalk bundles $\wt{\ch}_{\cl^{\theta_1}}$ and $\wt{\ch}_{\cl^{\theta_2}}$ are equal. For $b\in B$ let $\eta \in \sC^{\infty}(T^{1,0}_B)_b$, let $\xi^{\theta_1}_{\eta}$ and $\xi^{\theta_2}_\eta$ be horizontal lifts of $\eta$ with respect to the distributions $\theta_1$ and $\theta_2$. Then $w_{\eta} = \xi^{\theta_1}_{\eta}-\xi^{\theta_2}_{\eta}$ is a vertical vector field. Let $\mathfrak{f}\in \wt{\ch}_{\cl^{\theta_1},b} =  \wt{\ch}_{\cl^{\theta_2},b}$ and let $u$ be an $E$-valued $(n,0)$-form representing $\mathfrak{a(f)}$. Then
\begin{align*}
\iota_{X_b}^*(L^{0,1}_{\ol{\xi}^{\theta_1}_{\eta}}u - L^{0,1}_{\ol{\xi}^{\theta_2}_{\eta}}u) = \iota_{X_b}^*(L^{0,1}_{\ol{w}_{\eta}}u) = \iota_{X_b}^*(\ol{w}_{\eta}\lrcorner \dbar u) = \ol{w}_{\eta}\big|_{X_b} \lrcorner \iota_{X_b}^*\dbar u =0,
\end{align*}
where the penultimate equality depends on the fact that $w_{\eta}$ is vertical and the last equality follows because $\iota_b(\mathfrak{a(f)})=\iota_{X_b}^*u$ is in $\ch_b$.
Thus 
\begin{equation}\label{sigmainv1}
    \iota_{X_b}^*L^{0,1}_{\ol{\xi}^{\theta_1}_{\eta}}u\in \ch_b \text{ if and only if } \iota_{X_b}^*L^{0,1}_{\ol{\xi}^{\theta_2}_{\eta}}u\in \ch_b.
\end{equation} This proves the first statement in the proposition.  For the second condition in \eqref{concsigma}, since $w_\eta$ is vertical
\begin{align*}
\iota_{X_b}^* (L^{1,0}_{\xi^{\theta_1}_\eta}\dbar u - L^{1,0}_{\xi^{\theta_2}_\eta}\dbar u) = \iota_{X_b}^* L^{1,0}_{w_\eta}\dbar u &= \iota_{X_b}^*(w_\eta\lrcorner \nabla^{1,0}\dbar u + \nabla^{1,0}(w_\eta \lrcorner \dbar u)) \\
&= w_\eta|_{X_b}\lrcorner \nabla^{1,0} (\iota_{X_b}^*\dbar u) + \nabla^{1,0}(w_\eta|_{X_b} \lrcorner \iota_{X_b}^* \dbar u) = 0,
\end{align*}
where the last equality follows because $\iota_{X_b}^*u \in \ch_b$.
Thus 
\begin{equation}\label{sigmainv2}
\iota_{X_b}^* L^{1,0}_{\xi^{\theta_1}_\eta}\dbar u=0 \text{ if and only if } \iota_{X_b}^* L^{1,0}_{\xi^{\theta_2}_\eta}\dbar u=0. 
\end{equation}
It follows from \eqref{sigmainv1} and \eqref{sigmainv2} that $\Sigma_b^{\theta_1} = \Sigma_b^{\theta_2}$. Now the rest of statement (2) follows from Proposition \ref{HiBLSprop} and the fact that $\wt{\ch}_{\cl^{\theta_1},b}= \wt{\ch}_{\cl^{\theta_2},b}$.
\end{proof}

\begin{theorem}\label{liftinv}
Let $(E,h)\to X\to B$ be as in Section \ref{notation}. Let $\ch \to B$ be the field of Bergman spaces. Let $\cl^{\theta_1}, \cl^{\theta_2} \to B$ and $\Sigma^{\theta_1},\Sigma^{\theta_2}\to B$ be as in Proposition \ref{liftinvintro}. Then for $\mathfrak{f,g} \in \Sigma^{\theta_1}_b= \Sigma^{\theta_2}_b$ and $\sigma, \tau \in \sC^{\infty}(T^{1,0}_B)_b$ we have 
\begin{equation}\label{Qequal}
    \ipr{ \Theta^{\mathcal{L}^{\theta_1}}_{\sigma \ol \tau}\mathfrak{f,g}}_b - \ipr{ \cp^{\perp} \nabla^{\mathcal{L}^{\theta_1}}_{\sigma}\mathfrak{f}, \cp^{\perp}\nabla^{\mathcal{L}^{\theta_1}}_{\tau}\mathfrak{g}}_b = \ipr{ \Theta^{\mathcal{L}^{\theta_2}}_{\sigma \ol \tau}\mathfrak{f,g}}_b - \ipr{ \cp^{\perp}\nabla^{\mathcal{L}^{\theta_2}}_{\sigma}\mathfrak{f}, \cp^{\perp}\nabla^{\mathcal{L}^{\theta_2}}_{\tau}\mathfrak{g}}_b.
\end{equation}
Thus, if $(\ch, \cl^{\theta_1},\Sigma^{\theta_1})$ is an iBLS field then $(\ch, \cl^{\theta_2}, \Sigma^{\theta_2})$ is also an iBLS field and $\Theta^{\ch/\cl^{\theta_1}}_{\sigma(b)\ol\tau(b)} = \Theta^{\ch/\cl^{\theta_2}}_{\sigma(b)\ol\tau(b)}$.
\end{theorem}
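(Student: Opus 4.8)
The plan is to strip \eqref{Qequal} down to a purely first--order identity and then close that identity by a Lie--derivative computation of the same flavour as the one behind \eqref{hcurv}. By tensoriality of $\nabla^{\cl^{\theta_i}(1,0)}$, of $\dbar^{\cl^{\theta_i}}$ and of the curvature I may assume $\sigma,\tau$ holomorphic, so $[\sigma,\ol\tau]=0$ and $\Theta^{\cl^{\theta_i}}_{\sigma\ol\tau}\mathfrak{f}=\nabla^{\cl^{\theta_i}}_{\sigma}\nabla^{\cl^{\theta_i}}_{\ol\tau}\mathfrak{f}-\nabla^{\cl^{\theta_i}}_{\ol\tau}\nabla^{\cl^{\theta_i}}_{\sigma}\mathfrak{f}$. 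Put $w_\eta:=\xi^{\theta_1}_\eta-\xi^{\theta_2}_\eta$; this is a vertical $(1,0)$--vector field which, being a difference of vector fields tangent to $\partial X$, is itself tangent to $\partial X$, so $w_\eta|_{X_b}$ is tangent to $\partial X_b$. Fix $\mathfrak{f},\mathfrak{g}\in\Sigma^{\theta_1}_b=\Sigma^{\theta_2}_b$ with representing $(n,0)$--forms $u,v$, so $\iota_{X_b}^*u,\iota_{X_b}^*v\in\ch_b$. Two rigidity observations drive the reduction. First, $(\nabla^{\cl^{\theta_1}}_{\ol\tau}\mathfrak{f})(b)=(\nabla^{\cl^{\theta_2}}_{\ol\tau}\mathfrak{f})(b)$, since the difference is $\iota_{X_b}^*(L^{0,1}_{\ol w_\tau}u)=\ol w_\tau|_{X_b}\lrcorner\,\dbar(\iota_{X_b}^*u)=0$ because $\iota_{X_b}^*u$ is holomorphic. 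Second, $\cp(\nabla^{\cl^{\theta_1}(1,0)}_{\sigma}\mathfrak{f})(b)=\cp(\nabla^{\cl^{\theta_2}(1,0)}_{\sigma}\mathfrak{f})(b)$: the difference is $\cp$ applied to $\iota_{X_b}^*(L^{1,0}_{w_\sigma}u)=\nabla^{1,0}\!\bigl(w_\sigma|_{X_b}\lrcorner\iota_{X_b}^*u\bigr)$, and for $k\in\wt{\ch}_{\cl,b}(b)$ (smooth up to $\partial X_b$) Stokes' theorem gives $\ipr{\iota_{X_b}^*L^{1,0}_{w_\sigma}u,k}_b=\sqi^{n^2}\int_{\partial X_b}\hwedge{w_\sigma|_{X_b}\lrcorner\iota_{X_b}^*u}{k}=0$, the boundary integrand vanishing because $w_\sigma$ is tangent to $\partial X_b$; by density of $\wt{\ch}_{\cl,b}(b)$ in $\ch_b$ this gives $\cp(\iota_{X_b}^*L^{1,0}_{w_\sigma}u)=0$.

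Now, using metric compatibility \eqref{compatibleconnection} of $\nabla^{\cl^{\theta_i}}$ together with $[\sigma,\ol\tau]=0$ repeatedly, one rewrites the $i$--th side of \eqref{Qequal} as
\[
\sigma\ipr{\nabla^{\cl^{\theta_i}}_{\ol\tau}\mathfrak{f},\mathfrak{g}}(b)-\ol\tau\ipr{\nabla^{\cl^{\theta_i}}_{\sigma}\mathfrak{f},\mathfrak{g}}(b)-\ipr{\nabla^{\cl^{\theta_i}}_{\ol\tau}\mathfrak{f},\nabla^{\cl^{\theta_i}}_{\ol\sigma}\mathfrak{g}}_b+\ipr{\cp\nabla^{\cl^{\theta_i}}_{\sigma}\mathfrak{f},\cp\nabla^{\cl^{\theta_i}}_{\tau}\mathfrak{g}}_b .
\]
By the first rigidity observation the third term is independent of $i$, and by the second so is the fourth; hence, writing $A_\sigma\mathfrak{f}:=\mathfrak{i}(L^{1,0}_{w_\sigma}u)$ and $A_{\ol\tau}\mathfrak{f}:=\mathfrak{i}(L^{0,1}_{\ol w_\tau}u)$ (and similarly $A_{\ol\sigma}\mathfrak{g},A_{\tau}\mathfrak{g}$ from $v$), the difference of the two sides of \eqref{Qequal} equals $\sigma\ipr{A_{\ol\tau}\mathfrak{f},\mathfrak{g}}(b)-\ol\tau\ipr{A_\sigma\mathfrak{f},\mathfrak{g}}(b)$, so \eqref{Qequal} is reduced to proving
\[
\sigma\ipr{A_{\ol\tau}\mathfrak{f},\mathfrak{g}}(b)=\ol\tau\ipr{A_\sigma\mathfrak{f},\mathfrak{g}}(b) .
\]

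To prove this I would use the ``integration by parts in the base'' identities \eqref{zerolie10}, \eqref{zerolie01} (valid for the vertical field $w_\eta$ at every point of $B$) to rewrite $\ipr{A_{\ol\tau}\mathfrak{f},\mathfrak{g}}=-\ipr{\mathfrak{f},A_{\tau}\mathfrak{g}}$ and $\ipr{A_\sigma\mathfrak{f},\mathfrak{g}}=-\ipr{\mathfrak{f},A_{\ol\sigma}\mathfrak{g}}$ as functions near $b$, then differentiate using the metric compatibility of $\nabla^{\cl^{\theta_1}}$ (Proposition \ref{metcomp}); since $(A_{\ol\sigma}\mathfrak{g})(b)=0$ (first rigidity observation applied to $\mathfrak g,\ol\sigma$), the right side collapses to a single integral over $X_b$, and the desired identity becomes an equality among integrals over $X_b$ of pairings of iterated twisted Lie derivatives along $\xi^{\theta_1}_\sigma,\xi^{\theta_1}_\tau,w_\sigma,w_\tau$. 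These are matched using Proposition \ref{dbarPL}, the commutation rule for twisted Lie derivatives (bracket of vector fields plus a correction through $\Theta^E$), the defining conditions \eqref{concsigma} of $\Sigma^{\theta}$ --- which eliminate the terms carrying $\dbar u$ and $\dbar v$ --- and, once more, the tangency of $w_\sigma,w_\tau$ to $\partial X$, which makes the boundary (Levi--form) terms produced by the integrations by parts occur in canceling pairs. This last computation is the step I expect to be the main obstacle: it is essentially a second pass through the calculation underlying the curvature formula \eqref{hcurv}, and the reason it closes is structural --- $w_\eta$ is both vertical and tangent to $\partial X$, so every extra curvature and Levi--form term it introduces appears symmetrically on the two sides.

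Granting \eqref{Qequal}, the remaining assertions follow at once. Since $\cl^{\theta_1}$ and $\cl^{\theta_2}$ share the same smooth structure, $\wt{\ch}_{\cl^{\theta_1}}=\wt{\ch}_{\cl^{\theta_2}}$, so iBLS axiom (a) transfers from $\theta_1$ to $\theta_2$; axioms (b) and (c) transfer by Proposition \ref{liftinvintro}; and axiom (d) transfers because, by \eqref{Qequal}, the sesquilinear form $Q_{\sigma\tau}(\cdot,\cdot)_b$ associated with $\cl^{\theta_2}$ coincides on $\Sigma^{\theta_2}_b(b)=\Sigma^{\theta_1}_b(b)$ with the one associated with $\cl^{\theta_1}$, whose continuity in the second slot is part of the hypothesis on $(\ch,\cl^{\theta_1},\Sigma^{\theta_1})$. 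Finally $\Theta^{\ch/\cl^{\theta_1}}_{\sigma(b)\ol\tau(b)}=\Theta^{\ch/\cl^{\theta_2}}_{\sigma(b)\ol\tau(b)}$, since both operators are the Riesz representatives of that one sesquilinear form.
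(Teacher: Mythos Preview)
Your reorganization via metric compatibility is correct and gives a clean reduction that differs from the paper's direct attack. The paper instead expands $\ipr{\Theta^{\cl^{\theta_1}}_{\sigma\ol\tau}\mathfrak{f},\mathfrak{g}}_b - \ipr{\Theta^{\cl^{\theta_2}}_{\sigma\ol\tau}\mathfrak{f},\mathfrak{g}}_b$ as a telescoping sum of four iterated Lie--derivative terms, simplifies each one to arrive at $\ipr{L^{1,0}_{w_\sigma}u, L^{1,0}_{\xi^1_\tau}v}_b + \ipr{L^{1,0}_{\xi^2_\sigma}u, L^{1,0}_{w_\tau}v}_b$, and then uses the boundary/tangency argument (essentially your second rigidity observation) to replace each entry by its $P_b^\perp$--projection, which matches the difference of the second--fundamental--form terms directly. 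Your route trades this bookkeeping for the reduced identity $\sigma\ipr{A_{\ol\tau}\mathfrak{f},\mathfrak{g}}(b)=\ol\tau\ipr{A_\sigma\mathfrak{f},\mathfrak{g}}(b)$.

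The gap is that you leave this last identity as a sketch and call it the main obstacle, anticipating Levi--form cancellations and a second pass through the curvature formula. In fact \emph{both sides are separately zero}, and this needs much less than you propose. For the right side: by \eqref{zerolie10} one has $\ipr{A_\sigma\mathfrak{f},\mathfrak{g}}=-\ipr{u,L^{0,1}_{\ol w_\sigma}v}$ as functions near $b$; differentiating via \eqref{metcomp01} with lift $\xi^1_\tau$ gives $-\ipr{L^{0,1}_{\ol\xi^1_\tau}u,L^{0,1}_{\ol w_\sigma}v}_b-\ipr{u,L^{1,0}_{\xi^1_\tau}L^{0,1}_{\ol w_\sigma}v}_b$, and both terms vanish since $\iota_{X_b}^*L^{0,1}_{\ol w_\sigma}v=0$ and
\[
\iota_{X_b}^*\bigl(L^{1,0}_{\xi^1_\tau}(\ol w_\sigma\lrcorner\dbar v)\bigr)=\ol w_\sigma\big|_{X_b}\lrcorner\iota_{X_b}^*\bigl(L^{1,0}_{\xi^1_\tau}\dbar v\bigr)+[\xi^1_\tau,\ol w_\sigma]^{0,1}\big|_{X_b}\lrcorner\iota_{X_b}^*\dbar v=0,
\]
using precisely the $\Sigma$--condition $\iota_{X_b}^*L^{1,0}_{\xi^1_\tau}\dbar v=0$ together with holomorphicity of $\iota_{X_b}^*v$. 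The left side vanishes by the symmetric computation with $\xi^2_\sigma$. No boundary integrals and no appeal to Proposition~\ref{dbarPL} are required here.

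One minor point: your second rigidity observation invokes density of $\wt{\ch}_{\cl,b}(b)$ in $\ch_b$, which is iBLS axiom~(a) and is not part of the hypotheses for the bare identity \eqref{Qequal}. The paper's corresponding step instead applies Stokes directly with the specific holomorphic element $P_bL^{1,0}_{\xi^1_\tau}v$, which carries its own regularity subtlety; so neither version is entirely free of implicit assumptions at that point.
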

\begin{proof}
By Proposition \ref{liftinvintro} we have $\Sigma^{\theta_1} = \Sigma^{\theta_2}$ so denote both tunings by $\Sigma$. If $(\ch, \cl^{\theta_1},\Sigma)$ is an iBLS field, by definition \eqref{iBLSdef} we have
\begin{enumerate}[label=(\alph*)]
     \item For every $b\in B$ the subspace $\wt{\ch}_{\cl^{\theta_1},b}(b)$ is dense in $\ch_b$.
    
    \item $(\ch, \Sigma)$ is a formal tuning.
    
    \item $(\ch, \Sigma)$ is an infinitesimally holomorphic Hilbert subfield of $\cl^{\theta_1}$. 
    
    \item For all $b\in B$, all $\sigma, \tau \in \sC^{\infty}(T_B)_b$ and all $\mathfrak{f} \in \Sigma_b$ the anti-linear functional $Q^{\theta_1}_{\sigma\ol\tau}(\mathfrak{f}(b),\cdot)_b$ is continuous where 
    \[
    Q^{\theta_1}_{\sigma\ol\tau}(\mathfrak{f}(b),\mathfrak{g}(b))_b = \ipr{ \Theta^{\mathcal{L}^{\theta_1}}_{\sigma \ol \tau}\mathfrak{f,g}}_b - \ipr{ \cp^{\perp} \nabla^{\mathcal{L}^{\theta_1}}_{\sigma}\mathfrak{f}, \cp^{\perp}\nabla^{\mathcal{L}^{\theta_1}}_{\tau}\mathfrak{g}}_b.
    \]
\end{enumerate}
By Proposition \ref{liftinvintro} conditions $(a)$-$(c)$ also hold for $(\ch,\cl^{\theta_2},\Sigma)$. It follows from \eqref{Qequal} that for $\sigma, \tau \in \sC^{\infty}(T^{1,0}_B)_b$ and $\mathfrak{f}\in \Sigma_b$ the functional $Q_{\sigma\ol\tau}^{\theta_1}(\mathfrak{f}(b), \cdot)_b = Q_{\sigma\ol\tau}^{\theta_2}(\mathfrak{f}(b), \cdot)_b$, so $Q_{\sigma\ol\tau}^{\theta_2}(\mathfrak{f}(b), \cdot)_b$ is continuous as well. Thus, $(\ch, \cl^{\theta_2},\Sigma)$ is also an iBLS field and by the Riesz representation theorem we must have $\Theta^{\ch/\cl^{\theta_1}}_{\sigma(b)\ol\tau(b)}\mathfrak{f}(b) = \Theta^{\ch/\cl^{\theta_2}}_{\sigma(b)\ol\tau(b)}\mathfrak{f}(b)$. It remains to prove \eqref{Qequal}. 

To do so, let $u$ and $v$ be twisted $(n,0)$-forms that represent $\mathfrak{a(f)}$ and $\mathfrak{a(g)}$ respectively. 
For the $(1,0)$ vector field $\sigma$ (resp. $\tau$) on $B$, let $\xi^i_{\sigma}$ (resp. $\xi^i_{\tau}$) denote the horizontal lift of $\sigma$ (resp. $\tau$) with respect to the distribution $\theta_i^{1,0}$, where $i=1,2$. Moreover, denote by $w_{\sigma}$ and $w_{\tau}$ the vertical vector fields $w_{\sigma}= \xi^1_{\sigma} - \xi^2_{\sigma}$ and $w_{\tau} = \xi^1_{\tau} - \xi^2_{\tau}$. With this notation we first calculate the difference between the curvatures of the Hilbert fields $\cl^{\theta_1}$ and $\cl^{\theta_2}$. We have 
\begin{align*}
    \ipr{ \Theta^{\cl^{\theta_1}}_{\sigma\ol\tau}\mathfrak{f,g}}_b  &= \ipr{ L^{1,0}_{\xi^1_{\sigma}}L^{0,1}_{\ol{\xi}^1_{\tau}} u - L^{0,1}_{\ol{\xi}^1_{\tau}}L^{1,0}_{\xi^1_{\sigma}}u,v}_b \\
    &=\ipr{L^{1,0}_{\xi^1_{\sigma}} L^{0,1}_{\ol{\xi}^1_{\tau}}u,v }_b - \ipr{ L^{1,0}_{\xi^2_{\sigma}}L^{0,1}_{\ol{\xi}^1_{\tau}}u,v }_b + \ipr{ L^{1,0}_{\xi^2_{\sigma}}L^{0,1}_{\ol{\xi}^1_{\tau}}u,v }_b -\ipr{L^{0,1}_{\ol{\xi}^1_{\tau}}L^{1,0}_{\xi^1_{\sigma}}u,v}_b \\
    &= \ipr{L^{1,0}_{w_{\sigma}}L^{0,1}_{\ol{\xi}^1_{\tau}}u,v}_b + \ipr{ L^{1,0}_{\xi^2_{\sigma}}L^{0,1}_{\ol{\xi}^1_{\tau}}u,v }_b  -\ipr{L^{0,1}_{\ol{\xi}^1_{\tau}}L^{1,0}_{\xi^1_{\sigma}}u,v}_b, 
\end{align*}
and 
\begin{align*}
    \ipr{ \Theta^{\cl^{\theta_2}}_{\sigma\ol\tau}\mathfrak{f,g}}_b &=  \ipr{ L^{1,0}_{\xi^2_{\sigma}}L^{0,1}_{\ol{\xi}^2_{\tau}} u - L^{0,1}_{\ol{\xi}^2_{\tau}}L^{1,0}_{\xi^2_{\sigma}}u,v}_b \\
    &=  \ipr{ L^{1,0}_{\xi^2_{\sigma}}L^{0,1}_{\ol{\xi}^2_{\tau}} u, v}_b - \ipr{L^{0,1}_{\ol{\xi}^1_{\tau}}L^{1,0}_{\xi^2_{\sigma}}u,v}_b + \ipr{L^{0,1}_{\ol{\xi}^1_{\tau}}L^{1,0}_{\xi^2_{\sigma}}u,v}_b - \ipr{L^{0,1}_{\ol{\xi}^2_{\tau}}L^{1,0}_{\xi^2_{\sigma}}u,v}_b \\
    &= \ipr{ L^{1,0}_{\xi^2_{\sigma}}L^{0,1}_{\ol{\xi}^2_{\tau}} u, v}_b - \ipr{L^{0,1}_{\ol{\xi}^1_{\tau}}L^{1,0}_{\xi^2_{\sigma}}u,v}_b + \ipr{ L^{0,1}_{\ol{w}_{\tau}}L^{1,0}_{\xi^2_{\sigma}}u,v}_b.
\end{align*}
Subtracting the two quantities, we get
\begin{align}\label{curvdiff}
    \ipr{ \Theta^{\cl^{\theta_1}}_{\sigma\ol{\tau}}\mathfrak{f,g}}_b - \ipr{ \Theta^{\cl^{\theta_2}}_{\sigma \ol{\tau}} \mathfrak{f,g}}_b &= \ipr{ L^{1,0}_{w_{\sigma}}L^{0,1}_{\ol{\xi}^1_{\tau}} u,v}_b + \ipr{ L^{1,0}_{\xi^2_{\sigma}} L^{0,1}_{\ol{w}_{\tau}}u,v}_b -\ipr{ L^{0,1}_{\ol{\xi}^1_{\tau}}L^{1,0}_{w_{\sigma}}u,v }_b - \ipr{ L^{0,1}_{\ol{w}_{\tau}}L^{1,0}_{\xi^2_{\sigma}}u,v}_b.  
\end{align}
Let us deal with terms on the right hand side of the equation \eqref{curvdiff} individually. Using identity \eqref{zerolie10} we see that
\[
\ipr{ L^{1,0}_{w_{\sigma}}L^{0,1}_{\ol{\xi}^1_{\tau}} u,v}_b = - \ipr{ L^{0,1}_{\ol\xi^1_\tau}u, L^{0,1}_{\ol{w}_\sigma} v }_b = 0,
\]
where the second equality holds because $\iota_{X_b}^*(L^{0,1}_{\ol{w}_{\sigma}}v) =0$. To see this, first observe that for a vertical vector field $\eta$ on $X$ and an $E$-valued $(p,q)$-form $\alpha$ we have $\iota_{X_b}^*(\eta \lrcorner \alpha) = \eta|_{X_b}\lrcorner \iota^*_{X_b}\alpha$. Since $w_\sigma$ is vertical
\[
\iota_{X_b}^*(L^{0,1}_{\ol{w}_{\sigma}}v) = \iota_{X_b}^* (\ol{w}_{\sigma} \lrcorner \dbar v) = \ol{w}_{\sigma}|_{X_b} \lrcorner (\iota_{X_b}^* \dbar v) = 0,
\]
as $v$ is holomorphic along $X_b$.

We claim that the second term on the right hand side of \eqref{curvdiff} also vanishes. Indeed, since both $w_{\tau}$ and $[\xi^2_{\sigma},w_{\tau}]$ are vertical vector fields,
\begin{align*}
\iota_{X_b}^*(L^{1,0}_{\xi^2_{\sigma}}L^{0,1}_{\ol{w}_{\tau}} u) = \iota_{X_b}^*(L^{1,0}_{\xi^2_{\sigma}} (\ol{w}_{\tau}\lrcorner \dbar u)) &= \iota_{X_b}^* \left( \ol{w}_{\tau}\lrcorner (L^{1,0}_{\xi^2_{\sigma}} \dbar u) + [\xi^2_{\sigma},\ol{w}_{\tau}]^{0,1}\lrcorner \dbar u \right) \\
&= \ol{w}_{\tau}|_{X_b}\lrcorner \iota_{X_b}^*(L^{1,0}_{\xi^2_{\sigma}}\dbar u) + [\xi^2_{\sigma},\ol{w}_{\tau}]^{0,1}|_{X_b} \lrcorner \iota_{X_b}^* (\dbar u) = 0.
\end{align*}
The last equality follows from the fact that $\iota_{X_b}^*(L^{1,0}_{\xi^2_{\sigma}}\dbar u) =0$ and $\iota_{X_b}^*(\dbar u) =0$ for an $E$-valued $(n,0)$-form $u$ representing the section $\mathfrak{a(f)}$ associated to $\mathfrak{f}\in \Sigma_b(U)$.

For the third term, the identity \eqref{zerolie10} yields 
\[
\ipr{ L^{1,0}_{w_\sigma}u,v}_b = - \ipr{u, L^{0,1}_{\ol{w}_\sigma}v}_b.
\]
Using \eqref{metcomp01} to differentiate both sides of this equation, we get
\begin{align*}
    \ipr{L^{0,1}_{\ol{\xi}^1_{\tau}} L^{1,0}_{w_\sigma}u,v}_b + \ipr{ L^{1,0}_{w_\sigma}u, L^{1,0}_{\xi^1_{\tau}}v }_b &= \ol\tau \ipr{L^{1,0}_{w_\sigma}u,v }_b \\
    &= - \ol\tau \ipr{u, L^{0,1}_{\ol{w}_\sigma}v}_b \\
    &= - \ipr{ L^{0,1}_{\ol\xi^1_{\tau}}u, L^{0,1}_{\ol{w}_\sigma}v}_b - \ipr{ u, L^{1,0}_{\xi^1_{\tau}} L^{0,1}_{\ol{w}_\sigma}v}_b =0.
\end{align*}
The same reasoning we used to explain the vanishing of the first two terms on the right hand side of \eqref{curvdiff} gives $\iota_{X_b}^*(L^{0,1}_{\ol{w}_\sigma}v)=0$ and $\iota_{X_b}^*(L^{1,0}_{\xi^1_{\tau}}L^{0,1}_{\ol{w}_\sigma}v)=0$.

By using the identity \eqref{zerolie01} the fourth term can be written as
\[
- \ipr{ L^{0,1}_{\ol{w}_\tau} L^{1,0}_{\xi^2_\sigma}u,v}_b =  \ipr{ L^{1,0}_{\xi^2_\sigma}u, L^{1,0}_{w_\tau}v }_b.
\]
After all these simplifications, equation \eqref{curvdiff} reduces to 
\[
\ipr{ \Theta^{\cl^{\theta_1}}_{\sigma \ol{\tau}}\mathfrak{f,g}}_b - \ipr{ \Theta^{\cl^{\theta_2}}_{\sigma \ol\tau}\mathfrak{f,g} }_b = \ipr{ L^{1,0}_{w_\sigma}u,L^{1,0}_{\xi^1_{\tau}}v }_b + \ipr{ L^{1,0}_{\xi^2_{\sigma}}u,L^{1,0}_{w_\tau}v}_b.
\]
We simplify \eqref{curvdiff} one final time. To do so, observe that because $w_\sigma$ is a vertical vector field, we have $\iota_{X_b}^*(w_\sigma \lrcorner \nabla^{1,0}u) = w_{\sigma}|_{X_b}\lrcorner \iota_{X_b}^* (\nabla^{1,0}u) = 0$, as $\nabla^{1,0}u$ is an $(n+1,0)$-form. Thus we have
\begin{align*}
    \ipr{ L^{1,0}_{w_\sigma}u, L^{1,0}_{\xi^1_{\tau}}v }_b &= \sqi^{n^2} \int_{X_b} \hwedge{\nabla^{1,0}(w_\sigma \lrcorner u)}{L^{1,0}_{\xi^1_{\tau}}v} \\
    &= \sqi^{n^2} \int_{X_b} \partial \hwedge{(w_\sigma \lrcorner u)}{L^{1,0}_{\xi^1_{\tau}}v} + (-1)^n \hwedge{(w_\sigma\lrcorner u)}{\dbar L^{1,0}_{\xi^1_{\tau}}v} \\
    &= \sqi^{n^2} \int_{X_b} \partial \hwedge{(w_\sigma \lrcorner u)}{L^{1,0}_{\xi^1_{\tau}}v} + (-1)^n \hwedge{(w_\sigma\lrcorner u)}{\dbar P^{\perp}_b L^{1,0}_{\xi^1_{\tau}}v} \\
    &= \sqi^{n^2} \int_{X_b} \partial \hwedge{(w_\sigma \lrcorner u)}{L^{1,0}_{\xi^1_{\tau}}v} - \partial \hwedge{(w_\sigma \lrcorner u)}{P_b^{\perp}L^{1,0}_{\xi^1_{\tau}}v} + \hwedge{\nabla^{1,0}(w_\sigma\lrcorner u)}{ P^{\perp}_b L^{1,0}_{\xi^1_{\tau}}v} \\
    &=\sqi^{n^2} \int_{X_b}d \left(w_\sigma \lrcorner \hwedge{u}{P_bL^{1,0}_{\xi^1_{\tau}}v} \right) +\ipr{L^{1,0}_{w_\sigma}u, P_b^{\perp} L^{1,0}_{\xi^1_{\tau}}v }_b \\
    &= \int_{X_b} d\left(w_\sigma \lrcorner \left\{u, P_bL^{1,0}_{\xi^1_\tau}v \right\}dV_{\omega_b} \right) + \ipr{P_b^{\perp} L^{1,0}_{w_\sigma}u, P_b^{\perp} L^{1,0}_{\xi^1_{\tau}}v}_b \\
    &= \int_{\partial X_b} \partial \rho_b(w_\sigma)\left\{ u, P_bL^{1,0}_{\xi^1_\tau}v\right\}dS +\ipr{P_b^{\perp} L^{1,0}_{w_\sigma}u, P_b^{\perp} L^{1,0}_{\xi^1_{\tau}}v}_b \\
    &= \ipr{P_b^{\perp} L^{1,0}_{w_\sigma}u, P_b^{\perp} L^{1,0}_{\xi^1_{\tau}}v}_b,
\end{align*}
where $\{ \cdot, \cdot\}$ denotes the pointwise inner product on $E|_{X_b}$-valued forms. We also assume that $\rho$ is a defining function for $\partial X$ so that $\rho_b := \rho|_{X_b}$ is defining function for $\partial X_b$ satisfying $\abs{\partial \rho_b}=1$ on $\partial X_b$. Using the fact that $w_\sigma$ is a vertical vector field we get
\begin{align*}
\iota_{\partial X_b}^* \left(\partial \rho_b(w_\sigma) \right) = \iota_{\partial X_b}^* \left(w_\sigma|_{X_b} \lrcorner \partial \rho_b \right) &= \iota_{\partial X_b}^* \left(w_\sigma|_{X_b} \lrcorner \iota_{X_b}^*\partial \rho \right) \\ 
&= \iota_{\partial X_b}^* \iota_{X_b}^*(w_\sigma \lrcorner \partial \rho) \\
&= \iota_{\partial X_b}^* \iota_{X_b}^* \left(\partial\rho(\xi^1_\sigma) - \partial \rho(\xi^2_\sigma)\right) \\
&=  \iota_{X_b}^* \iota_{\partial X}^* \left(\partial\rho(\xi^1_\sigma) - \partial \rho(\xi^2_\sigma)\right) =0, 
\end{align*}
since $\xi^1_\sigma$ and $\xi^2_\sigma$ are tangent to $\partial X$. The last equality follows from this. In a similar manner, we can show that $\ipr{ L^{1,0}_{\xi^2_\sigma}u, L^{1,0}_{w_\tau}v}_b = \ipr{ P^{\perp}_b L^{1,0}_{\xi^2_\sigma}u, P^{\perp}_b L^{1,0}_{w_\tau}v}_b$.
After this final simplification, \eqref{curvdiff} becomes
\begin{equation}\label{curvdiff2}
    \ipr{ \Theta^{\cl^{\theta_1}}_{\sigma \ol{\tau}}\mathfrak{f,g}}_b - \ipr{ \Theta^{\cl^{\theta_2}}_{\sigma \ol\tau}\mathfrak{f,g} }_b = \ipr{ P_b^{\perp}L^{1,0}_{w_\sigma}u, P_b^{\perp} L^{1,0}_{\xi^1_{\tau}}v }_b + \ipr{ P_b^{\perp} L^{1,0}_{\xi^2_{\sigma}}u, P_b^{\perp} L^{1,0}_{w_\tau}v}_b.
\end{equation}

We now calculate the difference between the square of the second fundamental maps of $\ch$ with respect to the Hilbert fields $\cl^{\theta_1}$ and $\cl^{\theta_2}$ as
\begin{align}
    &\quad \ipr{ \cp^{\perp} \nabla^{\cl^{\theta_1}}_{\sigma} \mathfrak{f}, \cp^{\perp} \nabla^{\cl^{\theta_1}}_{\tau} \mathfrak{g} }_b - \ipr{ \cp^{\perp} \nabla^{\cl^{\theta_2}}_{\sigma} \mathfrak{f}, \cp^{\perp} \nabla^{\cl^{\theta_2}}_{\tau} \mathfrak{g} }_b \nonumber \\
    &= \ipr{P_b^{\perp} L^{1,0}_{\xi^1_\sigma} u, P_b^{\perp} L^{1,0}_{\xi^1_\tau} v  }_b -  \ipr{P_b^{\perp} L^{1,0}_{\xi^2_\sigma} u, P_b^{\perp} L^{1,0}_{\xi^2_\tau} v  }_b \nonumber \\
    &= \ipr{P_b^{\perp} L^{1,0}_{\xi^1_\sigma} u, P_b^{\perp} L^{1,0}_{\xi^1_\tau} v  }_b - \ipr{P_b^{\perp} L^{1,0}_{\xi^2_\sigma} u, P_b^{\perp} L^{1,0}_{\xi^1_\tau} v  }_b + \ipr{P_b^{\perp} L^{1,0}_{\xi^2_\sigma} u, P_b^{\perp} L^{1,0}_{\xi^1_\tau} v  }_b- \ipr{P_b^{\perp} L^{1,0}_{\xi^2_\sigma} u, P_b^{\perp} L^{1,0}_{\xi^2_\tau} v  }_b \nonumber \\
    &= \ipr{ P_b^{\perp}L^{1,0}_{w_\sigma}u, P_b^{\perp} L^{1,0}_{\xi^1_{\tau}}v }_b + \ipr{ P_b^{\perp} L^{1,0}_{\xi^2_{\sigma}}u, P_b^{\perp} L^{1,0}_{w_\tau}v}_b. \label{curvdiff3}
\end{align}
The identity \eqref{Qequal} now follows from equations \eqref{curvdiff2} and \eqref{curvdiff3}.
\end{proof}


\subsection{Final remarks on the iBLS-structure}\label{finalremarks}
Since the definition of an iBLS field is somewhat technical, it seems difficult to determine the precise necessary and sufficient geometric conditions on the fibration $E\to X\to B$ that guarantee that $\ch\to B$ is an iBLS field. We can however give examples of fibrations when $(\ch,\Sigma)$ is an iBLS field. In all the examples below it is possible to equip $\ch \to B$ with a smooth structure $\sC^{\infty}(\ch)$ so that the image of the stalk $\sC^{\infty}(\ch)_b$ under the evaluation map is dense in $\ch_b$ for all $b\in B$. The Hilbert fields in examples $(3)$-$(5)$ might fail to be BLS fields as they cannot always be equipped with a BLS-Chern connection.

All these fibrations also satisfy the condition that for all $b\in B$ the fiber $X_b$ is a domain on which the Neumann operator $\mathcal{N}_b^{(0,2)}$ acting on $(0,2)$-forms is globally regular. The latter condition is needed in Theorem \ref{Bgeneral}, so the theorem holds for these classes of examples. First we need the following estimate on the norm of the second fundamental map.

\begin{theorem}\label{2ndfunprop}
Let $(E,h)\to B$ be as in subsection \ref{notation}. Assume that the curvature of line bundle $E$ is semipositive. Let $\tau_1, \cdots, \tau_m$ be holomorphic $(1,0)$-vector fields on an open set $U$ and $\xi_{\tau_1},\cdots, \xi_{\tau_m}$ be their horizontal lifts to $X$. Then for $\mathfrak{f}_1,\cdots,\mathfrak{f}_m\in \Sigma_b$ we have the estimate
\begin{equation}\label{2ndfunestimate}
    \norm{P_b^{\perp}\iota_b\left(\sum_{j=1}^m L^{1,0}_{\xi_{\tau_j}} \mathfrak{a}(\mathfrak{f}_j)\right)}_b^2 \leq \sum_{j,k=1}^m \left[\ipr{\Theta^E_{\xi_{\tau_j}\ol{\xi}_{\tau_k}}\mathfrak{a}(\mathfrak{f}_j),\mathfrak{a}(\mathfrak{f}_k)}_b + \ipr{ \dbar\xi_{\tau_j}\lrcorner \mathfrak{a}(\mathfrak{f}_j),\dbar\xi_{\tau_k}\lrcorner \mathfrak{a}(\mathfrak{f}_k)}_b\right].
\end{equation}
\end{theorem}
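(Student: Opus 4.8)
The plan is to mimic Berndtsson's Hörmander-type argument for the curvature of direct images, now carried out fiberwise on $X_b$. Fix $b$ and set $u_j = \mathfrak{a}(\mathfrak{f}_j)$, and write $\beta := \iota_{X_b}^*\left(\sum_j L^{1,0}_{\xi_{\tau_j}} u_j\right)$, so that $P_b^\perp\beta$ is the element whose norm we must bound. First I would use Proposition \ref{dbarPL} together with the defining condition \eqref{concsigma} for $\Sigma_b$ (namely $\iota_{X_b}^*L^{1,0}_{\xi_{\tau_j}}\dbar u_j=0$) to compute $\dbar\beta$ on $X_b$; since $\iota_{X_b}^*u_j=0$ and $\dbar\xi_{\tau_j}$ is vertical, the terms $\dbar\xi_{\tau_j}\lrcorner\nabla^{1,0}u_j$ and $(\xi_{\tau_j}\lrcorner\Theta^E)\wedge u_j$ and $\nabla^{1,0}(\dbar\xi_{\tau_j}\lrcorner u_j)$ all drop out under $\iota_{X_b}^*$ except possibly the middle one — more precisely I expect $\dbar\beta = -\sum_j \iota_{X_b}^*\big((\xi_{\tau_j}\lrcorner\Theta^E)\wedge u_j + \nabla^{1,0}(\dbar\xi_{\tau_j}\lrcorner u_j)\big)$ after a short computation, so $\dbar\beta$ is expressed purely in terms of curvature and the deformation forms $\dbar\xi_{\tau_j}\lrcorner u_j$, with no $P_b^\perp\beta$ present. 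The key point is that $P_b^\perp\beta$ is the minimal-norm solution $v$ of $\dbar v = \dbar\beta$ (since $P_b\beta$ is $\dbar$-closed, i.e.\ holomorphic), so $\|P_b^\perp\beta\|^2 \le \|N_b^{(n,1)}\dbar\beta\|\cdot\|\dbar\beta\|$-type estimates are available; the sharp statement is the Hörmander/Bochner–Kodaira inequality on the pseudoconvex domain $X_b$ with the semipositive bundle $E$.

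Next I would invoke the fundamental $L^2$-estimate: for a smoothly bounded pseudoconvex $X_b$ and $(E,h)$ with $\Theta^E\ge 0$, the minimal solution $v$ to $\dbar v = g$ (for a $\dbar$-exact $(n,1)$-form $g$, with appropriate $\dbar$-closedness) satisfies $\|v\|^2 \le \langle B^{-1}g, g\rangle$ where $B$ is the curvature term in the Bochner–Kodaira–Nakano–Hörmander identity — here, because we are on $(n,1)$-forms, the curvature operator acting on $g = \dbar\beta = \sum_j(\text{terms})$ controls exactly the quantity we want. I would apply this with $g = \dbar\beta$ and extract, after pairing against $\beta$ and integrating by parts (Stokes on $X_b$, with the boundary term vanishing because $\beta$ arises from a form smooth up to $\partial X_b$ and the pseudoconvexity/positivity of the boundary Levi form gives the right sign — this is where the smoothness-up-to-boundary in the definition of $\sC^\infty(\cl)$ and pseudoconvexity of $X_b$ enter), the bound $\|P_b^\perp\beta\|^2 \le \sum_{j,k}\big[\langle\Theta^E_{\xi_{\tau_j}\ol\xi_{\tau_k}}u_j,u_k\rangle_b + \langle\dbar\xi_{\tau_j}\lrcorner u_j,\dbar\xi_{\tau_k}\lrcorner u_k\rangle_b\big]$. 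The two terms on the right correspond respectively to the curvature contribution $(\xi_{\tau_j}\lrcorner\Theta^E)\wedge u_j$ and to $\nabla^{1,0}(\dbar\xi_{\tau_j}\lrcorner u_j)$; the cross term and the $\nabla^{1,0}$ acting on the deformation piece should reorganize, via integration by parts and the Kähler identities relating $\nabla^{1,0}$, $\dbar$ and $\Lambda_{\omega_b}$, into precisely $\|\dbar\xi_{\tau_j}\lrcorner u_j\|^2$-type quantities.

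The main obstacle I anticipate is the careful bookkeeping of the boundary integral and of the sign in the Bochner–Kodaira–Hörmander inequality: one must verify that the Levi-form boundary term $\int_{\partial X_b}\partial\dbar\rho_b(\cdot,\cdot)\{\cdot,\cdot\}\,dS_b$ has the correct (nonnegative) sign so that it may be discarded, using pseudoconvexity of $X_b$, and that the horizontal lifts $\xi_{\tau_j}$ being tangent to $\partial X$ is what makes the relevant contractions well-defined on the boundary. A secondary subtlety is justifying that $P_b^\perp\beta$ really is the minimal $\dbar$-solution and that $\beta$ lies in the domain of $\dbar$ with $\dbar\beta$ in the domain of the relevant Neumann operator — but since everything is smooth up to the boundary this should follow from standard $\dbar$-Neumann theory on smoothly bounded pseudoconvex domains, and for the $L^2$-estimate itself no global regularity is needed, only the Hörmander estimate. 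Once the boundary term is handled, the remaining identity is a routine application of Cartan's formula, Proposition \ref{dbarPL}, and the pointwise Kähler identities, so I would present that part compactly.
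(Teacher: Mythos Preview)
Your overall strategy matches the paper's: compute $\dbar\beta$ via Proposition~\ref{dbarPL} to obtain $\dbar\beta = -\mu - \nabla^{1,0}\kappa$ with $\mu = \sum_j \iota_{X_b}^*((\xi_{\tau_j}\lrcorner\Theta^E)\wedge u_j)$ and $\kappa = \sum_j \iota_{X_b}^*(\dbar\xi_{\tau_j}\lrcorner u_j)$, then recognize $P_b^\perp\beta$ as the minimal-norm $\dbar$-solution and apply an $L^2$ estimate. But there is a genuine gap in how you propose to run the estimate.

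The Hörmander form $\|v\|^2 \le \langle B^{-1}g,g\rangle$ with $B=[\Theta^{E_b},\Lambda_{\omega_b}]$ is not available here: $\Theta^E$ is only \emph{semi}positive, so $B$ need not be invertible, and in any case $\langle B^{-1}\nabla^{1,0}\kappa,\nabla^{1,0}\kappa\rangle$ has no reason to collapse to $\|\kappa\|^2$ via K\"ahler identities. The paper instead uses the dual formulation: it shows that for every \emph{compactly supported} $E|_{X_b}$-valued $(n,1)$-form $\alpha$ one has $|(\mu+\nabla^{1,0}\kappa,\alpha)|^2 \le C\,(\Box\alpha,\alpha)$ with $C$ the right-hand side of \eqref{2ndfunestimate}. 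The two pieces are handled separately and asymmetrically. For $\mu$ one writes $\Theta^E = \sum_k \gamma^k\wedge\ol{\gamma}^k$ locally and Cauchy--Schwarz gives $|(\mu,\alpha)| \le \big[\sum_{j,k}(\Theta^E_{\xi_{\tau_j}\ol\xi_{\tau_k}}u_j,u_k)\big]^{1/2}\,([\Theta^{E_b},\Lambda_{\omega_b}]\alpha,\alpha)^{1/2}$. For $\nabla^{1,0}\kappa$ one integrates by parts to get $|(\nabla^{1,0}\kappa,\alpha)| = |(\kappa,\nabla^{1,0*}\alpha)| \le \|\kappa\|\,(\Box^{1,0}\alpha,\alpha)^{1/2}$, using that $\nabla^{1,0}\alpha=0$ for bidegree reasons. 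The two bounds are then combined via the Bochner--Kodaira identity $\Box = \Box^{1,0} + [\Theta^{E_b},\Lambda_{\omega_b}]$. This splitting---curvature term against $[\Theta^{E_b},\Lambda_{\omega_b}]$, deformation term against $\Box^{1,0}$---is the crux, and your sketch does not contain it.

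Two smaller points. First, $\iota_{X_b}^*u_j = \mathfrak{f}_j(b) \neq 0$ in general; the reason $\iota_{X_b}^*(\dbar\xi_{\tau_j}\lrcorner\nabla^{1,0}u_j)$ vanishes is that $\dbar\xi_{\tau_j}$ is vertical (as $\tau_j$ is holomorphic) and $\iota_{X_b}^*\nabla^{1,0}u_j=0$ for bidegree reasons. Second, your concern about boundary Levi-form terms is misplaced for \emph{this} theorem: testing against compactly supported $\alpha$ means no boundary integral ever appears. The Levi-form boundary integral belongs to the curvature formula (Theorem~\ref{smoothcurv}), not to the second-fundamental-form estimate.
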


\begin{proof}
Let $u_j$ be an $E$-valued $(n,0)$-form representing the twisted relative canonical section $\mathfrak{a}(\mathfrak{f}_j)$. From \eqref{2ndfund} we have
\begin{align*}
    \dbar\iota_{X_b}^*\ipr{L^{1,0}_{\tau_j}u_j} &= \iota_{X_b}^* \left( \dbar \xi_{\tau_j}\lrcorner \nabla^{1,0}u_j - (\xi_{\tau_j}\lrcorner \Theta^E)\wedge u_j - \nabla^{1,0}(\dbar \xi_{\tau_j}\lrcorner u_j) + L^{1,0}_{\xi_{\tau_j}}\dbar u_j\right) \\
    &= \iota_{X_b}^* \left( -(\xi_{\tau_j}\lrcorner \Theta^E)\wedge u_j - \nabla^{1,0}(\dbar \xi_{\tau_j}\lrcorner u_j) \right).
\end{align*}
Indeed, $\iota_{X_b}^*\ipr{\dbar \xi_{\tau_j}\lrcorner \nabla^{1,0}u_j}=0$ and $\iota_{X_b}^*\ipr{L^{1,0}_{\xi_{\tau_j}}\dbar u_j}=0$ as in the proof of Proposition \ref{HiBLSprop}. Thus $P_b^{\perp}\iota_b\ipr{\sum_{j=1}^m L^{1,0}_{\xi_{\tau_j}}\mathfrak{a}(\mathfrak{f}_j)}$ is the solution of the smallest $L^2$-norm to the equation 
\[
\dbar f = \sum_{j=1}^m\iota_{X_b}^* \left( -(\xi_{\tau_j}\lrcorner \Theta^E)\wedge u_j - \nabla^{1,0}(\dbar \xi_{\tau_j}\lrcorner u_j)\right)
\]
on the pseudoconvex domain $X_b$. To simplify notation, let 
\[
\kappa = \sum_{j=1}^m \iota_{X_b}^*\left(\dbar \xi_{\tau_j}\lrcorner u_j\right), \quad \quad \mu = \sum_{j=1}^m \iota_{X_b}^* \left((\xi_{\tau_j}\lrcorner \Theta^E)\wedge u_j\right),
\]
and denote the quantity on the right hand side of the inequality \eqref{2ndfunestimate} by $C$. By the usual functional analysis associated to $L^2$ existence theory for the $\dbar$-equation on complete K\"ahler manifolds, in order to get the estimate \eqref{2ndfunestimate}, it suffices to show that for all compactly supported $E$-valued $(n,1)$-forms $\alpha$ on $X_b$ we have
\[
\abs{\ipr{\mu+ \nabla^{1,0}\kappa,\alpha}_b}^2 \leq C\ipr{\norm{\dbar\alpha}_b^2 + \norm{\dbarstar\alpha}_b^2} = C\ipr{\Box \alpha,\alpha}_b. 
\]
Since $\Theta^E$ is a nonnegative $(1,1)$-form on $X$, we can locally write $\Theta^E = \sum_{k=1}^{n+m}\gamma^{k}\wedge \ol{\gamma}^k$ for some $(1,0)$-forms $\gamma^1,\dots, \gamma^{n+m}$ on $X$. Let $\gamma^k_b$ denote the form $\gamma^k_b = \iota_{X_b}^*\gamma^k$, so that $\sum_{j=1}^{n+m} \gamma_b^k\wedge \ol{\gamma}^k_b := \Theta^{E_b}$ is the curvature of the line bundle $E|_{X_b}$. Finally, let $\vee$ denote the adjoint of the wedge product, i.e., for a form $\beta$ and twisted forms $v$ and $w$ on $X_b$ of appropriate bidegrees, we have
\[
\{ \beta\wedge v,w\} = \{ v,\beta \vee w\}.
\]
Now, let $\alpha$ be a compactly supported $E|_{X_b}$-valued $(n,1)$-form on $X_b$. Then
\begin{align}
     \left\{ \sum_{j=1}^m (\xi_{\tau_j}\lrcorner \Theta^E)\wedge u_j,\alpha\right\} &= \sum_{j=1}^m\sum_{k=1}^{n+m}\left\{ \gamma^k(\xi_{\tau_j})\ol{\gamma}^k_b\wedge u_j,\alpha\right\} \nonumber \\
    &= \sum_{k=1}^{n+m}\left\{ \sum_{j=1}^m\gamma^k(\xi_{\tau_j}) u_j, \ol{\gamma}_b^k\vee \alpha\right\} \nonumber \\
    &\leq \left[\sum_{k=1}^{n+m} \left\{ \sum_{j=1}^m \gamma^k(\xi_{\tau_j})u_j, \sum_{i=1}^m \gamma^k(\xi_{\tau_i})u_i\right\}\right]^{1/2}\left[\sum_{k=1}^{n+m} \left\{ \ol{\gamma}^k_b\vee \alpha, \ol{\gamma}^k_b\vee \alpha \right\}\right]^{1/2} \nonumber \\
    &= \left[ \sum_{k=1}^{n+m}\sum_{i,j=1}^m \left\{\gamma^k(\xi_{\tau_j})\ol{\gamma^k(\xi_{\tau_i})}u_j,u_i \right\}\right]^{1/2} \left\{\sum_{k=1}^{n+m}\ol{\gamma}^k_b\wedge(\ol{\gamma}^k_b\vee \alpha),\alpha \right\}^{1/2} \nonumber\\
    &= \left[\sum_{i,j=1}^m \{ \Theta^E_{\xi_{\tau_j}\ol{\xi}_{\tau_i}}u_j,u_i\}\right]^{1/2}\left\{ \left[\Theta^{E_b},\Lambda_{\omega_b}\right]\alpha, \alpha\right\}^{1/2}. \label{2ndfunest1}
\end{align}
Here $\Lambda_{\omega_b}$ denotes the adjoint of the Lefschetz operator $L_{\omega_b}$ on $X_b$ with respect to the inner product $\{\cdot,\cdot\}$. To see the final equality, note that $\Lambda_{\omega_b}\Theta^{E_b}\wedge \alpha =0$ for reasons of bidegree. A local calculation then shows that
\[
\sum_{k} \ol{\gamma}^k_b\wedge (\ol{\gamma}^k_b\vee\alpha) = \Theta^{E_b}\wedge \Lambda_{\omega_b}\alpha = \left[\Theta^{E_b},\Lambda_{\omega_b}\right]\alpha.
\]
Integrating both sides of \eqref{2ndfunest1} on $X_b$, we get
\begin{equation}\label{2ndfunest2}
    \abs{\ipr{ \mu,\alpha}_b}  \leq \left[ \sum_{i,j=1}^m \ipr{\Theta^E_{\xi_{\tau_j}\ol{\xi}_{\tau_i}}u_j,u_i }_b \right]^{1/2} \ipr{\left[\Theta^{E_b},\Lambda_{\omega_b}\right]\alpha,\alpha}_b^{1/2}. 
\end{equation}
On the other hand, we have 
\begin{align}
    \abs{\ipr{\nabla^{1,0}\kappa, \alpha}_b} = \abs{\ipr{\kappa,\nabla^{1,0*}\alpha}_b} &\leq \ipr{\kappa, \kappa}_b^{1/2}\ipr{\nabla^{1,0*}\alpha,\nabla^{1,0*}\alpha}_b^{1/2} = \ipr{\kappa, \kappa}_b^{1/2}\ipr{\Box^{1,0}\alpha,\alpha}_b^{1/2}, \label{2ndfunest3}
\end{align}
where the last equality follows from the fact that $\nabla^{1,0}\alpha=0$ for degree reasons.
Using equations \eqref{2ndfunest2} and \eqref{2ndfunest3}, we have
\begin{align*}
    \abs{\ipr{\mu+\nabla^{1,0}\kappa,\alpha}_b}^2 &\leq \abs{\ipr{ \mu,\alpha}_b}^2 + \abs{\ipr{\nabla^{1,0}\kappa, \alpha}_b}^2 + 2\abs{\ipr{ \mu,\alpha}_b} \abs{\ipr{\nabla^{1,0}\kappa, \alpha}_b} \\
    &\leq \left[ \sum_{i,j=1}^m \ipr{\Theta^E_{\xi_{\tau_j}\ol{\xi}_{\tau_i}}u_j,u_i }_b \right] \ipr{\left[\Theta^{E_b},\Lambda_{\omega_b}\right]\alpha,\alpha}_b + \ipr{\kappa, \kappa}_b\ipr{\Box^{1,0}\alpha,\alpha}_b \\
    &+ 2\left[ \sum_{i,j=1}^m \ipr{\Theta^E_{\xi_{\tau_j}\ol{\xi}_{\tau_i}}u_j,u_i }_b \right]^{1/2} \ipr{\left[\Theta^{E_b},\Lambda_{\omega_b}\right]\alpha,\alpha}_b^{1/2} \ipr{\kappa, \kappa}_b^{1/2}\ipr{\Box^{1,0}\alpha,\alpha}_b^{1/2} \\
    &\leq \left[ \sum_{i,j=1}^m \ipr{\Theta^E_{\xi_{\tau_j}\ol{\xi}_{\tau_i}}u_j,u_i }_b \right] \ipr{\left[\Theta^{E_b},\Lambda_{\omega_b}\right]\alpha,\alpha}_b + \ipr{\kappa, \kappa}_b\ipr{\Box^{1,0}\alpha,\alpha}_b \\
    &+ \left[ \sum_{i,j=1}^m \ipr{\Theta^E_{\xi_{\tau_j}\ol{\xi}_{\tau_i}}u_j,u_i }_b \right]\ipr{\Box^{1,0}\alpha,\alpha}_b + \ipr{\left[\Theta^{E_b},\Lambda_{\omega_b}\right]\alpha,\alpha}_b \ipr{\kappa, \kappa}_b\\
    &= C\ipr{\ipr{\Box^{1,0}\alpha,\alpha}_b+ \ipr{\left[\Theta^{E_b},\Lambda_{\omega_b}\right]\alpha,\alpha}_b}  \\
    &= C\ipr{\Box\alpha, \alpha}_b,
\end{align*}
where we used the formal identity $\Box^{1,0}+\left[\Theta^{E_b},\Lambda_{\omega_b}\right] =\Box$ to arrive at the last equality. This completes the proof.
\end{proof}

\begin{proposition}\label{SteiniBLS}
Suppose that the ambient manifold $\wt{X}$ is Stein and that $\Sigma_b^{\theta}(b)$ is dense in $\wt{\ch}_{\cl^{\theta},b}(b)$ for all $b\in B$. Then $(\ch, \Sigma^{\theta},\cl^{\theta})$ is an iBLS field.
\end{proposition}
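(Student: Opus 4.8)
The plan is to verify, with $\cl^{\theta}$ as ambient BLS field and $\Sigma^{\theta}$ the tuning of \eqref{concsigma}, the four conditions of Definition \ref{iBLSdef}; the first three are short and the fourth is the substance. We already know $\cl^{\theta}$ is a BLS field. Condition (a) asks that $\wt{\ch}_{\cl^{\theta},b}(b)$ be dense in $\ch_b$: since $\wt{X}$ is Stein each $\wt{X}_b$ is Stein, so $X_b$ --- a bounded pseudoconvex domain in a Stein manifold --- is itself Stein, and by the standard approximation theorem for smoothly bounded pseudoconvex domains the holomorphic twisted canonical sections that are smooth up to $\partial X_b$ are dense in $\ch_b$; each such section, being smooth up to $\partial X_b$, extends to a section of $K_{X/B}\otimes E$ smooth up to $\partial X$ on a neighbourhood of $\ol{X_b}$, hence lies in $\wt{\ch}_{\cl^{\theta},b}(b)$. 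Condition (c), that $(\ch,\Sigma^{\theta})$ be an infinitesimally almost holomorphic subfield, is built into the first requirement in \eqref{concsigma} (see the remark following it), and condition (b), formality of $(\ch,\Sigma^{\theta})$, is Proposition \ref{HiBLSprop}, whose hypothesis --- density of $\Sigma^{\theta}_b(b)$ in $\wt{\ch}_{\cl^{\theta},b}(b)$ --- is assumed.

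The heart of the matter is condition (d): for all $b$, all $\sigma,\tau\in\sC^{\infty}(T_B\otimes\C)_b$ and all $\mathfrak{f}\in\Sigma^{\theta}_b$, the anti-linear functional $Q_{\sigma\tau}(\mathfrak{f}(b),\cdot)_b$ must be bounded on $\Sigma^{\theta}_b(b)$ in the $\ch_b$-norm. By sesquilinearity we may take $\sigma,\tau$ of pure type; $(\ch,\Sigma^{\theta})$ is infinitesimally holomorphic (shown before the definition of the curvature of $\ch$), so by Proposition \ref{iBLS11formcrit} $N_b$ vanishes in $(0,1)$-directions and $\Theta^{\cl^{\theta}}_{\ol{\sigma}\ol{\tau}}\mathfrak{f}(b)=0$, and the only case requiring work is $\sigma,\tau$ of type $(1,0)$, where
\[
Q_{\sigma\ol{\tau}}(\mathfrak{f}(b),\mathfrak{g}(b))_b=\ipr{\Theta^{\cl^{\theta}}_{\sigma\ol{\tau}}\mathfrak{f},\mathfrak{g}}_b-\ipr{P_b^{\perp}L^{1,0}_{\xi_{\sigma}}u,\;P_b^{\perp}L^{1,0}_{\xi_{\tau}}v}_b,
\]
with $u=\mathfrak{a}(\mathfrak{f})$ fixed and $v=\mathfrak{a}(\mathfrak{g})$. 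By Lemma \ref{LSlemma} the first term equals $\ipr{\Theta^{\cl^{\theta}}_{\sigma(b)\ol{\tau}(b)}\mathfrak{f}(b),\mathfrak{g}(b)}_b$ with $\Theta^{\cl^{\theta}}_{\sigma(b)\ol{\tau}(b)}\mathfrak{f}(b)$ a fixed element of $\cl^{\theta}_b$, hence trivially bounded in $\norm{\mathfrak{g}(b)}_{\ch_b}$. So everything reduces to the estimate $\abs{\ipr{P_b^{\perp}L^{1,0}_{\xi_{\sigma}}u,\,P_b^{\perp}L^{1,0}_{\xi_{\tau}}v}_b}\le C_{\mathfrak{f}}\norm{\mathfrak{g}(b)}_{\ch_b}$.

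To establish it I would argue as in Proposition \ref{HiBLSprop}: since $u,v$ represent elements of $\Sigma^{\theta}_b$, Proposition \ref{dbarPL} gives $\dbar\iota_{X_b}^{*}L^{1,0}_{\xi_{\tau}}v=-\mu_{\mathfrak{g}}-\nabla^{1,0}\kappa_{\mathfrak{g}}$ with $\mu_{\mathfrak{g}}=\iota_{X_b}^{*}\big((\xi_{\tau}\lrcorner\Theta^{E})\wedge v\big)$ and $\kappa_{\mathfrak{g}}=\iota_{X_b}^{*}(\dbar\xi_{\tau}\lrcorner v)$, and similarly $\dbar\iota_{X_b}^{*}L^{1,0}_{\xi_{\sigma}}u$ is a fixed $\dbar$-closed $L^{2}$-form depending only on $\mathfrak{f},\sigma$. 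Because $\xi_{\tau}\lrcorner\Theta^{E}$ and $\dbar\xi_{\tau}$ are fixed smooth tensors into which $v$ enters undifferentiated, $\norm{\mu_{\mathfrak{g}}}_b+\norm{\kappa_{\mathfrak{g}}}_b\le C\norm{\mathfrak{g}(b)}_{\ch_b}$, so $P_b^{\perp}L^{1,0}_{\xi_{\tau}}v$ is the minimal $L^{2}$-solution on the bounded pseudoconvex $X_b$ of a $\dbar$-equation whose right-hand side is controlled by $\norm{\mathfrak{g}(b)}_{\ch_b}$ apart from the single term $\nabla^{1,0}\kappa_{\mathfrak{g}}$. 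One then writes $P_b^{\perp}L^{1,0}_{\xi_{\sigma}}u=\dbarstar\alpha$ with $\alpha:=N^{(n,1)}_b\big(\dbar\iota_{X_b}^{*}L^{1,0}_{\xi_{\sigma}}u\big)$ a fixed $\dbar$-closed $(n,1)$-form (the $\dbar$-Neumann operator exists since $X_b$ is bounded pseudoconvex), integrates by parts via $\ipr{\dbarstar\alpha,w}_b=\ipr{\alpha,\dbar w}_b$, uses the basic Bochner--Kodaira estimate on the pseudoconvex fiber to place $\alpha$ in $\dom(\nabla^{1,0*})$ with $\norm{\nabla^{1,0*}\alpha}_b\le C_{\mathfrak{f}}$ together with the K\"ahler identity $\nabla^{1,0*}\alpha=\sqi\,\dbar\Lambda_{\omega_b}\alpha$, and keeps careful track of the boundary integral over $\partial X_b$ that appears --- this is precisely the Levi-form term of \eqref{hcurv} --- converting it, by Stokes, the holomorphy of $\iota_{X_b}^{*}v$ and one further integration by parts, into an interior pairing of fixed data against $v$ undifferentiated. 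The outcome is $\abs{\ipr{P_b^{\perp}L^{1,0}_{\xi_{\sigma}}u,P_b^{\perp}L^{1,0}_{\xi_{\tau}}v}_b}\le C_{\mathfrak{f}}\big(\norm{\mu_{\mathfrak{g}}}_b+\norm{\kappa_{\mathfrak{g}}}_b+\norm{v|_{X_b}}_b\big)\le C'_{\mathfrak{f}}\norm{\mathfrak{g}(b)}_{\ch_b}$, which is condition (d).

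The hard part is this last step: controlling the boundary contributions produced by the integrations by parts on $X_b$, since for a holomorphic $v$ the boundary trace $\norm{v|_{\partial X_b}}$ is not dominated by the Bergman norm $\norm{v|_{X_b}}_b$. It is here that the pseudoconvexity of the fibers (nonnegativity of the Levi form, existence of the $\dbar$-Neumann operators $N^{(n,q)}_b$, the Bochner--Kodaira inequality) and the Steinness of $\wt{X}$ --- which furnishes both the density used in (a) and the solvability with $L^{2}$-estimates of the auxiliary $\dbar$-equations on the fibers --- are essential; the delicate point is to organize the computation so that every surviving term is manifestly an interior $L^{2}(X_b)$-pairing of quantities built from $u$ and the (fixed, smooth) geometric data against $v$ taken undifferentiated.
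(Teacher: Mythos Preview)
Your verification of conditions (a)--(c) is mostly fine, though the claim in (a) that holomorphic sections smooth up to $\partial X_b$ are dense in $\ch_b$ by a ``standard approximation theorem'' is not justified: this requires global regularity of the Bergman projection (Condition~R), which is not assumed here. The paper in fact does not verify (a) inside this proof either; it is established separately in the examples.

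The real issue is your approach to (d), which differs substantially from the paper's and is left incomplete. The paper's argument is much shorter: it uses the Stein hypothesis to pick a smooth plurisubharmonic exhaustion $r$ on $\wt{X}$, twists the metric to $he^{-Ar}$ with $A\gg 0$ so that the curvature becomes nonnegative on a neighbourhood of $\ol{X_b}$, and then invokes Theorem~\ref{2ndfunprop} (the H\"ormander-type estimate proved immediately before this proposition) with respect to the twisted metric. Since $e^{-Ar}$ is bounded above and below on $\ol{X_b}$, this yields $\norm{P_b^{\perp}L^{1,0}_{\xi_\sigma}v}_b\le C\norm{v}_b$ with $C$ depending only on the sup-norms of $\Theta^E_{\xi_\sigma\ol\xi_\sigma}$, $\partial\dbar r(\xi_\sigma,\ol\xi_\sigma)$ and $\dbar\xi_\sigma$ on $\ol{X_b}$---in particular, independent of $\mathfrak{g}$. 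Thus the second fundamental map $N_b(\sigma)$ is a \emph{bounded} operator on $\Sigma_b(b)$, which is strictly stronger than continuity of $Q_{\sigma\ol\tau}(\mathfrak{f}(b),\cdot)_b$ and makes (d) immediate.

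Your route---writing $P_b^{\perp}L^{1,0}_{\xi_\sigma}u=\dbarstar\alpha$ with $\alpha=N_b^{(n,1)}(\cdots)$ and integrating by parts against $\nabla^{1,0}\kappa_{\mathfrak{g}}$---is essentially the strategy of Theorems~\ref{2ndfunexact} and~\ref{2ndfunexact2}, but those results explicitly assume global regularity of $N_b^{(n,q)}$, which is absent here. Without it, $\alpha$ need not be smooth up to the boundary, the K\"ahler identity $\nabla^{1,0*}\alpha=\sqi\,\dbar\Lambda_{\omega_b}\alpha$ is only formal, and the boundary integrals you propose to ``keep careful track of'' are not even well-defined as traces. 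You yourself flag this as the hard part and do not carry it out; that is the gap. The point you are missing is that the Stein assumption is used not merely for density and solvability, but to force nonnegative curvature by twisting---after which Theorem~\ref{2ndfunprop} closes the argument with no boundary analysis at all.
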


\begin{proof}
Since we assume that $\Sigma^\theta_b(b)$ is dense in $\wt{\ch}_{\cl^\theta,b}$, by Proposition \eqref{liftinvintro} it follows that $\Sigma^\theta$ formalizes $\ch$. The only other condition that needs to be checked is the continuity of the anti-linear functional $Q^{\theta}_{\sigma\ol\tau}(\mathfrak{f}(b),\cdot)_b$ for all $b\in B$, all $\sigma, \tau \in \sC^{\infty}(T^{1,0}_B)_b$ and all $\mathfrak{f} \in \Sigma_b$. In order to do this, it suffices to check that the second fundamental map is bounded, i.e., for all $\sigma \in \sC^{\infty}(T^{1,0}_B)_b$ and all $\mathfrak{g}\in \Sigma_b$ we have $\norm{\cp^{\perp}\nabla^{\cl^\theta(1,0)}_\sigma \mathfrak{g}}_b \leq C\norm{\mathfrak{g}}_b$. We use the estimate \eqref{2ndfunestimate} to accomplish this. 

Let $r$ be a smooth plurisubharmonic exhaustion function for $\wt{X}$ which can be assumed to be positive. Since the metric $(E,h)\to \wt{X}$ is smooth, we may assume that for sufficiently large positive number $A$, the twisted metric $he^{-Ar}$ for $E\to V$ has nonnegative curvature, where $V$ is a compactly contained open neighborhood of the compact set $\ol{\pi^{-1}(U)}$, where $U$ is a small enough open set containing $b$. Since $r$ is an exhaustion function, we may choose $M$ large enough so that $\ol{V} \subset r^{-1}(0,\infty)$. Then by Theorem \ref{2ndfunprop} we have
\begin{align*}
    \norm{\cp^{\perp}\nabla^{\cl^\theta(1,0)}_\sigma \mathfrak{g}}_b^2 &= \int_{X_b} \left\{ P_b^{\perp} L^{1,0}_{\sigma}u, P_b^{\perp}L^{1,0}_{\xi_\sigma} u\right\}dV \\
    &\leq e^{AM} \int_{X_b} \left\{ P_b^{\perp} L^{1,0}_{\sigma}u, P_b^{\perp}L^{1,0}_{\xi_\sigma} u\right\}e^{-Ar} dV \\
    &\leq e^{AM} \left[ \int_{X_b}\{(\Theta^E_{\xi_\sigma, \ol\xi_\sigma}+A\partial\dbar r(\xi_\sigma, \ol\xi_\sigma))u,u\}e^{-Ar}dV + \int_{X_b} \{\dbar \xi_\sigma\lrcorner u, \dbar\xi_\sigma \lrcorner u \}e^{-Ar}dV\right]\\
    &\leq e^{AM}C_0 \norm{u}_b^2  = e^{AM}C_0 \norm{\mathfrak{g}}_b^2.
\end{align*}
Here we used the fact that $e^{-Ar}<1$ and that $\dbar\xi_{\sigma}$ and $\Theta^E_{\xi_\sigma, \ol\xi_\sigma}+A\partial\dbar r(\xi_\sigma, \ol\xi_\sigma)$ are smooth on $\ol{X}_b$. This completes the proof. 
\end{proof}


\subsubsection{Examples of iBLS fields}\label{examples}
In all the examples below we show that $\Sigma_b(b)$ is dense in $\ch_b$ for all $b\in B$. By Proposition \ref{SteiniBLS}, the field of Bergman spaces associated to these families of domains is an iBLS field. 
\begin{enumerate}
   \item \emph{Trivial fibration.} Let $B\subset \C^m$ be the unit ball, let $\wt{X} = B\times \C^n$ and let $E\to \wt{X}$ be the trivial line bundle with a nontrivial metric. Let $\Omega \subset \C^n$ be a smoothly bounded pseudoconvex domain. Then we can take $X = B\times \Omega \subset \wt{X}$ and $\pi :X\to B$ to be the projection onto the first factor. Define a smooth structure $\sC^{\infty}(\cl)$ for the Hilbert field of square integrable canonical sections by
   \[
   \Gamma(U,\sC^{\infty}(\cl)) = \{ [fd\vec{z}\,] : f\in \sC^{\infty}(U\times \Omega) \text{ such that } f(b,\cdot)d\vec{z} \in \cl_b \text{ for all } b\in U \},
   \]
   where $d\vec{z} := dz^1\wedge \cdots \wedge dz^n$ and $[ fd\vec{z}\,]$ denotes the relative canonical section represented by $fd\vec{z}$. Let $\sC^\infty(\ch)$ be the smooth structure for $\ch$ given by
   \[
   \Gamma(U,\sC^\infty(\ch)) = \{ [fd\vec{z}\,]: f\in \sC^{\infty}(U\times \Omega) \text{ such that } f(b,\cdot)d\vec{z}\in \ch_b \text{ for all }b\in U\}.
   \]
    Note that all fibers $\ch_b$ can be canonically identified with $\ch_0$ as the metric for $E$ is smooth on $\ol{X}$. Thus, given any element $fd\vec{z}\in \ch_b$ there is a relative canonical section $[Fd\vec{z}\,]$ on $X$ such that $fd\vec{z} = \iota_{X_b}^* Fd\vec{z}$. Here $Fd\vec{z}$ is the extension of $fd\vec{z}$ as a `constant section' 
    \[
    F(b',\cdot) d\vec{z} := fd\vec{z} \quad \text{ for all } b'\in B.
    \]
    Since the $(n,1)$-form $\dbar(Fd\vec{z})\equiv 0$ for such extensions, we see that $[Fd\vec{z}\,]$ satisfies the conditions in \eqref{concsigma}. Thus for all $b\in B$, by taking $\Sigma_b = \sC^{\infty}(\ch)_b$ we see that the image of $\Sigma_b$  under the evaluation at map is all of $\ch_b$. As discussed in the introduction, the compactness of the Neumann operator of $\Omega$ at the level of $(0,1)$-forms is not necessary for Theorem \eqref{Bgeneral} to hold.

    \item Let $E\to \wt{X}$ a hermitian holormophic line bundle over an $n+m$-dimensional complex manifold $\wt{X}$ and let $\wt{\pi}:\wt{X}\to B$ be a holomorphic submersion, where $B\subset \C^m$ is the unit ball. Take $X\subset \wt{X}$ to be the domain $X = \{ \rho <0\}$, where $\rho$ is a smooth real valued function on $\wt{X}$ such that for all $b\in B$ (i) $\rho|_{\wt{\pi}^{-1}(b)}$ is a strictly plurisubharmonic function in a neighborhood of the closure of $X_b := X\cap \wt{\pi}^{-1}(b)$ and (ii) $d\rho(x)\neq 0$ for all $x \in \partial X_b$. Take $\pi:X\to B$ be the map $\wt{\pi}|_X$. Then the work of X. Wang (see \cite{W2017}) shows that $\ch$ can be equipped with the structure of a BLS-field.
    
    \item Let $\wt{X} = B\times \C^n$ where $B\subset \C^m$ is a domain, let $E\to \wt{X}$ be the trivial line bundle with a nontrivial metric $e^{-\phi}$ and let $\wt{\pi}:\wt{X}\to B$ be the projection onto the first factor. Then take $X\subset \wt{X}$ to be the domain $X = \{ \rho <0\}$, where $\rho$ is a smooth real valued function on $\wt{X}$ such that for all $b\in B$ (i) $\rho|_{\wt{\pi}^{-1}(b)}$ is a plurisubharmonic function and (ii) $d\rho(x)\neq 0$ for all $x \in \partial X_b$. Take $\pi:X\to B$ be the map $\wt{\pi}|_X$.

Let $\Sigma\to B$ be the substalk bundle as follows. The stalk $\Sigma_b$ consists of germs of $\sC^{\infty}(\cl)$ that are associated to the relative canonical section $[\wt{p}_2^*f|_{\pi^{-1}(U)}]$ where $f\in \Gamma(\C^n, \mathcal{O}(\Lambda^{n,0}_{\C^n}))$, $U$ is an open set containing $b$ and $\wt{p}_2: \wt{X}\to \C^n$ is the projection onto the second factor. Since $\dbar [\wt{p}_2^*f] \equiv 0$, it follows that such sections satisfy \eqref{concsigma}. Thus, to show that $\Sigma_b(b)$ is dense in $\ch_b$ for all $b \in B$ it suffices to show that $(n,0)$-forms $fd\vec{z}|_{X_b}$ with $f$ entire are dense in $\ch_b$. This is done by an argument similar to \cite[Proposition 4.1]{BL2016}, which we sketch now. For each $b \in B$ the domain $X_b$ has a plurisubharmonic defining function, so by a result of Boas-Straube (see \cite{boasstraube}) the Neumann operator $\mathcal{N}^{(p,q)}_b$ is globally regular for all $1\leq q \leq n$. This in turn implies that forms smooth up to the boundary are dense in $\ch_b$ for all $b\in B$. Indeed, given $f \in \ch_b$ let $\{\chi_j\}$ be a sequence of cutoff functions with compact support in $X_b$ so that $\chi_j f\to f$ in $\cl_b$. By the global regularity of $\mathcal{N}_b^{(n,1)}$, the forms $P_b(\chi_j f)$ are smooth up to the boundary and 
\[
\norm{P_b(\chi_j f) - f}_{\ch_b} = \norm{ P_b(\chi_j f- f)}_{\ch_b} \leq \norm{\chi_j f - f}_{\cl_b},
\]
so $P(\chi_j f)\to f$ in $\ch_b$. Thus to show that $\Sigma_b(b)$ is dense in $\ch_b$, it suffices to show that for all forms $f\in \ch_b$ smooth up to the boundary, there is a sequence $\{f_j\} \subset \Gamma(\C^n, \mathcal{O}(\Lambda^{n,0}_{\C^n}))$ such that $f_j|_{X_b}\to f$ in $\ch_b$. Extend $f$ to a smooth form $F$ with compact support in $\C^n$. Let $\rho_b := \rho|_{\wt{\pi}^{-1}(b)}$ and let $u_j$ be the solution to $\dbar u_j =  \dbar F$ satisfying the $L^2$-estimate
\[
\int_{\C^n} \abs{u_j}^2e^{-j \max(\rho_b,0)+\abs{z}^2}dV \leq C_1 \int_{\C^n} \abs{\dbar F}^2 e^{-j\max(\rho_b,0)+\abs{z}^2}dV.
\]
Then $f_j :=F-u_j$ are holomorphic on $\C^n$ and $f_j|_{X_b}\to f$ in $\ch_b$ because $\norm{u_j}_{\ch_b} \to 0$. Indeed
\begin{align*}
\int_{X_b} \abs{u_j}^2 e^{-\phi}dV & \leq C_2 \int_{X_b} \abs{u_j}^2 e^{-j\max(\rho_b,0)+\abs{z}^2}dV \\
				               &	\leq C_2 \int_{\C^n} \abs{u_j}^2e^{-j \max(\rho_b,0)+\abs{z}^2}dV  \\
					      &\leq C_1C_2 \int_{\C^n} \abs{\dbar F}^2 e^{-j\max(\rho_b,0)+\abs{z}^2}dV \\
					       &= C_1 C_2 \int_{\{\rho_b>0\}} \abs{\dbar F}^2 e^{-j\max(\rho_b,0)+\abs{z}^2}dV
\end{align*}
and the last quantity goes to zero as $j\to \infty$.

\item Let $\wt{X} = B\times \C^n$ where $B\subset \C^m$ is a domain, let $E\to \wt{X}$ be the trivial line bundle with a nontrivial metric and let $\wt{\pi}:\wt{X}\to B$ be the projection onto the first factor. Take $X\subset \wt{X}$ to a smoothly bounded domain such that the domains $X_b := X\cap \wt{\pi}^{-1}(b) \subset \C^n$ admit good Stein neighborhood bases in the sense of \cite{straube2001}. Take $\pi:X\to B$ be the map $\wt{\pi}|_X$. By the results of \cite{straube2001}, the Neumann operator $\mathcal{N}^{(p,q)}_b$ is compact, and hence globally regular for all $0\leq p\leq n$, $1\leq q\leq n$. Taking $\Sigma \to B$ to be the substalk bundle in example (3) and following the same argument shows that elements of $\Sigma_b$ satisfy \eqref{concsigma} for all $b\in B$. For each $b\in B$, by \cite[Theorem 1.5]{SL-T2020} the holomorphic forms in $\C^n$ are dense in $\ch_b$, consequently $\Sigma_b(b)$ is dense in $\ch_b$. 

\item Let $\wt{X}$ be a Stein manifold and $E\to \wt{X}$ be a line bundle. Let $X\subset \wt{X}$ be a domain so that for every $b\in B$ the fibration $X\to B$ satisfies (i) $X_b$ is such that the Neumann operator $\mathcal{N}_b^{(0,1)}$ acting on $(0,1)$-forms is compact (ii) $X_b$ admits a neighborhood of Stein domains in $\wt{X}_b$. Let $\sC^{\infty}(\cl)$ be the smooth structure as in Section \ref{concL}. Let $\Sigma\to B$ be the following substalk bundle. The stalk $\Sigma_b$ contains (equivalence classes) of sections $F\in \Gamma(\pi^{-1}(U), \mathcal{O}(K_{X/B}\otimes E))$ where $U$ is an open set containing $b$ such that $\iota_{X_{b'}}^*F \text{ is smooth up to }\partial X_{b'} \text{ for all }b' \in U$. Since $F$ is a holomorphic twisted relative canonical section on $\pi^{-1}(U)$, it follows that $F$ satisfies the conditions in \eqref{concsigma}. The density of $\Sigma_b(b)$ in $\ch_b$ is a consequence of a result of Shaw-Laurent-Thi\'ebaut \cite[Theorem 1.5]{SL-T2020}. According to their theorem, global sections $F$ of $K_{\wt{X}_b}\otimes E|_{\wt{X}_b}\to \wt{X}_b$ are dense in $\ch_b$. Since $\wt{X}$ is Stein, such sections extend to holomorphic sections $\wt{F}$ of $\Lambda^{n,0}_{\wt{X}}\otimes E\to \wt{X}$. Thus, $\left[\wt{F}|_{\pi^{-1}(U)}\right]$ is a representative of a germ of $\Sigma_b$.

\end{enumerate}


\section{Curvature formula}

\subsection{Commutators of Lie derivatives.} Let $\xi, \eta$ be $(1,0)$ vector fields on a complex manifold $Z$, and let $E\to Z$ be a hermitian holomorphic line bundle. In this section we obtain the following formula for the commutator $\left[L^{1,0}_{\xi}, L^{0,1}_{\overline{\eta}} \right]$ acting on an $E$-valued $(n,0)$-form $u$.
\begin{equation}\label{liedercomm}
    \left[L^{1,0}_{\xi}, L^{0,1}_{\overline{\eta}} \right]u = \Theta^E_{\xi\overline{\eta}} u + L_{[\xi, \overline{\eta}]}u - \left[L^{0,1}_{\xi}, L^{1,0}_{\overline{\eta}} \right]u - \left[L^{0,1}_{\xi}, L^{0,1}_{\overline{\eta}} \right]u.
\end{equation}

Note that all the operators appearing in \eqref{liedercomm} are local. To work locally we introduce a local holomorphic frame $e$ for the line bundle $E$ and write $u = \vp \otimes e$, where $\vp$ is an $(n,0)$-form. If $\tau$ is a complex vector field on $X$ then we have
\begin{align*}
    L_{\tau}(\vp \otimes e) &= \tau \lrcorner \nabla(\vp \otimes e) + \nabla((\tau \lrcorner\vp)\otimes e) \\
                                   &= \tau\lrcorner (d \vp\otimes e +(-1)^n \vp\wedge \nabla e) + d(\tau \lrcorner \vp)\otimes e + (-1)^{n-1} (\tau \lrcorner \vp)\wedge \nabla e \\
                                  &= (\tau\lrcorner d \vp)\otimes e + (-1)^n (\tau \lrcorner \vp)\wedge \nabla e + \vp \otimes (\tau \lrcorner \nabla e) + d (\tau \lrcorner \vp)\otimes e + (-1)^{n-1} (\tau \lrcorner \vp)\wedge \nabla e  \\
                                  &= L_{\tau}\vp \otimes e + \vp \otimes (\tau\lrcorner \nabla e).
\end{align*}
Here we have used the same notation for the twisted Lie derivative acting on $E$-valued forms (appearing on the left hand side) and the usual Lie derivative (appearing on the right hand side). We will continue to employ this abusive notation throughout this subsection.
Similarly, we have
\begin{align} 
     L^{1,0}_{\tau} (\vp \otimes e) &= L^{1,0}_{\tau}\vp \otimes e + \vp \otimes (\tau \lrcorner \nabla^{1,0}e) \label{localie10} \\
     L^{0,1}_{\tau} (\vp \otimes e) &= L^{0,1}_{\tau} \vp \otimes e \label{localie01}.
\end{align}
Formula \eqref{localie01} follows from the fact that $\nabla$ is the Chern connection for $E$ so $\nabla^{0,1}=\dbar$ and $\tau \lrcorner \dbar e =0$ because $e$ is a holomorphic section of the line bundle $E$. 

We use formulas \eqref{localie01} and \eqref{localie10} repeatedly in the calculation below. First we obtain,
\begin{align*}
    L^{1,0}_{\xi} L^{0,1}_{\overline{\eta}}(\vp \otimes e) &= L^{1,0}_{\xi} (L^{0,1}_{\overline{\eta}}\vp \otimes e)\\
    &= L^{1,0}_{\xi}L^{0,1}_{\overline{\eta}}\vp \otimes e + L^{0,1}_{\overline{\eta}}\vp \otimes (\xi\lrcorner \nabla^{1,0} e)\\
    &= L^{1,0}_{\xi}L^{0,1}_{\overline{\eta}}\vp \otimes e + L^{0,1}_{\overline{\eta}}\vp \otimes \nabla^{1,0}_{\xi} e.
\end{align*}
On the other hand, 
\begin{align*}
    L^{0,1}_{\overline{\eta}}L^{1,0}_{\xi} (\vp \otimes e) &= L^{0,1}_{\overline{\eta}}(L^{1,0}_{\xi}\vp \otimes e + \vp\otimes (\xi \lrcorner \nabla^{1,0}e)) \\
    &= L^{0,1}_{\overline{\eta}}(L^{1,0}_{\xi}\vp \otimes e + \vp\otimes \nabla^{1,0}_{\xi}e) \\
    &= L^{0,1}_{\overline{\eta}}L^{1,0}_{\xi}\vp \otimes e + L^{0,1}_{\overline{\eta}}\vp \otimes \nabla^{1,0}_{\xi}e + \vp\otimes \nabla^{0,1}_{\overline{\eta}}\nabla^{1,0}_{\xi}e. \\
\end{align*}
Subtracting the two quantities, we get
\begin{align*}
    \left[ L^{1,0}_{\xi}, L^{0,1}_{\overline{\eta}}\right] (\vp\otimes e) &= \left[ L^{1,0}_{\xi}, L^{0,1}_{\overline{\eta}}\right]\vp \otimes e - \vp \otimes \nabla^{0,1}_{\overline{\eta}}\nabla^{1,0}_{\xi}e \\
    &= \left[ L^{1,0}_{\xi}, L^{0,1}_{\overline{\eta}}\right]\vp \otimes e + \vp \otimes \Theta^E_{\xi\overline{\eta}} e + \vp\otimes \nabla_{[\xi, \overline{\eta}]} e \\
    &= \left[ L^{1,0}_{\xi}, L^{0,1}_{\overline{\eta}}\right]\vp \otimes e + \vp \otimes \Theta^E_{\xi\overline{\eta}} e + \vp\otimes ([\xi, \overline{\eta}]\lrcorner \nabla e) \\
    &= \left[ L^{1,0}_{\xi}, L^{0,1}_{\overline{\eta}}\right]\vp \otimes e + \vp \otimes \Theta^E_{\xi\overline{\eta}} e + L_{[\xi, \overline{\eta}]}(\vp\otimes e) - (L_{[\xi, \overline{\eta}]}\vp)\otimes e \\
    &= \Theta^E_{\xi\overline{\eta}}(\vp\otimes e) + L_{[\xi, \overline{\eta}]}(\vp\otimes e) + \left( [L^{1,0}_{\xi},L^{0,1}_{\overline{\eta}}]\vp - L_{[\xi,\overline{\eta}]}\vp\right)\otimes e
\end{align*}
By the Jacobi identity, for a differential form $\psi$ on $X$ we have $[L_{\xi},L_{\ol{\eta}}]\psi = L_{[\xi,\ol{\eta}]}\psi$. Thus we get
\begin{align*}
    [L^{1,0}_{\xi},L^{0,1}_{\overline{\eta}}]\vp - L_{[\xi,\overline{\eta}]}\vp &= [L^{1,0}_{\xi},L^{0,1}_{\overline{\eta}}]\vp - [L_{\xi},L_{\ol{\eta}}]\vp \\
    &=[L^{1,0}_{\xi},L^{0,1}_{\overline{\eta}}]\vp - [L^{1,0}_{\xi}+L^{0,1}_{\xi},L^{1,0}_{\ol{\eta}}+L^{0,1}_{\ol{\eta}}]\vp \\
    &= -[L^{1,0}_{\xi},L^{1,0}_{\ol{\eta}}]\vp -[L^{0,1}_{\xi},L^{1,0}_{\ol{\eta}}]\vp - [L^{0,1}_{\xi},L^{0,1}_{\ol{\eta}}]\vp \\
    &= -[L^{0,1}_{\xi},L^{1,0}_{\ol{\eta}}]\vp - [L^{0,1}_{\xi},L^{0,1}_{\ol{\eta}}]\vp,
\end{align*}
where the last equality follows from the fact that $L^{1,0}_{\ol{\eta}}\vp=L^{1,0}_{\ol{\eta}}L^{1,0}_{\xi}\vp =0$ for reasons of type. For the same reason, we see that $\ol\eta \lrcorner \nabla^{1,0}e = 0$. Using this fact along with equations \eqref{localie10} and \eqref{localie01} we see that
\begin{align*}
    [L^{0,1}_{\xi},L^{1,0}_{\ol\eta}](\vp\otimes e) + [L^{0,1}_{\xi},L^{0,1}_{\ol\eta}](\vp\otimes e) &= \left([L^{0,1}_{\xi},L^{1,0}_{\ol\eta}]\vp \right)\otimes e + \left([L^{0,1}_{\xi},L^{0,1}_{\ol\eta}] \vp \right)\otimes e.
\end{align*}
Putting all this together, we obtain
\begin{align*}
    \left[ L^{1,0}_{\xi}, L^{0,1}_{\overline{\eta}}\right] (\vp\otimes e)
    &= \Theta^E_{\xi\overline{\eta}}(\vp\otimes e) + L_{[\xi, \overline{\eta}]}(\vp\otimes e) + \left( [L^{1,0}_{\xi},L^{0,1}_{\overline{\eta}}]\vp - L_{[\xi,\overline{\eta}]}\vp\right)\otimes e \\
    &=\Theta^E_{\xi\overline{\eta}}(\vp\otimes e) + L_{[\xi, \overline{\eta}]}(\vp\otimes e) - [L^{0,1}_{\xi},L^{1,0}_{\ol\eta}](\vp\otimes e) - [L^{0,1}_{\xi},L^{0,1}_{\ol\eta}](\vp\otimes e).
\end{align*}
This is equation \eqref{liedercomm}.

\begin{theorem}\label{smoothcurv}
Let $(E,h)\to X\to B$ be as in Section \ref{notation}. Fix a horizontal distribution $\theta^{1,0} \subset T^{1,0}_X$. Let $\sigma, \tau$ be holomorphic $(1,0)$-vector fields on an open set $U$ containing the point $b$ and $\xi_{\sigma}, \xi_{\tau}$ be their horizontal lifts with respect to the distribution $\theta^{1,0}$. Then for $\mathfrak{f}, \mathfrak{g} \in \Sigma_b$ we have 
\begin{equation*}
    \ipr{\Theta^{\mathcal{L}^{\theta}}_{\sigma\ol{\tau}}\mathfrak{f},\mathfrak{g}}_b = \ipr{\Theta^E_{\xi_{\sigma}\ol{\xi}_{\tau}} \mathfrak{a(f)}, \mathfrak{a(g)}}_b - \sqi^{n^2} \int_{X_b} \hwedge{\dbar \xi_{\sigma}\lrcorner \mathfrak{a(f)}}{\dbar \xi_{\tau}\lrcorner \mathfrak{a(g)}} + \int_{\partial X_b} \partial \dbar\rho(\xi_{\sigma},\ol{\xi}_{\tau})\{\mathfrak{a(f),a(g)}\}dS,
\end{equation*}
where $\rho$ is a defining function for $X$ such that $\rho_b := \rho|_{X_b}$ is a defining function for $\partial X_b$ normalized to satisfy $\abs{d\rho_b}=1$ on $\partial X_b$, and $dS$ is the volume form induced on $\partial X_b$ by the Kahler form $\omega_b$ on $X_b$, i.e., $d\rho_b \wedge dS = dV_{\omega_b}|_{\partial X_b}$.
\end{theorem}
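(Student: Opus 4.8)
The plan is to unwind $\Theta^{\cl^{\theta}}_{\sigma\ol\tau}$ into twisted Lie derivatives, reduce to a commutator of Lie derivatives on the total space, apply the commutator identity \eqref{liedercomm}, and finish with a sequence of integrations by parts on the fiber $X_b$. Write $u=\mathfrak a(\mathfrak f)$ and $v=\mathfrak a(\mathfrak g)$ for fixed $E$-valued $(n,0)$-form representatives, and abbreviate $(a,b)_b:=\sqi^{n^{2}}\int_{X_b}\hwedge{\iota_{X_b}^{*}a}{\iota_{X_b}^{*}b}$. First, since $\sigma,\tau$ are holomorphic, $[\sigma,\ol\tau]=0$, so $\Theta^{\cl^{\theta}}_{\sigma\ol\tau}\mathfrak f=\nabla^{\cl^{\theta}(1,0)}_{\sigma}\dbar^{\cl^{\theta}}\mathfrak f(\ol\tau)-\dbar^{\cl^{\theta}}\bigl(\nabla^{\cl^{\theta}(1,0)}_{\sigma}\mathfrak f\bigr)(\ol\tau)$. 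Feeding in the definitions $\dbar^{\cl^{\theta}}\mathfrak f(\ol\tau)=\mathfrak i(L^{0,1}_{\ol\xi_\tau}u)$ and $\nabla^{\cl^{\theta}(1,0)}_{\sigma}\mathfrak f=\mathfrak i(L^{1,0}_{\xi_\sigma}u)$ (both $L^{0,1}_{\ol\xi_\tau}u$ and $L^{1,0}_{\xi_\sigma}u$ are again twisted $(n,0)$-forms, hence represent relative canonical sections) yields $\Theta^{\cl^{\theta}}_{\sigma\ol\tau}\mathfrak f=\mathfrak i\bigl([L^{1,0}_{\xi_\sigma},L^{0,1}_{\ol\xi_\tau}]u\bigr)$, and therefore $\ipr{\Theta^{\cl^{\theta}}_{\sigma\ol\tau}\mathfrak f,\mathfrak g}_b=\bigl([L^{1,0}_{\xi_\sigma},L^{0,1}_{\ol\xi_\tau}]u,\,v\bigr)_b$.

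\noindent\textbf{Peeling off the easy terms.} I would then substitute the identity \eqref{liedercomm} and pair against $v$. Since $\mathfrak g\in\Sigma_b$, the form $\iota_{X_b}^{*}v$ is holomorphic of bidegree $(n,0)$, so any summand of bidegree $(p,q)$ with $q\ge1$ is killed: its pullback wedged with $\overline{\iota_{X_b}^{*}v}$ has bidegree $(p,q+n)$, which vanishes on the $n$-dimensional $X_b$. Thus $\Theta^{E}_{\xi_\sigma\ol\xi_\tau}u$ produces the first term $\ipr{\Theta^{E}_{\xi_\sigma\ol\xi_\tau}\mathfrak a(\mathfrak f),\mathfrak a(\mathfrak g)}_b$; the commutator $[L^{0,1}_{\xi_\sigma},L^{0,1}_{\ol\xi_\tau}]u$ is of bidegree $(n-1,1)$ and drops out; and, since $L^{1,0}_{\ol\xi_\tau}u=0$ for bidegree reasons, $-[L^{0,1}_{\xi_\sigma},L^{1,0}_{\ol\xi_\tau}]u=L^{1,0}_{\ol\xi_\tau}L^{0,1}_{\xi_\sigma}u$. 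Everything therefore reduces to evaluating
\[
\mathcal T:=\bigl(L_{[\xi_\sigma,\ol\xi_\tau]}u+L^{1,0}_{\ol\xi_\tau}L^{0,1}_{\xi_\sigma}u,\,v\bigr)_b,
\]
where I will use that $V:=[\xi_\sigma,\ol\xi_\tau]$ is a vertical vector field tangent to $\partial X$ (it projects to $[\sigma,\ol\tau]=0$), so $\iota_{X_b}^{*}L_{V}u=L_{V|_{X_b}}(\iota_{X_b}^{*}u)$ with $V|_{X_b}$ tangent to $\partial X_b$.

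\noindent\textbf{The deformation and boundary terms.} The remaining task is to show $\mathcal T=-\sqi^{n^{2}}\int_{X_b}\hwedge{\dbar\xi_\sigma\lrcorner u}{\dbar\xi_\tau\lrcorner v}+\int_{\partial X_b}\partial\dbar\rho(\xi_\sigma,\ol\xi_\tau)\{\iota_{X_b}^{*}u,\iota_{X_b}^{*}v\}\,dS$. For this I would expand $L^{0,1}_{\xi_\sigma}u=\dbar(\xi_\sigma\lrcorner u)+\xi_\sigma\lrcorner\dbar u$ and $L^{1,0}_{\ol\xi_\tau}=\nabla^{1,0}\!\circ(\ol\xi_\tau\lrcorner\,\cdot\,)+\ol\xi_\tau\lrcorner\nabla^{1,0}$, push the operators past one another using Cartan's formula, the line-bundle Bianchi identity $\nabla^{1,0}\dbar+\dbar\nabla^{1,0}=\Theta^{E}\wedge\,\cdot\,$, Proposition \ref{dbarPL}, and the Leibniz rule \eqref{lieder}, and repeatedly invoke the two defining conditions on $\Sigma_b$ from \eqref{concsigma} (namely $\iota_{X_b}^{*}\dbar u=0$ and $\iota_{X_b}^{*}L^{1,0}_{\xi_\eta}\dbar u=0$ for every $(1,0)$-field $\eta$, and the same for $v$) together with the transfer identities \eqref{metcomp10} and \eqref{metcomp01}. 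Stokes' theorem on $X_b$ then converts the $\nabla^{1,0}$-exact pieces into integrals over $\partial X_b$: the interior remainder collapses to $-\sqi^{n^{2}}\int_{X_b}\hwedge{\dbar\xi_\sigma\lrcorner u}{\dbar\xi_\tau\lrcorner v}$, while the boundary remainder, in which the failure of $\ol\xi_\tau$ to be tangent to the fiber $X_b$ shows up as a contraction against $\partial\rho$, is brought to $\int_{\partial X_b}\partial\dbar\rho(\xi_\sigma,\ol\xi_\tau)\{\cdot,\cdot\}\,dS$ by one further Stokes step, using $d\dbar\rho=\partial\dbar\rho$ and the tangency relations $\xi_\sigma\rho=\ol\xi_\tau\rho=0$ on $\partial X$ (so that only the complex Hessian survives on the boundary).

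\noindent\textbf{Main obstacle.} Steps 1--2 above are purely formal; the entire difficulty is concentrated in the last step. The two delicate points there are the bookkeeping of which Lie-derivative summands are annihilated by $\iota_{X_b}^{*}$ (by bidegree, by verticality of the relevant fields, or by the $\Sigma_b$-conditions) versus which persist after the integrations by parts, and — above all — the correct extraction of the Levi form $\partial\dbar\rho(\xi_\sigma,\ol\xi_\tau)$ from the $\partial X_b$-boundary contribution, i.e.\ the mechanism by which the non-tangency of the horizontal lift $\ol\xi_\tau$ to the fibers of $X\to B$ is converted into the complex Hessian of the defining function of the total domain $X$.
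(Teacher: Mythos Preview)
Your reduction to $\bigl([L^{1,0}_{\xi_\sigma},L^{0,1}_{\ol\xi_\tau}]u,v\bigr)_b$ and your handling of the $\Theta^E$ and $[L^{0,1}_{\xi_\sigma},L^{0,1}_{\ol\xi_\tau}]$ terms match the paper exactly. Where you diverge is in the last step: you lump the two remaining pieces into a single $\mathcal T$ and propose to grind through Cartan, Bianchi, Proposition~\ref{dbarPL}, and both $\Sigma_b$ conditions. The paper keeps them separate and is much lighter.

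For $\bigl(L^{1,0}_{\ol\xi_\tau}L^{0,1}_{\xi_\sigma}u,v\bigr)_b$: the paper observes $L^{1,0}_{\ol\xi_\tau}L^{0,1}_{\xi_\sigma}u=L_{\ol\xi_\tau}L^{0,1}_{\xi_\sigma}u$ modulo a form whose wedge with $\ol v$ has bidegree $(n-1,n+1)$, then applies only \eqref{lieder} and Proposition~\ref{metcomp} to write
\[
0=\ol\tau\!\int_{X_b}\hwedge{L^{0,1}_{\xi_\sigma}u}{v}=\int_{X_b}\hwedge{L_{\ol\xi_\tau}L^{0,1}_{\xi_\sigma}u}{v}+\int_{X_b}\hwedge{L^{0,1}_{\xi_\sigma}u}{L^{0,1}_{\xi_\tau}v},
\]
the left side vanishing because $\hwedge{L^{0,1}_{\xi_\sigma}u}{v}$ already has bidegree $(n-1,n+1)$. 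This gives the deformation term immediately. Neither Proposition~\ref{dbarPL} nor the second condition in \eqref{concsigma} is used; only holomorphicity of $\iota_{X_b}^*u$ and $\iota_{X_b}^*v$.

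For $\bigl(L_{[\xi_\sigma,\ol\xi_\tau]}u,v\bigr)_b$: your intuition that the boundary term encodes ``the failure of $\ol\xi_\tau$ to be tangent to the fiber'' is off. The paper's mechanism is: $V=[\xi_\sigma,\ol\xi_\tau]$ is vertical, so $\iota_{X_b}^*(V\lrcorner\nabla u)=0$ and only $\nabla(V^{1,0}\lrcorner u)$ survives; Stokes on $X_b$ yields $\int_{\partial X_b}\partial\rho_b(V^{1,0})\{u,v\}\,dS$. The Levi form then appears from the elementary identity (Cartan formula for $d\partial\rho$ applied to $(\xi_\sigma,\ol\xi_\tau)$, using that $\partial\rho(\xi_\sigma)$ vanishes on $\partial X$ and $\ol\xi_\tau$ is tangent to $\partial X$):
\[
\partial\rho\bigl([\xi_\sigma,\ol\xi_\tau]^{1,0}\bigr)\big|_{\partial X}=\partial\dbar\rho(\xi_\sigma,\ol\xi_\tau)\big|_{\partial X}.
\]
So the boundary contribution comes entirely and cleanly from the bracket term, with no Stokes steps applied to the other piece. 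Your plan would presumably arrive at the same place, but the extra machinery you invoke is not needed and the geometric picture you describe for how $\partial\dbar\rho$ enters is not the right one.
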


\begin{proof}
Let $u$ and $v$ be $E$-valued $(n,0)$-forms on $\pi^{-1}(U)$ that represent the twisted relative canonical sections $\mathfrak{a(f)}$ and $\mathfrak{a(g)}$ associated to $\mathfrak{f}$ and $\mathfrak{g}$ respectively. Since $\sigma, \tau$ are holomorphic vector fields on $U$, we have $[\sigma, \ol\tau]=0$. Thus, the curvature $\Theta^{\cl^\theta}$ is given by 
\begin{align*}
    \Theta^{\cl^\theta}_{\sigma\ol\tau} \mathfrak{f} &= \nabla^{\cl^\theta(1,0)}_{\sigma}\nabla^{\cl^\theta(0,1)}_{\ol\tau}\mathfrak{f} - \nabla^{\cl^\theta(0,1)}_{\ol\tau}\nabla^{\cl^\theta(1,0)}_{\sigma}\mathfrak{f} \\
    &= \nabla^{\cl^\theta(1,0)}_{\sigma}\ipr{\mathfrak{i}\ipr{L^{0,1}_{\ol\xi_\tau}\mathfrak{a(f)}}} - \nabla^{\cl^\theta(0,1)}_{\ol\tau}\ipr{\mathfrak{i}\ipr{L^{1,0}_{\xi_\sigma}\mathfrak{a(f)}}} \\
    &= \mathfrak{i}\ipr{L^{1,0}_{\xi_\sigma}L^{0,1}_{\ol\xi_\tau}\mathfrak{a(f)}} - \mathfrak{i}\ipr{L^{0,1}_{\ol\xi_\tau}L^{1,0}_{\xi_\sigma}\mathfrak{a(f)}} = \mathfrak{i}\ipr{\left[L^{1,0}_{\xi_\sigma}, L^{0,1}_{\ol{\xi}_\tau} \right]\mathfrak{a(f)}}.
\end{align*}

By using formula \eqref{liedercomm}
\begin{align}
    \ipr{\Theta^{\mathcal{L}^{\theta}}_{\sigma\ol{\tau}}\mathfrak{f},\mathfrak{g}}_b &= \ipr{\left[L^{1,0}_{\xi_\sigma}, L^{0,1}_{\ol{\xi}_\tau} \right]u, v}_b \nonumber \\
    &=\sqi^{n^2} \int_{X_b} \hwedge{\Theta^E_{\xi_\sigma \overline{\xi}_\tau} u}{v} + \sqi^{n^2}\int_{X_b} \hwedge {L_{[\xi_\sigma, \overline{\xi}_\tau]}u}{v}  \nonumber \\
    &- \sqi^{n^2}\int_{X_b} \hwedge{\left[L^{0,1}_{\xi_\sigma}, L^{1,0}_{\overline{\xi}_\tau} \right]u}{v} - \sqi^{n^2} \int_{X_b} \hwedge{\left[L^{0,1}_{\xi_\sigma}, L^{0,1}_{\overline{\xi}_\tau} \right]u}{v}. \label{smoothcurveq}
\end{align}
We deal with each of the terms in the last line individually. We deal with the last term first. Note that 
$\left[L^{0,1}_{\xi_\sigma}, L^{0,1}_{\overline{\xi}_\tau} \right]u$ is an $E$-valued $(n-1,1)$-form, so that the integrand in the last term is an $(n-1,n+1)$-form. Thus, the last term vanishes. 

To simplify the third term, note that $L^{0,1}_{\xi_\sigma}L^{1,0}_{\ol{\xi}_\tau} u =0$ for reasons of type. Thus
\begin{align*}
    - \int_{X_b} \hwedge{\left[ L^{0,1}_{\xi_\sigma}, L^{1,0}_{\ol{\xi}_\tau}\right] u}{v} &= \int_{X_b} \hwedge{ L^{1,0}_{\ol{\xi}_\tau}L^{0,1}_{\xi_\sigma}u}{v} \\
    &= \int_{X_b} \hwedge{ L_{\ol{\xi}_\tau}L^{0,1}_{\xi_\sigma}u}{v} \\
    &= \int_{X_b} L_{\ol{\xi}_{\tau}}\hwedge{L^{0,1}_{\xi_\sigma}u}{v} - \hwedge{L^{0,1}_{\xi_\sigma}u}{L^{0,1}_{\xi_\tau}v} \\
    &= \ol{\tau} \int_{X_b} \hwedge{L^{0,1}_{\xi_\sigma}u}{v} - \int_{X_b} \hwedge{L^{0,1}_{\xi_\sigma}u}{L^{0,1}_{\xi_\tau}v}\\
    &= - \int_{X_b} \hwedge{L^{0,1}_{\xi_\sigma}u}{L^{0,1}_{\xi_\tau}v}.
\end{align*}
To get the second equality we use the fact that $\hwedge{L^{0,1}_{\ol{\xi}_{\sigma}}L^{0,1}_{\xi_\tau}u}{v}$ is an $(n-1,n+1)$-form so its integral vanishes. The third equality follows from equation \eqref{lieder}. The last equality follows from the fact that $\hwedge{L^{0,1}_{\xi_\sigma}u}{v}$ is an $(n-1,n+1)$-form, so its integral over the fiber $X_{\wt{b}}$ vanishes for all $\wt{b}$ near $b$.

To simplify the second term, note that $[\xi_\sigma, \ol\xi_\tau]$ is a vertical vector field, so
\[
\iota_{X_b}^* ([\xi_\sigma, \ol\xi_\tau]\lrcorner \nabla u) = \left([\xi_\sigma, \ol\xi_\tau]\Big|_{X_b} \right) \lrcorner \left(\iota_{X_b}^*\nabla u\right)=0,
\]
where the last equality follows because $\nabla^{1,0}u$ is an $(n+1,0)$-form, and $\iota_{X_b}^*\dbar u=0$ as $u$ is holomorphic along $X_b$. The same reasoning also shows that $\iota_{X_b}^* \nabla v=0$. Thus, we may write 
\begin{align*}
    \sqi^{n^2} \int_{X_b} \hwedge{L_{[\xi_\sigma, \ol\xi_\tau]}u}{v} &= \sqi^{n^2} \int_{X_b} \hwedge{\nabla ([\xi_\sigma, \ol\xi_\tau]\lrcorner u)}{v} \\
    &= \sqi^{n^2} \int_{X_b} d\hwedge{([\xi_\sigma, \ol\xi_\tau]\lrcorner u)}{v} + (-1)^n \hwedge{[\xi_\sigma, \ol\xi_\tau]\lrcorner u}{\nabla v}\\
    &= \sqi^{n^2} \int_{X_b} d\hwedge{([\xi_\sigma, \ol\xi_\tau]^{1,0}\lrcorner u)}{v} \\
    &= \sqi^{n^2}\int_{X_b} d\left( [\xi_\sigma, \ol\xi_\tau]^{1,0}\lrcorner \hwedge{u}{v}\right) \\
    &= \int_{X_b} d\left( [\xi_\sigma, \ol\xi_\tau]^{1,0}\lrcorner \{u,v\}dV_{\omega_b}\right) \\
    &= \int_{\partial X_b} \partial \rho_b([\xi_\sigma, \ol\xi_\tau]^{1,0}) \{u,v\}dS \\
    &= \int_{\partial X_b} \partial \dbar \rho(\xi_\sigma, \ol\xi_\tau)\{u,v\}dS.
\end{align*}
The explanation for the last equality which we have seen before, is as follows. Since $[\xi_\sigma, \ol\xi_\tau]^{1,0}$ is a vertical vector field we have
\begin{align*}
    \iota_{\partial X_b}^* \left(\left([\xi_\sigma, \ol\xi_\tau]^{1,0}\Big|_{X_b}\right) \lrcorner \partial\rho_b \right) = \iota_{\partial X_b}^* \left(\left([\xi_\sigma, \ol\xi_\tau]^{1,0}\Big|_{X_b}\right) \lrcorner \iota_{X_b}^* \partial \rho \right) &= \iota_{\partial X_b}^* \iota_{X_b}^* \left([\xi_\sigma, \ol\xi_\tau]^{1,0} \lrcorner \partial \rho \right) \\
    &= \iota_{X_b}^* \iota_{\partial X}^* \partial \dbar\rho(\xi_\sigma, \ol\xi_{\tau}),
\end{align*}
where the last equality follows because $\xi_\sigma, \xi_\tau$ are $(1,0)$-vector fields tangent to $\partial X$.

Applying these computations to the terms on the right hand side of equation \eqref{smoothcurveq} yields the desired formula.
\end{proof}

\begin{lemma}\label{primlift}
Let $(E,h)\to X\to B$ be as in Theorem \ref{smoothcurv}. Assume that $\mathcal{N}_b^{(0,2)}: L^2_{0,2}(X_b)\to L^2_{0,2}(X_b)$ is globally regular. Then for any holomorphic vector field $\tau$ on $B$ and any $E$-valued $(n,0)$-form $u$ on $X$ there is a horizontal lift $\xi_{\tau}$ such that the form $\iota_{X_b}^* (\dbar \xi_{\tau} \lrcorner u)$ is primitive.
\end{lemma}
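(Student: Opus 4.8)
The plan is to correct a given horizontal lift by a vertical vector field so that the contraction with $u$ becomes primitive after restriction to $X_b$. Fix an arbitrary horizontal lift $\xi$ of $\tau$ (with respect to any horizontal distribution, tangent to $\partial X$), and write $\kappa_0 := \iota_{X_b}^*(\dbar\xi \lrcorner u)$. Since $\dbar\xi$ is a vertical-vector-field-valued $(0,1)$-form (as $\tau$ is holomorphic), $\kappa_0$ is a genuine $E|_{X_b}$-valued $(n-1,1)$-form on $X_b$. I want to find a vertical $(1,0)$-vector field $w$ on $X$, vanishing to the appropriate order at $\partial X$ so that $\xi + w$ remains a valid horizontal lift (i.e. still tangent to $\partial X$), such that $\iota_{X_b}^*(\dbar(\xi+w)\lrcorner u) = \kappa_0 + \iota_{X_b}^*(\dbar w \lrcorner u)$ is primitive, i.e. $\omega_b \wedge (\kappa_0 + \iota_{X_b}^*(\dbar w \lrcorner u)) = 0$ on $X_b$. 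The point is that $w$ need only be defined on $X$ and its behavior is governed by its restriction to the fiber and the $\dbar$-operator there.

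The key step is to solve a $\dbar$-type equation on $X_b$. Applying $\Lambda_{\omega_b}$ and using the primitivity criterion, the condition becomes $\Lambda_{\omega_b}\kappa_0 + \Lambda_{\omega_b}\iota_{X_b}^*(\dbar w \lrcorner u) = 0$; since $u$ restricted to the fiber is (up to the canonical bundle twist) a section, contraction of $\dbar w$ into $u$ and then applying $\Lambda_{\omega_b}$ can be rewritten, via the standard Kähler identities relating $\Lambda$, $\dbar$ and the contraction operators on $(n,\bullet)$-forms, as $\dbar$ applied to a $(0,0)$-form built from $w$ and $u$; concretely $\Lambda_{\omega_b}\iota_{X_b}^*(\dbar w\lrcorner u)$ should equal (a constant times) $\dbar$ of the function $\langle w, \cdot\rangle$ paired against $u$, or more precisely one identifies the map $w \mapsto \Lambda_{\omega_b}\iota_{X_b}^*(\dbar w\lrcorner u)$ with $\dbar$ acting on the $E|_{X_b}$-valued section $\iota_{X_b}^*(w\lrcorner u)$ — here $w\lrcorner u$ is an $E$-valued $(n-1,0)$-form, and $\iota_{X_b}^*(w\lrcorner u)\wedge\omega_b^{?}$ or $\Lambda_{\omega_b}$ of it gives an $E|_{X_b}$-valued $(n-1,?)$ object that we want to be $\dbar$-exact. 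So the real content is: reduce the primitivity requirement to solvability of $\dbar \beta = -\Lambda_{\omega_b}\kappa_0$ for a suitable $\beta$ (a $(0,1)$-form on $X_b$, valued in the right bundle), then use the hypothesis that $\mathcal{N}_b^{(0,2)}$ is globally regular: first check $\dbar(\Lambda_{\omega_b}\kappa_0)=0$ or reduce the closedness to the fact that $\Lambda_{\omega_b}\kappa_0$ already lies in $\dbar$ of something, apply the Neumann operator to get a smooth-up-to-the-boundary solution $\beta = \dbarstar \mathcal{N}_b \Lambda_{\omega_b}\kappa_0$, and then reconstruct $w$ on $X_b$ from $\beta$ by contraction (using that $u$, viewed fiberwise, is a nonvanishing-generically canonical section so the map $w\mapsto w\lrcorner u$ is injective on the set where $u\neq 0$, extending across the zero set by the smoothness of $\beta$), finally extending $w$ off the fiber arbitrarily as a vertical vector field tangent to $\partial X$.

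The main obstacle I anticipate is two-fold. First, verifying the exact algebraic identity that turns $\Lambda_{\omega_b}$ applied to $\iota_{X_b}^*(\dbar w\lrcorner u)$ into a $\dbar$-exact expression controlled by a first-order operator in $w$ — this requires care with the Kähler identity $[\Lambda_{\omega_b},\dbar] = -\sqi\,\partial^{*}$ (or its twisted analogue) and with keeping track of how contraction into an $(n,0)$-form interacts with $\Lambda_{\omega_b}$; in fact on $(n,q)$-forms these operators degenerate in a useful way, which is exactly what makes the reduction work, but stating it cleanly needs the local frame computation of the type already carried out in the commutator-of-Lie-derivatives subsection. Second, the regularity/boundary issue: I need the solution $\beta$ to be smooth up to $\partial X_b$ (so that the resulting $w$, and hence $\xi + w$, is a legitimate smooth horizontal lift tangent to the boundary), and this is precisely where global regularity of $\mathcal{N}_b^{(0,2)}$ enters — one must confirm that solving in degree $(0,1)$ for $\beta$ can be arranged so that the relevant obstruction/data lives in bidegree where the $(0,2)$-Neumann operator's regularity applies, e.g. because $\Lambda_{\omega_b}\kappa_0$ arises as $\dbar$ of a $(0,1)$-form and we are inverting $\dbar$ against the canonical solution operator built from $\mathcal{N}_b^{(0,2)}$. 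I would also need to note that the ambiguity in $w$ (addition of a vertical holomorphic-on-the-fiber vector field contracted to zero against $u$ modulo the conormal directions) does not affect $\iota_{X_b}^*(\dbar(\xi+w)\lrcorner u)$, so the construction is well-posed.
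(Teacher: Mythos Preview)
Your overall architecture (start with an arbitrary horizontal lift, correct by a vertical vector field solving a $\dbar$-equation on $X_b$, use global regularity of $\mathcal{N}_b^{(0,2)}$ for boundary smoothness) is right, but the way you set up the $\dbar$-equation has a genuine gap, and the paper's proof proceeds via a different and much cleaner reduction.

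The key step you are missing is that the primitivity condition can be made \emph{independent of $u$}. Since $\dbar\xi^\theta_\tau$ is a vertical-vector-field-valued $(0,1)$-form and $\omega\wedge u$ is an $(n+1,1)$-form, one has $\iota_{X_b}^*\bigl(\dbar\xi^\theta_\tau\lrcorner(\omega\wedge u)\bigr)=0$, whence
\[
\omega_b\wedge\iota_{X_b}^*(\dbar\xi^\theta_\tau\lrcorner u)\;=\;-\,\iota_{X_b}^*\bigl((\dbar\xi^\theta_\tau\lrcorner\omega)\wedge u\bigr).
\]
Thus it suffices to arrange $\iota_{X_b}^*(\dbar\xi_\tau\lrcorner\omega)=0$, a \emph{scalar} $(0,2)$-form condition on $X_b$ with no $E$ and no $u$ in sight. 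Since $\omega$ is closed, $\iota_{X_b}^*(\dbar\xi^\theta_\tau\lrcorner\omega)=\dbar\,\iota_{X_b}^*(\xi^\theta_\tau\lrcorner\omega)$ is already $\dbar$-exact as a $(0,2)$-form, so one sets $\beta=\dbarstar\mathcal{N}_b^{(0,2)}\iota_{X_b}^*(\dbar\xi^\theta_\tau\lrcorner\omega)$, smooth up to $\partial X_b$ by the regularity hypothesis. Raising the index via $\omega_b$ gives a $(1,0)$-vector field $\eta$ on $X_b$ with $\eta\lrcorner\omega_b=\beta$; the condition $\beta\in\text{Dom}(\dbarstar)$ is exactly $\eta\rho_b=0$ on $\partial X_b$, so $\eta$ is tangent to the boundary. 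Extending $\eta$ to a vertical field $\tilde\eta$ on $X$ tangent to $\partial X$ and setting $\xi_\tau=\xi^\theta_\tau-\tilde\eta$ finishes the proof.

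By contrast, your route runs into two concrete problems. First, the bidegrees do not match: $\Lambda_{\omega_b}\kappa_0$ is an $E|_{X_b}$-valued $(n-2,0)$-form, so an equation ``$\dbar\beta=-\Lambda_{\omega_b}\kappa_0$'' with $\beta$ a $(0,1)$-form is not well-posed, and in any case the data is $E$-twisted while the hypothesis concerns the \emph{untwisted} operator $\mathcal{N}_b^{(0,2)}$; you yourself flag this mismatch without resolving it. Second, your proposed reconstruction of $w$ by ``inverting the map $w\mapsto w\lrcorner u$ where $u\neq 0$ and extending across the zero set'' is not valid: $u$ may vanish on a set of positive measure (or be identically zero), and even when the zero set is small the quotient need not extend smoothly. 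The paper avoids this entirely by raising the index of $\beta$ with the K\"ahler metric $\omega_b$, which is everywhere nondegenerate, rather than with $u$. The upshot is that the corrected lift is in fact independent of $u$ and works for all $E$-valued $(n,0)$-forms simultaneously.
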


\begin{proof}
Fix a preliminary lift $\xi^{\theta}_{\tau}$ of $\tau$ with respect to a distribution $\theta$. Then $\dbar \xi^{\theta}_{\tau}$ is a smooth $(0,1)$-form valued vertical vector field. Then we have
\[
\iota_{X_b}^* (\dbar \xi^{\theta}_{\tau} \lrcorner (\omega \wedge u))= \dbar (\xi^{\theta}_{\tau}\big|_{X_b}) \lrcorner \iota_{X_b}^* (\omega \wedge u) = 0,
\] 
where the last equality follows from the fact that $\omega \wedge u$ is an $E$-valued $(n+1,1)$-form.
Consequently, we get
\[
\iota_{X_b}^* ((\dbar \xi^{\theta}_{\tau} \lrcorner \omega) \wedge u) = - \iota_{X_b}^* (\omega \wedge (\dbar \xi^{\theta}_{\tau} \lrcorner u)) = -\omega_b \wedge \iota_{X_b}^* (\dbar \xi^{\theta}_{\tau} \lrcorner u).
\]
To show that $\iota_{X_t}^* (\dbar \xi_{\tau} \lrcorner u)$ is primitive it suffices to show that $\iota_{X_b}^* (\dbar \xi_{\tau} \lrcorner \omega)=0$. We will add a vertical vector field to $\xi^{\theta}_{\tau}$ that is tangent to $\partial X_b$ to get the desired lift. Note that 
\[
\dbar(\xi^{\theta}_{\tau} \lrcorner \omega) = \dbar \xi^{\theta}_{\tau} \lrcorner \omega - \xi^{\theta}_{\tau} \lrcorner \dbar \omega = \dbar \xi^{\theta}_{\tau} \lrcorner \omega,
\]
so $\iota_{X_b}^* (\dbar \xi^{\theta}_{\tau} \lrcorner \omega) = \iota_{X_b}^* \dbar (\xi^{\theta}_{\tau} \lrcorner \omega) = \dbar \iota_{X_b}^* (\xi^{\theta}_{\tau} \lrcorner \omega)$. Thus, $\iota_{X_b}^*(\dbar \xi^{\theta}_{\tau} \lrcorner \omega)$ is a $\dbar$-exact $(0,2)$-form on $X_b$, so it is orthogonal to the harmonic $(0,2)$-forms. By the global regularity of $\mathcal{N}_b^{(0,2)}$, the $(0,1)$-form $\beta = \dbarstar \mathcal{N}_b^{(0,2)} \iota_{X_b}^* (\dbar \xi^{\theta}_{\tau} \lrcorner \omega)$ is smooth up to the boundary of $X_b$ and satisfies $\dbar \beta = \iota_{X_b}^* (\dbar \xi^{\theta}_{\tau} \lrcorner \omega)$. 


Let $\eta$ be the $(1,0)$-vector field on $X_b$ obtained from $\beta$ by raising the index, i.e., $\beta = \eta \lrcorner \omega_b$. We claim that $\eta$ is tangent to $\partial X_b$. Suppose locally at a point on $\partial X_b$ we write $\beta = \beta_{\overline{j}} d\overline{z}^j$. Then we have
\[
\beta_{\overline{j}} d\overline{z}^j = \eta^i \frac{\partial}{\partial z^i} \lrcorner \left[ (\omega_b)_{l\overline{k}} dz^l \wedge d\overline{z}^k \right] = \eta^l (\omega_b)_{l\overline{k}} d\overline{z}^k.
\]
Thus, we have that $\eta^i = \omega_b^{i\overline{k}} \beta_{\overline{k}}$ and 
\[
\eta \rho = \eta \rho_b = \omega_b^{i\overline{k}} \beta_{\overline{k}} \frac{\partial \rho_b}{\partial z^i} = 0,
\]
where the last equality is the condition that $\beta \in \text{Dom}(\dbarstar)$. Now $\eta$ extends to a vector field $\wt{\eta}$ on $X$ that is tangent to $\partial X$ and $\xi_{\tau} = \xi^{\theta}_{\tau} - \wt{\eta}$ is the desired lift.
\end{proof}

\subsection*{Proof of Theorem \ref{Bgeneral}}
By example (5) in Section \ref{examples}, $(\ch, \Sigma, \cl)$ is an iBLS field. Let $\mathfrak{f}_1,\dots, \mathfrak{f}_m \in \Sigma_b$ and let $\tau_1, \dots, \tau_m$ be holomorphic $(1,0)$-vector fields on $U$. Let $u_j$ be an $E$-valued $(n,0)$-form representing the twisted relative canonical section $\mathfrak{a}(\mathfrak{f}_j)$ for $1\leq j\leq m$. Since the curvature of $\ch$ is independent of the lift, by Lemma \ref{primlift} we can choose a horizontal lift $\xi_{\tau_j}$ of vector field $\tau_j$ on $B$ so that $\iota_{X_b}^*(\dbar \xi_{\tau_j}\lrcorner u_j)$ is primitive on $X_b$ for $j=1,\dots ,m$. Thus, 
\[
- \sum_{j,k=1}^m\sqi^{n^2} \int_{X_b} \hwedge{\dbar \xi_{\tau_j}\lrcorner u_j}{\dbar \xi_{\tau_k}\lrcorner u_k} = \sum_{j,k=1}^m \ipr{ \dbar \xi_{\tau_j}\lrcorner u_j, \dbar \xi_{\tau_k}\lrcorner u_k}_b.
\]
Recall that
\[
\sum_{j,k=1}^m \ipr{\Theta^{\ch}_{\tau_j,\ol{\tau}_k}\mathfrak{f}_j,\mathfrak{f}_k}_b = \sum_{j,k=1}^m \ipr{\Theta^{\cl}_{\tau_j,\ol{\tau}_k}\mathfrak{f}_j,\mathfrak{f}_k} -  \sum_{j,k=1}^m \ipr{\cp^{\perp}\nabla^{\cl}_{\tau_j}\mathfrak{f}_j, \cp^{\perp} \nabla^{\cl}_{\tau_k} \mathfrak{f}_k}_b.
\]
Thus, by Theorem \ref{smoothcurv} and equation \eqref{2ndfunestimate} it follows that 
\begin{align*}
\sum_{j,k=1}^m \ipr{\Theta^\ch_{\tau_j\ol\tau_k}\mathfrak{f}_j, \mathfrak{f}_k}_b &\geq \sum_{j,k=1}^m \int_{\partial X_b} \partial \dbar \rho(\xi_{\tau_j},\ol{\xi}_{\tau_k})\{u_j,u_k\}dS_b \geq 0,
\end{align*}
where the last inequality follows from the fact that $\partial X$ is pseudoconvex. This completes the proof.

\section{Exact formulas for the second fundamental form}

In this section we derive an exact formula for the second fundamental form with the extra assumption that $\wt{X}$ is a Stein manifold. The following proposition makes use of the Stein assumption on $\wt{X}$.

\begin{proposition}\label{Hodge0}
Let $D$ be a bounded pseudoconvex domain in an $n$-dimensional Stein manifold $M$, and let $E\to M$ be a hermitian holomorphic line bundle. The Neumann operator $N^{(n,0)}:\Gamma(D,L^2(\Lambda^{n,0}_D\otimes E)) \to \Gamma(D,L^2(\Lambda^{n,0}_D\otimes E))$ satisfies 
\[
\text{Im}(N^{(n,0)})\subset \text{Dom}(\Box)  \quad \text{and} \quad N^{(n,0)}\Box = \Box N^{(n,0)} = \text{Id} - P,
\]
where $P$ is the Bergman projection. We also have
\begin{align}
u &= \dbarstar\dbar N^{(n,0)}u + Pu \quad \text{for all } u\in \Gamma(D,L^2(\Lambda^{n,0}_D\otimes E)) \label{Hodge01} \\
\dbar N^{(n,0)}u &= N^{(n,1)}\dbar u\quad \text{for all }  u \in \text{Dom}(\dbar) \subset \Gamma(D,L^2(\Lambda^{n,0}_D\otimes E)) \label{Hodge02} \\
\dbarstar N^{(n,1)}u &= N^{(n,0)}\dbarstar u \quad \text{for al } u \in \text{Dom}(\dbarstar)\subset \Gamma(D,L^2(\Lambda^{n,1}_D\otimes E)) \label{Hodge03} \\
N^{(n,0)} &= \dbarstar N^{(n,1)} N^{(n,1)} \dbar.  \label{Hodge04}
\end{align}
\end{proposition}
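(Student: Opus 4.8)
The plan is to reduce the whole statement to the standard $L^2$-Hodge theory of the $\dbar$-Neumann problem, the Stein hypothesis on $M$ entering only through the solvability of $\dbar$ with estimates. First I would record the relevant input: since $M$ is Stein and $D$ is bounded and pseudoconvex, H\"ormander's $L^2$-estimates show that $\dbar : L^2_{n,q-1}(D,E)\to L^2_{n,q}(D,E)$ has closed range for every $q$, and that the $L^2$ Dolbeault cohomology in bidegrees $(n,q)$ with $q\geq 1$ vanishes. Hence the Neumann operators $N^{(n,q)}$ exist as bounded self-adjoint operators; the harmonic spaces $\mathcal{H}^{n,q}:=\ker\Box$ vanish for $q\geq 1$, so $\Box N^{(n,q)}=N^{(n,q)}\Box=\text{Id}$ there, while $\mathcal{H}^{n,0}=\ker\dbar\cap L^2_{n,0}(D,E)$ is the Bergman space and $P$ is the orthogonal projection onto it. (All classical; see e.g.\ \cite{Strbook}.) The point where the Stein assumption is essential is exactly this closed-range / cohomology-vanishing step.

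Next I treat the bidegree $(n,0)$. Since $\Lambda^{n,-1}_D=0$, the operator $\dbarstar$ vanishes identically on $(n,0)$-forms, so $\Box=\dbarstar\dbar$ there and $\ker\Box=\ker\dbar$. The defining property of $N^{(n,0)}$---the inverse of $\Box$ on the orthogonal complement of $\ker\Box$, extended by $0$ on $\ker\Box$---gives immediately $\text{Im}(N^{(n,0)})\subset\text{Dom}(\Box)$, $N^{(n,0)}\Box=\Box N^{(n,0)}=\text{Id}-P$, and $N^{(n,0)}P=0$; rewriting $\Box N^{(n,0)}u=u-Pu$ with $\Box=\dbarstar\dbar$ is then exactly \eqref{Hodge01}.

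Then \eqref{Hodge02} and \eqref{Hodge03} are the standard commutation identities for the Neumann operator, obtained by commuting $\dbar$, respectively $\dbarstar$, past the complex Laplacian and using $\dbar^2=(\dbarstar)^2=0$. Concretely, for \eqref{Hodge02}: if $u\in\text{Dom}(\dbar)$ then $N^{(n,0)}u\in\text{Dom}(\Box)$, one checks $\dbar N^{(n,0)}u\in\text{Dom}(\Box_{(n,1)})$, and $\Box(\dbar N^{(n,0)}u)=\dbar\,\Box N^{(n,0)}u=\dbar(u-Pu)=\dbar u$ since $Pu$ is holomorphic; applying the two-sided inverse $N^{(n,1)}$ gives $\dbar N^{(n,0)}u=N^{(n,1)}\dbar u$. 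For \eqref{Hodge03}: if $u\in\text{Dom}(\dbarstar)$ then $\Box(\dbarstar N^{(n,1)}u)=\dbarstar\,\Box N^{(n,1)}u=\dbarstar u$; applying $N^{(n,0)}$ and using that $\dbarstar N^{(n,1)}u\in\text{Im}(\dbarstar)$ is orthogonal to $\ker\dbar$, so $P$ annihilates it, gives $N^{(n,0)}\dbarstar u=(\text{Id}-P)(\dbarstar N^{(n,1)}u)=\dbarstar N^{(n,1)}u$.

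Finally \eqref{Hodge04} is pure algebra on the relations just proved (read on the dense domain $\text{Dom}(\dbar)$): by \eqref{Hodge02}, $\dbarstar N^{(n,1)}N^{(n,1)}\dbar=\dbarstar N^{(n,1)}\dbar N^{(n,0)}$; by \eqref{Hodge03}, $\dbarstar N^{(n,1)}=N^{(n,0)}\dbarstar$, so this equals $N^{(n,0)}\dbarstar\dbar N^{(n,0)}$; and by \eqref{Hodge01}, $\dbarstar\dbar N^{(n,0)}=\text{Id}-P$, whence it equals $N^{(n,0)}(\text{Id}-P)=N^{(n,0)}$ using $N^{(n,0)}P=0$. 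I expect the only genuine obstacle to be the domain bookkeeping hidden in \eqref{Hodge02}--\eqref{Hodge03}: verifying that the forms produced actually lie in $\text{Dom}(\Box)$ and that $\dbar,\dbarstar$ genuinely commute with $\Box$ on those domains. This is the standard technical heart of $\dbar$-Neumann theory, and I would invoke it from \cite{Strbook} rather than reprove it; everything else is formal.
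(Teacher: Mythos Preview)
The paper does not give its own proof of this proposition: it simply cites \cite[Theorem 4.4.3]{chenshaw} and remarks that the argument there, stated for the trivial bundle over domains in $\C^n$, adapts to line bundles over domains in Stein manifolds. Your sketch is correct and is exactly the standard argument one finds in such references (Chen--Shaw or \cite{Strbook}): closed range of $\dbar$ from H\"ormander's estimates on a pseudoconvex domain in a Stein manifold, vanishing of the harmonic spaces in positive degree, the resulting two-sided invertibility of $\Box$ on $(n,q)$-forms for $q\geq 1$, and then the formal commutation identities. So there is nothing to compare---you have supplied precisely the proof the paper chose to outsource.
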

This proposition is taken from \cite[Theorem 4.4.3]{chenshaw}. Though the theorem is stated for the trivial line bundle over domains in $\C^n$, the proof can be adapted to work in the setting of general line bundles over domains in Stein manifolds.

\begin{proposition}[Berndtsson's choice of representatives]\label{Bchoice}
Let $E, X, \wt{X}, B$ be as in Section \ref{notation}. Further assume that $\wt{X}$ is a Stein manifold. Assume that the Neumann operator $N_b^{n,q}$ acting on $E|_{X_b}$-valued $(n,q)$-forms is globally regular for $1\leq q\leq n$ i.e., it maps the space $\Gamma(\ol{X}_b, \sC^{\infty}(E|_{X_b}\otimes \Lambda^{n,q}_{X_b}))$ to itself. Let $f\in \Gamma(X, \sC^{\infty}(K_{X/B}\otimes E))$ be a twisted relative canonical section so that $\iota_b f \in \ch_b$. Then for any holomorphic $(1,0)$-vector field $\tau$ on $B$ and any horizontal lift $\xi_{\tau}$ of $\tau$ to $X$ there is a representative $u\in \Gamma(\ol{X},\sC^{\infty}(\Lambda^{n,0}_X\otimes E))$ of $f$ so that $\iota_{X_b}^*(\xi_\tau \lrcorner \nabla^{1,0}u)$ is a holomorphic $E|_{X_b}$-valued $(n,0)$-form on $X_b$

\end{proposition}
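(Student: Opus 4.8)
The plan is to find the correct representative $u$ by modifying an arbitrary initial representative using the $\dbar$-machinery on the Stein fiber.

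First I would pick an arbitrary representative $u_0 \in \Gamma(\ol{X}, \sC^\infty(\Lambda^{n,0}_X \otimes E))$ of $f$ (which exists by definition of the smooth structure on $\cl^\theta$, since $f$ is a smooth twisted relative canonical section) and compute $\iota_{X_b}^*(\xi_\tau \lrcorner \nabla^{1,0} u_0)$. The obstruction to this being holomorphic on $X_b$ is the $(0,1)$-form $\dbar\iota_{X_b}^*(\xi_\tau \lrcorner \nabla^{1,0}u_0)$ on $X_b$. The key observation is that modifying $u_0$ by an element of the ideal generated by $\{\pi^*dt^j\}$ does not change $f$ but does change $\iota_{X_b}^*(\xi_\tau\lrcorner \nabla^{1,0}u_0)$; concretely, replacing $u_0$ by $u = u_0 - \nabla^{1,0}(\beta) \wedge \pi^*dt$ for a suitable scalar $\beta$ (or more precisely by $u_0$ minus a correction in the ideal) alters $\iota_{X_b}^*(\xi_\tau \lrcorner \nabla^{1,0}u)$ by $\iota_{X_b}^*\nabla^{1,0}$ of a form. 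So the strategy is: first show $\dbar\iota_{X_b}^*(\xi_\tau\lrcorner\nabla^{1,0}u_0)$ is a $\dbar$-exact $E|_{X_b}$-valued $(n,1)$-form on $X_b$; then use the Neumann operator $N_b^{(n,1)}$, which is globally regular by hypothesis, to solve the $\dbar$-equation with a solution smooth up to $\partial X_b$; then extend that solution to a neighborhood in $X$ and subtract it off, working modulo the relative-canonical ideal so that $f$ is unchanged.

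Let me sketch the key steps in order. Step 1: By the adjunction-type Lemma \ref{twistedrelcan}, $\iota_{X_b}^*u_0 = \iota_b f \in \ch_b$ is holomorphic, so $\dbar\iota_{X_b}^*u_0 = 0$. Step 2: Apply Proposition \ref{dbarPL} (with $Y = X_b$, $Z = X$, $u = u_0$, $\xi = \xi_\tau$) to get
\[
\dbar\iota_{X_b}^*\bigl(L^{1,0}_{\xi_\tau}u_0\bigr) = \iota_{X_b}^*\bigl(\dbar\xi_\tau\lrcorner\nabla^{1,0}u_0 - (\xi_\tau\lrcorner\Theta^E)\wedge u_0 - \nabla^{1,0}(\dbar\xi_\tau\lrcorner u_0) + L^{1,0}_{\xi_\tau}\dbar u_0\bigr),
\]
and note $\iota_{X_b}^*(\dbar\xi_\tau\lrcorner\nabla^{1,0}u_0) = 0$ (type reasons, $\nabla^{1,0}u_0$ is $(n+1,0)$) and $\iota_{X_b}^*((\xi_\tau\lrcorner\Theta^E)\wedge u_0) = 0$ (since $\iota_{X_b}^*u_0$ is $(n,0)$). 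Step 3: Observe $L^{1,0}_{\xi_\tau}u_0 = \nabla^{1,0}(\xi_\tau\lrcorner u_0) + \xi_\tau\lrcorner\nabla^{1,0}u_0$; since $\iota_{X_b}^*u_0 \in \ch_b$, the first term restricts to $\nabla^{1,0}$ of a section which is itself a relative-ideal term, so modulo adjusting representatives we may track $\iota_{X_b}^*(\xi_\tau\lrcorner\nabla^{1,0}u_0)$ directly; in any case $\dbar\iota_{X_b}^*(\xi_\tau\lrcorner\nabla^{1,0}u_0)$ equals a $\dbar$-exact form plus terms of the form $\nabla^{1,0}\dbar(\text{something})$ and $L^{1,0}_{\xi_\tau}\dbar u_0$, all of which are $\dbar$-exact since $\dbar u_0$ vanishes upon restriction to fibers (or is itself built from $\dbar$). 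Step 4: Since $X_b$ is bounded pseudoconvex in a Stein manifold and $N_b^{(n,1)}$ is globally regular, solve $\dbar w = \dbar\iota_{X_b}^*(\xi_\tau\lrcorner\nabla^{1,0}u_0)$ with $w = \dbarstar N_b^{(n,1)}\dbar\iota_{X_b}^*(\xi_\tau\lrcorner\nabla^{1,0}u_0)$ smooth up to $\partial X_b$ (invoking Proposition \ref{Hodge0}). Step 5: The $(n-1,0)$-nature of the correction: write $w = \eta \lrcorner \omega_b$-type adjustment or directly solve $\nabla^{1,0}\gamma = $ (desired correction) and modify $u_0 \rightsquigarrow u = u_0 - \gamma\wedge\pi^*dt^j(\cdots)$; extend $w$ smoothly to $\ol X$, repackage it as a correction lying in the relative-canonical ideal so that the associated twisted relative canonical section is still $f$, and verify that with this new $u$ we get $\dbar\iota_{X_b}^*(\xi_\tau\lrcorner\nabla^{1,0}u) = 0$, i.e. $\iota_{X_b}^*(\xi_\tau\lrcorner\nabla^{1,0}u)$ is holomorphic.

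The main obstacle I expect is Step 5: carefully arranging the correction so that it simultaneously (a) lies in the ideal generated by $\{\pi^*dt^j\}$ so that the relative canonical section $f$ is genuinely unchanged, (b) is smooth up to $\partial X$ on all of $X$ (not just on the single fiber $X_b$ where the global regularity of the Neumann operator lives), and (c) actually kills the obstruction $\dbar\iota_{X_b}^*(\xi_\tau\lrcorner\nabla^{1,0}u_0)$ after applying $\nabla^{1,0}$ and restricting. Matching the bidegrees and the interior-product-versus-wedge bookkeeping — turning a solved $(0,1)$-form $w$ on $X_b$ into an $(n-1,0)$-form correction $\gamma$ on $X$ with $\iota_{X_b}^*\nabla^{1,0}\gamma$ equal to the needed term — is the delicate combinatorial heart of the argument, and the Stein hypothesis on $\wt X$ is exactly what lets us extend fiberwise solutions to the total space. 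Once the representative is fixed this way, the holomorphy of $\iota_{X_b}^*(\xi_\tau\lrcorner\nabla^{1,0}u)$ follows immediately from the construction.
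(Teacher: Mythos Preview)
Your overall strategy---start with an arbitrary representative, solve a $\dbar$-type problem on the fiber, and correct by an element of the ideal generated by $\{\pi^*dt^j\}$---matches the paper's. But there are two genuine gaps.

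First, your Step~2 contains an error: the claim that $\iota_{X_b}^*\bigl((\xi_\tau\lrcorner\Theta^E)\wedge u_0\bigr)=0$ is false. The form $(\xi_\tau\lrcorner\Theta^E)$ is of type $(0,1)$, so its wedge with the $(n,0)$-form $u_0$ is an $(n,1)$-form, which restricts to a perfectly good (generally nonzero) $(n,1)$-form on the $n$-dimensional fiber $X_b$. The detour through Proposition~\ref{dbarPL} is in fact unnecessary here and is what led you into this mistake; the paper does not use it in this proof at all.

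Second, and more seriously, Step~5 is where the actual content lies and you have not carried it out. You correctly identify that you need to write the $(n,0)$-form $w=P_b^\perp\iota_{X_b}^*(\xi_\tau\lrcorner\nabla^{1,0}u_0)$ as $\nabla^{1,0}\gamma$ for some $(n-1,0)$-form $\gamma$ on $X_b$, but ``$\eta\lrcorner\omega_b$-type adjustment or directly solve $\nabla^{1,0}\gamma=\cdots$'' is not a method. The paper's key move is to invoke the K\"ahler identity $\sqi[\nabla^{1,0},\Lambda_{\omega_b}]=\vartheta$: working in local base coordinates $(t^1,\dots,t^m)$ so that $\nabla^{1,0}\wt u=dt^k\wedge\mu_k$, one sets
\[
W_k:=\sqi\,\Lambda_{\omega_b}\dbar N_b^{(n,0)}P_b^\perp\iota_{X_b}^*\mu_k,
\]
and then $\nabla^{1,0}W_k=\sqi[\nabla^{1,0},\Lambda_{\omega_b}]\dbar N_b^{(n,0)}P_b^\perp\iota_{X_b}^*\mu_k=\dbarstar\dbar N_b^{(n,0)}P_b^\perp\iota_{X_b}^*\mu_k=P_b^\perp\iota_{X_b}^*\mu_k$. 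The global regularity of $N_b^{(n,0)}$ (deduced from that of $N_b^{(n,1)}$ via Proposition~\ref{Hodge0}) makes $W_k$ smooth up to $\partial X_b$; then one takes \emph{any} smooth extension $w_k$ to $\ol X$---no Stein hypothesis needed for that step, contrary to your final sentence---and sets $u=\wt u+dt^k\wedge w_k$. The Stein hypothesis enters only through Proposition~\ref{Hodge0}, which furnishes the Hodge decomposition at level $(n,0)$.
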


\begin{proof}
Note that the holomorphicity of $\iota_{X_b}^*(\xi_\tau\lrcorner \nabla^{1,0}u)$ is independent of the lift $\xi_\tau$. To see this, suppose $\xi^1_{\tau}$ and $\xi^2_{\tau}$ are two lifts of the vector field $\tau$. Then $v_\tau = \xi^1_\tau - \xi^2_\tau$ is a vertical vector field on $X$.Then 
\[
\iota_{X_b}^*(v_\tau\lrcorner \nabla^{1,0}u) = v_\tau|_{X_b}\lrcorner \iota_{X_b}^*(\nabla^{1,0}u) = 0,
\]
because $\nabla^{1,0}u$ is an $(n+1,0)$-form, so $\dbar \iota_{X_b}^*(\xi^1_{\tau}\lrcorner \nabla^{1,0}u) = \dbar \iota_{X_b}^*(\xi^2_{\tau}\lrcorner \nabla^{1,0}u)$.

Let $\wt{u}$ be an initial representative of the twisted relative canonical section $f$. Let $(t^1,\dots, t^m)$ be local coordinates centered at $b\in B$. Then we may write 
\[
\nabla^{1,0}\wt{u} = dt^k\wedge \mu_k,
\]
where $\mu_k$ are $E$-valued $(n,0)$-forms on $X$. Any other representative $u$ of $f$ is of the form $u = \wt{u} + dt^k\wedge w_k$ for $E$-valued $(n-1,0)$-forms $w_k$.
We calculate
\[
\nabla^{1,0}u = dt^k \wedge (\mu_k - \nabla^{1,0} w_k).
\]
Consequently,
\[
\iota_{X_b}^*(\xi_\tau \lrcorner \nabla^{1,0}u) = \tau^k(b)\iota^*_{X_b}(\mu_k - \nabla^{1,0}w_k),
\]
where $\tau = \tau^k\frac{\partial}{\partial t^k}$.
Thus, for $\iota_{X_b}^*(\xi_\tau \lrcorner \nabla^{1,0}u)$ to be holomorphic we want $P_b^{\perp}\iota_{X_b}^*(\mu_k -\nabla^{1,0}w_k)=0$ for all $k$. 
Let $W_k$ the $E|_{X_b}$-valued $(n-1,0)$-form 
\[
W_k := \sqi \Lambda_{\omega_b}\dbar N^{(n,0)}_b P_b^{\perp} \iota_{X_b}^*\mu_k.
\]
Then
\begin{align*}
    \nabla^{1,0}W_k &= \sqi \nabla^{1,0}\Lambda_{\omega_b}\dbar N^{(n,0)}_b P_b^{\perp} \iota_{X_b}^*\mu_k \\
                     &= \sqi [\nabla^{1,0},\Lambda_{\omega_b}]\dbar N^{(n,0)}_b P_b^{\perp} \iota_{X_b}^*\mu_k \\
                     &= \dbarstar \dbar N^{(n,0)}_b P_b^{\perp} \iota_{X_b}^*\mu_k \\
                     &= P_b^{\perp} \iota_{X_b}^*\mu_k.
\end{align*}
A few remarks are in order. Since $\iota_{X_b}^*\mu_k$ is a twisted form smooth up to the boundary and $N^{(n,1)}_b$ is globally regular, we see that $P_b^{\perp} \iota_{X_b}^*\mu_k$ is smooth up to the boundary. Thus the twisted form $P_b^{\perp} \iota_{X_b}^*\mu_k$ is in $\text{Dom}(\dbar)$. The identity $N^{(n,0)}_b = \dbarstar N_b^{(n,1)} N_b^{(n,1)} \dbar$ shows that $N^{(n,0)}_b$ is globally regular. Thus the twisted form $\dbar N^{(n,0)}_b P_b^{\perp} \iota_{X_b}^*\mu_k$ is smooth up to the boundary of $X_b$ and is in $\text{Dom }\dbarstar$. Recall that on such forms $\dbarstar = \vartheta$, where $\vartheta$ denotes the formal adjoint of $\dbar$. This combined with the K\"ahler identity $\sqi [\nabla^{1,0},\Lambda_{\omega_b}] = \vartheta$, yields the penultimate equality.  

Thus, $W_k$ is smooth on $\ol{X}_b$ and taking $w_k$ to be any smooth extension of $W_k$ to $\ol{X}$, we see that $u = \wt{u} + dt^k\wedge w_k$ is the desired representative. 
\end{proof}

\begin{remark}
With more work, we could have chosen a representative $u$ so that $\iota_{X_b}^*(\xi_\tau \lrcorner \nabla^{1,0}u)$ is holomorphic and  $\iota_{X_b}^*(\xi_\tau \lrcorner \dbar u)$ is primitive. The choice of a representative with these two properties is attributed to B. Berndtsson (see \cite{B2009}, also \cite{V2021}). Since we do not use the primitivity of the form $\iota_{X_b}^*(\xi_\tau\lrcorner  \dbar u)$, we avoid the extra work.  
\end{remark}

\begin{theorem}\label{2ndfunexact}
Let $(E,h)\to X \to B$ be as in Proposition \ref{Bchoice}. Let $\sigma, \tau \in \sC^{\infty}(T^{1,0}_B)_b$ be germs of holomorphic $(1,0)$-vector fields and let $\xi_{\sigma}, \xi_{\tau}$ be their horizontal lifts. Let $\mathfrak{f,g} \in \Sigma_b$ where $\mathfrak{f}(b) = \iota_{X_b}^*u$ and $\mathfrak{g}(b) = \iota_{X_b}^*v$ for $E$-valued $(n,0)$-forms $u$ and $v$. Then  
\begin{align*}
    \ipr{\cp^{\perp}\nabla^{\cl(1,0)}_{\sigma}\mathfrak{f},\cp^{\perp}\nabla^{\cl(1,0)}_{\tau}\mathfrak{g}}_b &=  -\frac{1}{2}  \ipr{N^{(n,1)}_b\iota_{X_b}^*\dbar\nabla^{1,0}(\xi_\sigma \lrcorner u), \iota_{X_b}^*(\xi_{\tau}\lrcorner \Theta^{E})v+\iota_{X_b}^*\nabla^{1,0}(\dbar\xi_{\tau}\lrcorner v)}_b  \\
    &\quad- \frac{1}{2} \ipr{ \iota_{X_b}^*(\xi_{\sigma}\lrcorner \Theta^{E})u+\iota_{X_b}^*\nabla^{1,0}(\dbar\xi_{\sigma}\lrcorner u), N^{(n,1)}_b\iota_{X_b}^*\dbar\nabla^{1,0}(\xi_\tau \lrcorner v)}_b.
\end{align*}
\end{theorem}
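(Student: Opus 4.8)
The plan is to express the second fundamental map $\cp^{\perp}\nabla^{\cl(1,0)}_\sigma\mathfrak{f}$ as the canonical (minimal $L^2$-norm) solution of a $\dbar$-equation on $X_b$, and then rewrite that minimal solution using the Hodge-theoretic machinery of Proposition \ref{Hodge0}. By Proposition \ref{HiBLSprop} we have $N^\theta_b(\sigma)\mathfrak{f}(b) = \mathfrak{i}(P_b^{\perp}L^{1,0}_{\xi_\sigma}u)$, and by Proposition \ref{dbarPL} (equation \eqref{2ndfund}) together with the vanishings established in the proof of Proposition \ref{HiBLSprop} and condition \eqref{concsigma}, the form $\iota_{X_b}^*L^{1,0}_{\xi_\sigma}u$ satisfies
\begin{equation*}
\dbar \iota_{X_b}^*L^{1,0}_{\xi_\sigma}u = -\iota_{X_b}^*(\xi_\sigma\lrcorner\Theta^E)\wedge u - \iota_{X_b}^*\nabla^{1,0}(\dbar\xi_\sigma\lrcorner u) =: -\beta_\sigma.
\end{equation*}
Hence $P_b^{\perp}L^{1,0}_{\xi_\sigma}u = -\dbarstar N_b^{(n,1)}\beta_\sigma$ is the minimal solution, using \eqref{Hodge02}–\eqref{Hodge03} of Proposition \ref{Hodge0} (which apply because $\wt X$ is Stein and the Neumann operators are globally regular, so all forms in sight are smooth up to the boundary and in the relevant domains).

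Next I would compute the inner product directly. Writing $a := P_b^{\perp}L^{1,0}_{\xi_\sigma}u = -\dbarstar N_b^{(n,1)}\beta_\sigma$ and $b' := P_b^{\perp}L^{1,0}_{\xi_\tau}v = -\dbarstar N_b^{(n,1)}\beta_\tau$, we have
\begin{equation*}
\ipr{a,b'}_b = \ipr{\dbarstar N_b^{(n,1)}\beta_\sigma, \dbarstar N_b^{(n,1)}\beta_\tau}_b.
\end{equation*}
The idea is to move a $\dbarstar$ from one slot to a $\dbar$ on the other slot. Since $a$ and $b'$ are holomorphic (they lie in $\ch_b$, being images under $P_b^\perp$ of forms whose $\dbar$ was just computed — wait, more precisely they are the minimal solutions hence orthogonal to holomorphic forms; but the point is $\dbar$ of the first slot equals $-\beta_\sigma$), we use $\ipr{\dbarstar N\beta_\sigma, \dbarstar N\beta_\tau}_b = \ipr{\dbar\dbarstar N\beta_\sigma, N\beta_\tau}_b$. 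Now $\dbar\dbarstar N_b^{(n,1)}\beta_\sigma = \beta_\sigma - \dbarstar\dbar N_b^{(n,1)}\beta_\sigma - (\text{harmonic part})$; since $\beta_\sigma$ is $\dbar$-exact it has no harmonic part, and $\dbar\beta_\sigma = \dbar\iota_{X_b}^*\nabla^{1,0}(\xi_\sigma\lrcorner u)$ up to sign (here one must be careful: $\beta_\sigma = -\dbar\iota_{X_b}^*L^{1,0}_{\xi_\sigma}u$, and one needs $\dbar\nabla^{1,0}(\xi_\sigma\lrcorner u) = \Theta^E(\xi_\sigma\lrcorner u) - \nabla^{1,0}\dbar(\xi_\sigma\lrcorner u)$, which is exactly the Bianchi-type identity used in the proof of Proposition \ref{dbarPL}). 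Symmetrizing between the two ways of moving $\dbarstar$ (one producing the first term of the claimed formula, the other the second) yields the factor $\tfrac12$ and the symmetric expression in the statement. The $N_b^{(n,1)}$ appearing in the final formula arises because $N_b^{(n,0)} = \dbarstar N_b^{(n,1)}N_b^{(n,1)}\dbar$ by \eqref{Hodge04}, so $\dbarstar N_b^{(n,1)}\beta_\sigma$ can be re-expressed in terms of $N_b^{(n,1)}\iota_{X_b}^*\dbar\nabla^{1,0}(\xi_\sigma\lrcorner u)$.

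Concretely the steps are: (1) identify $\cp^{\perp}\nabla^{\cl(1,0)}_\sigma\mathfrak{f}$ with $P_b^\perp \iota_{X_b}^*L^{1,0}_{\xi_\sigma}u$ via Proposition \ref{HiBLSprop}; (2) compute its $\dbar$ via \eqref{2ndfund} and the vanishing lemmas, getting $-\iota_{X_b}^*(\xi_\sigma\lrcorner\Theta^E)u - \iota_{X_b}^*\nabla^{1,0}(\dbar\xi_\sigma\lrcorner u)$; (3) use Proposition \ref{Hodge0} to write the minimal solution as $\dbarstar N_b^{(n,1)}$ applied to this, and verify all regularity/domain conditions using the Stein and global-regularity hypotheses; (4) expand $\ipr{\cdot,\cdot}_b$, integrate by parts moving $\dbarstar$ across, use $\dbar\dbarstar N = \mathrm{Id} - P - \dbarstar\dbar N$ on exact forms together with the identity $\dbar\iota_{X_b}^*\nabla^{1,0}(\xi_\sigma\lrcorner u) = \iota_{X_b}^*(\xi_\sigma\lrcorner\Theta^E)u + \iota_{X_b}^*\nabla^{1,0}(\dbar\xi_\sigma\lrcorner u)$ (the key algebraic identity, essentially contained in the proof of Proposition \ref{dbarPL}); (5) symmetrize to obtain the $\tfrac12(\cdots) + \tfrac12(\cdots)$ form, and replace $\dbarstar N_b^{(n,1)}\dbar(\cdots)$-type expressions by $N_b^{(n,1)}\iota_{X_b}^*\dbar\nabla^{1,0}(\xi_\sigma\lrcorner u)$ using \eqref{Hodge02} and \eqref{Hodge04}.

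The main obstacle I expect is step (4)–(5): keeping track of the precise bookkeeping of which terms are $\dbar$-exact versus holomorphic, justifying each integration by parts (no boundary terms appear because all the relevant forms lie in $\mathrm{Dom}(\dbarstar)$ or $\mathrm{Dom}(\dbar)$ thanks to global regularity), and teasing out exactly where the symmetrization producing the $\tfrac12$ factor comes from — it is not a priori symmetric, so one must be writing $\ipr{a,b'}_b = \tfrac12\ipr{\dbar\dbarstar N\beta_\sigma, N\beta_\tau}_b + \tfrac12\overline{\ipr{\dbar\dbarstar N\beta_\tau, N\beta_\sigma}_b}$ or an equivalent device, and then simplifying each half using that $\dbarstar\dbar N\beta$ and the Bergman projection contributions drop out against the holomorphicity of the other factor. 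A secondary technical point is that Proposition \ref{Hodge0} as stated is for the untwisted canonical bundle over domains in Stein manifolds, but the remark following it asserts the twisted version holds, so I would simply cite that.
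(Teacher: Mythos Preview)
Your steps (1)--(3) are correct and match the paper exactly: one obtains $P_b^{\perp}\iota_{X_b}^*L^{1,0}_{\xi_\sigma}u = -\dbarstar N_b^{(n,1)}\beta_\sigma$ with $\beta_\sigma = \iota_{X_b}^*(\xi_\sigma\lrcorner\Theta^E)u + \iota_{X_b}^*\nabla^{1,0}(\dbar\xi_\sigma\lrcorner u)$, and your Hodge-theoretic manipulation in step (4) correctly yields $\ipr{P_b^\perp L^{1,0}_{\xi_\sigma}u, P_b^\perp L^{1,0}_{\xi_\tau}v}_b = \ipr{\beta_\sigma, N_b^{(n,1)}\beta_\tau}_b$.

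The gap is in step (5). The formula in the theorem involves $N_b^{(n,1)}\iota_{X_b}^*\dbar\nabla^{1,0}(\xi_\sigma\lrcorner u)$, \emph{not} $N_b^{(n,1)}\beta_\sigma$. Since $L^{1,0}_{\xi_\sigma}u = \nabla^{1,0}(\xi_\sigma\lrcorner u) + \xi_\sigma\lrcorner\nabla^{1,0}u$, one has
\[
\iota_{X_b}^*\dbar\nabla^{1,0}(\xi_\sigma\lrcorner u) = -\beta_\sigma - \dbar\iota_{X_b}^*(\xi_\sigma\lrcorner\nabla^{1,0}u),
\]
and the extra term $\dbar\iota_{X_b}^*(\xi_\sigma\lrcorner\nabla^{1,0}u)$ does not vanish for a generic representative $u$. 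Consequently your ``key algebraic identity'' $\dbar\iota_{X_b}^*\nabla^{1,0}(\xi_\sigma\lrcorner u) = \beta_\sigma$ is false as stated, and the replacement you propose in step (5) via \eqref{Hodge02}, \eqref{Hodge04} cannot be carried out.

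What the paper does instead is invoke Proposition~\ref{Bchoice} (Berndtsson's choice of representative): it chooses $u$ so that $\iota_{X_b}^*(\xi_\sigma\lrcorner\nabla^{1,0}u)$ is \emph{holomorphic} on $X_b$, which kills both the obstructing term above and its contribution to $P_b^\perp$. With this choice $P_b^\perp L^{1,0}_{\xi_\sigma}u = P_b^\perp\nabla^{1,0}(\xi_\sigma\lrcorner u) = \dbarstar N_b^{(n,1)}\dbar\iota_{X_b}^*\nabla^{1,0}(\xi_\sigma\lrcorner u)$, and moving the $\dbarstar$ to the $v$-slot gives one half of the stated formula. The factor $\tfrac12$ then arises not from a formal symmetrization of a single expression, but from averaging the computation done with Berndtsson's choice for $u$ against the analogous computation done with Berndtsson's choice for $v$. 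You never use Proposition~\ref{Bchoice}, and without it the argument cannot reach the form claimed in the theorem.
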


\begin{proof}
By proposition \ref{dbarPL} 
\begin{equation}\label{exact2}
\dbar \iota_{X_b}^*(L^{1,0}_{\xi_\tau}v) = -\iota_{X_b}^*(\xi_\tau\lrcorner \Theta^E)v - \iota_{X_b}^*\nabla^{1,0}(\dbar\xi_{\tau}\lrcorner v).
\end{equation} 
Note that 
\[
P^{\perp}_b\iota_{X_b}^*\nabla^{1,0}(\xi_{\sigma}\lrcorner u) = \dbarstar N^{(n,1)}_b\dbar \iota_{X_b}^*\nabla^{1,0}(\xi_{\sigma}\lrcorner u). 
\]
By proposition \ref{Bchoice} we can take $u$ to be an $E$-valued $(n,0)$-form such that $\iota_{X_b}^*(\xi_{\sigma}\lrcorner \nabla^{1,0} u)$ is a holomorphic section of $K_{X_b}\otimes E|_{X_b}\to X_b$. Then
\begin{align}
 \ipr{\cp^{\perp}\nabla^{\cl(1,0)}_{\sigma}\mathfrak{f},\cp^{\perp}\nabla^{\cl(1,0)}_{\tau}\mathfrak{g}}_b &= \ipr{P^{\perp}_b \iota_{X_b}^*(L^{1,0}_{\xi_\sigma}u), P^{\perp}_b \iota_{X_b}^*(L^{1,0}_{\xi_\tau}v) }_b \nonumber \\
&= \ipr{ P^{\perp}_b \iota_{X_b}^*(\nabla^{1,0}\xi_{\sigma}\lrcorner u), P^{\perp}_b \iota_{X_b}^*(L^{1,0}_{\xi_{\tau}}v)}_b \nonumber \\
&= \ipr{\dbarstar N^{(n,1)}_b\dbar \iota_{X_b}^*\nabla^{1,0}(\xi_{\sigma}\lrcorner u), P^{\perp}_b \iota_{X_b}^*(L^{1,0}_{\xi_\tau} v)}_b \nonumber \\
&= \ipr{ N^{(n,1)}_b\dbar \iota_{X_b}^*\nabla^{1,0}(\xi_{\sigma}\lrcorner u),  \dbar P^{\perp}_b \iota_{X_b}^*(L^{1,0}_{\xi_\tau} v)}_b \nonumber \\
&= \ipr{ N^{(n,1)}_b\dbar \iota_{X_b}^*\nabla^{1,0}(\xi_{\sigma}\lrcorner u), \dbar \iota_{X_b}^*(L^{1,0}_{\xi_\tau} v)}_b \nonumber \\
&= -\ipr{ N^{(n,1)}_b\dbar \iota_{X_b}^*\nabla^{1,0}(\xi_{\sigma}\lrcorner u), \iota_{X_b}^*(\xi_\tau\lrcorner \Theta^E)v + \iota_{X_b}^*\nabla^{1,0}(\dbar\xi_{\tau}\lrcorner v)}_b, \label{exact3}
\end{align}
where the last equality follows from \eqref{exact2}.

Similarly, we can take $v$ to be an $E$-valued $(n,0)$-form such that $\iota_{X_b}^*(\xi_\tau\lrcorner \nabla^{1,0}v)$ is a holomorphic section of $K_{X_b}\otimes E|_{X_b}\to X_b$ and show that
\begin{equation}\label{exact4}
\ipr{\cp^{\perp}\nabla^{\cl(1,0)}_{\sigma}\mathfrak{f},\cp^{\perp}\nabla^{\cl(1,0)}_{\tau}\mathfrak{g}}_b = -\ipr{ \iota_{X_b}^*(\xi_\sigma\lrcorner \Theta^E)u + \iota_{X_b}^*\nabla^{1,0}(\dbar\xi_{\sigma}\lrcorner u), N^{(n,1)}_b\dbar \iota_{X_b}^*\nabla^{1,0}(\xi_{\tau}\lrcorner v)}_b.
\end{equation}
Adding equations \eqref{exact3} and \eqref{exact4} yields the desired formula. 
\end{proof}

\subsection{Another exact formula for the second fundamental form}

\begin{theorem}\label{2ndfunexact2}
Let $(E,h)\to X \to B$ be as in Proposition \ref{Bchoice}. Further assume that for all $b\in B$ the Neumann operator $\mathcal{N}^{(0,2)}_b$ acting on $(0,2)$-forms on $X_b$ is globally regular. Let $\tau \in \sC^{\infty}(T^{1,0}_B)_b$ be a germ of holomorphic $(1,0)$-vector field and let $\xi_{\tau}$ be its primitive horizontal lift. Let $\mathfrak{f} \in \Sigma_b(b)$ where $\mathfrak{f}(b) = \iota_{X_b}^*u$ for an $E$-valued $(n,0)$-form $u$. 
Then 
\begin{equation*}
   \norm{P_b^{\perp}\nabla^{\cl^\theta(1,0)}_\tau\mathfrak{f}}^2_b = \ipr{\alpha, \iota_{X_b}^*(\xi_\tau\lrcorner \Theta^E)u}_b + \ipr{\dbar \Lambda_{\omega_b}\alpha, \iota_{X_b}^*(\dbar\xi_\tau \lrcorner u)}_b + \int_{\partial X_b}\left\{\iota_{X_b}^*(\ol{\xi}_\tau \lrcorner \partial \dbar \rho)\wedge\Lambda_{\omega_b}\alpha ,u   \right\}dS_b, 
\end{equation*}
where $\alpha := \sqi N^{(n,1)}_b \iota_{X_b}^*\dbar \nabla^{1,0} (\xi_\tau \lrcorner u)$ and $N^{(n,1)}_b$ is the Neumann operator acting on $E|_{X_b}$-valued $(n,1)$-forms. 
\end{theorem}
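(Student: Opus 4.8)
The plan is to start from the exact formula of Theorem \ref{2ndfunexact} specialized to $\sigma = \tau$ and $\mathfrak{f} = \mathfrak{g}$ (so $u = v$), which gives
$\norm{P_b^{\perp}\nabla^{\cl^\theta(1,0)}_\tau\mathfrak{f}}^2_b = -\ipr{N^{(n,1)}_b\iota_{X_b}^*\dbar\nabla^{1,0}(\xi_\tau\lrcorner u),\ \iota_{X_b}^*(\xi_\tau\lrcorner\Theta^E)u+\iota_{X_b}^*\nabla^{1,0}(\dbar\xi_\tau\lrcorner u)}_b$,
after noting that the two summands in Theorem \ref{2ndfunexact} are conjugate to one another when $u=v$. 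Writing $\alpha = \sqi N^{(n,1)}_b\iota_{X_b}^*\dbar\nabla^{1,0}(\xi_\tau\lrcorner u)$, this reads $\norm{P_b^{\perp}\nabla^{\cl^\theta(1,0)}_\tau\mathfrak{f}}^2_b = \ipr{\alpha,\ \iota_{X_b}^*(\xi_\tau\lrcorner\Theta^E)u}_b + \ipr{\alpha,\ \iota_{X_b}^*\nabla^{1,0}(\dbar\xi_\tau\lrcorner u)}_b$ (the factor $\sqi$ and the sign being absorbed; I will need to check the constant $\sqi^{n^2}$ bookkeeping carefully here). The first term already matches the statement, so the whole problem reduces to rewriting the second term $\ipr{\alpha,\ \iota_{X_b}^*\nabla^{1,0}(\dbar\xi_\tau\lrcorner u)}_b$ as the sum $\ipr{\dbar\Lambda_{\omega_b}\alpha,\ \iota_{X_b}^*(\dbar\xi_\tau\lrcorner u)}_b + \int_{\partial X_b}\{\iota_{X_b}^*(\ol\xi_\tau\lrcorner\partial\dbar\rho)\wedge\Lambda_{\omega_b}\alpha,\ u\}dS_b$.

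The key is an integration by parts combined with the Kähler identity $\sqi[\nabla^{1,0},\Lambda_{\omega_b}] = \vartheta = \dbarstar$ on forms smooth up to the boundary, exactly as used at the end of the proof of Proposition \ref{Bchoice}. Here is where the \emph{primitivity} of the lift enters: since $\xi_\tau$ is the primitive horizontal lift (Lemma \ref{primlift}), $\kappa := \iota_{X_b}^*(\dbar\xi_\tau\lrcorner u)$ is a primitive $(n-1,1)$-form, so $\Lambda_{\omega_b}\kappa = 0$ and hence $\nabla^{1,0}\kappa$ is handled cleanly. Concretely I expect to move $\nabla^{1,0}$ off of $(\dbar\xi_\tau\lrcorner u)$: $\ipr{\alpha, \iota_{X_b}^*\nabla^{1,0}\kappa}_b = \ipr{\nabla^{1,0*}\alpha, \kappa}_b + (\text{boundary term})$, and then use $\nabla^{1,0*} = -\sqi[\Lambda_{\omega_b},\dbar] = \sqi\dbar\Lambda_{\omega_b} - \sqi\Lambda_{\omega_b}\dbar$ acting on $\alpha$. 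Since $\alpha$ is an $(n,1)$-form, $\dbar\alpha$ is an $(n,2)$-form and $\Lambda_{\omega_b}\dbar\alpha$ is an $(n,1)$-form; but pairing against the $(n-1,1)$-form $\kappa$ forces the degree-$(n,1)$ piece to be killed, leaving $\ipr{\sqi\dbar\Lambda_{\omega_b}\alpha, \kappa}_b$, which (after the $\sqi$ is absorbed into the definition of $\alpha$, giving the stated $\dbar\Lambda_{\omega_b}\alpha$) is the first new term. One should double-check the degree/type cancellations here; the cleanest route is a pointwise identity $\{\nabla^{1,0}\beta,\gamma\} = \{\beta,\sqi\dbar\Lambda_{\omega_b}\gamma\} + (\text{pointwise boundary-normal term})$ valid for $\beta$ an $(n-1,1)$-form and $\gamma$ an $(n,1)$-form, obtained by Stokes after writing $\partial\{\beta,*\gamma\}$ and contracting with the Levi form.

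The boundary term is the most delicate part, and I expect it to be the main obstacle: I need to show that the boundary contribution from the integration by parts collapses to exactly $\int_{\partial X_b}\{\iota_{X_b}^*(\ol\xi_\tau\lrcorner\partial\dbar\rho)\wedge\Lambda_{\omega_b}\alpha, u\}dS_b$. The mechanism should parallel the boundary-term computations in the proof of Theorem \ref{liftinv} and Theorem \ref{smoothcurv}: one writes the boundary integrand as $\partial\rho_b(\cdot)\{\cdot,\cdot\}$, uses that the normal part of $\dbar\xi_\tau\lrcorner u$ along $\partial X_b$ is controlled by the Levi form $\partial\dbar\rho(\xi_\tau,\ol\xi_\tau)$ because $\xi_\tau$ is tangent to $\partial X$, and reassembles. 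I would first prove the interior identity (dropping boundary terms on compactly supported test forms), then separately track the boundary terms by running the Stokes argument on all of $X_b$ with $\alpha$ smooth up to the boundary — which is legitimate because global regularity of $N^{(n,1)}_b$ and the identity $N^{(n,0)}_b = \dbarstar N^{(n,1)}_b N^{(n,1)}_b\dbar$ guarantee $\alpha$ and $\Lambda_{\omega_b}\alpha$ are smooth on $\ol X_b$. The primitivity hypothesis on $\xi_\tau$ (which is available under the global regularity of $\mathcal{N}^{(0,2)}_b$ via Lemma \ref{primlift}) is what makes the normal-derivative bookkeeping close; without it the identity would acquire extra terms involving $\Lambda_{\omega_b}\kappa$.
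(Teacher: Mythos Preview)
Your overall route—start from Theorem~\ref{2ndfunexact} with $\sigma=\tau$, $u=v$, split off the curvature term, then integrate by parts in $(\alpha,\iota_{X_b}^*\nabla^{1,0}\kappa)$ with $\kappa=\iota_{X_b}^*(\dbar\xi_\tau\lrcorner u)$—is exactly the paper's strategy (the paper simply redoes the first steps of Theorem~\ref{2ndfunexact} inline rather than quoting it). Two points need correction.

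\medskip
\textbf{The bidegree argument is wrong.} You claim $\Lambda_{\omega_b}\dbar\alpha$ is an $(n,1)$-form and hence pairs to zero against the $(n-1,1)$-form $\kappa$. But $\Lambda_{\omega_b}$ lowers bidegree by $(1,1)$, so $\Lambda_{\omega_b}\dbar\alpha$ has bidegree $(n-1,1)$, the same as $\kappa$, and there is no degree obstruction. The correct reason this term disappears is that $\dbar\alpha=0$: by \eqref{Hodge02} one rewrites $\alpha=\sqi N^{(n,1)}_b\dbar\,\iota_{X_b}^*\nabla^{1,0}(\xi_\tau\lrcorner u)=\sqi\,\dbar N^{(n,0)}_b\iota_{X_b}^*\nabla^{1,0}(\xi_\tau\lrcorner u)$, which is manifestly $\dbar$-closed. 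This is also how the paper shows $\dbar\Lambda_{\omega_b}\alpha$ is primitive, since $\omega_b\wedge\dbar\Lambda_{\omega_b}\alpha=\dbar(L_{\omega_b}\Lambda_{\omega_b}\alpha)=\dbar\alpha=0$.

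\medskip
\textbf{Where primitivity actually enters.} It is not used to ``handle $\nabla^{1,0}\kappa$ cleanly'' in the integration by parts. In the paper the integration by parts is done at the wedge-product level via Stokes on $\dbar\hwedge{\Lambda_{\omega_b}\alpha}{\kappa}$, producing the boundary integral and the interior term $-c_n\int_{X_b}\hwedge{\dbar\Lambda_{\omega_b}\alpha}{\kappa}$. Primitivity of both $\dbar\Lambda_{\omega_b}\alpha$ and $\kappa$ is then what converts this wedge pairing into the pointwise inner product $\ipr{\dbar\Lambda_{\omega_b}\alpha,\kappa}_b$, via the standard identity for primitive $(n-1,1)$-forms. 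If you carry out the integration by parts at the inner-product level as you propose, you will need to reconcile the resulting boundary term with the wedge-product boundary integral, and the primitivity of $\kappa$ will reappear in that reconciliation.

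\medskip
Your sketch of the boundary computation (``the normal part of $\dbar\xi_\tau\lrcorner u$ along $\partial X_b$ is controlled by the Levi form because $\xi_\tau$ is tangent to $\partial X$'') is correct in spirit; the paper implements it by the Morrey trick—writing $\xi_\tau\lrcorner\partial\rho=r\rho$ near $\partial X$, applying $\dbar$, and wedging with $u$ to obtain on $\partial X_b$ the identity $\partial\rho_b\wedge\iota_{X_b}^*(\dbar\xi_\tau\lrcorner u)=\iota_{X_b}^*((\xi_\tau\lrcorner\dbar\partial\rho)\wedge u)+r\,\dbar\rho_b\wedge u$. The second piece is killed by $\partial\rho_b\wedge\Lambda_{\omega_b}\alpha|_{\partial X_b}=0$ (the $\text{Dom}(\dbarstar)$ condition for $\alpha$), and the first yields the Levi-form integral.
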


\begin{proof}
By proposition \ref{Bchoice} we can take $u$ to be an $E$-valued $(n,0)$-form such that $\iota_{X_b}^*(\xi_{\sigma}\lrcorner \nabla^{1,0} u)$ is a holomorphic $E$-valued form on $X_b$. Then
\[
 \ipr{\cp^{\perp}\nabla^{\cl(1,0)}_{\tau}\mathfrak{f},\cp^{\perp}\nabla^{\cl(1,0)}_{\tau}\mathfrak{f}}_b = \ipr{P^{\perp}_b \iota_{X_b}^*(L^{1,0}_{\xi_\tau}u), P^{\perp}_b \iota_{X_b}^*(L^{1,0}_{\xi_\tau}u) } = \ipr{ P^{\perp}_b \iota_{X_b}^*(\nabla^{1,0}\xi_{\tau}\lrcorner u), P^{\perp}_b \iota_{X_b}^*(\nabla^{1,0}\xi_{\tau}\lrcorner u)}.
\]
By our choice of $u$ and \eqref{Hodge01}
\begin{align*}
P^{\perp}\iota_{X_b}^*\nabla^{1,0}(\xi_\tau\lrcorner u) &= \dbarstar \dbar N^{(n,0)}P^{\perp} \iota_{X_b}^*\nabla^{1,0} (\xi_\tau \lrcorner u) \\
&= \vartheta \dbar N^{(n,0)}P^{\perp} \iota_{X_b}^*\nabla^{1,0} (\xi_\tau \lrcorner u) \\
&= \sqi [\nabla^{1,0},\Lambda_{\omega_b}]\dbar N^{(n,0)}P^{\perp} \iota_{X_b}^*\nabla^{1,0} (\xi_\tau \lrcorner u) \\
&= \sqi \nabla^{1,0}\Lambda_{\omega_b} N^{(n,1)}\dbar P^{\perp} \iota_{X_b}^*\nabla^{1,0} (\xi_\tau \lrcorner u) \\
&= \sqi \nabla^{1,0}\Lambda_{\omega_b} N^{(n,1)} \iota_{X_b}^*\dbar \nabla^{1,0} (\xi_\tau \lrcorner u).
\end{align*}
The second equality follows since $\dbar N^{(n,0)}P^{\perp} \iota_{X_b}^*\nabla^{1,0} (\xi_\tau \lrcorner u)$ is a smooth form in $\text{Dom}(\dbarstar)$. The third equality follows from the K\"ahler identity $\sqi[\nabla^{1,0},\Lambda_{\omega_b}]= \vartheta$, and the penultimate equality follows from \eqref{Hodge02}. Let $\alpha := \sqi N^{(n,1)} \iota_{X_b}^*\dbar \nabla^{1,0} (\xi_\tau \lrcorner u)$.
Then,
\begin{align}
&\ipr{ P^{\perp}_b \iota_{X_b}^*(\nabla^{1,0}\xi_{\tau}\lrcorner u), P^{\perp}_b \iota_{X_b}^*(\nabla^{1,0}\xi_{\tau}\lrcorner u)} \nonumber \\
&= c_n \int_{X_b} \hwedge{P^{\perp}_b \iota_{X_b}^*(\nabla^{1,0}\xi_{\tau}\lrcorner u)}{P^{\perp}_b \iota_{X_b}^*(\nabla^{1,0}\xi_{\tau}\lrcorner u)} \nonumber \\
&= c_n\int_{X_b} \hwedge{\nabla^{1,0}\Lambda_{\omega_b}\alpha}{P^{\perp}_b \iota_{X_b}^*(\nabla^{1,0}\xi_{\tau}\lrcorner u)} \nonumber \\
&= c_n\int_{X_b}\partial \hwedge{\Lambda_{\omega_b}\alpha}{P^{\perp}_b \iota_{X_b}^*(\nabla^{1,0}\xi_{\tau}\lrcorner u)} + (-1)^n \hwedge{\Lambda_{\omega_b}\alpha}{\dbar P^{\perp}_b \iota_{X_b}^*(\nabla^{1,0}\xi_{\tau}\lrcorner u)}  \nonumber \\
&= c_n\int_{\partial X_b} \hwedge{\Lambda_{\omega_b}\alpha}{P^{\perp}_b \iota_{X_b}^*(\nabla^{1,0}\xi_{\tau}\lrcorner u)}  + (-1)^n c_n \int_{X_b} \hwedge{\Lambda_{\omega_b}\alpha}{\dbar \iota_{X_b}^*(L^{1,0}_{\xi_\tau} u)}. \label{exact1}
\end{align}
On $\partial X_b$ 
\begin{align*}
\hwedge{\Lambda_{\omega_b}\alpha}{P^{\perp}_b \iota_{X_b}^*(\nabla^{1,0}\xi_{\tau}\lrcorner u)} &= \left\{ d\rho_b  \wedge  \Lambda_{\omega_b}\alpha \wedge \ol{P^{\perp}_b \iota_{X_b}^*(\nabla^{1,0}\xi_{\tau}\lrcorner u)}, \frac{\omega_b^n}{n!} \right\} dS_b \\
&= \left\{ \partial \rho_b \wedge \Lambda_{\omega_b}\alpha \wedge \ol{P^{\perp}_b \iota_{X_b}^*(\nabla^{1,0}\xi_{\tau}\lrcorner u)}, \frac{\omega_b^n}{n!} \right\} dS_b =0,
\end{align*}
since $\partial \rho_b \wedge \Lambda_{\omega_b}\alpha =0$ on $\partial X_b$ is the condition that $\alpha \in \text{Dom}(\dbarstar)$. Thus the boundary integral in \eqref{exact1} vanishes.

Substituting \eqref{exact2} in \eqref{exact1} gives
\begin{align}
&\ipr{ P^{\perp}_b \iota_{X_b}^*(\nabla^{1,0}\xi_{\tau}\lrcorner u), P^{\perp}_b \iota_{X_b}^*(\nabla^{1,0}\xi_{\tau}\lrcorner u)} \nonumber \\
&= (-1)^{n-1}c_n\int_{X_b} \hwedge{\Lambda_{\omega_b}\alpha}{\iota_{X_b}^*(\xi_\tau\lrcorner \Theta^E)u} + c_n\int_{X_b} (-1)^{n-1}\hwedge{\Lambda_{\omega_b}\alpha}{\iota_{X_b^*}\nabla^{1,0}(\dbar\xi_\tau \lrcorner u)} \nonumber \\
&= (-1)^{n-1}c_n \int_{X_b} \hwedge{\Lambda_{\omega_b}\alpha}{\iota_{X_b}^*(\xi_\tau\lrcorner \Theta^E)u}  +c_n \int_{X_b} \dbar\hwedge{\Lambda_{\omega_b}\alpha}{\iota_{X_b}^*(\dbar\xi_\tau\lrcorner u)} \nonumber \\
&\quad - c_n\int_{X_b} \hwedge{\dbar \Lambda_{\omega_b}\alpha}{\iota_{X_b}^*(\dbar\xi_\tau \lrcorner u)} \label{secwmorrey} 
\end{align}
Using the identity $L_{\omega_b}\Lambda_{\omega_b}=[L_{\omega_b},\Lambda_{\omega_B}] = \text{Id}$ on twisted $(n,1)$-forms, we write the first term on the right hand side of \eqref{secwmorrey} as
\begin{align*}
    (-1)^{n-1}c_n \int_{X_b} \hwedge{\Lambda_{\omega_b}\alpha}{\iota_{X_b}^*(\xi_\tau\lrcorner \Theta^E)u} &= (-1)^{n-1}c_n \int_{X_b} \hwedge{\Lambda_{\omega_b}\alpha}{L_{\omega_b}\Lambda_{\omega_b}\iota_{X_b}^*(\xi_\tau\lrcorner \Theta^E)u} \\
                &= -\sqi c_{n-1} \int_{X_b} \hwedge{\omega_b \wedge \Lambda_{\omega_b}\alpha}{\Lambda_{\omega_b}\iota_{X_b}^*(\xi_\tau\lrcorner \Theta^E)u} \\
                &= \ipr{\Lambda_{\omega_b}\alpha, \Lambda_{\omega_b}\iota_{X_b}^*(\xi_\tau \lrcorner \Theta^E)u}_{\ch_b} \\
                &= \ipr{L_{\omega_b}\Lambda_{\omega_b}\alpha, \iota_{X_b}^*(\xi_\tau \lrcorner \Theta^E)u}_{\ch_b}  = \ipr{\alpha, \iota_{X_b}^*(\xi_\tau \lrcorner \Theta^E)u}_{\ch_b}. \\
\end{align*}
The second term on the right hand side of \eqref{secwmorrey} reduces to an integral over the boundary of $X_b$
\begin{equation}\label{morreybintegral}
c_n \int_{X_b} \dbar\hwedge{\Lambda_{\omega_b}\alpha}{\iota_{X_b}^*(\dbar\xi_\tau\lrcorner u)}
    = c_n \int_{\partial X_b} \hwedge{\Lambda_{\omega_b}\alpha}{\iota_{X_b}^*(\dbar\xi_\tau\lrcorner u)}. 
\end{equation}
We use the Morrey trick to simplify this boundary integral. Since $\xi_\tau$ is tangent to $\partial X$, we can write \begin{equation}\label{morrey1}
    \xi_\tau \lrcorner \partial \rho = r\rho
\end{equation} near $\partial X$ for some smooth positive function $r$. Applying $\dbar$ on both sides of \eqref{morrey1} gives
\begin{equation}\label{morrey2}
    \dbar\xi_\tau \lrcorner \partial \rho - \xi_\tau \lrcorner \dbar \partial \rho =\rho\dbar r + r\dbar \rho.
\end{equation}
Since $\dbar \xi_\tau$ is vertical and $\partial \rho \wedge u$ is an $(n+1,0)$-form we have
\begin{equation}\label{morrey3}
    0 = \iota_{X_b}^* \left(\dbar\xi_\tau \lrcorner (\partial \rho \wedge u)\right) = \iota_{X_b}^* \left((\dbar \xi_\tau \lrcorner \partial \rho)\wedge u - \partial \rho \wedge (\dbar \xi_\tau \lrcorner u)\right).
\end{equation}
Taking the wedge product of both sides of \eqref{morrey2} with $u$ and using \eqref{morrey3} to restrict to the fiber $X_b$, we get
\begin{equation}\label{morrey4}
    \iota_{X_b}^* \left((\xi_\tau \lrcorner \dbar \partial \rho)\wedge u +\rho\dbar r\wedge u + r\dbar \rho \wedge u - \partial \rho \wedge (\dbar \xi_\tau\lrcorner u)  \right) = 0.
\end{equation}
On $\partial X_b$, integrand on the right hand side of \eqref{morreybintegral} can be simplified as
\begin{align*}
    c_n \hwedge{\Lambda_{\omega_b}\alpha}{\iota_{X_b}^*(\dbar\xi_\tau\lrcorner u)} 
    &= c_n \left\{d\rho_b \wedge \Lambda_{\omega_b} \alpha\wedge \overline{\iota_{X_b}^*(\dbar\xi_\tau \lrcorner u)}, \frac{\omega_b^n}{n!}   \right\}dS_b \\
    &= c_n  \left\{\dbar \rho_b \wedge \Lambda_{\omega_b} \alpha\wedge \overline{\iota_{X_b}^*(\dbar\xi_\tau \lrcorner u)}, \frac{\omega_b^n}{n!}   \right\}dS_b \\
    &= c_n (-1)^{n-1}  \left\{\Lambda_{\omega_b} \alpha\wedge \overline{\partial \rho_b \wedge\iota_{X_b}^*(\dbar\xi_\tau \lrcorner u)}, \frac{\omega_b^n}{n!}   \right\}dS_b \\
    &=c_n (-1)^{n-1}  \left\{\Lambda_{\omega_b} \alpha\wedge \overline{\iota_{X_b}^*(\partial \rho \wedge(\dbar\xi_\tau \lrcorner u))}, \frac{\omega_b^n}{n!}   \right\}dS_b \\
    &= (-1)^{n-1}c_n  \left\{\Lambda_{\omega_b} \alpha\wedge \overline{\iota_{X_b}^*((\xi_\tau \lrcorner \dbar\partial \rho)\wedge u+ r\dbar \rho \wedge u)}, \frac{\omega_b^n}{n!}   \right\}dS_b \\
    &= c_n \left\{\iota_{X_b}^*(\ol{\xi}_\tau \lrcorner \partial \dbar \rho) \wedge \Lambda_{\omega_b} \alpha \wedge \ol{u}, \frac{\omega_b^n}{n!}   \right\}dS_b + c_n\left\{ r\partial \rho_b \wedge \Lambda_{\omega_b}\alpha \wedge\ol{u}, \frac{\omega_b^n}{n!}\right\}dS_b, 
\end{align*}
where the penultimate equality follows from \eqref{morrey4} since $\rho|_{\partial X_b} = 0$.  
Thus, the equation \eqref{morreybintegral} reduces to 
\begin{align*}
    &c_n \int_{\partial X_b} \hwedge{\Lambda_{\omega_b}\alpha}{\iota_{X_b}^*(\dbar\xi_\tau\lrcorner u)} \\
    &= c_n \int_{\partial X_b} \left\{\iota_{X_b}^*(\ol{\xi}_\tau \lrcorner \partial \dbar \rho) \wedge \Lambda_{\omega_b} \alpha \wedge \ol{u}, \frac{\omega_b^n}{n!}   \right\}dS_b + c_n \int_{\partial X_b}\left\{ r\partial \rho_b \wedge \Lambda_{\omega_b}\alpha \wedge\ol{u}, \frac{\omega_b^n}{n!}\right\}dS_b \\
     &= \int_{\partial X_b}\left\{\iota_{X_b}^*(\ol{\xi}_\tau \lrcorner \partial \dbar \rho)\wedge\Lambda_{\omega_b}\alpha ,u   \right\}dS_b.
\end{align*}
Here the last equality follows from the fact that $\alpha = \sqi N^{(n,1)}_b\iota_{X_b}^*\dbar\nabla^{1,0}(\xi_\tau\lrcorner u) \in \text{Dom}(\dbarstar)$ and $\partial \rho_b \wedge \Lambda_{\omega_b}\alpha =0$ is the condition for a smooth $(n,1)$-form to be in $\text{Dom}(\dbarstar)$.

To simplify the third term on right hand side of \eqref{secwmorrey}, we choose a lift $\xi_\tau$ as in Lemma \ref{primlift} so that $\iota_{X_b}^*(\dbar \xi_\tau \lrcorner u)$ is primitive. Note that $\dbar \Lambda_{\omega_b}\alpha$ is primitive. Indeed since $\dbar \omega_b = 0$, we have 
\begin{align*}
    \omega_b \wedge \dbar \Lambda_{\omega_b}\alpha = \dbar(\omega_b \wedge \Lambda_{\omega_b}\alpha) = \dbar L_{\omega_b}\Lambda_{\omega_b}\alpha = \dbar \alpha = 0,
\end{align*}
since $L_{\omega_b}\Lambda_{\omega_b} = \text{Id}$ on $E$-valued $(n,1)$-forms and $\alpha = \sqi \dbar N^{n,0}_b\iota_{X_b}^*\nabla^{1,0}(\xi_\tau \lrcorner u)$. 
Thus we have
\[
- c_n\int_{X_b} \hwedge{\dbar \Lambda_{\omega_b}\alpha}{\iota_{X_b}^*(\dbar\xi_\tau \lrcorner u)} = \int_{X_b} \{\dbar \Lambda_{\omega_b}\alpha, \iota_{X_b}^*(\dbar\xi_\tau \lrcorner u)\} dV_{\omega_b} = \ipr{\dbar \Lambda_{\omega_b}\alpha, \iota_{X_b}^*(\dbar\xi_\tau \lrcorner u)}_{\ch_b}.
\]
Putting all this together, we get
\begin{equation*}
   \norm{P_b^{\perp}L^{1,0}_{\xi_\tau}u}^2_{\ch_b} = \ipr{\alpha, \iota_{X_b}^*(\xi_\tau\lrcorner \Theta^E)u}_{\ch_b} + \ipr{\dbar \Lambda_{\omega_b}\alpha, \iota_{X_b}^*(\dbar\xi_\tau \lrcorner u)}_{\ch_b} + \int_{\partial X_b}\left\{\iota_{X_b}^*(\ol{\xi}_\tau \lrcorner \partial \dbar \rho)\wedge\Lambda_{\omega_b}\alpha ,u   \right\}dS_b. 
\end{equation*}
\end{proof}



\bibliographystyle{alpha}
\bibliography{DirectImagesRef}

\begin{thebibliography}{Wan17}

\bibitem[Ber09]{B2009}
Bo~Berndtsson.
\newblock Curvature of vector bundles associated to holomorphic fibrations.
\newblock {\em Ann. of Math. (2)}, 169(2):531--560, 2009.

\bibitem[Ber11]{BerndtssonStrict}
Bo~Berndtsson.
\newblock Strict and nonstrict positivity of direct image bundles.
\newblock {\em Math. Z.}, 269(3-4):1201--1218, 2011.

\bibitem[BL16]{BL2016}
Bo~Berndtsson and L{\'a}szl{\'o} Lempert.
\newblock A proof of the {O}hsawa--{T}akegoshi theorem with sharp estimates.
\newblock {\em Journal of the Mathematical Society of Japan}, 68(4):1461--1472,
  2016.

\bibitem[BS91]{boasstraube}
Harold~P Boas and Emil~J Straube.
\newblock Sobolev estimates for the $\overline{\partial}$-neumann operator on
  domains in $\mathbb{C}^n$ admitting a defining function that is
  plurisubharmonic on the boundary.
\newblock {\em Mathematische Zeitschrift}, 206(1):81--88, 1991.

\bibitem[CS01]{chenshaw}
So-Chin Chen and Mei-Chi Shaw.
\newblock {\em Partial differential equations in several complex variables},
  volume~19 of {\em AMS/IP Studies in Advanced Mathematics}.
\newblock American Mathematical Society, Providence, RI; International Press,
  Boston, MA, 2001.

\bibitem[Huy05]{huyb}
Daniel Huybrechts.
\newblock {\em Complex geometry}.
\newblock Universitext. Springer-Verlag, Berlin, 2005.

\bibitem[KN65]{KoNi}
J.~J. Kohn and L.~Nirenberg.
\newblock Non-coercive boundary value problems.
\newblock {\em Comm. Pure Appl. Math.}, 18:443--492, 1965.

\bibitem[LS14]{LS2014}
L\'{a}szl\'{o} Lempert and R\'{o}bert Sz\H{o}ke.
\newblock Direct images, fields of {H}ilbert spaces, and geometric
  quantization.
\newblock {\em Comm. Math. Phys.}, 327(1):49--99, 2014.

\bibitem[LTS20]{SL-T2020}
Christine Laurent-Thi\'{e}baut and Mei-Chi Shaw.
\newblock Holomorphic approximation via {D}olbeault cohomology.
\newblock {\em Math. Z.}, 296(3-4):1027--1047, 2020.

\bibitem[Str01]{straube2001}
Emil~J. Straube.
\newblock Good {S}tein neighborhood bases and regularity of the
  {$\overline\partial$}-{N}eumann problem.
\newblock {\em Illinois J. Math.}, 45(3):865--871, 2001.

\bibitem[Str10]{Strbook}
Emil~J. Straube.
\newblock {\em Lectures on the {$\mathscr{L}^2$}-{S}obolev theory of the
  {$\overline{\partial}$}-{N}eumann problem}.
\newblock ESI Lectures in Mathematics and Physics. European Mathematical
  Society (EMS), Z\"{u}rich, 2010.

\bibitem[Var22]{V2021}
Dror Varolin.
\newblock Berndtsson-{L}empert-{S}z{\H{o}}ke fields associated to proper
  holomorphic families of vector bundles.
\newblock {\em arXiv:2201.12802}, 2022.

\bibitem[Wan17]{W2017}
Xu~Wang.
\newblock A curvature formula associated to a family of pseudoconvex domains.
\newblock {\em Ann. Inst. Fourier (Grenoble)}, 67(1):269--313, 2017.

\end{thebibliography}

\end{document}